\documentclass[11pt,reqno]{amsart}

\numberwithin{equation}{section}
\usepackage{times}
\usepackage{amsmath,amsfonts,amstext,amssymb,amsbsy,amsopn,amsthm,eucal}
\usepackage{mathtools}
\usepackage{dsfont}
\usepackage{graphicx}   
\usepackage{hyperref}
\usepackage{accents}
\usepackage{enumerate}
\usepackage{xcolor}
\usepackage{verbatim}
\usepackage{esint}

\usepackage[normalem]{ulem}
\usepackage{cancel}

\newcommand{\NN}{\mathds{N}}
\newcommand{\RR}{\mathds{R}}

\newcommand{\dC}{\mathds{C}}

\newcommand{\ba}{\text{\bf{a}}}
\newcommand{\bb}{\text{\bf{b}}}

\newcommand{\bp}{\text{\bf{p}}}

\newcommand{\bx}{\text{\bf{x}}}

\newcommand{\Bw}{\Bar{w}}
\newcommand{\Bz}{\Bar{z}}

\newcommand{\cA}{\mathcal{A}}

\newcommand{\cC}{\mathcal{C}}

\newcommand{\cQ}{\mathcal{Q}}

\newcommand{\cR}{\mathcal{R}}

\newtheorem{theorem}[equation]{Theorem}
\newtheorem*{theorem*}{Theorem}

\newtheorem{proposition}[equation]{Proposition}
\newtheorem{lemma}[equation]{Lemma}
\newtheorem{corollary}[equation]{Corollary}
\theoremstyle{definition}
\newtheorem{definition}[equation]{Definition}

\theoremstyle{remark}
\newtheorem{remark}[equation]{Remark}
\theoremstyle{remark}

\theoremstyle{remark}

\theoremstyle{remark}
\theoremstyle{remark}

\allowdisplaybreaks

\begin{document}

\thanks{}
\thanks{}

\title[]{General regularity obstructions for the arrival time equation}

\author{Yiqi Huang, Jingze Zhu}
\address[Yiqi Huang]{Department of Mathematics, MIT, 77 Massachusetts Avenue, Cambridge, MA 02139-4307, USA}
 \email{yiqih777@mit.edu}
 \address[Jingze Zhu]{Institute of Geometry and Physics, USTC, No. 99 Xiupu Road, Shanghai 201315, China}
 \email{zhujz3@ustc.edu.cn}

\begin{abstract}
We study the regularity of the arrival time function associated with mean curvature flow, formulated as a degenerate elliptic equation encoding the singular structure of the flow. We identify a systematic mechanism obstructing higher regularity, arising from higher-order asymptotic expansions near spherical singularities. We construct uncountably many low-regularity arrival time functions in all dimensions. In particular, we settle the question of smoothness in the planar case by showing that even for convex curve shortening flow the arrival time need not be $C^{25}$. Our companion paper \cite{hz} proves that every such arrival time is $C^{24,\alpha}$ for any $0<\alpha<1$. Hence the obstruction at order $25$ gives the optimal regularity threshold. 

Our approach introduces new analytic ingredients, including a precise correspondence between elliptic asymptotic expansions and parabolic long-time asymptotics of the associated rescaled mean curvature flow, a complexification of the arrival time equation, and a method for prescribing higher-order asymptotic expansions.
\end{abstract}
\maketitle

\setcounter{tocdepth}{1}
\tableofcontents

\section{Introduction}

A smooth family of hypersurfaces $\{M_t\}_{t\in [0,T)}$ in $\RR^{n+1}$ is called a mean curvature flow if it satisfies the equation $\partial_t x = \vec{H}$ where $x$ denotes the position vector and $\vec{H}$ denotes the mean curvature vector. Mean curvature flow can be viewed as the gradient flow of the area functional and is one of the central geometric evolution equations in differential geometry and geometric analysis. It has deep connections with material science, minimal surfaces and topology, and has become a fundamental tool for understanding the geometry and topology of hypersurfaces.

The arrival time function associated with mean curvature flow provides a unified analytic framework for geometric evolution, encoding the entire flow --- including its singularities --- into a single nonlinear partial differential equation. Introduced through the level set method, the arrival time formulation replaces the evolving mean convex hypersurface with a scalar function
\begin{equation*}
    U : \Omega \subset \mathbb{R}^{n+1} \to \mathbb{R}
\end{equation*}
solving the degenerate elliptic boundary value problem
\begin{equation} \label{eq:arrival}
\begin{cases}
|D U| \cdot \mathrm{div}\!\left(\frac{D U}{|D U|}\right) = -1  & \text{in } \Omega,\\
U = 0 & \text{on } \partial\Omega,
\end{cases}
\end{equation}
in the viscosity sense. The function $U(x)$ represents the time at which the evolving hypersurface passes through the point $x$. This perspective originated in numerical analysis through the work of Osher and Sethian \cite{os}, and was subsequently developed in the viscosity framework by Chen--Giga--Goto \cite{cgg} and Evans--Spruck \cite{es1,es2,es3,es4}. The arrival time formulation also admits a game-theoretic interpretation, developed by Kohn and Serfaty \cite{ks}; see also \cite{sp,e,koh,gl,ks09,ks10} for related developments.

\subsection{Regularity background}

While formulation \eqref{eq:arrival} provides global existence and uniqueness beyond the classical regime, it raises subtle questions concerning regularity. Away from critical points, equation \eqref{eq:arrival} is non-degenerate and solutions are smooth. All analytic difficulties therefore concentrate at critical points, where $|D U| = 0$ and the equation becomes degenerate.

Although viscosity theory initially yields only Lipschitz regularity \cite{cgg,es1}, the true regularity is substantially stronger. In the convex setting, Huisken \cite{huisken} proved that the flow shrinks smoothly to a round point and that the arrival time is $C^2$. This reflects the asymptotic spherical structure of the flow near extinction. Colding and Minicozzi \cite{cm16a,cm18} later established that for general mean convex flows the arrival time is twice differentiable everywhere with uniformly bounded Hessian, and satisfies \eqref{eq:arrival} in the classical sense even at critical points, building on their work on uniqueness of tangent flows and structure of the singular set \cite{cm15,cm16b}. Their work shows that the quadratic expansion of $U$ at a critical point is determined by the tangent flow, reflecting a strong rigidity at second order. Beyond quadratic order, however, the behavior of $U$ is no longer rigid: higher regularity depends on finer asymptotic information and is not determined solely by the tangent flow.

This naturally leads to the question: \emph{how regular can the arrival time function be?}

In higher dimensions $n\geq 2$, \v{S}e\v{s}um \cite{se} and later Strehlke \cite{str} constructed convex flows for which the arrival time fails to be $C^3$. More precisely, for each $k\geq 2$, they proved that there exist arrival time functions in the form of
\begin{align}\label{Intro;arrival-expansion;11}
	U(x) = T_0 - \frac{|x|^2}{2n} + |x|^{\frac{k(k-1)}{n}} H_k + o\left(|x|^{\frac{k(k-1)}{n}+k}\right)
\end{align}
where $H_k$ is a homogeneous harmonic polynomial of degree $k$. When $k = 2$, the term $|x|^{\frac{2}{n}} H_2$ prevents $C^3$ regularity. Furthermore, Sun-Xue \cite{sx24} proved that expansion \eqref{Intro;arrival-expansion;11} is generic with respect to initial perturbation of the corresponding mean curvature flow when $n\geq 2$, therefore proving generic low regularity. More generally, whenever $\frac{k(k-1)}{n} \notin 2\mathbb{Z}$, the presence of non-even-integer power in  the term $|x|^{\frac{k(k-1)}{n}} H_k$ creates obstruction to smoothness.

In contrast, this mechanism breaks down in dimension $n=1$. Indeed, when $n=1$ the exponent reduces to $k(k-1)$, which is always an even number. Hence the corresponding term $|x|^{k(k-1)} H_k$ is a polynomial and cannot obstruct smoothness. Moreover, Kohn-Serfaty \cite{ks} proved that the arrival time function of convex curve shortening flow is $C^3$ with vanishing third derivatives at the extinction point, which played a key role in the game-theoretic interpretation of the flow. These facts lead to the question whether planar arrival time function might enjoy substantially higher regularity. In fact, it was even unknown whether or not the arrival time functions for convex curves are smooth.

\subsection{Main results}

Our first result settles the question of smoothness in the planar case.

\begin{theorem}[]\label{thm:curve25}
There exist arrival time functions arising from convex compact curve shortening flows that are not $C^{25}$.
\end{theorem}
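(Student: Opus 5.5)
The plan is to realize a resonant logarithmic term of the form $|x|^{25}\log|x|\,\Theta(\theta)$ in the expansion of $U$ at the extinction point, with $\Theta$ a nonzero trigonometric polynomial. Such a term is $C^{24,\alpha}$ for every $\alpha<1$ but not $C^{25}$, which matches the threshold announced in the abstract. I would work with the rescaled curve shortening flow near the round circle. Write the rescaled curve as a radial graph $\sqrt{2}(1+u(\theta,\tau))$ with $\tau=-\log(T_0-t)$. The linearized operator then has eigenfunctions $\cos k\theta,\sin k\theta$ and decay rates that translate, through $|x|\sim e^{-\tau/2}$, into the terms $|x|^{k(k-1)}H_k=r^{k^2}(a_k\cos k\theta+b_k\sin k\theta)$ of \eqref{Intro;arrival-expansion;11} with $n=1$. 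The first step is a dictionary: an expansion of $u$ in terms $\tau^j e^{-\mu\tau}\cos(m\theta)$ should correspond exactly to an expansion of $U-T_0+\frac{|x|^2}{2}$ in terms $r^{2\mu}(\log r)^j\cos m\theta$. I would prove this by inverting $x\mapsto$ arrival time along rays, keeping track of error terms in $C^k$ norms.

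The second step is to find where resonance can occur. Nonlinear interactions of modes with rates $\mu_1,\dots,\mu_p$ produce forcing at rate $\sum\mu_i$, together with lower-order corrections coming from powers of $r^2$ and from the time shift. Whenever such a forcing rate coincides with an eigenvalue rate and the frequency matches, solving the linear inhomogeneous equation forces a $\tau e^{-\mu\tau}$ term, hence a $\log r$ term in $U$. The numerology $25=3^2+4^2=5^2$ strongly suggests the following scenario: the interaction of the $k=3$ and $k=4$ modes, possibly through the $\theta$-dependence generated at second order, feeds the $k=5$ eigenspace at exactly rate $25/2$. I would enumerate all resonances at total degree up to $25$ and check that the lower ones are harmless. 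Either their coefficients vanish by symmetry, or they land on frequencies that produce polynomial terms. This consistency check is necessary because regularity holds up to $C^{24,\alpha}$.

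The third step is to construct flows whose slow modes are prescribed. Given coefficients $(a_3,b_3,a_4,b_4)$, I need an eternal-in-$\tau$ rescaled flow that converges to the circle with precisely these leading coefficients and with no lower modes other than the modes $k=0,1$ killed by translation and time normalization. The natural tool is a fixed point argument "from $\tau=\infty$": write $u=\sum_{\text{prescribed}}+$ a correction, solve in a weighted space decaying faster than $e^{-(9/2+\epsilon)\tau}$, and project out the unstable and neutral directions. The modes $k\le2$ are unstable or neutral here, so they have to be treated by a shooting or degree argument, or by symmetry. I also expect the $k=2$ and $k=3$ modes to need handling so that they do not contaminate the expansion. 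Convexity and compactness come for free because $u$ is small in $C^2$ for $\tau$ large. One then runs the flow backward to a finite time, which remains convex because the curve is $C^2$-close to the circle.

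The main obstacle is the explicit computation showing that the resonant coefficient at order $25$ is nonzero for suitable $(a_3,b_3,a_4,b_4)$. This requires expanding the rescaled CSF nonlinearity to sufficiently high order, tracking all cascaded contributions that reach rate $25/2$ in frequency $5$, and showing that their sum is a nonvanishing polynomial in the prescribed coefficients. I would first try to organize this computation with the complexified equation, writing the curve in terms of $z,\bar z$ so that frequencies add. Once the coefficient is a nonzero polynomial, generic choices of the parameters give uncountably many examples. Combined with the dictionary from the first step, these flows yield a term $c\,r^{25}\log r\cos(5\theta+\phi)$ with $c\neq0$, so $U\notin C^{25}$.
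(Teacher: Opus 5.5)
\textbf{The resonance you aim for does not exist, and your construction removes the mode that the actual resonance needs.}

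For $n=1$ the $k$-th mode decays at rate $\lambda_k=\frac{k^2}{2}-1$. A term $e^{-\mu\tau}$ in the graph function corresponds to a term of degree $2\mu+2$ in $U$, not $2\mu$ (Theorem~\ref{Taylor-to-asymptotic}: $f_m e^{-m\tau/2}\leftrightarrow P_{m+2}$). So mode $5$ sits at rate $\frac{23}{2}$, and resonance conditions read $\sum_i(k_i^2-2)=k^2-2$ rather than $\sum_i k_i^2=k^2$.

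The identity $3^2+4^2=5^2$ is therefore a red herring: $\lambda_3+\lambda_4=\frac{21}{2}\neq\lambda_5$. Elliptically, $L_2$ sends degrees $9$ and $16$ to degree $23$. The order-$25$ resonance is cubic: $\lambda_2+\lambda_3+\lambda_4=1+\frac72+7=\lambda_5$, i.e.\ $4+9+16-4=25$ through $L_3$. It is carried by the monomial $z^{10}\bar z^{15}=(z^3\bar z)(z^3\bar z^6)(z^6\bar z^{10})/(z\bar z)^2$, with coefficient $C^{(2)}_{\ba}=-\frac{17280}{91}\neq0$ (Theorem~\ref{Thm;12} with $k=2$).

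If the $k=2$ mode is absent, as you propose, any contribution at degree $25$ would need $7a+14b=23$, where $7$ and $14$ are the $|S'|$-degrees of modes $3$ and $4$. This has no solution. The degree-$25$ compatibility condition is then automatically satisfied, and the coefficient you plan to show is a nonvanishing polynomial in $(a_3,b_3,a_4,b_4)$ is identically zero. Also, mode $2$ is stable ($\lambda_2=1$); only $k=0,1$ are unstable.

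\textbf{What is needed instead.} Keep the leading $k=2$ mode nonzero; this is the term $|x|^2H_2$ with $H_2\neq0$. Then vary the $k=3$ and $k=4$ coefficients on top of it, with all slower modes frozen. The compatibility condition at degree $25$ becomes $C\,\overline{\Phi_2}\,\Phi_3\,\Phi_4+\cQ=0$ with $C\neq0$, where $\cQ$ depends only on $\Phi_2,\Phi_3$ and their conjugates (Theorem~\ref{Thm;14}). Fixing $\Phi_2,\Phi_3\neq0$, you violate it by choosing $\Phi_4$.

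Prescribing a non-leading coefficient such as $\Phi_4$ is not achieved by a Strehlke-type fixed point around the leading profile. A space decaying faster than $e^{-(9/2+\epsilon)\tau}$ does not pin down the $e^{-7\tau}$ coefficient. The paper instead builds a stable manifold for the difference $w=v-v_0$ of two solutions, in a norm $\|\cdot\|_{r,\sigma,\eta}$ that tolerates the slower decay of $v_0$ (Theorem~\ref{Thm;prescribe-diff-asymptotic;2}), and iterates this over the modes.

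Finally, you do not need to exhibit an $r^{25}\log r$ term, which would require extending your dictionary to logarithmic terms. The paper argues by contradiction: an expansion of $U$ to order $25$ forces the obstruction equation on the half-integer asymptotic coefficients of $v$.
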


Thus, although planar arrival time functions exhibit improved low-order regularity, this phenomenon does not persist at higher order.  Theorem \ref{thm:curve25} is a special case of a more general theorem below, where we identify a systematic obstruction mechanism governing the regularity of the arrival time in all dimensions.

Together with our companion paper \cite{hz}, Theorem \ref{thm:curve25} identifies the optimal regularity for arrival time in the plane. In \cite{hz}, we prove that every arrival time function on convex $\Omega\subset \RR^2$ is $C^{24,\alpha}$ for any $0<\alpha<1$. Moreover, its twenty-fourth derivatives satisfy a logarithmic Lipschitz estimate and thus the examples constructed here fail to be $C^{24,1}$. Hence the order-25 obstruction is the first one that can occur, and it does occur for convex flows by Theorem \ref{thm:curve25}. Its location is striking: the leading expansions in the plane are polynomials, and the compatibility conditions at the lower critical degrees 4, 9, and 16 vanish.

\begin{theorem}\label{thm:general}
Suppose $n \ge 1$. For each $k \ge 2$ and each nonzero homogeneous harmonic polynomial $H_k$ of degree $k$, there exist uncountably many asymptotically distinct arrival time functions $U$ on a bounded convex domain $\Omega \subset \mathbb{R}^{n+1}$ that are not $C^N$
and satisfy the asymptotic expansion near $0$:
\begin{equation}\label{Intro;arrival-expansion;12}
    U(x) = T_0 - \frac{|x|^2}{2n} + |x|^{\frac{k(k-1)}{n}} H_k + o\!\left(|x|^{\frac{k(k-1)}{n}+k}\right),
\end{equation}
where
$$
N =
\begin{cases}
(k^2+k-1)^2, & n=1, \\
\left\lceil \frac{j(j-1)}{n}+j \right\rceil,
& n\ge2,
\end{cases}
$$
and $j \ge k$ is the smallest integer such that $\frac{j(j-1)}{2n} \notin \mathbb{Z}$.
\end{theorem}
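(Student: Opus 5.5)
Our approach is to pass from the elliptic problem to the rescaled mean curvature flow near the round shrinking sphere, and to read regularity of $U$ off the long-time asymptotics of that flow. Write the convex flow near extinction as a rescaled flow $\tilde M_\tau = (T_0-t)^{-1/2}M_t$, which is a graph $\rho=\sqrt{2n}+u(\omega,\tau)$ over $S^n$. The linearized operator is $L=\Delta_{S^n}+1$ (after normalization), and its eigenvalues on degree-$k$ spherical harmonics are $\lambda_k = 1-\frac{k(k+n-1)}{2n}$. A decaying mode $e^{\lambda_k\tau}H_k$ corresponds, through $\tau=-\log(T_0-t)$ and $|x|^2\approx 2n(T_0-t)$, to a term of size $|x|^{-2\lambda_k}\cdot$ (angular part). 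After the extra factor coming from inverting $\rho$ into $U$, this gives the term $|x|^{k(k-1)/n}H_k$ in \eqref{Intro;arrival-expansion;12}.

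\textbf{Step 1 (dictionary).} We prove that every term $e^{-\mu\tau}\,p(\tau)\,Y(\omega)$ in the asymptotic expansion of $u$, with $p$ polynomial and $Y$ a spherical harmonic, corresponds to a term $|x|^{2\mu}(\log|x|)^{m}Y(x/|x|)$ in $U$, and conversely. To make this rigorous, we complexify the arrival time equation: we treat the expansion as a formal series in $|x|^{2\mu}$ with analytic coefficients, so that the nonlinear interactions of modes produce only exponents of the form $\sum_i a_i\mu_i$.

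\textbf{Step 2 (prescription).} We construct rescaled flows whose expansion is $u = c\,e^{\lambda_k\tau}H_k+\cdots$ with freely prescribed coefficients on the higher modes $j>k$. We do this by a fixed-point/shooting argument on the unstable manifold: prescribe the projections onto the finitely many eigenmodes up to the relevant order, solve backward from $\tau=\infty$ in weighted spaces, and then run the flow forward to obtain a compact convex initial curve or hypersurface. Convexity is preserved because the perturbation is small in $C^2$ for large $\tau$, and Huisken then gives extinction at a round point. The free real parameter on a higher mode yields uncountably many asymptotically distinct solutions.

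\textbf{Step 3 (obstruction).} For $n\ge2$, we take $j$ as in the theorem. The mode $e^{\lambda_j\tau}H_j$ contributes $|x|^{j(j-1)/n}|x|^{j}\,\widehat H_j$, with total exponent $\frac{j(j-1)}{n}+j$. Since $\frac{j(j-1)}{2n}\notin\mathbb Z$, this term is not a polynomial. We choose its coefficient nonzero and check that no lower mode or nonlinear product can cancel it, either because the exponents differ or by the free choice of coefficient. This shows $U\notin C^N$.

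For $n=1$, all eigenvalue exponents are even integers. The obstruction must therefore come from resonance: a nonlinear product of modes hits an eigenvalue, and the formal expansion then forces a $\tau e^{-\mu\tau}$ term, i.e. $|x|^{2\mu}\log|x|$. The hardest part is the explicit computation of the compatibility (resonance) coefficient at the critical degree $(k^2+k-1)^2$, and showing that it is nonzero. By the abstract these coefficients vanish at the degrees $4,9,16$ and first appear at $25$. My approach would be to compute the normal form of the curve-shortening rescaled equation in Fourier modes up to the needed order, identify the resonant monomial, and verify that it is nonzero symbolically. A $\log$ term of this kind then precludes $C^{N}$.
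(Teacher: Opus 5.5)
Your outline has the right architecture: parabolic prescription of modes plus a compatibility (resonance) obstruction. For $n\ge2$, a free coefficient on the first mode $E_j$ with $\frac{j(j-1)}{2n}\notin\mathbb Z$ is essentially the paper's idea. But the case $n=1$, which is the heart of the theorem, is not proved. You defer it to ``compute the normal form and verify symbolically that the resonant coefficient is nonzero.'' The compatibility condition at degree $(k^2+k-1)^2$ is a polynomial in all resonant modes of degrees $4,9,\dots,(k^2+k-2)^2$, and its interaction patterns grow with $k$. So no computation settles it for all $k\ge2$ unless you first isolate a coefficient coming from a bounded, $k$-independent interaction.

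That isolation is the missing idea, and the paper does it on the elliptic side. With $z=x_1+ix_2$ (this, not a formal series in $|x|^{2\mu}$, is the complexification that matters), $L_1P=|x|^2\Delta P-D^2P(x,x)$ acts diagonally on $z^a\bar z^b$ with eigenvalue $a+b-(a-b)^2$. Hence every Taylor coefficient is a universal polynomial in the resonant ones $z^{\frac{l(l\mp1)}{2}}\bar z^{\frac{l(l\pm1)}{2}}$. An index count then shows that in the degree-$(k^2+k-1)^2$ obstruction, the only monomial containing the $(k^2+k-2)$-mode and no mode slower than $k$ is the triple product of the $k$-, $(k+1)$- and $(k^2+k-2)$-modes. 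For $k=2$ this is $z^3\bar z$, $z^3\bar z^6$, $z^6\bar z^{10}$ producing $z^{10}\bar z^{15}$. Its coefficient is computed in closed form as $-\frac{2\mathbf K^2(\mathbf K-1)(\mathbf K-2)^2(\mathbf K-3)}{(k^3+k^2-2k-1)(k^3+2k^2-k-1)}\neq0$, with $\mathbf K=k^2+k$.

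Consequently, for flows whose modes below $k$ vanish, the obstruction is \emph{affine} in the $(k^2+k-2)$-mode. Its slope is proportional to the product of the $k$- and $(k+1)$-mode coefficients. This is exactly what the construction uses: take the $(k+1)$-mode nonzero, and then only one value of the $(k^2+k-2)$-mode is compatible with $C^N$. Merely knowing that some resonance polynomial is nonzero would not show it is nonzero on the slice where modes $2,\dots,k-1$ vanish and the $k$-mode is fixed by $H_k$. Nor would it tell you how to reach realizable data off its zero set.

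Several further points need repair.
\begin{enumerate}
\item \emph{The Step~1 dictionary is not correct term by term.} Inverting $s(r)^2=T_0-U(r\theta)$ along rays gives a triangular polynomial correspondence,
$f_m=-\frac{\sqrt{2n}}{2}\frac{P_{m+2}}{P_2}+\mathcal Q_m\big(\frac{P_3}{P_2},\dots,\frac{P_{m+1}}{P_2}\big)$.
This triangularity lets you transfer the elliptic obstruction to the flow.
\item \emph{For $n\ge2$, the ``no cancellation'' claim needs the same triangularity.} Compare two family members that agree on all slower data. If both were $C^N$, their asymptotic functions would agree below order $2\lambda_j$ and differ at that order by a nonzero $Q\in E_j$. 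Then $Q$ would coincide on the sphere with a homogeneous polynomial of degree $2\lambda_j+2$, which has the wrong parity (or is not an integer degree at all). You also need injectivity of the map from initial data to the asymptotic coefficient.
\item \emph{Arguing forward is harder than you suggest.} Exhibiting a $\tau e^{-\mu\tau}$ term in $v$ would require a full parabolic expansion through rate $\lambda_{k^2+k-1}$, which you do not have. The contrapositive avoids this: $C^N$ implies pure half-integer asymptotics, which imply an algebraic relation that the family violates off a small set.
\item \emph{Prescription of a faster mode.} Keeping the slower modes fixed while prescribing a faster one is not just prescribing projections on a (stable, not unstable) manifold. It needs a stable-manifold argument for the difference equation around a fixed base flow, in a norm that handles two decay rates.
\item \emph{Asymptotic distinctness.} ``Uncountably many asymptotically distinct'' requires ruling out identifications by rotations and rescalings. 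The paper uses the symmetry group of $H_k$ for $n=1$ and backward uniqueness for $n\ge2$.
\end{enumerate}
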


\begin{remark}
    A stronger statement than uncountability can be proved up to minor modification. The examples constructed in the proof arise from finite-dimensional families of prescribed higher-order asymptotic data. In these families, the condition that the arrival time have the corresponding higher regularity imposes nontrivial algebraic equations on the parameters. As a comparison, Sun--Xue \cite{sx24} proved a global genericity result in dimensions $n\ge2$: generically, the rescaled flow near a spherical singularity is governed by the slowest nontrivial mode, which yields the leading term $|x|^{2/n}H_2$ in the arrival time expansion and hence obstructs $C^3$ regularity. This leading-order mechanism does not apply to convex curve shortening flow, since when $n=1$ all exponents $k(k-1)$ are even and the leading terms are polynomial. 
\end{remark}

Theorem \ref{thm:curve25} is a direct consequence of Theorem \ref{thm:general} by taking $k=2$ and $n=1$. By the work of Strehlke \cite{str}, any nontrivial convex arrival time function necessarily admits a leading-order asymptotic expansion of the form \eqref{Intro;arrival-expansion;11} with nonzero $H_k$. Consequently, the asymptotic profiles appearing in Theorem \ref{thm:general} exhaust all possible nontrivial leading-order behaviors. Therefore, Theorem \ref{thm:general} is in full generality in the sense that it covers all the possible nontrivial leading order behaviors $|x|^{\frac{k(k-1)}{n}} H_k$. Theorem \ref{thm:general}  is new not just when dimension $n=1$, but also in the case of higher dimension $n\geq 2$ and $\frac{k(k-1)}{n}\in 2\mathbb{Z}$. 

The lack of smoothness obstruction from $|x|^{\frac{k(k-1)}{n}}H_k$ causes major difficulties. To address the difficulties, we introduce several new ideas, which we outline now.

\subsection{Strategy and new ingredients}

We now outline the main ideas.

\emph{(1) Algebraic obstruction via complexification.}

To understand the possible regularity of $U$ near a spherical singularity, we analyze the recursive structure imposed by \eqref{eq:arrival}. Assuming sufficient regularity --- for clarity, we temporarily assume $U$ is analytic --- we expand $U$ near the singular point $0$
\begin{equation*}
    U = T_0 - \frac{|x|^2}{2n} + \sum_{m \ge 3} P_m,
\end{equation*}
where each $P_m$ is a homogeneous polynomial of degree $m$.

Substituting this expansion into \eqref{eq:arrival} yields a recursive system of the form
\begin{equation*}
    L_1 P = L_2 P + L_3 P,
\end{equation*}
where
\begin{equation*}
    L_1 P = |x|^2 \Delta P - D^2P(x,x)
\end{equation*}
preserves degree, while $L_2$ and $L_3$ increase degree, sending $P_k$ to homogeneous polynomials of degrees $2k-2$ and $3k-4$, respectively. Equating homogeneous components of each degree transfers \eqref{eq:arrival} into a hierarchy of algebraic equations for the coefficients of the expansion.

The degeneracy of the arrival time equation is reflected in the fact that $L_1$ is not invertible. Indeed, one verifies that a homogeneous polynomial $P$ lies in $\ker L_1$ if and only if
\begin{equation*}
    P = |x|^{\frac{k(k-1)}{n}} H_k
\end{equation*}
for some harmonic homogeneous polynomial $H_k$ of degree $k$, where $\frac{k(k-1)}{n}$ is an even number; see Lemma \ref{Lem;linear-term-equation}.  We remark that this extends to any smooth homogeneous function, but we do not use it in this paper.

At degrees for which $L_1$ is invertible, the recursive equation uniquely determines $P_m$ from lower-order terms.
However, at the critical degrees $m = \frac{k(k-1)}{n}+k$, the operator $L_1$ has nontrivial kernel, and the corresponding homogeneous component is determined only up to an element of $ \{ |x|^{\frac{k(k-1)}{n}} H_k \} $. These kernel elements therefore serve as free parameters in the expansion. Solvability at these degrees requires that the right-hand side be orthogonal to the cokernel of $L_1$, yielding a compatibility condition. This compatibility condition constitutes the \emph{obstruction equation}.

A central difficulty is to determine whether this obstruction equation is genuinely nontrivial. Because the recursive system couples multiple lower-order terms, its structure is highly nonlinear and a priori it is not clear that any genuine obstruction occurs at finite order. In dimension $n=1$, the possible critical degrees are $k^2$. A detailed analysis shows that the obstruction equations for $k=2,3,4$ are automatically satisfied, while the first nontrivial obstruction appears at $k=5$, corresponding to order $25$. This explains the appearance of $C^{25}$ as the threshold of regularity.

To make the computation tractable in dimension one, we introduce a complexification of the arrival time equation. Since the space of harmonic homogeneous polynomials of fixed degree in two variables is two-dimensional, it can be naturally identified with the complex numbers. Under this identification, the recursive compatibility condition reduces to an explicit algebraic equation involving the coefficients corresponding to $k=2,3,4$. We prove that this algebraic obstruction is nonvanishing, and hence provides a genuine compatibility condition.

Consequently, if the coefficients of degrees $4,9,16$ fail to satisfy this algebraic relation, then the recursive equation at degree $25$ cannot be solved, contradicting the assumed Taylor expansion. It follows that $U$ cannot be $C^{25}$. The detailed derivation of the obstruction equation
is carried out in Section 4.

\emph{(2) Elliptic-parabolic correspondence.}

Having identified the algebraic obstruction at the level of the elliptic expansion, the next problem is to construct solutions realizing prescribed asymptotics near the singular point.

In earlier work, \cite{str} adapted the stable manifold theorem for geometric evolution equations \cite{nai} to the rescaled mean curvature flow, constructing invariant manifolds of solutions that converge to the round sphere with a prescribed exponential rate. Similar ideas were employed by \v{S}e\v{s}um \cite{se}. These results establish a correspondence between the leading asymptotic mode of the graph function of the rescaled flow over the sphere, and the leading-order term $|x|^{\frac{k(k-1)}{n}}H_k$ in the elliptic expansion of $U$.

However, this correspondence controls only the first asymptotic mode. In the present setting, the obstruction arises from higher-order interactions, and therefore depends essentially on finer asymptotic information. To address this limitation, we establish a refined elliptic-parabolic correspondence: we relate the full higher-order expansion of $P$ to the higher order asymptotics of the graph function
of the associated rescaled mean curvature flow over the sphere.

This refined correspondence allows us to translate the algebraic obstruction for the elliptic expansion into a dynamical obstruction for the higher-order asymptotics of the rescaled flow. The construction of this correspondence is carried out in Section 3, and the transfer of the obstruction equation to the parabolic setting
is developed in Section 5.

\emph{(3) Prescription of higher-order asymptotics.}

The final step is to construct solutions of the rescaled mean curvature flow whose asymptotic expansions violate the obstruction equation.

While Strehlke’s work \cite{str} allows the prescription of the leading asymptotic mode, this is insufficient in the planar case, where the obstruction arises from the interactions among multiple higher-order asymptotic modes.  Controlling only the leading order mode does not determine the finer asymptotic structure required to detect or violate the obstruction.

To overcome this difficulty, we develop in Section \ref{Sec;prescribe-higher-asymptotic} a refined stable manifold analysis for the rescaled mean curvature flow.  More precisely, we analyze the evolution of the difference of graph functions and construct a stable manifold for this difference equation. This allows us to prescribe higher order mode while preserving slower modes (which includes leading order mode). By iterating the argument, we are able to prescribe multiple higher-order asymptotic modes simultaneously. 

The proof of Theorem \ref{thm:general} is constructive in every dimension. Starting from a convex rescaled flow with the prescribed leading mode, we use ingredient (3) to construct finite-dimensional families in which selected higher-order modes vary independently while all slower modes remain fixed. Ingredient (2) transfers $C^N$ regularity into compatibility conditions on these parabolic coefficients, and ingredient (1) shows that these conditions
are nontrivial.

\subsection*{Disclosure of AI tools} The authors use ChatGPT 5.6 Pro for assistance in identifying mathematical and linguistic typos.
\subsection*{Acknowledgements}
The authors are grateful to Prof. Toby Colding for his continual support and encouragement. The authors would like to thank Prof. Natasa \v{S}e\v{s}um for some helpful conversations. Yiqi Huang is supported by a Simons Dissertation Fellowship.

\section{Preliminaries and setup}\label{section;pre}

\subsection{Background on mean curvature flow}

Let   $\{M_t\}_{t\in I}$ denote a mean curvature flow (MCF) in $\RR^{n+1}$. The rescaled mean curvature flow (RMCF) centered at $(x_0,T_0)$ is defined by
\begin{align}\label{Def;RMCF;1}
	\bar{M}_{\tau} = e^{\frac{\tau}{2}}(M_{T_0-e^{-\tau}}-x_0) = (T_0-t)^{-\frac{1}{2}}(M_t - x_0)
\end{align}
where $\tau = -\ln(T_0-t)$.

We say that the MCF has a spherical singularity at $(x_0,T_0)$ if the corresponding RMCF centered at $(x_0,T_0)$ converges to $\mathbb{S}^n = \mathbb{S}^n(\sqrt{2n})$. Equivalently,  there is a $\tau_0$ such that for all $\tau \geq \tau_0$ one can write $\bar{M}_{\tau}$ as a graph $v$ over $\mathbb{S}^n$:
\begin{align}\label{Def;RMCF-graph;1}
	\bar{M}_{\tau} = \{ y + v(y, \tau) \vec{n} :  y \in \mathbb{S}^{n}(\sqrt{2n}), \ \vec{n} \text{ is the outward unit normal on }\mathbb{S}^n(\sqrt{2n}) \}
\end{align}
with $||v||_{C^l}\rightarrow 0$ as $\tau\rightarrow \infty$ for all $l\in \mathbb{N}$.

The evolution equation of $v$ is 
\begin{align}
	\partial_{\tau} v = Lv+ N(v, \nabla v, \nabla^2 v)
\end{align}
where $L = \Delta + 1$ is the linearized operator on $\mathbb{S}^n(\sqrt{2n})$ and $N$ collects the nonlinear terms. For a derivation of this equation see for example
\cite{se,str}.

The eigenvalues of the linear operator $\Delta + 1$ on $\mathbb{S}^n(\sqrt{2n})$  are given by 
\begin{align}\label{eigenvalue;lambda-k;1}
	\lambda_k = \frac{k(k+n-1)}{2n}-1,\quad k=0,1,2,....
\end{align}
The corresponding eigenspace is denoted by $E_k$ with dimension $\binom{n+k}{n}  - \binom{n+k-2}{ n}$ for all $k\geq 2$ and with dimension $\binom{n+k}{n}$ when $k=0,1$. Note that any element in $E_k$ is the restriction of a homogeneous harmonic polynomial in $\RR^{n+1}$ of degree $k$.   When $n=1$, the dimension of $E_k$ is 2 for all $k\geq 1$.

Given any MCF with spherical singularity at $(x_0,T_0)$, one has the leading order asymptotic behavior of $v$ by the work of Strehlke \cite{str}:
\begin{theorem}\cite[Theorem 2.2]{str} If the flow is not a family of self-similarly shrinking round spheres, then there exist $k\geq 2$, a nonzero $P_k\in E_k$, and $\sigma>0$ such that, for each $l\geq0$,
\begin{align}
	\Big|v - e^{-\lambda_k \tau}P_k(y) \Big|_{C^l(\mathbb{S}^{n})}\leq Ce^{-(\lambda_k + \sigma)\tau}
\end{align}
for some $C = C(l)$ and for all large $\tau$. 

\end{theorem}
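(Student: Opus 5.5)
This statement is quoted from Strehlke \cite[Theorem 2.2]{str}. A proof follows the standard spectral-dynamics argument for the rescaled flow near a linearly stable (modulo symmetries) shrinker. Throughout we work with the graph function $v$ of \eqref{Def;RMCF-graph;1}, which obeys $\partial_\tau v = Lv + N(v,\nabla v,\nabla^2 v)$. The nonlinearity satisfies $\|N\|_{C^{l}} \le C\|v\|_{C^{l+2}}^2$ once $\|v\|_{C^{l+2}}$ is small. We decompose $v=\sum_k \pi_k v$ along the eigenspaces $E_k$ of $L=\Delta+1$, whose eigenvalues $\lambda_k$ are given in \eqref{eigenvalue;lambda-k;1}. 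The modes $k=0,1$ have $\lambda_0=-1$ and $\lambda_1=-\tfrac12$. These are unstable directions, and they correspond to changing the singular time $T_0$ and the singular point $x_0$. All $\lambda_k$ with $k\ge2$ are positive.

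\textbf{Step 1: the unstable modes are slaved.} Since the rescaled flow is centered at the actual singular point $(x_0,T_0)$ and $v\to0$ smoothly, the components $\pi_0v$ and $\pi_1v$ cannot grow. Integrating their equations backward from $\tau=\infty$ gives $|\pi_0 v|+|\pi_1 v|\le C\int_\tau^\infty \|v\|^2\,ds$. Hence these components are quadratically small compared with the stable part.

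\textbf{Step 2: exponential decay and a first mode.} I would use parabolic smoothing to convert $L^2$ bounds into $C^l$ bounds, and then apply an Agmon-type frequency (Merle--Zaag ODE) lemma to the quantities $\|\pi_{\le k}v\|$, $\|\pi_k v\|$ and $\|\pi_{>k}v\|$. The conclusion is that $v$ decays at least like $e^{-\lambda_2\tau}$. Moreover, there is a unique $k\ge2$ for which $\pi_k v$ dominates, with $\|v-\pi_kv\|\le o(\|\pi_k v\|)$. The alternative is that $v$ decays faster than every exponential. In that case a backward uniqueness or unique-continuation argument, using the gap structure of the spectrum and the quadratic nonlinearity, forces $v\equiv0$, that is, the flow is a family of self-similarly shrinking round spheres. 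This case is excluded by hypothesis.

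\textbf{Step 3: the limit $P_k$ and the rate $\sigma$.} Once $k$ is fixed, consider $w=e^{\lambda_k\tau}\pi_kv$. It satisfies $\partial_\tau w = e^{\lambda_k\tau}\pi_k N$, and the right side is $O(e^{-\lambda_k\tau})$ because $\|N\|\lesssim \|v\|^2\lesssim e^{-2\lambda_k\tau}$. So $w$ converges to some $P_k\in E_k$ at rate $e^{-\lambda_k\tau}$, and $P_k\neq0$ by dominance. For the remainder:
\begin{itemize}
\item modes $j>k$ are controlled by Duhamel's formula with gap $\lambda_{k+1}-\lambda_k$;
\item modes $j<k$ are controlled by backward integration, using the gap $\lambda_k-\lambda_{k-1}$ together with the quadratic source of size $e^{-2\lambda_k\tau}$.
\end{itemize}
Together these give $\|v-e^{-\lambda_k\tau}P_k\|\le Ce^{-(\lambda_k+\sigma)\tau}$ with $\sigma=\min(\lambda_k,\lambda_{k+1}-\lambda_k,\dots)>0$. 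Interpolation with the uniform $C^l$ bounds upgrades this estimate to every $C^l$ norm. Strictly, this costs a small amount of $\sigma$, which can be absorbed by shrinking $\sigma$.

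\textbf{Main obstacle.} The delicate part is Step 2. We must rule out super-exponential decay, and also guarantee that a single mode dominates rather than energy oscillating between modes. I would first try the frequency-monotonicity ODE lemma, applied to the pairs $(\pi_{\le k}v,\pi_{>k}v)$ for each $k$, which yields the dichotomy. The $v\equiv0$ conclusion in the super-exponential case would then use backward uniqueness for parabolic equations on the compact sphere.
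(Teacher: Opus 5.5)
The paper gives no proof of this statement. It is quoted from Strehlke, and the paper separately cites \v{S}e\v{s}um for the a priori decay $\|v\|_{C^l}=O(e^{-\lambda_2\tau})$ in the proof of Lemma~\ref{Lem;admissible-norm}. So your proposal is a self-contained reconstruction rather than a variant of an in-paper argument, and as an outline it is sound. The following all check out: the backward integration of $\pi_0v$ and $\pi_1v$ (this needs only boundedness of $v$, since $\lambda_0,\lambda_1<0$); the convergence of $e^{\lambda_k\tau}\pi_kv$ at rate $e^{-\lambda_k\tau}$; Duhamel for $j>k$; and backward integration for $j<k$.

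Your Step~2 is heavier than it needs to be. Since $N$ is quadratic and $v$ already decays at some rate $\|v\|_{C^l}\lesssim e^{-\mu\tau}$, you can run the bookkeeping of your Step~3 with only this rate. Integrate $e^{\lambda_j\tau}\pi_jv$ forward for $\lambda_j<2\mu$, and use Duhamel on the tail $\lambda_j\ge 2\mu$. This gives
\[
v=\sum_{\mu\le\lambda_j<2\mu}e^{-\lambda_j\tau}P_j+O\big(e^{-(2\mu-\delta)\tau}\big),\qquad P_j\in E_j .
\]
Either some $P_j\neq0$, and the smallest such $j$ is your $k$ (with $k\ge2$ and $\sigma>0$), or the rate improves from $\mu$ to $2\mu-\delta$. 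Iterating gives your dichotomy directly. This avoids the Merle--Zaag lemma, whose standard form assumes the three rescaled quantities tend to zero; $e^{\lambda_k\tau}\|\pi_{<k}v\|$ need not do that.

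The one genuinely nontrivial input is then excluding super-exponential decay, and here you should be precise about the tool. Backward uniqueness in the usual sense (vanishing at a finite time forces vanishing earlier) says nothing about decay as $\tau\to\infty$. What you need is unique continuation at $\tau=\infty$. One way to get it is an Agmon--Nirenberg log-convexity (frequency) argument. Apply it to $v$ viewed as a solution of a linear equation in divergence form whose coefficients differ from those of $L$ by exponentially small amounts. The time-dependence of the principal part then contributes only an integrable multiple of the frequency, so the frequency stays bounded. Treating the second-order part of $N$ naively as a perturbation of size $\epsilon\|Lv\|$ leads to a Riccati inequality that does not close.

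Two smaller points.
\begin{itemize}
\item If you only obtain $v\equiv0$ for large $\tau$, concluding that the whole flow is a shrinking sphere needs backward uniqueness for the mean curvature flow itself.
\item The statement requires one $\sigma$ for all $l$. Rather than interpolating, apply parabolic estimates to $u=v-e^{-\lambda_k\tau}P_k$, which solves $\partial_\tau u=Lu+N(v)$ with $\|N(v)\|_{C^m}=O(e^{-2\lambda_k\tau})$. This costs nothing in $\sigma$.
\end{itemize}
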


Now we turn to the discussion of the arrival time function. Suppose that we have a convex MCF $\{M_t\}_{t\in [0,T_0)}$ with a spherical singularity at $(x_0,T_0)$. Let $\Omega\subset\RR^{n+1}$ be the convex domain bounded by $M_{0}$. By the work of \cite{cgg,es1}, there is a unique arrival time function $U(x)$ in $\Omega$ satisfying
\begin{align}\label{arrival-time-eqn-1}
	|DU|\cdot \textrm{div}(\frac{DU}{|DU|}) = -1
\end{align}
with Dirichlet boundary condition $U\equiv 0$ on $\partial\Omega$. In addition the level set
\begin{align}\label{Def;arrival-time;1}
	\{x\in \Omega: U(x) = t\}
\end{align}
moves by the mean curvature, and in particular coincides with $M_t$ for all $t\in [0,T_0)$.

Equivalently, if can define $U(x) = t$ when $x\in M_t$, then $U$ is an  arrival time function in $\Omega$, where $\Omega$ is the domain bounded by $M_0$. 

Therefore, convex MCF has a canonical correspondence with arrival time functions. By abuse of notation we might say $(x_0, T_0)$ to be the singular point of $U$ without mentioning the corresponding MCF.

By \cite{hui93}, $U$ is  $C^2$ in $\Omega$, analytic away from $x_0$ and $DU(x_0) = 0$, \ $D_{ij}U(x_0) = -\frac{1}{n}\delta_{ij}$. In particular, $U$ has the expansion:
\begin{align}\label{Taylor;1}
	U(x) = T_0 - \frac{|x-x_0|^2}{2n} + o(|x-x_0|^2) \quad \quad \text{ as } x\rightarrow x_0.
\end{align}
Furthermore, in a small neighborhood of $x_0$, $DU(x) = 0$ if and only if $x = x_0$.  If $\Omega$ is convex, then $x_0$ is the only critical point on $\Omega$, not just near $x_0$.

\begin{remark}
In the case of  $n= 1$,  $U$ is $C^3$ by \cite{ks} with vanishing third derivatives and therefore the expansion can be improved  to be:
\begin{align*}
	U(x) = T_0 - \frac{|x-x_0|^2}{2n} + o(|x-x_0|^3)
\end{align*}
\end{remark}

We summarize these results as follows.
\begin{theorem}\cite{huisken,cgg,es1,hui93}\label{maxp;1}
	For a bounded  convex   domain $\Omega$, there exists a unique arrival time function $U$ with $U \equiv 0$ on $\partial \Omega$. Then $U > 0$ in $\Omega$, $U$ is $C^2$ in $\Omega$ and is analytic away from a unique critical point. 
\end{theorem}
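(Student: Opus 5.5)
This theorem is a summary of known results, so the plan is to assemble them rather than prove anything new. The one thing to check is that each cited statement applies in the generality stated: a bounded convex $\Omega$ and the unique level-set solution.

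\emph{Step 1: existence, uniqueness and positivity.*} For existence and uniqueness we use the viscosity theory of Chen--Giga--Goto \cite{cgg} and Evans--Spruck \cite{es1}. For a bounded domain with $U=0$ on $\partial\Omega$, comparison for degenerate elliptic equations of this geometric type gives a unique Lipschitz viscosity solution of \eqref{arrival-time-eqn-1}.

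For positivity, we identify $U$ with the arrival time of the classical flow $\{M_t\}$ starting from $M_0=\partial\Omega$. If $\partial\Omega$ is merely convex rather than smooth, we first approximate $\Omega$ from inside by smooth uniformly convex domains and invoke stability of viscosity solutions.

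Huisken's theorem \cite{huisken} (Gage--Hamilton when $n=1$) says the flow stays smooth and convex and shrinks to a point $x_0$ at a time $T_0>0$. The sets enclosed by $M_t$ are strictly nested, since $H>0$ makes each hypersurface move strictly inward. Hence every $x\in\Omega$ is swept exactly once, at a time $U(x)\in(0,T_0]$, with $U(x_0)=T_0$. This gives $U>0$ in $\Omega$.

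\emph{Step 2: regularity away from $x_0$.} At a point $x\neq x_0$ with $U(x)=t<T_0$, we have $DU(x)=-\vec n/H$. Since $H>0$ on each smooth convex $M_t$, this is nonzero, so \eqref{arrival-time-eqn-1} is uniformly elliptic near $x$. Moreover, the map $(p,t)\mapsto x(p,t)$ is a local diffeomorphism there, because its time derivative is the normal vector $\vec H\neq 0$.

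So $U$ is as smooth as the flow, and the flow is analytic in space for $t>0$. Alternatively, analytic regularity for nondegenerate elliptic equations with analytic nonlinearity (Morrey) gives analyticity of $U$ on $\Omega\setminus\{x_0\}$ directly. The same formula shows $x_0$ is the only critical point.

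\emph{Step 3: $C^2$ at $x_0$.} This is the only genuinely delicate point. Huisken's convergence of the rescaled flow \eqref{Def;RMCF;1} to $\mathbb{S}^n(\sqrt{2n})$ in every $C^l$, at an exponential rate as in \cite{hui93}, gives
\[
U(x)=T_0-\frac{|x-x_0|^2}{2n}+o(|x-x_0|^2).
\]
It also gives the estimate $D^2U(x)\to-\frac1n I$ as $x\to x_0$. To see this, write $D^2U$ in terms of $H$ and the second fundamental form $A$ of $M_t$ and its derivatives. The rescaled quantities converge, so the scalings cancel and the limit is $-\frac1n\delta_{ij}$.

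Combining this with $C^1$ continuity ($DU\to 0$) yields $U\in C^2$ with \eqref{Taylor;1}. The main obstacle is this last scaling computation, namely controlling $\partial_t H$ terms through the rescaled convergence. We would simply cite \cite{hui93} for it.
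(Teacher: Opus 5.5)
Your proposal is correct and follows the paper's route: the paper gives no argument of its own here and simply assembles the cited results, namely viscosity existence and uniqueness from \cite{cgg,es1}, smooth convergence to a round point from \cite{huisken} (Gage--Hamilton when $n=1$), and $C^2$ regularity with analyticity away from the unique critical point from \cite{hui93}. One remark: Step 3 is less delicate than you suggest, because $\partial_t H=\Delta H+|A|^2H$ gives $D^2U(\nu,\nu)=-(\Delta H+|A|^2H)/H^3$, which is scale invariant and tends to $-\frac1n$ under $C^4$ convergence of the rescaled flow alone, so no convergence rate is needed.
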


\subsection{Notations}

We will frequently use multi-index notation. All the multi-indices throughout the paper are assumed to contain only \textbf{nonnegative} integers. We list a few definitions we might need:
\begin{definition}\label{multiindex-def;1}
	Consider \textbf{nonnegative}  multi-index $\alpha = (\alpha_1,...,\alpha_p)$.
 \begin{itemize}
 	\item Define $|\alpha| = \sum_{k=1}^p \alpha_k$ and $\alpha! = \alpha_1! \alpha_2!\cdots \alpha_p!$
 	\item Define $Zero(\alpha) = \#\{i: \alpha_i = 0\}$  to be the number of 0 element in $\alpha$. 
 	\item Define $D(\alpha)$ to be the number of distinct elements in $\alpha$. 
 	
 	For example, if $\alpha = (3,1,1,2)$ then $D(\alpha) = 3$; if $\alpha = (4,1)$ then $D(\alpha) = 2$; if $\alpha = (2,2,2)$ then $D(\alpha) = 1$.
 	
 	\item Define $\hat{\alpha}$ to be a multi-index derived from $\alpha$, consisting of the multiplicities of the distinct entries of $\alpha$, listed in descending order.
 
 		For example, if $\alpha = (3,1,1,2)$ then $\hat{\alpha} = (2,1,1)$; if $\alpha = (4,1)$ then $\hat{\alpha} = (1,1)$; if $\alpha = (2,2,2)$ then $\hat{\alpha} = (3)$.
 \end{itemize}
\end{definition}

\noindent\textbf{Convention A:} All constants in the form of $C^{(A)}_{\beta}$ or $C^{(A)}_{\ba}$ in this paper depend only on the multi-index $\beta$ or double multi-index $\ba$ (to be defined in Section \ref{Sec;elliptic}) respectively and dimension $n$. All polynomials in the form of $\cQ^{(A)}_{m}$ or $\cQ^{(A)}_{i,j}$ depend only on their index $m$ or $(i,j)$ respectively and dimension $n$.  Indeed, they all can be defined universally and inductively for all $\beta, \ba, m$ or $(i,j)$  respectively, similar to the fashion of Definition \ref{Def;C1C2}, although we will only use them for small $\beta, \ba, m$ or $(i,j)$. These constants should be considered as being associated with the structure of the PDE, not with the actual regularity of the solution. In other words, they are algebraic constants, not analytic constants. Sometimes we suppress subscripts when this causes no ambiguity. This rule applies to all such constants or polynomials, either in the statement of theorem or in the proof. \\
Note that in the Section \ref{Sec;elliptic} and \ref{Sec;parabolic-obstruction} we specialize in the case $n+1=2$, where these constants will be even independent of dimension.

\noindent\textbf{Convention B}: We reserve $\bx$ exclusively for polynomial variables (along with superscripts and subscripts). This is different from the coordinate $x$ in $\RR^{n+1}$.

\section{Elliptic-parabolic correspondence}\label{Sec;elliptic-to-parabolic}
Consider a convex MCF $\{M_t\}_{t\in [0,T_0)}$ with a spherical singularity at $(x_0,T_0)$. Suppose that $M_0$ bounds $\Omega$. By Section \ref{section;pre}, there is a corresponding arrival time function $U$ in $\Omega$ that is analytic away from $x_0$ and a corresponding RMCF centered at $(x_0, T_0)$ with a graph function $v$. The regularity of $U$ then boils down to the regularity at $x_0$ only.

The goal of this section is to establish the relation between the asymptotic expansion of $U$ near $x_0$  and the fine asymptotic behavior of the corresponding graph function $v$:

\begin{theorem}\label{Taylor-to-asymptotic}
Suppose that $N\geq3$ and that an arrival time function $U$ on $\Omega$ satisfies
\begin{align}\label{Taylor;2}
	\Big|T_0-U  - \sum_{k=0}^{N} \frac{P_k(x-x_0)}{n}\Big| = o(|x-x_0|^{N}) \quad \text{ as } x\rightarrow x_0
\end{align} 
where each $P_k$ is a homogeneous polynomial of degree $k$,  and $P_2(x) = \frac{|x|^2}{2}$. Let $v$ be the graph function of the corresponding RMCF centered at $(x_0,T_0)$, then there exist a unique collection of  functions $f_1,..., f_{N-2}\in C^{\infty}(\mathbb{S}^n(\sqrt{2n}))$ such that $v$ satisfies the following \textbf{half-integer} asymptotics:
\begin{align}\label{v;asymp;0}
	\Big|v(y,\tau)-\sum_{k=1}^{N-2} f_k(y) e^{-\frac{k\tau}{2}}\Big|_{C^0(\mathbb{S}^n)} = o(e^{-\frac{N-2}{2}\tau}) \quad \text{ uniformly in } y, \text{ as } \tau \rightarrow \infty
\end{align}
Moreover, $f_1,...,f_{N-2}$ are given by the induction formula:
\begin{align}\label{fm;induction;3}
	f_0 := \sqrt{2n},\quad \quad f_m = - \frac{1}{2P_2f_0 }\sum_{\substack{\alpha_1+\cdots+\alpha_k +k =m+2\\ 0\leq \alpha_1,...,\alpha_k\leq m-1}} (2n)^{1-k/2} P_{k}f_{\alpha_1} \cdots f_{\alpha_k}  \quad (1\leq m \leq N-2)
\end{align}
where we restrict $P_k$ on the sphere $\mathbb{S}^n(\sqrt{2n})$ and set $P_0 = P_1\equiv 0$. In general, the functions $f_0, f_1,...,f_{N-2}$ are restrictions of homogeneous polynomials in $\RR^{n+1}$ onto $\mathbb{S}^n(\sqrt{2n})$. 
\end{theorem}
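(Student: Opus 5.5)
We work directly from the level-set identity. Translate so $x_0=0$. A point $x=e^{-\tau/2}(y+v(y,\tau)\vec n)$ with $y\in\mathbb{S}^n(\sqrt{2n})$ lies on $M_t$ with $T_0-t=e^{-\tau}$, so $U(x)=t$. Write $y=\sqrt{2n}\,\omega$ with $|\omega|=1$, and set $r(\omega,\tau)=\sqrt{2n}+v$. Then $x=e^{-\tau/2}r\,\omega$, and the identity $T_0-U(x)=e^{-\tau}$ reads
\begin{align*}
e^{-\tau}=\sum_{k=2}^N \frac{1}{n}e^{-k\tau/2}r^k P_k(\omega)+o(e^{-N\tau/2}r^N).
\end{align*}
Here we use $|x|\sim e^{-\tau/2}$, which holds because $v\to 0$ uniformly. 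Multiplying by $n e^{\tau}$ and setting $\epsilon=e^{-\tau/2}$ gives the scalar equation, for each fixed $\omega$,
\begin{align*}
n=\sum_{k=2}^N \epsilon^{k-2}r^kP_k(\omega)+o(\epsilon^{N-2}).
\end{align*}
Using $P_2(\omega)=\tfrac12$, the leading term is $r^2/2$, and $r=\sqrt{2n}$ solves it at $\epsilon=0$.

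The plan is to view the left-hand side as $F(r,\epsilon,\omega)=\sum_{k=2}^N\epsilon^{k-2}r^kP_k(\omega)$. This is a polynomial in $(r,\epsilon)$ with coefficients smooth in $\omega$, and $\partial_rF(\sqrt{2n},0,\omega)=\sqrt{2n}\neq0$. By the implicit function theorem, uniformly in $\omega\in\mathbb{S}^n$, there is a unique smooth root $R(\epsilon,\omega)=\sum_{m\ge0}g_m(\omega)\epsilon^m$ near $\sqrt{2n}$ with $F(R,\epsilon,\omega)=n$. We then compare $r$ with $R$. Since $F(r,\epsilon,\omega)-F(R,\epsilon,\omega)=o(\epsilon^{N-2})$ and $\partial_rF$ is bounded below near $(\sqrt{2n},0)$, the mean value theorem gives $|r-R|=o(\epsilon^{N-2})$ uniformly in $\omega$. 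This step needs $r\to\sqrt{2n}$ uniformly, which the $C^0$ convergence of $v$ provides. Truncating the Taylor series of $R$ at order $N-2$ then yields \eqref{v;asymp;0}. The functions $f_m$ are $g_m$ expressed as functions of $y=\sqrt{2n}\omega$, with $f_0=\sqrt{2n}$.

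Next we derive the recursion. Substitute $r=\sum f_m\epsilon^m$ into $\sum_k\epsilon^{k-2}r^kP_k$ and collect the coefficient of $\epsilon^{m}$ for $m\ge1$; that coefficient must vanish. Expanding $r^k$ by multinomials produces the sum of $P_k f_{\alpha_1}\cdots f_{\alpha_k}$ over $\alpha_1+\dots+\alpha_k+k-2=m$. The only term involving $f_m$ comes from $k=2$ with one index equal to $m$ and the other $0$; it equals $2P_2f_0f_m$. All remaining terms have $\alpha_i\le m-1$. Solving for $f_m$ gives \eqref{fm;induction;3}.

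The factors $(2n)^{1-k/2}$ arise because $P_k$ is restricted to the sphere of radius $\sqrt{2n}$ rather than to the unit sphere, using $P_k(\omega)=(2n)^{-k/2}P_k(y)$. So the main bookkeeping is the normalization: we must check that $(2n)^{-k/2}$ combined with the $(2n)^{-1}$ coming from $P_2(y)=|y|^2/2=n$ produces the stated prefactor. Uniqueness of the $f_k$ is immediate, since asymptotic expansions in the distinct scales $e^{-k\tau/2}$ are unique.

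Finally, we show the $f_m$ are restrictions of homogeneous polynomials. By induction, the terms $P_kf_{\alpha_1}\cdots f_{\alpha_k}$ in the formula for $f_m$ have total degree $k+\sum\deg f_{\alpha_i}$. We would like to show $\deg f_m = m+\text{const}$, but the degrees do not balance uniformly across different $k$. We fix this using $|y|^2=2n$ on the sphere: multiplying each term by a suitable power of $|y|^2/(2n)=1$ homogenizes it. The term $1/P_2=1/n$ is a constant on the sphere. After homogenization each $f_m$ is a homogeneous polynomial restricted to the sphere, say of degree $2m$ or of the same parity as $m$.

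We expect this step to be the main, though mild, obstacle: confirming the degree parity so that homogenization is consistent. Homogenizing with powers of $|y|^2$ only changes degree by even amounts, so the terms can be made the same degree only if their degrees have matching parity. The index constraint gives $k+\sum\alpha_i\equiv m$ modulo $2$, so if $\deg f_\alpha\equiv\alpha$ inductively, the total degrees of all terms in the formula for $f_m$ share the parity of $m$. This makes the homogenization consistent.
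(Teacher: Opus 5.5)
Your proof is correct, but it performs the radial inversion differently from the paper. The paper works ray by ray. It sets $T_0-U(r\theta)=s^2$ and uses strict convexity to get that $s$ is increasing in $r$. It then applies the inductive inverse-series Lemma \ref{Taylor;inverse-fuction} to expand $r(\theta,s)=\sum_k\tilde f_k(\theta)s^{k+1}+o(s^{N-1})$, and only afterwards substitutes $r=s(\sqrt{2n}+v)$.

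You substitute first. Since $\bar M_\tau$ is already a radial graph, the rescaled radius $\sqrt{2n}+v$ is given a priori as a function of $(\omega,\tau)$, so you need neither monotonicity nor an inverse function. You solve the exact truncated equation $F(R,\epsilon,\omega)=n$ by the implicit function theorem, uniformly over the compact sphere. You then transfer the $o(\epsilon^{N-2})$ defect to $|r-R|$ by the mean value theorem, using $\partial_rF\geq c>0$.

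What each route buys:
- Yours is shorter and uses only uniform $C^0$ convergence of $v$, not convexity. It gives smoothness of the $f_m$ in $\omega$ immediately, and it yields the recursion by exact coefficient matching in $F(R(\epsilon,\omega),\epsilon,\omega)\equiv n$.
- The paper's lemma is a one-variable, hands-on induction that gives explicit control of the error modulus $E_2$ in terms of $E_1$ and the coefficient bounds.

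The normalization you deferred does check out. Since $P_k(y)=(2n)^{k/2}P_k(\omega)$ and $2P_2(y)=2n$, the prefactors combine to $(2n)^{-1}(2n)^{1-k/2}(2n)^{k/2}=1$. So \eqref{fm;induction;3} reads $f_m=-(2n)^{-1/2}\sum P_k(\omega)f_{\alpha_1}\cdots f_{\alpha_k}$, which is exactly your coefficient identity.

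Your parity-and-homogenization argument also proves the final sentence of the statement in every dimension. The paper's proof, by contrast, only records smoothness of $\tilde f_m$ away from $0$. It merely remarks on the stronger claim that $\tilde f_m$ is a homogeneous polynomial of degree exactly $m$, which rests on $|x|^2$-divisibility of the $P_k$, proved there only for $n=1$.

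One correction: drop the aside ``say of degree $2m$''. Only the parity statement (degree $\equiv m \pmod 2$) is correct in general, and it is all your argument uses.
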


\begin{remark}
The expansion \eqref{Taylor;2} is weaker than $C^N$ regularity. In the proof, we do not use differentiability of $U$; the expansion itself is sufficient. The arrival-time structure is used to identify the level sets of $U$ with the evolving hypersurfaces, and, in the convex setting, to ensure that for each direction $\theta$ the function $r\mapsto \sqrt{T_0-U(x_0+r\theta)}$ is strictly increasing for small $r>0$.  This radial monotonicity allows us to invert the expansion along each ray.
\end{remark}
\begin{remark}
	Note that $P$ is slightly different from those in the next sections. In particular, $P_2$ is not zero here but $P_2 =  0$ in the next sections.  
\end{remark}
\begin{remark}
The converse is also true, namely fine asymptotic of $v$ implies the expansion of $U$. Since we do not need this direction in the present paper, we omit it.
\end{remark}

The proof of Theorem \ref{Taylor-to-asymptotic} will rely on the following elementary Lemma \ref{Taylor;inverse-fuction}.

\begin{lemma}\label{Taylor;inverse-fuction}
	Suppose that $N\geq 3$ and $s = s(r): [0,\delta_1]\rightarrow [0,\delta_2]$ is a continuous, strictly increasing function satisfying the expansion:
	\begin{align}\label{s-r;11}
		\Big|s(r)^2 - \sum_{k=2}^{N} a_k r^k\Big| \leq E_1(r)\cdot r^{N} 
	\end{align}
	where $a_2 > 0$ and $E_1$ is a continuous increasing function with $E_1(0)=0$. Set  $M_1 = \max\{a_2, |a_3|,..., |a_N|, a_2^{-1}\}$ and $a_1 =0$.  Then there is another continuous increasing function $E_2$ with $E_2(0) =0$ and $s_2>0$, both depending only on $E_1, M_1 ,\delta_1, N$, such that the inverse function $r = r(s)$ is an increasing function satisfying the expansion:
	\begin{align}\label{r-s;12}
		\Big|r(s) - \sum_{k=0}^{N-2} b_k s^{k+1} \Big| \leq  E_2(s)\cdot s^{N-1}  \quad \quad \text{ when } 0\leq s \leq s_2
	\end{align}
	where $b_0, b_1, ...,b_{N-2}$ satisfy:
	\begin{align}\label{bk;induction;1}
	\sum_{\substack{|\alpha | + k  = p \\ \alpha_1,\cdots,\alpha_k \geq 0}} a_{k} b_{\alpha_1}...b_{\alpha_k} = \begin{cases}
		1, \quad \quad p = 2\\
		0, \quad \quad 3\leq p\leq N
	\end{cases}   
\end{align}
Equivalently, $b_0,...,b_{N-2}$ satisfy the  induction formula:
\begin{align}\label{bk;induction;2}
	b_0 = \sqrt{\frac{1}{a_2}}, \quad  \quad    b_m = - \frac{1}{2a_2b_0}\cdot\sum_{\substack{|\alpha| + k=m+2 \\ 0\leq \alpha_1,...,\alpha_k\leq m-1}} a_{k} b_{\alpha_1}...b_{\alpha_k} \quad (1\leq m\leq N-2)
\end{align}
\end{lemma}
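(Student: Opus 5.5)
We plan to reduce the statement to a perturbative inversion around the leading order. Since $a_2>0$ and $s^2 = a_2 r^2(1+g(r))$ with $g(r) = \sum_{k=3}^N (a_k/a_2) r^{k-2} + O(E_1(r) r^{N-2}/a_2)$, for $r$ small (depending on $M_1$, $E_1$, $N$) we have $|g|\le 1/2$. Because $s$ is nonnegative and increasing, this gives
\[
s = \sqrt{a_2}\, r\,(1+g(r))^{1/2}.
\]
In particular $c_1 r\le s\le c_2 r$ with constants depending only on $M_1$. Consequently, error terms of the form $E(r)r^{N-1}$ and $\tilde E(s)s^{N-1}$ are interchangeable, after adjusting the modulus $E$ by a constant factor in its argument.

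First we define $b_0,\dots,b_{N-2}$ by the induction formula \eqref{bk;induction;2}. This is well-defined since $a_2b_0 = \sqrt{a_2}\ne 0$, and each $|b_m|$ is bounded in terms of $M_1$ and $N$.

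Next we check the equivalence of \eqref{bk;induction;1} and \eqref{bk;induction;2}. Consider the coefficient of $s^p$ in the formal composition $\sum_k a_k \big(\sum_j b_j s^{j+1}\big)^k$. It equals $\sum_{|\alpha|+k=p} a_k b_{\alpha_1}\cdots b_{\alpha_k}$.
\begin{itemize}
\item For $p=2$ only $k=2$, $\alpha=(0,0)$ contributes, giving $a_2b_0^2=1$.
\item For $p=m+2$ the terms containing $b_m$ have $k=2$ and $\alpha=(m,0)$ or $(0,m)$, since $a_1=0$. They contribute exactly $2a_2b_0b_m$, and all remaining terms have every $\alpha_i\le m-1$. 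Solving for $b_m$ gives \eqref{bk;induction;2}.
\end{itemize}
So \eqref{bk;induction;1} says that the polynomial $\tilde r(s) := \sum_{k=0}^{N-2} b_k s^{k+1}$ satisfies
\[
\sum_{k=2}^N a_k \tilde r(s)^k = s^2 + O(s^{N+1}),
\]
with the $O$ constant depending only on $M_1$ and $N$.

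The main step is the quantitative comparison of $r(s)$ with $\tilde r(s)$. Define $F(r) := \sum_{k=2}^N a_k r^k$. Then
\[
F(r(s)) = s^2 + O(E_1(r)r^N) \quad\text{and}\quad F(\tilde r(s)) = s^2 + O(s^{N+1}),
\]
so $|F(r(s)) - F(\tilde r(s))| \le C\big(E_1(Cs)+s\big)s^N$. On $[0,\rho]$ with $\rho$ small we have $F'(r) = 2a_2 r(1+O(r)) \ge a_2 r$. Both $r(s)$ and $\tilde r(s)$ lie in $[c s, Cs]$: the first by the comparison above, the second because $b_0>0$ and $s$ is small. By the mean value theorem,
\[
|r(s)-\tilde r(s)| \le \frac{C\big(E_1(Cs)+s\big)s^N}{a_2 c s} = E_2(s)\, s^{N-1},
\]
where $E_2(s) := C'(E_1(Cs)+s)$ is continuous, increasing and vanishes at $0$. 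Finally, $s_2$ is chosen so that all the smallness requirements hold and $r(s)\le \delta_1$; this uses $r(s)\le Cs$ and depends only on $E_1$, $M_1$, $\delta_1$, $N$. Monotonicity of $r(s)$ is automatic, since it is the inverse of a strictly increasing function.

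We expect the only delicate point to be tracking that all constants depend only on $M_1$, $N$, $E_1$, $\delta_1$. This holds because the bounds on $b_m$, on the remainder of $F(\tilde r)$, and on the lower bound of $F'$ are explicit in these quantities.
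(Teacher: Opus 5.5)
Your proof is correct, but it is organized differently from the paper's.

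\textbf{The paper's route.} The paper runs an $(N-2)$-step bootstrap. Starting from $|r-b_0s|\le Cs^2$, it assumes $r=\sum_{k<m}b_ks^{k+1}+\mathcal{E}_{m+1}$ with $|\mathcal{E}_{m+1}|\le Cs^{m+1}$. It substitutes this into the expansion of $s^2$ and expands each power to first order in $\mathcal{E}_{m+1}$. It then cancels the lower orders using the induction hypothesis and divides by $2a_2b_0s$ to read off $\mathcal{E}_{m+1}=b_ms^{m+1}+O(s^{m+2})+O(E(s)s^{N-1})$. In this way the recursion for $b_m$ emerges from the argument itself.

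\textbf{Your route.} You separate algebra from analysis. First you define all $b_m$ by the recursion and check the purely formal identity $F(\tilde r(s))=s^2+O(s^{N+1})$. Note that the restriction $\alpha_i\le N-2$ is automatic for $p\le N$, since $k\ge 2$ forces $\alpha_i\le p-2$. Then you do a single stability estimate for inverting $F$, via the mean value theorem and $F'(\xi)\ge a_2\xi\ge a_2cs$.

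\textbf{Comparison.} Both arguments rest on the same mechanism: the derivative of $F$ at scale $s$ is about $2a_2b_0s$, which turns an $O(s^N)$ error in $s^2$ into an $O(s^{N-1})$ error in $r$. Yours applies this once rather than $N-2$ times. It avoids the paper's bookkeeping of cross terms in the expanded powers, such as its remark that $Cs^{m+k+1}$ need not dominate some $s^{\alpha_1+\cdots+\alpha_k}$. It also yields the explicit modulus $E_2(s)=C'(E_1(Cs)+s)$. The paper's route, in exchange, derives the recursion \eqref{bk;induction;2} rather than verifying it after the fact.

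\textbf{A small fix.} The bound $r(s)\le Cs$ is only available once you know $r(s)\le r_0$, where $r_0$ is the radius on which your comparability $c_1r\le s(r)\le c_2r$ holds. So the choice of $s_2$ should not be justified by $r(s)\le Cs$ itself; that is circular. Use monotonicity instead: take $r_0\le\delta_1$, and for $s\le c_1r_0\le s(r_0)$ you get $r(s)\le r_0$. This also guarantees that $s$ lies in the range of $s(\cdot)$, so $r(s)$ is defined.
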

\begin{proof}
The existence of inverse function $r(s)$ is clear since $s(r)$ is strictly increasing. Note that our assumption implies that $r(0) = 0$. Throughout the proof, $C > 1, r_0 >0 , s_0>0$  denote constants and $E$ denote a continuous increasing function with $E(0) =0$. All of them \textbf{may vary from line to line but only depend on}  $E_1, M_1,\delta_1, N$.  (Indeed, it can be arranged so that $C, E$ do not depend on $\delta_1$.)

We start from the most rough estimate. Our assumption gives 
\begin{align}\label{s-r;9}
	|s^2 - a_2r^2| \leq C\cdot r^3, \quad  \forall r \in [0,r_0] 
\end{align}
this implies that $r$ and $s$ are comparable up to a multiplicative constant; more precise:
\begin{align}\label{s-r;10}
	\sqrt{\frac{a_2}{2}}\cdot r \leq s\leq \sqrt{2a_2}\cdot r,   \quad  \forall r \in [0,r_0]
\end{align}
The proof will be inductive. By  \eqref{s-r;10}, the rough expansion \eqref{s-r;9} implies that:
\begin{align*}
	|s-\sqrt{a_2}r| \leq C\cdot r^2,  \quad  \forall r \in [0,r_0]
\end{align*}
If we let $b_0 = \sqrt{\frac{1}{a_2}}$ and use \eqref{s-r;10}, the above can be rewritten as:
\begin{align*}
	\Big|r - \sqrt{\frac{1}{a_2}}s\Big|= |r - b_0 s | \leq C\cdot r^2  \leq C \cdot s^2,  \quad  \quad \forall s\in [0, s_0]
\end{align*}
this establishes the induction base. Suppose that we already have the following hold for some $m\leq N -2$:
\begin{align}\label{r-s;13}
	r(s) = \sum_{k=0}^{m-1} b_k s^{k+1} + \mathcal{E}_{m+1}, 
\end{align}
where 
\begin{itemize}
	\item $b_0,...,b_{m-1}$ are computed by induction formula \eqref{bk;induction;2}, which is equivalent to \eqref{bk;induction;1} up to $p = m+1$.
	\item $\mathcal{E}_{m+1} = \mathcal{E}_{m+1}(s)$ satisfies $\big|\mathcal{E}_{m+1}(s)\big|\leq C\cdot s^{m+1}$ when $s\in [0,s_0]$.
\end{itemize}
Plugging into \eqref{s-r;11} and using \eqref{s-r;10}:
\begin{align*}
	\Big| s^2 - \sum_{k=2}^{N} a_k r^k \Big| = \Big| s^2 - \sum_{k=2}^{N} a_k \Big(\sum_{l=0}^{m-1} b_l s^{l+1} + \mathcal{E}_{m+1}\Big)^k \Big| \leq& E(r)\cdot r^{N} \quad \quad \forall  r\in [0, r_0]\\
	\leq & E(s)\cdot s^N  \quad  \quad \forall s\in [0, s_0]
\end{align*}
By direct expansion and the fact that $|\mathcal{E}_{m+1}(s)| \leq  C\cdot s^{m+1}$ for all $s\in [0,s_0]$, we find that: 
\begin{align*}
	\Big|\Big(\sum_{l=0}^{m-1} b_l s^{l+1} + \mathcal{E}_{m+1}\Big)^k  -  \Big(\sum_{\substack{0\leq \alpha_1,\cdots,\alpha_k\leq m-1}} b_{\alpha_1}\cdots b_{\alpha_k}s^{\alpha_1+\cdots+\alpha_k + k} + k b_0^{k-1}\mathcal{E}_{m+1}s^{k-1} \Big)\Big| \leq C\cdot s^{m+k+1}, 
\end{align*}
for any $s\in [0,s_0]$. Putting them together we get:
\begin{align*}
	 \Big| s^2 - \sum_{k=2}^{N}a_k \Big(\sum_{\substack{0\leq \alpha_1,\cdots,\alpha_k\leq m-1}} b_{\alpha_1}\cdots b_{\alpha_k}s^{\alpha_1+\cdots+\alpha_k + k} + k b_0^{k-1}\mathcal{E}_{m+1}s^{k-1}\Big)\Big| \leq E(s)\cdot s^N + C\cdot s^{m+k+1}  
\end{align*}
for all $s\in [0,s_0]$. Note that it is possible ${\alpha_1+\cdots+\alpha_k} \geq m+k +1$. Thus $C\cdot s^{m+k+1}$ may not be much smaller than some of $s^{\alpha_1+\cdots+\alpha_k}$. 

Continuing the computation, after rearranging summation:
\begin{align*}
	\Big| s^2 -&  \Big(\sum_{p=2}^{m+1}s^p \sum_{\substack{|\alpha| + k=p}} a_{k} b_{\alpha_1}...b_{\alpha_k}\Big) - s^{m+2} \cdot \Big(\sum_{\substack{|\alpha| + k=m+2\\ 0\leq \alpha_1,...,\alpha_k\leq m-1}} a_{k} b_{\alpha_1}...b_{\alpha_k}\Big) - 2a_2b_0\mathcal{E}_{m+1}s \Big| \leq  C\cdot s^{m+3} + E(s)\cdot s^{N}   
\end{align*}
for all $s\in [0,s_0]$. 
Note that all indices $\alpha_i's$ appeared here are at most $m-1$.  By the induction hypothesis, the first bracket cancels $s^2$. The leftover is:
\begin{align*}
	\Big| -s^{m+2} \sum_{\substack{|\alpha| + k=m+2\\ 0\leq \alpha_1,...,\alpha_k\leq m-1}}a_{k} b_{\alpha_1}\cdots b_{\alpha_k} - 2a_2b_0\mathcal{E}_{m+1}s \Big| \leq  C\cdot s^{m+3} + E(s)\cdot s^{N},    \quad  \quad \forall s\in [0, s_0]
\end{align*}
Then we can write:
\begin{align}\label{Em;1}
	\Big|\mathcal{E}_{m+1} - b_m s^{m+1}\Big| \leq  C\cdot s^{m+2} + E(s)\cdot s^{N -1},  \quad  \quad \forall s\in [0, s_0]
\end{align}
where $b_{m}$ is defined by 
\begin{align}\label{bk;induction;3}
	b_m = - \frac{1}{2a_2b_0}\cdot\sum_{\substack{|\alpha| + k=m+2\\ 0\leq \alpha_1,...,\alpha_k\leq m-1}}a_{k} b_{\alpha_1}...b_{\alpha_k} 
\end{align}
But this is precisely \eqref{bk;induction;2}, which is also equivalent to \eqref{bk;induction;1} when $p = m+2$.
Putting \eqref{Em;1} into \eqref{r-s;13}:
\begin{align}\label{r-s;14}
	\Big|r(s) - \sum_{k=0}^{m} b_k s^{k+1}\Big|\leq C\cdot s^{m+2} + E(s)\cdot s^{N -1},     \quad  \quad \forall s\in [0, s_0]
\end{align}
When $m \leq N-3$ we have $N -1 \geq m+2$, hence $\big|r(s) - \sum_{k=0}^{m} b_k s^{k+1}\big|\leq C\cdot s^{m+2}$ for all $s\in [0,s_0]$. In these cases,  \eqref{r-s;13} is verified for $m+1$. By induction, \eqref{r-s;13} is true for $m = N-2$.

Finally, we carry out the above argument one last time for $m = N-2$. In this case $N -1 < m+2$, thus \eqref{r-s;14} becomes 
\begin{align}\label{r-s;15}
	\Big|r(s) - \sum_{k=0}^{N-2} b_k s^{k+1}\Big|\leq E(s)\cdot  s^{N-1},   \quad  \quad \forall s\in [0, s_0]
\end{align}
with $b_0,...,b_{N-2}$ specified by \eqref{bk;induction;1} or equivalently by \eqref{bk;induction;2}. Setting $E_2, s_2$ to be $E, s_0$ in \eqref{r-s;15}, the assertion then follows. 
\end{proof}

Now we proceed to the proof of Theorem \ref{Taylor-to-asymptotic}:
\begin{proof}[Proof of Theorem \ref{Taylor-to-asymptotic}]
Throughout the proof, the constant $C$ is always independent of $x, r, s, \tau, \theta, y$.
By spatial translation, we may assume that $x_0 = 0$. Then our assumption implies that 
\begin{align}\label{Taylor;3} 
	\Big|T_0-U  - \sum_{k=2}^{N} \frac{P_k(x)}{n}\Big| \leq E_1(|x|)\cdot |x|^{N}
\end{align}
for some continuous increasing function $E_1$ with $E_1(0) = 0$,
where $x\in \RR^{n+1}$ and each $P_k$ is a degree $k$ homogeneous polynomial. Moreover by \eqref{Taylor;1} we know that
\begin{align}\label{P2;1}
	P_2(x) = \frac{|x|^2}{2}
\end{align}
We can set $P_1\equiv 0$ so that the summation starts from $k=1$. 
Note that $T_0 - U$ can be regarded as "remaining time" to the spherical singularity.

We need two substitutions:
\begin{itemize}
	\item Polar coordinate $x = r\theta$, where $r = |x|$ and $\theta\in \mathbb{S}^n(1)$. 
	\item $T_0 - U = s^2 = s(r, \theta)^2$
\end{itemize}
Then we can rewrite \eqref{Taylor;3} and \eqref{P2;1} as
\begin{align}\label{s-r;1}
	\Big| s(\theta,r)^2 - \sum_{k=2}^{N} \frac{P_k(\theta)}{n}\cdot r^k \Big| \leq E_1(r)\cdot  r^{N}
\end{align}
\begin{align}
	P_2(\theta)\equiv \frac{1}{2}
\end{align}

By the convexity, we know that  for each $\theta \in \mathbb{S}^n$, $s(\theta,r)$ is an increasing function of $r$ on the interval $[0,\delta]$. To see this, it suffices to show that $\langle \theta, D U|_{r\theta} \rangle < 0$ whenever it is defined and $r>0$. Since $x_0 = 0$ is in the interior of $M_t$ for all $t>0$ and $Du$ is parallel to normal vector, this is true by strict convexity. Moreover, there is a small $\delta$ independent of $\theta$ such that $s(\theta,r)$ can be defined on $r\in [0,\delta]$ for each $\theta$.

 We can now apply Lemma \ref{Taylor;inverse-fuction} on \eqref{s-r;1} for each fixed $\theta$. 
Note that  $P_2,P_3,\cdots, P_N, P_2^{-1}$ are uniformly bounded on unit sphere,  we can find a continuous increasing function $E_2$ with $E_2(0) =0$ and a constant $s_2 > 0$, both of which \textbf{independent} of $\theta$, such that $r$ can be regarded as a function of $s,\theta$ that is increasing in $s$ variable and satisfies:
\begin{align}\label{r-s;1}
	\Big| r(\theta, s) - \sum_{k=0}^{N-2} \tilde{f}_k(\theta) s^{k+1}\Big| \leq E_2 (s)\cdot s^{N-1}  \quad \forall s\in [0, s_2]
\end{align}
where
\begin{align}\label{tilde-P0}
			 \tilde{f}_0(\theta) = \sqrt{\frac{n}{P_2(\theta)}} \equiv  \sqrt{2n}
\end{align} 
and
\begin{align}\label{tilde-Pk}
	\tilde{f}_m(\theta) = - \frac{1}{2P_2(\theta)\tilde{f}_0(\theta)}\sum_{\substack{|\alpha| + k=m+2\\ 0\leq \alpha_1,...,\alpha_k\leq m-1}} P_{k}(\theta) \tilde{f}_{\alpha_1}(\theta)\cdots\tilde{f}_{\alpha_k}(\theta) \quad (1\leq m \leq N-2)
\end{align}

So far we are working with each individual $\theta$, but we reached the same formula \eqref{tilde-P0}, \eqref{tilde-Pk} for all $\theta$. Therefore, if we view $\tilde{f}_{m}$ as a function over $\mathbb{S}^n(1)$, then \eqref{tilde-P0} and \eqref{tilde-Pk} extends to induction formula for functions. Furthermore, we can extend $\tilde{f}_m$ to $\RR^{n+1}$ to be $m$-homogeneous for all $0\leq m \leq N-2$, i.e. 
\begin{align*}
	\tilde{f}_m(x) = \tilde{f}_m(r\theta) := r^m\tilde{f}_m(\theta)
\end{align*} 
under polar coordinate $x= r\theta$. Clearly \eqref{tilde-P0} still holds in $\RR^{n+1}$ under the extension.  Since each $P_k$ is $k$-homogenous, comparing the homogeneous degree on both sides of \eqref{tilde-Pk} we find that \eqref{tilde-Pk} continue hold for all extended $\tilde{f}_m$ on $\RR^{n+1}\backslash\{0\}$.
Since $P_1 \equiv 0$ and $P_2\neq 0$ away from 0, the induction formula \eqref{tilde-Pk} along with \eqref{tilde-P0} implies that $\tilde{f}_0,...,\tilde{f}_{N-2}$ are all smooth functions on $\RR^{n+1}\backslash \{0\}$.

\begin{remark}

Indeed each $P_k$ is divisible by $|x|^2$ (see e.g Corollary \ref{Cor;divisble} for $n=1$ case). Therefore, the induction formula implies that each $\tilde{f}_k$ is a homogeneous polynomial of degree $k$ and thus is smooth on $\RR^{n+1}$ across 0, but we don't need this now. We do not prove this fact for $n\geq 2$ as it will not be used in the current paper.
\end{remark}

Under the polar coordinate, one can write $x$ in terms of $v, \theta, s$. To this end, recall that definition of the arrival time function implies that $x \in M_{U(x)}$. Using substitution
\begin{align}
	y = \sqrt{2n}\theta\in \mathbb{S}^n(\sqrt{2n}) \quad and\quad   \tau = -\ln(T_0-U(x)) = -2\ln s
\end{align} we can convert into RMCF:
\begin{align}
	|x| =& e^{-\frac{\tau}{2}}(\sqrt{2n} + v(y,\tau)) 
\end{align}
This means
\begin{align}
	r(\theta, s) = s\cdot \Big(\sqrt{2n} + v\big(y, -2\ln s\big)\Big)
\end{align}
Plugging into \eqref{r-s;1} we obtain:
\begin{align}
	\Big| s\cdot \Big(\sqrt{2n} + v\big(y, -2\ln s\big)\Big)  - \tilde{f}_0\Big(\frac{y}{\sqrt{2n}}\Big)\cdot s - \sum_{k=1}^{N-2} \tilde{f}_k\Big(\frac{y}{\sqrt{2n}}\Big) s^{k+1}\Big| \leq E_2(s)\cdot s^{N-1} \quad \forall s\in [0,s_2]
\end{align}
By \eqref{tilde-P0},  $\tilde{f}_0\Big(\frac{y}{\sqrt{2n}}\Big)\cdot s =  \sqrt{2n}\cdot s$, thus giving the following simplification:
\begin{align}
	\Big| v(y,-2\ln s)\cdot s - \sum_{k=1}^{N-2} \tilde{f}_k\Big(\frac{y}{\sqrt{2n}}\Big) s^{k+1}\Big| \leq E_2(s)\cdot s^{N-1} \quad \forall s\in [0,s_2]
\end{align}
By reducing an $s$ factor, using $\tau = -2\ln s$ and that each $\tilde{f}_k$ is a homogeneous of degree $k$ we get:
\begin{align}
	\Big| v(y,\tau) - \sum_{k=1}^{N-2} (2n)^{-\frac{k}{2}} \tilde{f}_{k}(y) e^{-\frac{k\tau}{2}}\Big| \leq E_2(e^{-\frac{\tau}{2}})\cdot  e^{-\frac{N  - 2}{2}\tau} \quad \forall \tau\in [\tau_2,\infty)
\end{align}
where $\tau_2>0$ is some constant independent of $y$. 
Finally we define functions on $\mathbb{S}^n(\sqrt{2n})$:
\begin{align}
	f_k(y) :=(2n)^{-\frac{k}{2}} \tilde{f}_k(y) , \quad  y\in \mathbb{S}^n(\sqrt{2n}), \quad 0\leq k \leq N-2, 
\end{align}
Then 
\begin{align}
	\Big|v(y,\tau)-\sum_{k=1}^{N-2} f_k(y) e^{-\frac{k\tau}{2}}\Big|_{C^0(\mathbb{S}^n(\sqrt{2n}))} \leq E_2(e^{-\frac{\tau}{2}})\cdot e^{-\frac{N-2}{2}\tau}  \quad \forall \tau\in [\tau_2,\infty)
\end{align}
and $f_1,...,f_{N-2}$ satisfies the induction formula
\begin{align}
	f_0 = \sqrt{2n},\quad \quad f_m = - \frac{1}{2P_2f_0 }\sum_{\substack{|\alpha| + k=m+2\\ 0\leq \alpha_1,...,\alpha_k\leq m-1}} (2n)^{1-k/2} P_{k}f_{\alpha_1} \cdots f_{\alpha_k}  \quad (1\leq m \leq N-2)
\end{align}
 
Since $\tilde{f}_k$ are all smooth away from $0$, $f_k$ are smooth on $\mathbb{S}^n({\sqrt{2n}})$, $1\leq k \leq N-2$.  By the structure of the asymptotic expansion, the uniqueness of $f_1,...f_{N-2}$ is clear.
The Theorem \ref{Taylor-to-asymptotic} is now proved.
\end{proof}

Now we take $P_0, P_1, P_2, P_3,...,P_{N}$ and $f_0, f_1 ,...,f_{N-2}$ from Theorem \ref{Taylor-to-asymptotic}. Keep in mind the special values: $P_0 = P_1\equiv 0, P_2= \frac{|x|^2}{2}$ in $\mathbb{R}^{n+1}$ and $f_0 = \sqrt{2n}$ on $\mathbb{S}^n({\sqrt{2n}})$. 

In the  following corollaries, we aim to express $f_m$ directly in terms of $P_k$ and vice versa:

\begin{corollary}\label{fm;Lemma;1}
There exist universal polynomials $\cQ^{(11)}_{m} = \cQ^{(11)}_{m}(\bx_1,...,\bx_{m-1})$ depending only on $m$ and $n$ with the following property.
Under the same setup as Theorem \ref{Taylor-to-asymptotic}, for all $1\leq m\leq N-2$ we can write:
\begin{align}\label{fm;formula;3}
	\frac{P_{m+2}}{2P_2}  = - (2n)^{-\frac{1}{2}}f_m + \cQ^{(11)}_{m} ( f_{1},\cdots,f_{m-1})
\end{align}
when restricted to $\mathbb{S}^n(\sqrt{2n})$. Moreover, $\cQ^{(11)}_m$ is in the following form:
\begin{align}\label{Q11;def}
	 \cQ^{(11)}_m(\bx_1,...,\bx_{m-1}) = \sum_{\substack{|\beta|=m \\ 1\leq \beta_1 \leq\cdots \leq \beta_p\leq m-1}} C^{(11)}_{\beta}\cdot \bx_{\beta_1}\cdots \bx_{\beta_p}
\end{align}

\end{corollary}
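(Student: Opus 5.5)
We will prove Corollary \ref{fm;Lemma;1} by strong induction on $m$, inverting the induction formula \eqref{fm;induction;3} of Theorem \ref{Taylor-to-asymptotic}. On $\mathbb{S}^n(\sqrt{2n})$ we have $P_2 = n$ and $f_0=\sqrt{2n}$, so all factors of $P_2$ and $f_0$ become explicit dimensional constants.

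\textbf{Step 1: isolate $P_{m+2}$.} In the sum defining $f_m$, the constraint is $|\alpha|+k=m+2$ with $0\le\alpha_i\le m-1$, and $P_0=P_1\equiv0$ forces $k\ge2$. The term with $k=m+2$ and all $\alpha_i=0$ is
\begin{align*}
(2n)^{1-(m+2)/2}P_{m+2}f_0^{m+2}=(2n)^{1-(m+2)/2}(2n)^{(m+2)/2}P_{m+2}=2n\,P_{m+2}.
\end{align*}
Multiplying the formula by $-2P_2f_0$ and dividing by $2n$ gives
\begin{align*}
\frac{P_{m+2}}{2P_2}=-(2n)^{-\frac12}f_m-\frac{1}{2n}\sum{}' (2n)^{1-k/2}\,\frac{P_k}{2P_2}\cdot 2P_2\, f_{\alpha_1}\cdots f_{\alpha_k}.
\end{align*}
Here $\sum'$ runs over the remaining terms. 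These have $2\le k\le m+1$, with at least one $\alpha_i\ge1$ when $k\le m+1$; when $k=2$ they also satisfy $\alpha_i\le m-1$.

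\textbf{Step 2: substitute the induction hypothesis.} In each remaining term, $P_k$ has $k\le m+1$, that is, $P_k=P_{j+2}$ with $j\le m-1$. For $j=0$ we use $P_2/(2P_2)=1/2$, and for $j\ge1$ we replace $P_{j+2}/(2P_2)$ by $-(2n)^{-1/2}f_j+\cQ^{(11)}_j(f_1,\dots,f_{j-1})$. Zero indices among the $\alpha_i$ contribute the constant factor $f_0=\sqrt{2n}$, and $2P_2=2n$. The result is a polynomial in $f_1,\dots,f_{m-1}$ whose coefficients depend only on $m$ and $n$. It does not involve $f_m$, because every $\alpha_i\le m-1$ and every $j\le m-1$. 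This defines $\cQ^{(11)}_m$ universally and inductively, in the spirit of Convention A.

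\textbf{Step 3: weighted homogeneity.} We must show that every monomial $\bx_{\beta_1}\cdots\bx_{\beta_p}$ in $\cQ^{(11)}_m$ satisfies $\sum\beta_i=m$ with $1\le\beta_i\le m-1$. Assign $f_j$ weight $j$ and $P_{j+2}/(2P_2)$ weight $j$. By induction, $P_{j+2}/(2P_2)$ is weighted homogeneous of weight $j$. A term in $\sum'$ has weight $(k-2)+\sum\alpha_i=m$, since $|\alpha|+k=m+2$. The $f_0$ factors and the $j=0$ case carry weight $0$ and only contribute constants. Hence $\cQ^{(11)}_m$ has exactly the form \eqref{Q11;def}. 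Terms consisting only of $f_m$ cannot occur, as noted in Step 2, so every $\beta_i\le m-1$. In the base case $m=1$, the sum $\sum'$ requires $k=2$ with $\alpha_1+\alpha_2=1$, which needs some $\alpha_i=1>m-1=0$; so it is empty and $\cQ^{(11)}_1=0$, which is consistent.

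\textbf{Main obstacle.} The main point requiring care is the bookkeeping in Steps 1 and 3. One must check that the isolated top term is the only one containing $P_{m+2}$, and that the weight count is correct after the constants from $P_2$ and $f_0$ on the sphere are absorbed. Everything else is routine algebra.
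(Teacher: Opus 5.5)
Your proposal is correct and follows essentially the same route as the paper's Appendix~A.1. Both arguments use strong induction, isolate the unique $k=m+2$, $\alpha=0$ term in \eqref{fm;induction;3}, substitute the inductive expression $-(2n)^{-1/2}f_j+\cQ^{(11)}_j$ (the paper's $\cQ^{(10)}_j$) for the lower $P_{j+2}/(2P_2)$, and use the weight count to obtain the form \eqref{Q11;def}. There is one harmless slip in Step~1: dividing by $2n$ once yields $P_{m+2}$, not $P_{m+2}/(2P_2)$, so the prefactor of $\sum{}'$ should be $-(2n)^{-2}$ rather than $-(2n)^{-1}$, which only changes the universal constants $C^{(11)}_\beta$.
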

 \begin{proof}
	Using \eqref{fm;induction;3}, this follows from the standard induction. Details are in the Appendix \ref{App: A1}.
\end{proof}

\begin{corollary}\label{fm;Lemma;2}
There exist universal polynomials $\cQ^{(21)}_{m} = \cQ^{(21)}_{m}(\bx_1,...,\bx_{m-1})$ depending only on $m\geq 1$ and $n$ with the following property. Under the same setup as Theorem \ref{Taylor-to-asymptotic}, for all $1\leq m\leq N-2$ we can write $f_m$ as:
\begin{align}\label{fm;formula;4}
	f_m = -\frac{(2n)^{\frac{1}{2}}}{2}\cdot \frac{P_{m+2}}{P_2} + \cQ^{(21)}_{m} \Bigg(\frac{P_{1+2}}{P_2},\cdots,\frac{P_{(m-1)+2}}{P_2}\Bigg)
\end{align}
when restricted to $\mathbb{S}^n(\sqrt{2n})$. Moreover, $\cQ^{(21)}_m$ is in the following form:  
\begin{align}\label{Q21;def}
	\cQ^{(21)}_m(\bx_1,...,\bx_{m-1}) = \sum_{\substack{|\beta|=m \\ 1\leq \beta_1 \leq\cdots \leq \beta_p\leq m-1}} C^{(21)}_{\beta}\cdot \bx_{\beta_1}\cdots\bx_{\beta_p}
\end{align}
where $C^{(21)}_{\beta}$ are universal constants depending only on $\beta$ and $n$. 
\end{corollary}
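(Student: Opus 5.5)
The plan is to isolate the top-order term in the induction formula \eqref{fm;induction;3} and then argue by strong induction on $m$. As in Theorem \ref{Taylor-to-asymptotic}, all functions are restricted to $\mathbb{S}^n(\sqrt{2n})$, where $P_2 = n$ and $f_0 = \sqrt{2n}$ are constants.

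\textbf{Isolating the top-order term.} In \eqref{fm;induction;3}, consider the summands of degree $m+2$ that involve $P_{m+2}$. Since $P_0 = P_1 \equiv 0$, every nonzero summand has $k\ge 2$. Moreover $k = m+2$ forces all $\alpha_i = 0$. That summand is $(2n)^{1-(m+2)/2}P_{m+2}f_0^{m+2} = (2n)\,P_{m+2}$, because $f_0^{m+2} = (2n)^{(m+2)/2}$. Dividing by $-2P_2 f_0$ gives
\[
-\frac{(2n)^{1/2}}{2}\cdot\frac{P_{m+2}}{P_2},
\]
which is exactly the leading term in \eqref{fm;formula;4}. Every other summand has some $k$ with $2\le k\le m+1$ and factors $f_{\alpha_i}$ with $\alpha_i\le m-1$. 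A summand with $k=2$ needs $\alpha_1+\alpha_2 = m$, and $\alpha_i\le m-1$ forces both $\alpha_i\ge 1$.

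\textbf{Inductive step.} Write $\bx_j := P_{j+2}/P_2$, so that $P_k = \bx_{k-2}P_2 = n\,\bx_{k-2}$. Then each remaining summand has the form
\[
c\cdot \bx_{k-2}\, f_{\alpha_1}\cdots f_{\alpha_k},
\]
with a constant $c$ depending only on $n$, $k$ and the number of zero indices (each factor $f_0$ is a constant). By the induction hypothesis, every $f_{\alpha_i}$ with $1\le\alpha_i\le m-1$ is a polynomial in $\bx_1,\dots,\bx_{\alpha_i}$ with universal coefficients. This gives the universal polynomial $\cQ^{(21)}_m$. The base case $m=1$ holds because $f_1 = -\tfrac{(2n)^{1/2}}{2}\bx_1$ and $\cQ^{(21)}_1 = 0$.

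\textbf{Weight bookkeeping.} Assign weight $j$ to $\bx_j$ and show by induction that $f_j$ is weighted-homogeneous of weight $j$. This holds because $f_j$ comes from the restriction of a $j$-homogeneous function, and consistently from the degree count. In a summand, $\bx_{k-2}$ has weight $k-2$ and the $f_{\alpha_i}$ contribute total weight $|\alpha| = m+2-k$, so the total weight is $m$. Hence every monomial $\bx_{\beta_1}\cdots\bx_{\beta_p}$ in $\cQ^{(21)}_m$ satisfies $|\beta| = m$. Each index satisfies $\beta_i\le m-1$: indeed $k-2\le m-1$, and by induction the $f_{\alpha_i}$ only involve $\bx_j$ with $j\le\alpha_i\le m-1$. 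The index ranges therefore match \eqref{Q21;def}.

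The argument is pure algebraic bookkeeping and involves no analysis. The only step needing care is the edge analysis showing that the $k = m+2$ term is the sole place where $P_{m+2}$ appears, while all other terms stay within the index range $\le m-1$. An alternative route is to invert Corollary \ref{fm;Lemma;1} triangularly by the same weight argument.
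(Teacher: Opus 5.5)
Your proposal is correct and follows essentially the same route as the paper's appendix proof. There too, the $k=m+2$, all-$\alpha_i=0$ summand of \eqref{fm;induction;3} is isolated as the linear term, and the inductive hypothesis is substituted into the remaining summands (with $\bx_0=1$) to produce $\cQ^{(21)}_m$ with total weight $m$; the paper only formalizes this substitution through auxiliary universal polynomials $\cQ^{(20)}_m=-\frac{(2n)^{1/2}}{2}\bx_m+\cQ^{(21)}_m$, which makes explicit that $\cQ^{(21)}_m$ is a formal polynomial independent of the particular $P_k$.
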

\begin{proof}
	Using \eqref{fm;induction;3}, this follows from the standard induction. Details are in the Appendix \ref{App: A2}.
\end{proof}

\section{Elliptic obstruction and complexification}\label{Sec;elliptic}

In this section we analyze the Taylor expansion of the arrival time function near a spherical singularity. The arrival time equation induces a recursive algebraic system for the Taylor coefficients. The key feature is that the linearized operator
\begin{align*}
L_1P = |x|^2\Delta P - D^2P(x,x)
\end{align*}
fails to be invertible at certain critical degrees. At these resonant degrees, solvability of the recursive system requires nonlinear compatibility conditions, which we call obstruction equations.

The main result of this section is that in dimension $n=1$ these obstruction equations are genuinely nontrivial. More precisely, the first obstruction occurs at degree $25$ when $k=2$, and more generally the obstruction at degree $(k^2+k-1)^2$ is nonvanishing for every $k\geq2$. See Theorems \ref{Thm;11} and \ref{Thm;12}.

The section is organized as follows. In Subsection 4.1 we derive the recursive elliptic system and identify the kernel of the linearized operator. In Subsection 4.2 we specialize to dimension $n=1$ and introduce a complex formulation which diagonalizes the linearized operator and isolates these monomials. In Subsections 4.3 we construct the obstruction equations recursively and prove their nontriviality.

\subsection{Taylor expansion and the recursive elliptic system}

Let $\{M_t\}_{t\in[0,T_0)}$ be a convex MCF in $\RR^{n+1}$ with a spherical singularity $(x_0,T_0)$ and consider the corresponding arrival time function $U$. By spatial translation we may assume that $x_0 = 0$ and that 
\begin{align*}
	U \in C^{N} \text{ for some } N\geq 2.
\end{align*}
We also assume that $T_0 - U\neq \frac{|x|^2}{2n}$, otherwise we are in the trivial case where the flow is shrinking spheres. 

By \eqref{Taylor;1} we can write:
\begin{align}\label{ansatz-U}
	T_0-U = \frac{1}{n}\Big(\frac{|x|^2}{2} + F(x)\Big)
\end{align}
where the remaining term $F(x) = o(|x|^{2})$ is  $C^{N}$. By our assumption, $F \not\equiv 0$.

We always consider a small neighborhood near 0. Then $|DU| = 0$ if and only if $x = 0$. Therefore, when $x\neq 0$, we can rewrite the equation \eqref{arrival-time-eqn-1} as
\begin{align}\label{arrival-time-eqn-2}
	\Delta U - \frac{D^2U(DU, DU)}{|DU|^2} = -1
\end{align}

Using \eqref{ansatz-U} we compute 
\begin{align}
	\Delta U = -\frac{1}{n}(n +1 + \Delta F),\quad DU = -\frac{1}{n}(x+ DF)
	,\quad D^2_{ij} U =-\frac{1}{n}(\delta_{ij} + D^2_{ij}F),
\end{align}
Plugging into \eqref{arrival-time-eqn-2} we get
\begin{align*}
	n+ 1+\Delta F - (\delta_{ij} + D^2_{ij}F)\cdot \frac{(x_i+D_iF)(x_j+D_jF)}{|x+DF|^2} = n
\end{align*}
Rearranging terms:
\begin{align*}
	n+ 1 + \Delta F - 1 - \frac{D^2_{ij}F\cdot (x_i+D_iF)\cdot (x_j+D_jF)}{|x+DF|^2} = n
\end{align*}
Cancelling $n$ from both sides we get:
\begin{align*}
	\Delta F - \frac{D^2F (x+D F,  x+DF)}{|x+DF|^2} = 0
\end{align*}
This is equivalent to:
\begin{align*}
	\Delta F\cdot  |x+DF|^2 - D^2F (x + DF, x + DF) = 0 
\end{align*}
Now we expand everything and get the following equation:
\begin{align}\label{f-Eqn}
	\Delta F \cdot |x|^2 + 2\Delta F \cdot \langle x,DF\rangle + \Delta F\cdot |DF|^2 = D^2F(x,x)+ 2D^2F(x,DF) + D^2F(DF,DF)
\end{align}

Since $F$ is $C^{N}$, it admits the Taylor expansion
\begin{align}\label{Taylor-expansion-f}
	F(x) =& \sum_{|\beta|\leq N} \frac{1}{\beta!} D^{\beta}F(0)x^{\beta}   + o(|x|^{N}) \quad \quad \text{ as } x\rightarrow 0
\end{align}
Moreover, $DF$ is $C^{N-1}$ and $D^2F$ is $C^{N-2}$, thus:
\begin{align}\label{Taylor-expansion-Df}
	D_i F(x) =& \sum_{|\beta|\leq N-1} \frac{1}{\beta!} D^{\beta}D_iF(0)x^{\beta}  + o(|x|^{N-1 }) \quad \quad \text{ as } x\rightarrow 0
\end{align}
\begin{align}\label{Taylor-expansion-DDf}
	 \quad D_{ij} F(x) = \sum_{|\beta|\leq N-2} \frac{1}{\beta!} D^{\beta}D_{ij}F(0)x^{\beta}  + o(|x|^{N-2})  \quad \quad \text{ as } x\rightarrow 0
\end{align}
If we let $P(x)$ to be the Taylor polynomial of degree at most $N$:
\begin{align}\label{e:taylor F}
	P(x) =& \sum_{|\beta|\leq N} \frac{1}{\beta!} D^{\beta}F(0)x^{\beta}  
\end{align}
Then we can rewrite the above as
\begin{align}\label{Taylor-expansion-Df;2}
	F(x) =& P(x)  + o(|x|^{N}) \quad \quad \text{ as } x\rightarrow 0 \\
	D_i F(x) =& D_iP(x)  + o(|x|^{N-1})  \quad \quad \text{ as } x\rightarrow 0\\
	D_{ij} F(x) =& D_{ij}P(x) + o(|x|^{N-2})  \quad \quad \text{ as } x\rightarrow 0
\end{align}
Moreover, since $F= o(|x|^2)$,  expansion \eqref{Taylor-expansion-f} implies that the polynomial $P$ has no terms of degree $0,1$, or $2$; equivalently, its lowest degree is at least $3$.

By evaluating the error, we can turn the \eqref{f-Eqn} into an equation of $P$ up to a certain degree:
\begin{lemma}
Let $P$ be the Taylor polynomial of $F$ at $0$ up to degree $N$ as in \eqref{e:taylor F}. Then up to degree $N$ we have
\begin{align}\label{P-Eqn;1}
	\Delta P \cdot |x|^2 + 2\Delta P \cdot \langle x,DP\rangle + \Delta P\cdot |DP|^2 
	= D^2P(x,x)+ 2D^2P(x,DP) + D^2P(DP,DP). 	
\end{align}

\end{lemma}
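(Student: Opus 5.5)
The plan is to substitute the Taylor expansions \eqref{Taylor-expansion-Df;2} into \eqref{f-Eqn}. Then I would show that the difference between the two sides of \eqref{P-Eqn;1} is $o(|x|^N)$. Finally, I would conclude that all homogeneous components of degree $\le N$ of that polynomial difference vanish. This last step is what ``up to degree $N$'' means.

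First I would write $F = P + R_0$, $D_iF = D_iP + R_{1,i}$ and $D_{ij}F = D_{ij}P + R_{2,ij}$, with $R_0 = o(|x|^N)$, $R_1 = o(|x|^{N-1})$ and $R_2 = o(|x|^{N-2})$. I would also record the a priori sizes near $0$. Since $P$ has lowest degree at least $3$, we have $DP = O(|x|^2)$ and $D^2P = O(|x|)$. The same bounds hold for $DF$ and $D^2F$, because $F$ is $C^N$ with $N\ge 3$ and $F=o(|x|^2)$. Without this one only gets $D^2F = o(1)$, which suffices anyway.

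Next I would expand each term of \eqref{f-Eqn} and bound the cross terms containing a remainder.
\begin{itemize}
\item In $\Delta F\,|x|^2$ the error is $|x|^2 R_2 = o(|x|^N)$.
\item In $\Delta F\langle x, DF\rangle$ the errors are $R_2\cdot|x|\cdot O(|x|^2)$ and $O(1)\cdot|x|\cdot R_1$, both $o(|x|^N)$.
\item In $\Delta F\,|DF|^2$ the errors are $R_2\, O(|x|^4)$ and $O(1)\, O(|x|^2)R_1$, plus quadratic remainder terms, all $o(|x|^N)$.
\end{itemize}
The right-hand side terms $D^2F(x,x)$, $D^2F(x,DF)$ and $D^2F(DF,DF)$ are handled identically, since each remainder is multiplied by at least enough powers of $|x|$. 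This gives
\[
\Delta P\,|x|^2 + 2\Delta P\langle x,DP\rangle + \Delta P|DP|^2 - D^2P(x,x) - 2D^2P(x,DP) - D^2P(DP,DP) = o(|x|^N).
\]

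The left side is a polynomial $Q$. If $Q = \sum_m Q_m$ in homogeneous components and $m_0\le N$ were the lowest degree with $Q_{m_0}\ne 0$, then restricting to rays $x = r\theta$ gives $r^{m_0}Q_{m_0}(\theta) + O(r^{m_0+1}) = o(r^N)$. Choosing $\theta$ with $Q_{m_0}(\theta)\ne0$ and letting $r\to0$ is a contradiction. Hence $Q_m = 0$ for all $m\le N$, which is \eqref{P-Eqn;1} up to degree $N$.

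The main (mild) obstacle is the bookkeeping of the remainder orders. The delicate term is $D^2F$, which is only known to $o(|x|^{N-2})$. Every occurrence of it is multiplied by at least $|x|^2$ (from $|x|^2$, $|x||DF|$ or $|DF|^2$), so its error stays $o(|x|^N)$.
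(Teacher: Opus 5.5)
Your proposal is correct and is essentially the paper's proof. You substitute $F=P+o(|x|^N)$, $DF=DP+o(|x|^{N-1})$ and $D^2F=D^2P+o(|x|^{N-2})$ into \eqref{f-Eqn}, use the a priori bounds $DP=O(|x|^2)$ and $D^2P=O(|x|)$ to see that every remainder term is $o(|x|^N)$, and conclude that the homogeneous components of degree at most $N$ of the difference of the two sides of \eqref{P-Eqn;1} vanish. The only addition is that you justify this last step explicitly by restricting to rays, whereas the paper simply asserts it.
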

\begin{proof}

Note that $P$ has lowest degree at least 3 and thus $P = O(|x|^{3})$, $DP = O(|x|^{2})$, $D^2P = O(|x|)$. Therefore:
\begin{align}
	\Delta F\cdot |x|^2 =& \Delta P \cdot |x|^2 + o(|x|^{N})\\
	D^2F(x,x) =& D^2 P(x,x) + o(|x|^{N})
\end{align}
Since $N\geq 2$ we have $2N-2 \geq N  $ and therefore:
\begin{align}
	|DF|^2 =& |DP|^2 + o(|x|^{N})\\
	\Delta F\cdot \langle x,DF\rangle  =& \Big(\Delta P + o(|x|^{N-2})\Big)\Big(\langle x,DP\rangle + o(|x|^{N})\Big)\nonumber\\
	 =& \Delta P \langle x,DP\rangle + o(|x|^{N})\\
	 D^2F(x,DF)=&  \Big(D^2 P + o(|x|^{N-2})\Big)\big(x, DP + o(|x|^{N-1})\big)\nonumber\\
	 =& D^2 P (x,DP) + o(|x|^{N})
\end{align}
Using the above and the fact that $3N-4 \geq 2N-2 \geq N $ we can further derive:
\begin{align}
	 \Delta F\cdot |DF|^2 =& \Big(\Delta P + o(|x|^{N-2})\Big)\Big(|DP|^2 + o(|x|^{N})\Big)\nonumber\\
	 =&\Delta P \cdot |DP|^2 + o(|x|^{N})\\
	 D^2F(DF,DF)=&  \Big(D^2 P + o(|x|^{N-2})\Big)\Big(DP + o(|x|^{N-1}), DP + o(|x|^{N-1})\Big)\nonumber\\
	 =& D^2 P (DP,DP) + o(|x|^{N})
\end{align}
 Plugging all above into \eqref{f-Eqn} we get
\begin{align}
	&\Delta P \cdot |x|^2 + 2\Delta P \cdot \langle x,DP\rangle + \Delta P\cdot |DP|^2 \nonumber\\
	=& D^2P(x,x)+ 2D^2P(x,DP) + D^2P(DP,DP) 	+ o(|x|^{N})
\end{align}

Consequently, polynomial $\Delta P \cdot |x|^2 + 2\Delta P \cdot \langle x,DP\rangle + \Delta P\cdot |DP|^2$ and polynomial $D^2P(x,x)+ 2D^2P(x,DP) + D^2P(DP,DP)$ must match up to degree $N$. This finishes the proof.
\end{proof}

The equation naturally separates into a linear term, a quadratic term,
and a cubic term in the Taylor coefficients.
This motivates the following decomposition. 
\begin{align}
    L_1P: =& \Delta P \cdot |x|^2 - D^2 P(x, x), \\ 
    L_2P :=& 2D^2P(x,DP) - 2\Delta P \cdot \langle x,DP\rangle, \\
    L_3P :=& D^2P(DP,DP) -  \Delta P\cdot |DP|^2.
\end{align}
Clearly, \eqref{P-Eqn;1} becomes
\begin{align}\label{P-Eqn;2}
	L_1 P = L_2 P + L_3 P 
\end{align}
up to degree $N$. If $P$ is a homogeneous polynomial of degree $k$, then
$
\deg(L_1P)=k,\deg(L_2P)=2k-2,\deg(L_3P)=3k-4.
$
In particular, $L_1$ captures the lowest-order contribution, while $L_2$ and $L_3$ are higher-order terms. Consequently, the leading-order term in the Taylor expansion of $P$ must lie in the kernel of $L_1$, which explains the algebraic form of the leading-order term in \eqref{Intro;arrival-expansion;12}. 

We now analyze the kernel of the leading operator $L_1$. 

\begin{lemma}\label{Lem;linear-term-equation}
Let $P$ be a nonzero homogeneous polynomial in $\RR^{n+1}$.
Then $ L_1 P= 0 $ if and only if $ P = |x|^{\frac{k(k-1)}{n}} H_k, $ where $k\in \mathbb N$, $\frac{k(k-1)}{n}\in 2\mathbb Z_{\ge 0}$, and $H_k$ is a homogeneous harmonic polynomial of degree $k$.
\end{lemma}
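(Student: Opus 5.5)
We want the kernel of $L_1P = |x|^2\Delta P - D^2P(x,x)$ on nonzero homogeneous polynomials of degree $m$ in $\RR^{n+1}$. The plan is to combine the Fischer (harmonic) decomposition with polar coordinates, where $L_1$ acts diagonally on each piece.

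\textbf{Step 1: decompose $P$.} Write
\[
P=\sum_{j} |x|^{2j}H_{m-2j},
\]
with each $H_{m-2j}$ a homogeneous harmonic polynomial of degree $m-2j$. On $\RR^{n+1}$ with $r=|x|$ we have
\[
\Delta = \partial_r^2+\frac{n}{r}\partial_r+\frac{1}{r^2}\Delta_{\mathbb S^n}, \qquad D^2P(x,x)=r^2\partial_r^2P .
\]
Hence
\[
L_1P = r^2\Big(\frac{n}{r}\partial_r P+\frac{1}{r^2}\Delta_{\mathbb S^n}P\Big) = n r\,\partial_rP+\Delta_{\mathbb S^n}P .
\]

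\textbf{Step 2: apply $L_1$ to each piece.} Let $Q=r^{a}H_k$ with $a+k=m$. Then $r\partial_r Q = mQ$, and $\Delta_{\mathbb S^n}$ acts on the angular part $H_k|_{\mathbb S^n}$ with eigenvalue $-k(k+n-1)$. Therefore
\[
L_1(r^aH_k) = \big(nm-k(k+n-1)\big)\,r^aH_k = \big(n(m-k)-k(k-1)\big)\,r^aH_k .
\]

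\textbf{Step 3: read off the kernel.} By Step 2, $L_1$ is diagonal with respect to the decomposition in Step 1. The summands $|x|^{2j}H_{m-2j}$ have distinct angular degrees $k=m-2j$, so they are linearly independent and the decomposition is unique. Consequently $L_1P=0$ if and only if every nonzero summand has eigenvalue zero, that is,
\[
n(m-k)=k(k-1), \qquad m-k=2j .
\]
The function $g(k)=k(k-1)+nk$ is strictly increasing for $k\ge 0$, so $g(k)=nm$ has at most one solution. Hence only one summand survives, and
\[
P=|x|^{k(k-1)/n}H_k, \qquad \frac{k(k-1)}{n}=2j\in 2\mathbb Z_{\ge0}.
\]
Conversely, if $P$ has this form, Step 2 gives $L_1P=0$ immediately.

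\textbf{Main obstacle.} The only step needing care is justifying the polar formulas and the spherical eigenvalue on $\mathbb{R}^{n+1}\setminus\{0\}$; both extend by continuity to the origin. Everything else is bookkeeping on the uniqueness of the harmonic decomposition.
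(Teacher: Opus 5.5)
Your proof is correct and follows the paper's argument. Both use the harmonic decomposition $P=\sum_j |x|^{2j}H_{m-2j}$, note that $L_1$ acts on each summand by the scalar $2jn-k(k-1)$ with $k=m-2j$, invoke uniqueness of the decomposition, and use strict monotonicity of $k\mapsto k+\frac{k(k-1)}{n}$ to isolate a single summand. The only cosmetic difference is how you obtain that scalar: you use the polar form $L_1=nr\partial_r+\Delta_{\mathbb{S}^n}$, whereas the paper combines Euler's identity $D^2P(x,x)=m(m-1)P$ with $|x|^2\Delta(|x|^{2j}H_k)=2j(2j+2k+n-1)|x|^{2j}H_k$.
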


\begin{remark}
More generally, without assuming that $P$ is a polynomial, the homogeneous solutions of $ |x|^2 \Delta P = D^2P(x,x) $ on $\RR^{n+1}\setminus\{0\}$ are precisely of the form $P = |x|^{\frac{k(k-1)}{n}}H_k$, where $k\in\mathbb N$ and $H_k$ is a homogeneous harmonic polynomial of degree $k$. The polynomial assumption is exactly what forces $\frac{k(k-1)}{n}$ to be a nonnegative even integer.
\end{remark}

\begin{proof}
Let $P$ be homogeneous of degree $m$. By Euler's identity,
\begin{align}
    D^2P(x,x)=m(m-1)P.
\end{align}
Thus $L_1P=0$ is equivalent to
\begin{align}\label{eq:L1-eigen-reduction}
    |x|^2\Delta P=m(m-1)P.
\end{align}

We use the standard harmonic decomposition of homogeneous polynomials:
\begin{align}
    P(x)=\sum_{j=0}^{\lfloor m/2\rfloor}|x|^{2j}H_{m-2j}(x),
\end{align}
where each $H_{m-2j}$ is a homogeneous harmonic polynomial of degree
$m-2j$. For a homogeneous harmonic polynomial $H_k$ of degree $k$, one has
\begin{align}
    |x|^2\Delta\big(|x|^{2j}H_k\big) =    2j(2j+2k+n-1)|x|^{2j}H_k.
\end{align}
Since the harmonic decomposition is unique, \eqref{eq:L1-eigen-reduction} holds if and only if each nonzero summand satisfies
\begin{align}
    2j(2j+2k+n-1)=m(m-1),
    \qquad m=2j+k.
\end{align}
Substituting $m=2j+k$ gives
\begin{align}
    2jn=k(k-1).
\end{align}
Hence $2j=\frac{k(k-1)}{n}$, and therefore every nonzero summand has the
form
\begin{align}
    |x|^{\frac{k(k-1)}{n}}H_k.
\end{align}

It remains to note that only one such summand can occur for a fixed degree $m$. Indeed, if $    m=k+\frac{k(k-1)}{n}$,  then the right-hand side is strictly increasing in $k\ge 0$. Thus the harmonic decomposition contains at most one summand satisfying the above relation. Consequently, $ P=|x|^{\frac{k(k-1)}{n}}H_k$ for some homogeneous harmonic polynomial $H_k$ of degree $k$.

Conversely, if $P=|x|^{\frac{k(k-1)}{n}}H_k$ with $H_k$ homogeneous harmonic of degree $k$, then the preceding computation gives
\begin{align}
    |x|^2\Delta P = \frac{k(k-1)}{n}
    \left(\frac{k(k-1)}{n}+2k+n-1\right)P.
\end{align}
Since $P$ is homogeneous of degree $m=k+\frac{k(k-1)}{n}$, the identity
\begin{align}
    \frac{k(k-1)}{n} \left(\frac{k(k-1)}{n}+2k+n-1\right) = m(m-1)
\end{align}
is equivalent to $n\frac{k(k-1)}{n}=k(k-1)$, and hence holds. Therefore
$L_1P=0$.

Finally, since $P$ is assumed to be a polynomial, the exponent $\frac{k(k-1)}{n}$ must be a nonnegative even integer. This completes the proof.
\end{proof}

The critical degrees $ m = k + \frac{k(k-1)}{n} $ are precisely the degrees at which the linearized operator $L_1$ develops a kernel. Away from these degrees, the recursive equation uniquely determines the Taylor coefficients from lower-order terms. At the critical degrees, however, solvability requires nonlinear compatibility conditions.
These are the obstruction equations.

In higher dimensions, the structure of these equations is complicated by the increasing dimension of the space of harmonic homogeneous polynomials. In dimension $n=1$, however, the harmonic spaces are two-dimensional, which allows a substantial simplification through complexification. This will be carried out in the next subsection.

\subsection{Complexification}

\textbf{In the rest of the section we work exclusively in $\RR^2$}, that is when $n = 1$. In dimension two, the kernel of $L_1$ is particularly simple: each critical degree contributes only two resonant monomials, which are conjugate to one another. Passing to complex coordinates diagonalizes the action of $L_1$ on monomials and makes the recursive structure essentially explicit. 

We introduce complex coordinates
\begin{align}
    z= x_1+ix_2; \quad \Bz= x_1-ix_2.
\end{align}

Recall the formula:
\begin{align}
    \partial_z = \frac{1}{2}(\partial_{x_1} - i \partial_{x_2}); &\quad \partial_{\Bz} = \frac{1}{2}(\partial_{x_1} +i \partial_{x_2}).\\
    \partial_{x_1}= \partial_z + \partial_{\Bz}; &\quad \partial_{x_2} = i (\partial_z - \partial_{\Bz}).
\end{align}

Let us always assume that the function is \textbf{real} so that conjugate on function will not change anything. We shall notice that all the complex expression below will always come with a conjugate and therefore making the result real.  Let us record some elementary differential identities on polynomials:

\begin{lemma}\label{Lem;complex-diff}
For any polynomial $P$, we have the following identities:
\begin{enumerate}
    \item $\langle DP , x\rangle = z P_z + \Bz P_{\Bz}$.
    \item $\Delta P= 4 P_{z \Bz}$.
    \item $|DP|^2 = 4P_z P_{\Bz}$
    \item $D^2P(x,DP) = 2zP_{\Bz}P_{zz}  + 2zP_zP_{z\Bz} + 2\Bz P_{\Bz }P_{z\Bz} + 2\Bz P_z P_{\Bz\Bz}$
    \item $D^2P(x,x) = P_{zz}\cdot z^2 + 2P_{z\Bz}\cdot |z|^2 + P_{\Bz\Bz}\cdot \Bz^2$
    \item $D^2P(DP,DP) = 4 (P_{\Bz})^2 P_{zz} + 8 P_z P_{\Bz} P_{z \Bz} + 4 (P_z)^2 P_{\Bz \Bz}$.
\end{enumerate}
\end{lemma}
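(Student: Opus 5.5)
These identities follow by direct substitution of the chain-rule formulas $\partial_{x_1}=\partial_z+\partial_{\Bz}$ and $\partial_{x_2}=i(\partial_z-\partial_{\Bz})$, together with $x_1=\frac{z+\Bz}{2}$ and $x_2=\frac{z-\Bz}{2i}$. Since $P$ is a polynomial, we may treat $z,\Bz$ as independent variables and all mixed partials commute. The computation is purely algebraic, and we will organize it so that each identity reduces to a bilinear or trilinear form in the vectors $(\partial_{x_1},\partial_{x_2})$.

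\textbf{Step 1: a polarization identity.} For vectors $a=(a_1,a_2)$ and $b=(b_1,b_2)$, set $a_z:=\frac{1}{2}(a_1-ia_2)$ and $a_{\Bz}:=\frac12(a_1+ia_2)$, and define $b_z, b_{\Bz}$ likewise. Then
\[
a_1b_1+a_2b_2 = 2(a_zb_{\Bz}+a_{\Bz}b_z),
\]
as one checks by expanding the right-hand side. For a vector $w=(w_1,w_2)$ written as $w_1+iw_2 = \omega$, we have $\langle DP, w\rangle = \omega P_z+\bar\omega P_{\Bz}$. Taking $w=x$ and $\omega=z$ gives (1) immediately.

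\textbf{Step 2: second-order quantities.} The Hessian acts through the directional derivative $D_w = \omega\partial_z+\bar\omega\partial_{\Bz}$. Hence
\[
D^2P(w,w') = (\omega\partial_z+\bar\omega\partial_{\Bz})(\omega'\partial_z+\bar\omega'\partial_{\Bz})P = \omega\omega'P_{zz}+(\omega\bar\omega'+\bar\omega\omega')P_{z\Bz}+\bar\omega\bar\omega'P_{\Bz\Bz}.
\]
Taking $\omega=\omega'=z$ yields (5). For $DP$, the complex number associated with the real gradient is $P_{x_1}+iP_{x_2} = 2P_{\Bz}$, and its conjugate is $2P_z$ because $P$ is real. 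Substituting $\omega=z$, $\omega'=2P_{\Bz}$ gives (4), and $\omega=\omega'=2P_{\Bz}$ gives (6). The same substitution gives $|DP|^2=|2P_{\Bz}|^2=4P_zP_{\Bz}$, which is (3). Finally, $\Delta=\partial_{x_1}^2+\partial_{x_2}^2=(\partial_z+\partial_{\Bz})^2-(\partial_z-\partial_{\Bz})^2=4\partial_z\partial_{\Bz}$, which is (2).

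The only point needing care is the identification of $DP$ with $2P_{\Bz}$, together with the reality of $P$ to conjugate correctly. Otherwise each identity is a one-line expansion, so no real obstacle is expected. For (4), collecting terms gives $2zP_{\Bz}P_{zz}+2(zP_z+\Bz P_{\Bz})P_{z\Bz}+2\Bz P_zP_{\Bz\Bz}$, which matches the stated form.
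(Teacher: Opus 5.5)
Your proof is correct and follows essentially the same route as the paper. Both rewrite $\partial_{x_1},\partial_{x_2}$ in terms of $\partial_z,\partial_{\Bz}$ (the paper checks (1)--(2) on monomials $z^a\Bz^b$, which is the same chain rule), express the Hessian in complex form, and substitute $x\leftrightarrow z$ and $DP\leftrightarrow 2P_{\Bz}$. One point in your favour: your bilinear Hessian formula (derived for constant vectors and then evaluated pointwise at $w=x$, $w'=DP$) is exactly what the mixed identity (4) needs, whereas the paper's quadratic identity $D^2f(v,v)=f_{zz}w^2+2f_{z\Bz}|w|^2+f_{\Bz\Bz}\Bw^2$ gives (4) only after polarization.
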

\begin{proof}
	One can check the first two items when $P = z^a\Bz ^b$ for all $a,b \in \NN$. Then they hold for all polynomial by linearity.
	The last two items follows from the following fact:
	Let $\vec{v} = (v_1,v_2)$ and $w=v_1 + iv_2$. Then $D^2 f(\vec{v},\vec{v}) = f_{zz} w^2 + 2 f_{z \Bz} |w|^2 + f_{\Bz \Bz} \Bw^2$.
	The third identity follows from direct computation.
\end{proof}
 
We now rewrite the operators $L_1, L_2, L_3$ in complex form. Recall that
\begin{align}
    L_1P: =& \Delta P \cdot |x|^2 - D^2 P(x, x), \\ 
    L_2P :=& 2D^2P(x,DP) - 2\Delta P \cdot \langle x,DP\rangle, \\
    L_3P :=& D^2P(DP,DP) -  \Delta P\cdot |DP|^2.
\end{align}

Using Lemma \ref{Lem;complex-diff} we can rewrite $L_1, L_2, L_3$ as
\begin{align}
	L_1P = 2 P_{z \Bz} \cdot |z|^2 -  P_{zz} \cdot z^2 - P_{\Bz \Bz} \cdot \Bz^2. 
\end{align}
\begin{align}
    L_2P &= 2D^2P(x,DP) - 2\Delta P \cdot \langle x,DP\rangle \nonumber \\
    &= 2 \Big( 2z P_{\Bz} P_{zz}  + 2 z P_z P_{z \Bz} +2 \Bz P_{\Bz} P_{z \Bz} + 2 \Bz P_z P_{\Bz \Bz}    \Big) - 8 P_{z \Bz} (P_z \cdot z + P_{\Bz} \cdot \Bz) \nonumber\\
    &= 4 \Big( z P_{\Bz} P_{zz}  - z P_z P_{z \Bz} - \Bz P_{\Bz} P_{z \Bz}  +  \Bz P_z P_{\Bz \Bz}   \Big).
\end{align}
and
\begin{align}
    L_3P &= D^2P(DP,DP) -  \Delta P\cdot |DP|^2 \nonumber\\
    &= 4 (P_{\Bz})^2   P_{zz} + 8 P_z P_{\Bz} P_{z \Bz} + 4 (P_z)^2 P_{\Bz \Bz} - 16 P_z P_{\Bz} P_{z \Bz} \nonumber \\
    &= 4 \Big( (P_{\Bz})^2   P_{zz} - 2 P_z P_{\Bz} P_{z \Bz} +  (P_z)^2 P_{\Bz \Bz} \Big).
\end{align}

This complex formulation allows us to reduce the elliptic equation
to explicit algebraic relations among the coefficients of $P$.
 
\begin{definition}Suppose that $P$ is a polynomial:
\begin{itemize}
	\item $P_n$ is the sum of all degree $n$ monomials in $P$,
	\item $P_{<d}$ is the sum of all degree $<d$ monomials in $P$.
	\item $P_{\leq d}$ is the sum of all degree $\leq d$ monomials in $P$.
	\item $P_{i,j}$ is the sum of all monomials of the form $C z^i \Bz^j$ in $P$.
\end{itemize}
\end{definition}

The following lemma follows from direct computations and Lemma \ref{Lem;linear-term-equation}.

\begin{lemma}\label{Lem;LP;2}
We can compute $L_1, L_2, L_3$ on polynomial $P$:
\begin{align}
	L_1 P =& \sum_{i, j} [(i+j)-(i-j)^2] P_{i,j}\\
	L_2 P =& 4\sum_{\substack{i+k\geq 1 \\ j+l \geq 1}}\Big[jk(k-1) + il(l-1) - (i+j)kl\Big] \frac{P_{i,j}P_{k,l}}{z\Bz} \\
	L_3 P =& 4\sum_{\substack{i+k+p\geq 2 \\ j+l+q \geq 2}}\Big[jlp(p-1) + ikq(q-1) - (il+jk)pq\Big] \frac{P_{i,j}P_{k,l}P_{p,q}}{z^2\Bz^2} 
\end{align}
Moreover, the kernel of $L_1$ restricted on the set of polynomials is  :
\begin{align}
	\text{span} \Big\{z^{\frac{k(k-1)}{2}}\Bz^{\frac{k(k+1)}{2}},\ z^{\frac{k(k+1)}{2}}\Bz^{\frac{k(k-1)}{2}} \big| \ k\geq 0\Big\} 
\end{align}

\end{lemma}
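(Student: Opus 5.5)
By bilinearity and trilinearity it suffices to compute each operator on monomials $z^i\Bz^j$ and then sum. Throughout I use the complex forms of $L_1,L_2,L_3$ already derived from Lemma \ref{Lem;complex-diff}, together with the notation $P_{i,j}=c_{ij}z^i\Bz^j$.

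\emph{The operator $L_1$.} For $P=z^i\Bz^j$ we have $P_{z\Bz}|z|^2=ijP$, $P_{zz}z^2=i(i-1)P$ and $P_{\Bz\Bz}\Bz^2=j(j-1)P$. Hence
\[
L_1P=\big[2ij-i(i-1)-j(j-1)\big]P=\big[(i+j)-(i-j)^2\big]P .
\]
Consequently $L_1$ is diagonal in the monomial basis, and $P\in\ker L_1$ exactly when every monomial present satisfies $i+j=(i-j)^2$. Write $k=|i-j|$ and $m=i+j=k^2$. Then $\{i,j\}=\{k(k-1)/2,\;k(k+1)/2\}$, which gives the stated span. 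This agrees with Lemma \ref{Lem;linear-term-equation} for $n=1$, since $|x|^{k(k-1)}\mathrm{Re}/\mathrm{Im}\,z^k$ has precisely these monomials.

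\emph{The operator $L_2$.} Take $A=z^i\Bz^j$ and $B=z^k\Bz^l$. The operator $L_2$ is quadratic, so I polarize it: $L_2P=\sum B_2(P_{i,j},P_{k,l})$, summed over ordered pairs, where $B_2(A,B)=4\big(zA_{\Bz}B_{zz}-zA_zB_{z\Bz}-\Bz A_{\Bz}B_{z\Bz}+\Bz A_zB_{\Bz\Bz}\big)$. Each term is a monomial multiple of $AB/(z\Bz)$, and the coefficient is
\[
j\,k(k-1)-i\,kl-j\,kl+i\,l(l-1)=jk(k-1)+il(l-1)-(i+j)kl ,
\]
which is the claimed bracket. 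I then need to check that summing over ordered pairs is legitimate. Expanding $P=\sum P_{i,j}$ inside the quadratic expression produces exactly the sum over ordered pairs with the first factor in the derivative slot, so no symmetrization is needed. The index restriction $i+k\ge1$, $j+l\ge1$ is harmless: when it fails, the bracket vanishes. For instance, if $j=l=0$ the bracket is $-ik\cdot 0=0$, and if $i=k=0$ it is $0$ as well. This check also shows that dividing by $z\Bz$ leaves a genuine polynomial.

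\emph{The operator $L_3$.} I treat $L_3$ in the same way, with three monomials $A,B,C$ carrying indices $(i,j),(k,l),(p,q)$. The polarized form is
\[
4\big(A_{\Bz}B_{\Bz}C_{zz}-2A_zB_{\Bz}C_{z\Bz}+A_zB_zC_{\Bz\Bz}\big).
\]
The three terms have coefficients $jl\,p(p-1)$, $-2ilpq$ and $ik\,q(q-1)$, each times $ABC/(z^2\Bz^2)$. Summing over ordered triples, the middle term $-2ilpq$ may be replaced by its average over swapping $A$ and $B$, namely $-(il+jk)pq$. This yields the stated bracket. The vanishing when $i+k+p\le1$ or $j+l+q\le1$ follows in the same way, since every term then carries a factor that is zero.

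The main obstacle is only bookkeeping. One has to track ordered versus unordered sums and the symmetrization in $L_3$ carefully, and confirm that the denominators $z\Bz$ and $z^2\Bz^2$ always cancel. Beyond that there is no real difficulty.
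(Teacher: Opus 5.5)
Your proposal is correct and follows essentially the same route as the paper, which just asserts the formulas ``by direct computation'' on monomials $z^i\Bz^j$ and identifies the kernel by citing Lemma \ref{Lem;linear-term-equation}. You instead read the kernel directly off the diagonal action of $L_1$, which is equally valid. Your details fill in what the paper leaves implicit: the ordered-pair expansion for $L_2$, the averaging over $A\leftrightarrow B$ that turns $-2ilpq$ into $-(il+jk)pq$ in $L_3$, and the check that the brackets vanish on the excluded index ranges, so that the quotients by $z\Bz$ and $z^2\Bz^2$ are genuine polynomials.
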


\begin{remark} 
	Note that when $i,j\geq 1$ (which will be justified in the next Theorem), each $P_{i,j}$ is divisible by $z\Bz = |z|^2$. In this case, the quotients above are indeed polynomials. Moreover, the summation is finite because there are only finitely many nonzero $P_{i,j}$.   Note also that by symmetry $\sum_{i,j,k,l\geq 1}(il+jk)P_{i,j}P_{k,l} = \sum_{i,j,k,l\geq 1}2ilP_{i,j}P_{k,l} $.
\end{remark}

Since $L_1$ acts diagonally on monomials, its cokernel can be naturally identified with its kernel. Therefore, at the critical degree $k^2$, solvability of the recursive equation $   L_1P = L_2P + L_3P$ requires that the degree-$k^2$ component of the nonlinear forcing
$L_2P+L_3P$ have vanishing projection onto
\begin{align}
    \mathrm{span}\left\{ z^{\frac{k(k+1)}{2}}\bar z^{\frac{k(k-1)}{2}}, \, z^{\frac{k(k-1)}{2}}\bar z^{\frac{k(k+1)}{2}} \right\}.
\end{align}
These compatibility conditions are precisely the obstruction equations. The explicit formulas in Lemma \ref{Lem;LP;2} reduce the obstruction equations to a purely algebraic recursive system among the coefficients of the monomials $z^a\bar z^b$.  We now organize these recursive interactions systematically.

\subsection{Construction of the obstruction equations}

We continue to work in dimension $n=1$ and use the complexification introduced in the previous subsection. The purpose of this subsection is to organize the recursive structure of these equations systematically. To this end, we introduce a collection of universal algebraic constants which encode the nonlinear interactions among lower-order coefficients. These constants depend only on the combinatorics of the equation and are independent of the particular solution under consideration.

First of all, Lemma \ref{Lem;LP;2} inspired us to define the following constants:

\begin{definition}\label{Def;C3C4}
We define $C^{(3)}$ to be constant depending on a pair of indices:
\begin{align}
	C^{(3)}_{(i,j)} = i + j - (i-j)^2 
\end{align}
Note that this is symmetric about $i,j$. 

We next define $C^{(4)}$ to be constant depending on two pairs of indices and $C^{(s4)}$ be its symmetrization:
	\begin{align} 
		C^{(4)}_{(i,j),(k,l)} :=& jk(k-1) + il(l-1) - (i+j)kl\\
		C^{(s4)}_{(i,j),(k,l)} :=& C^{(4)}_{(i,j),(k,l)} + C^{(4)}_{(k,l),(i,j)} = (j+k-i-l)(jk-il) -2(jk+il)
	\end{align}
	Let $C^{(5)}$ be a constant depending on three pairs of indices and $C^{(s5)}$ be its symmetrization:
	\begin{align}
		C^{(5)}_{(i,j),(k,l), (p,q)} :=& jlp(p-1) + ikq(q-1) - (il+jk)pq\\
		C^{(s5)}_{(i,j),(k,l), (p,q)} :=& C^{(5)}_{(i,j),(k,l), (p,q)}  + C^{(5)}_{(i,j),(p,q), (k,l)}  + C^{(5)}_{(k,l),(i,j), (p,q)}  + C^{(5)}_{(k,l),(p,q), (i,j)} \\
		&+C^{(5)}_{(p,q),(i,j), (k,l)} + C^{(5)}_{(p,q),(k,l), (i,j)}  \nonumber
	\end{align} 
\end{definition}

Using Definition \ref{Def;C3C4}, we can rewrite the operator $L_1,L_2,L_3$ on the set of polynomial $P$ as:
\begin{align}
	L_1 P =& \sum_{i, j} C^{(3)}_{(i,j)}P_{i,j}\\
	L_2 P =& \sum_{\substack{i+k\geq 1 \\ j+l \geq 1}} 4C^{(4)}_{(i,j),(k,l)}\frac{P_{i,j}P_{k,l}}{z\Bz}\\
	L_3 P =& \sum_{\substack{i+k+p\geq 2 \\ j+l+q \geq 2}} 4C^{(5)}_{(i,j),(k,l),(p,q)}\frac{P_{i,j}P_{k,l}P_{p,q}}{z^2\Bz^2}
\end{align}

To organize the recursive structure of the obstruction equations, we introduce a bookkeeping system which records how nonlinear interactions among lower-order resonant modes generate higher-order monomials. We will explain what those constants mean after the definitions.

\begin{definition}
For a double multi-index of \textbf{nonnegative} integers $ \ba  = (\ba^{-}_2, \ba^{+}_2, \ba^{-}_3, \ba^{+}_3, \ba^{-}_4, \ba^{+}_4...)$ with finitely many nonzero elements,  define
\\

\begin{align}\label{Def;S;1}
	S(\ba) =& (S_1(\ba),S_2(\ba)) \nonumber \\
    =&  (1,1) +  \sum_{k\geq2} \Bigg[ \ba^{-}_k \cdot \Big(\frac{k(k-1)}{2}-1 ,\  \frac{k(k+1)}{2}-1\Big)  + \ba^{+}_k \cdot \Big(\frac{k(k+1)}{2}-1 ,\  \frac{k(k-1)}{2}-1\Big) \Bigg].
\end{align}
\end{definition}

\begin{remark}
The quantity $S(\mathbf a)$ should be viewed as the total exponent of the monomial generated by the nonlinear interaction encoded by $\mathbf a$.
\end{remark}

\begin{remark}

Note that the lower index of $\ba$ starts from 2, not 1. Since $\ba$ only has finite nonzero elements, the above is indeed a finite summation. It is clear that we have
\begin{align}\label{Ta-Ineq;2}
	|S(\ba)| = S_1(\ba) + S_2(\ba) = 2 + \sum_{k\geq 2}(\ba^{-}_k + \ba^{+}_k)(k^2-2) \geq 2 + 2|\ba|
\end{align}

\end{remark}

Roughly speaking, each pair
\begin{align*}
\left( \frac{k(k-1)}{2}-1, \frac{k(k+1)}{2}-1 \right), \qquad \left( \frac{k(k+1)}{2}-1, \frac{k(k-1)}{2}-1 \right)
\end{align*}
corresponds to the two resonant monomials $z^{\frac{k(k-1)}{2}}\bar{z}^{\frac{k(k+1)}{2}}$ and $z^{\frac{k(k+1)}{2}}\bar{z}^{\frac{k(k-1)}{2}}$  associated with the critical degree $k^2$. The shift by $-1$ in the definition above comes from the derivative structure of the nonlinear operators $L_2$ and $L_3$. The double multi-index $\mathbf a$ records how many times each resonant mode appears in a nonlinear interaction in order the generate the target high-order monomials. The quantity $S(\mathbf a)$ records the total exponent produced by these interactions, while the constants
\begin{align*}
C_{\mathbf a}^{(1)}, \quad C_{\mathbf a}^{(2)}
\end{align*}
to be defined below, encode the universal coefficients of the target monomials generated recursively by the nonlinear operators $L_2$ and $L_3$.

For example, as we will see, the first nontrivial obstruction occurs at degree $25$ (when $k=5)$ and is generated by the interaction among resonant monomials of degrees $4$, $9$, and $16$ (when $k=2,3, 4$):
\begin{align*}
    z^{10}\bar z^{15} = \frac{(z^3\bar z(z^3\bar z^6)(z^6\bar z^{10})}{z^2\bar z^2}.
\end{align*}
Here $\mathbf a_2^+ = \mathbf a_3^- = \mathbf a_4^- =1$ and all other entries are zero. Hence $\mathbf a=(0,1,1,0,1,0,0,...)$ and $S(\mathbf a)=(10,15)$ records the resulting exponent of $z$ and $\bar z$. $C^{(2)}_{\mathbf a}$ records the coefficient of this term $z^{10} \bar z^{15}$ in
$L_2P+L_3P$.

We now define the coefficients associated with these nonlinear interactions recursively. These constants are universal algebraic quantities depending only on the structure of the equation and not on the particular solution.

\begin{definition}\label{Def;C1C2}
We define constants $C^{(1)}, C^{(2)}$ inductively. First define
	\begin{align}\label{Def;C1C2;base}
		C^{(1)}_{\bb} := 1, \quad C^{(2)}_{\bb} := 0 \quad \text{ whenever } |\bb| = 1.
	\end{align} 
	Suppose that we have defined $C^{(1)}_{\bb}, C^{(2)}_{\bb}$ for all $|\bb|\leq L$. Then for any double multi-index $\bb$ with $|\bb| = L+1\geq 2$, we define:
	\begin{align}\label{Def;C2;induction}
		C^{(2)}_{\bb} :=& 4\sum_{\substack{\ba,\ba'\\ \ba+\ba' =\bb \\ |\ba|, |\ba'|\geq 1}} C^{(4)}_{S(\ba),S(\ba')} C^{(1)}_{\ba}C^{(1)}_{\ba'} + 4 \sum_{\substack{\ba,\ba', \ba''\\ \ba+\ba' + \ba'' =\bb \\|\ba|, |\ba'|, |\ba''|\geq 1}}C^{(5)}_{S(\ba),S(\ba'),S(\ba'')}  C^{(1)}_{\ba}C^{(1)}_{\ba'}C^{(1)}_{\ba''}
	\end{align} 
and
	\begin{align}
		C^{(1)}_{\bb} := \begin{cases}
			\frac{1}{C^{(3)}_{S(\bb)}}C^{(2)}_{\bb} \quad &\text{ if }S(\bb) \neq \Big(\frac{k(k-1)}{2},  \frac{k(k+1)}{2}\Big) \text{ and } S(\bb) \neq \Big(\frac{k(k+1)}{2},  \frac{k(k-1)}{2}\Big)  \\ 
			0 \quad &\text{ otherwise }
		\end{cases}
	\end{align}	
\end{definition}
\begin{remark}
	Note that in the RHS of \eqref{Def;C2;induction}, $|\ba|, |\ba'|,|\ba''| \leq L$ and thus the inductive definition is valid.
\end{remark}

\begin{remark}
 From the definition it is clear that $C^{(1)}_{\bb}, C^{(2)}_{\bb}$ depends only on $\bb$ and are explicitly computable. Moreover, Definition \ref{Def;C3C4} gives explicit formulas for $C^{(4)}_{S(\ba),S(\ba')}$ and $C^{(5)}_{S(\ba),S(\ba'),S(\ba'')}$:
	\begin{align}
		C^{(4)}_{S(\ba),S(\ba')} =& S_2(\ba)S_1(\ba')[S_1(\ba')-1] +  S_1(\ba)S_2(\ba')[S_2(\ba')-1] - |S(\ba)|\cdot S_1(\ba')S_2(\ba') \\
		C^{(5)}_{S(\ba),S(\ba'), S(\ba'')} =& S_2(\ba)S_2(\ba')S_1(\ba'')[S_1(\ba'')-1] + S_1(\ba)S_1(\ba')S_2(\ba'')[S_2(\ba'')-1] \\
		&- [S_1(\ba)S_2(\ba') + S_2(\ba)S_1(\ba')]\cdot S_1(\ba'')S_2(\ba'')\nonumber
	\end{align} 
\end{remark}

Now we are able to state the main decomposition theorem for this section:
\begin{theorem}\label{Thm;11}
We take constants $C^{(1)}, C^{(2)}$ from Definition \ref{Def;C1C2}. For all integers $i,j\geq 0$ with $i+j\leq N$,
\begin{align}\label{L2+L3;expression;1}
	(L_2P + L_3P)_{i,j} =  \sum_{\substack{S(\ba) = (i,j) \\ |\ba|\geq 2}} C^{(2)}_{\ba}\cdot z\Bz \prod_{k\geq2} \Bigg(\frac{P_{\frac{k(k-1)}{2} ,\frac{k(k+1)}{2}}}{z\Bz}\Bigg)^{\ba^{-}_k}\Bigg(\frac{P_{\frac{k(k+1)}{2} ,\frac{k(k-1)}{2}}}{z\Bz}\Bigg)^{\ba^{+}_k}
\end{align}
Moreover, in the case that $(i,j) \neq (\frac{k(k-1)}{2} , \frac{k(k+1)}{2}) $ and $(i,j)\neq(\frac{k(k+1)}{2} , \frac{k(k-1)}{2})$, for any $k\geq 2$, 
\begin{align}\label{Pij;expression;2}
	P_{i,j} =  \sum_{\substack{S(\ba) = (i,j) \\ |\ba|\geq 2}} C^{(1)}_{\ba}\cdot z\Bz \prod_{k\geq2} \Bigg(\frac{P_{\frac{k(k-1)}{2} ,\frac{k(k+1)}{2}}}{z\Bz}\Bigg)^{\ba^{-}_k}\Bigg(\frac{P_{\frac{k(k+1)}{2} ,\frac{k(k-1)}{2}}}{z\Bz}\Bigg)^{\ba^{+}_k}
\end{align}
Finally, in the case that  $(i,j) = (\frac{k(k-1)}{2} , \frac{k(k+1)}{2}) $ or $(\frac{k(k+1)}{2} , \frac{k(k-1)}{2})$ for some $k \geq 2$,  we have the following \textbf{obstruction equation}: $(L_2P + L_3P)_{i,j} = 0$, i.e.
\begin{align}\label{L2+L3;expression;2}
	 \sum_{\substack{S(\ba) = (i,j) \\ |\ba|\geq 2}} C^{(2)}_{\ba}\cdot \prod_{k\geq2} \Bigg(\frac{P_{\frac{k(k-1)}{2} ,\frac{k(k+1)}{2}}}{z\Bz}\Bigg)^{\ba^{-}_k}\Bigg(\frac{P_{\frac{k(k+1)}{2} ,\frac{k(k-1)}{2}}}{z\Bz}\Bigg)^{\ba^{+}_k} = 0
\end{align}
\end{theorem}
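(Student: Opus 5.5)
We will prove all three assertions together by strong induction on the total degree $d=i+j$, running from $d=3$ up to $N$. The base fact is that $P$ has no terms of degree $\le 2$. The key structural input is Lemma \ref{Lem;LP;2}: by Definition \ref{Def;C3C4}, the degree-$d$ part of \eqref{P-Eqn;2} reads, bidegree by bidegree,
\begin{align*}
C^{(3)}_{(i,j)}P_{i,j} = 4\sum_{\substack{(a,b)+(c,d')=(i+1,j+1)}} C^{(4)}_{(a,b),(c,d')}\frac{P_{a,b}P_{c,d'}}{z\Bz} + 4\sum_{(a,b)+(c,d')+(p,q)=(i+2,j+2)} C^{(5)}_{(a,b),(c,d'),(p,q)}\frac{P_{a,b}P_{c,d'}P_{p,q}}{z^2\Bz^2}.
\end{align*}
Only components of strictly smaller degree appear on the right. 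Since $P$ has lowest degree $\ge 3$, each factor has degree $\ge 3$, so degree $2k-2$ and $3k-4$ exceed $k$.

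First we set up the induction hypothesis: for every $(a,b)$ with $3\le a+b<d$, $P_{a,b}$ equals either a resonant monomial $P_{\frac{k(k\mp1)}{2},\frac{k(k\pm1)}{2}}$, corresponding to the double multi-index $\ba$ with a single entry $1$ (so $S(\ba)=(a,b)$ and $C^{(1)}_{\ba}=1$), or the sum \eqref{Pij;expression;2}. In both cases we can write
\begin{align*}
P_{a,b}=\sum_{S(\ba)=(a,b),\,|\ba|\ge1} C^{(1)}_{\ba}\, z\Bz\, \Pi(\ba),
\end{align*}
where $\Pi(\ba)$ denotes the product of the normalized resonant coefficients. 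Here we also use that $C^{(1)}_{\ba}=0$ whenever $|\ba|\ge2$ and $S(\ba)$ is resonant, so no double counting occurs. We will also need that resonant exponents $(\frac{k(k\mp1)}{2},\frac{k(k\pm1)}{2})$ with $k\ge2$ have both entries $\ge1$. Together with $|S(\ba)|\ge 2+2|\ba|$, this shows each $P_{a,b}$ with $|\ba|\geq 1$ contributing has $a,b\ge1$, so it is divisible by $z\Bz$ and the quotients make sense. Components with $a=0$ or $b=0$ vanish by the same induction.

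Next we substitute these expansions into the right-hand side of the displayed equation. In a term $\frac{P_{a,b}P_{c,d'}}{z\Bz}$ we get the products $C^{(1)}_{\ba}C^{(1)}_{\ba'}\, z\Bz\,\Pi(\ba)\Pi(\ba')$ with $S(\ba)=(a,b)$ and $S(\ba')=(c,d')$. The constraint $(a,b)+(c,d')=(i+1,j+1)$ becomes $S(\ba+\ba')=(i,j)$, because $S$ is affine with constant term $(1,1)$. Likewise, in the cubic term the constraint $(i+2,j+2)$ matches $S(\ba+\ba'+\ba'')=(i,j)$. Since $\Pi(\ba)\Pi(\ba')=\Pi(\ba+\ba')$, we regroup by $\bb=\ba+\ba'$ (respectively $\ba+\ba'+\ba''$). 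The coefficient that results is exactly $C^{(2)}_{\bb}$ from \eqref{Def;C2;induction}, with $C^{(4)}_{S(\ba),S(\ba')}$ evaluated at the actual exponents. This gives \eqref{L2+L3;expression;1}, and $|\bb|\ge2$ holds automatically.

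Finally we read off the two cases. If $(i,j)$ is non-resonant, Lemma \ref{Lem;LP;2} gives $C^{(3)}_{(i,j)}\ne0$, and dividing by it yields \eqref{Pij;expression;2} with $C^{(1)}_{\bb}=C^{(2)}_{\bb}/C^{(3)}_{S(\bb)}$. If $(i,j)$ is resonant, then $C^{(3)}_{(i,j)}=0$, so the right-hand side must vanish. Dividing by $z\Bz$ gives \eqref{L2+L3;expression;2}; in this case $P_{i,j}$ is a free resonant coefficient, consistent with the hypothesis at the next step.

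We expect the main obstacle to be the bookkeeping in the regrouping step. One must check that the sum over ordered pairs and triples in \eqref{P-Eqn;2} corresponds exactly to ordered decompositions $\bb=\ba+\ba'(+\ba'')$ with no symmetry factors. One must also verify that the restriction $i+j\le N$ only ever invokes components of degree below $d\le N$, where the truncated equation of the earlier lemma is valid.
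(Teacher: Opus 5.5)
Your proposal is correct and follows essentially the same route as the paper's proof. Both argue by induction on total degree, write each lower-degree component as $\sum_{|\ba|\ge 1} C^{(1)}_{\ba}\, z\Bz\, \Pi(\ba)$, substitute into Lemma \ref{Lem;LP;2}, regroup via $S(\ba)+S(\ba')=S(\ba+\ba')+(1,1)$ to produce $C^{(2)}_{\bb}$, and then either divide by $C^{(3)}_{(i,j)}\neq 0$ or read off the obstruction equation; the only cosmetic difference is that you start at degree $3$ with a vacuous hypothesis, whereas the paper's base case first identifies the lowest-degree part as a resonant $P_{k_0^2}$ and checks that the sums are empty up to that degree.
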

\begin{remark}\label{Rem;12}
	Since $N\geq |S(\ba)|\geq 2 + 2|\ba|$, all the above multiplications and summations are finite. By \eqref{Ta-Ineq;2} we can further deduce that all $k$ appeared in the Theorem \ref{Thm;11} must satisfy $k^2\leq i+j \leq N$. 
\end{remark}
\begin{remark}
	If we allow $|\ba| = 1$ under the summation, then \eqref{Pij;expression;2} is indeed true for all $i,j$. But \eqref{Pij;expression;2} is trivial when $(i,j) = (\frac{k(k-1)}{2} , \frac{k(k+1)}{2}) $ or $(i,j) = (\frac{k(k+1)}{2} , \frac{k(k-1)}{2}) $ in view of Defintion \ref{Def;C1C2}.  
\end{remark}

\begin{corollary}\label{Cor;divisble}
 $P_{i,j} \equiv 0$ when $i=0$ or $j=0$. In particular, $P_{i,j}$  are divisible by $z\Bz$ and thus $\displaystyle \frac{P_{i,j}}{z\Bz}$ are polynomials for all $i,j\geq 0$ with $i+j\leq N$. 
\end{corollary}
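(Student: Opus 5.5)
We argue by strong induction on the total degree $m=i+j$, for $3\le m\le N$. The claim at degree $m$ is that $P_{i,j}\equiv 0$ whenever $i+j=m$ and $\min(i,j)=0$. Recall that $P$ has no terms of degree $0,1,2$. Since $P$ is real, $P_{j,i}=\overline{P_{i,j}}$, so it suffices to treat $j=0$, i.e. the pure holomorphic monomial $c\,z^m$.

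Fix $(i,j)=(m,0)$ with $m\ge3$. The pair $(m,0)$ is never resonant, because $\frac{k(k-1)}{2}=0$ forces $k\le1$. Also $C^{(3)}_{(m,0)}=m-m^2\neq0$ for $m\ge2$. Hence the degree-$m$ component of \eqref{P-Eqn;2}, restricted to the $z^m$ monomial, reads
\begin{align*}
C^{(3)}_{(m,0)}P_{m,0}=(L_2P+L_3P)_{m,0}.
\end{align*}
It therefore suffices to show that the $z^m$-coefficient of $L_2P+L_3P$ vanishes. This coefficient involves only components $P_{a,b}$ with $a+b<m$, since $L_2$ and $L_3$ raise degree.

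To see the vanishing, use the formulas of Lemma \ref{Lem;LP;2}. In $L_2P$, the term with $P_{i',j'}P_{k,l}/(z\Bz)$ contributes to $z^m\Bz^0$ only if $j'+l=1$. Say $j'=1$ and $l=0$. By the induction hypothesis $P_{k,0}=0$, since $k+l<m$ and $k+l\ge3$ (lower degrees vanish). The case $j'=0$, $l=1$ is symmetric. In $L_3P$ the relevant constraint is $j+l+q=2$, so at least one of the three factors has second index $0$ and again vanishes by induction.

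The cleanest way to organize this is to equivalently invoke Theorem \ref{Thm;11}, equation \eqref{Pij;expression;2}, directly. For $(i,j)=(m,0)$ non-resonant, $P_{m,0}$ is a sum over $\ba$ with $S(\ba)=(m,0)$. But $S_2(\ba)\ge1$ always, by definition \eqref{Def;S;1}: the base term is $(1,1)$ and every added entry $\frac{k(k\mp1)}{2}-1$ is $\ge0$ for $k\ge2$. So no such $\ba$ exists, the sum is empty, and $P_{m,0}=0$. Similarly $S_1(\ba)\ge1$ gives $P_{0,m}=0$. This makes the corollary essentially immediate from Theorem \ref{Thm;11}, and the inductive argument above serves as the underlying justification. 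The only care needed is that the theorem's own derivation must not presuppose divisibility by $z\Bz$. If it did, I would run the direct induction above using the polynomial form of Lemma \ref{Lem;LP;2}, where the quotients by $z\Bz$ are understood formally as Laurent monomials and the support bookkeeping still applies.

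Divisibility then follows. If $i,j\ge1$, then $P_{i,j}=c\,z^i\Bz^j=z\Bz\cdot c\,z^{i-1}\Bz^{j-1}$. Combined with $P_{i,0}=P_{0,j}=0$, every $P_{i,j}/(z\Bz)$ is a polynomial for $i+j\le N$. The main obstacle is only this non-circularity check; the combinatorial content is the observation $\min(S_1,S_2)\ge1$.
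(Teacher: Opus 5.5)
Your proposal is correct and takes the same route as the paper, whose proof simply reads the corollary off Theorem \ref{Thm;11}. The paper notes that the resonant components (with $k\ge2$) are divisible by $z\Bz$, so \eqref{Pij;expression;2} writes every non-resonant $P_{i,j}$ as $z\Bz$ times a polynomial; your observation that $S(\ba)\ge(1,1)$ componentwise is the same fact stated at the level of exponents.

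Your circularity worry does not arise. The proof of Theorem \ref{Thm;11} builds the decomposition into the $P^{(\ba)}$ inductively without assuming the corollary. Also, under the summation constraints in Lemma \ref{Lem;LP;2}, the products are automatically divisible by $z\Bz$ (respectively $z^2\Bz^2$). So your direct induction is correct but not needed.
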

\begin{proof}
	Note that $P_{\frac{k(k-1)}{2} ,\frac{k(k+1)}{2}}$ and $P_{\frac{k(k+1)}{2} ,\frac{k(k-1)}{2}}$ are divisible by $z\Bz$ when $k\geq 2$. Then the corollary follows from Theorem \ref{Thm;11}.
\end{proof}

\begin{proof}[Proof of Theorem \ref{Thm;11}]
The proof proceeds by induction on the total degree $i+j$. The key observation is that the nonlinear operators $L_2$ and $L_3$ only involve lower-order coefficients, so that the recursive structure
is triangular with respect to degree.

We will repeatedly use the identity \eqref{P-Eqn;2} throughout the proof:
	\begin{align}		
	L_1 P = L_2 P + L_3 P \nonumber 
	\end{align}
	Suppose that the lowest degree in $P$ is $d_0 \geq 3$, i.e $P = \sum_{k\geq d_0} P_k$. If $d_0 > N$, then there is nothing to prove. Therefore we may assume that $d_0 \leq N$.  By Lemma \ref{Lem;LP;2}, we find that the lowest degree of $L_2P + L_3P$ is at least $min\{2d_0-2, 3d_0 -4\} > d_0$. Thus comparing the degree $d_0 $ term in the identity \eqref{P-Eqn;2} we get
	\begin{align*}
		(L_1P)_{d_0} = (L_2P + L_3P)_{d_0} = 0
	\end{align*}
	This shows that $P_{d_0}$ is in the kernel of $L_1$. By Lemma \ref{Lem;LP;2}, this forces $d_0 = k_0^2$ for some $k_0 \geq 2$ and $P_{d_0}$ be in the form:
	\begin{align}\label{P-Eqn-12}
		P_{d_0} = P_{k_0^2} = P_{\frac{k_0(k_0-1)}{2},\frac{k_0(k_0+1)}{2}} +  P_{\frac{k_0(k_0+1)}{2},\frac{k_0(k_0-1)}{2}} 
	\end{align}
	which consists of only two parts that are conjugate to each other. To prove the Theorem for $i+j \leq d_0$, it suffices to prove that the right hand side of \eqref{L2+L3;expression;1} and \eqref{Pij;expression;2} are both 0. Indeed, combining the fact that $d_0$ is the lowest degree, \eqref{P-Eqn-12} and \eqref{Ta-Ineq;2}, we see that when $|\ba|\geq 2$:
	\begin{align}
		 |S(\ba)| \geq 2 + 2(k_0^2-2) > d_0 \geq i+j
	\end{align} 
	Thus the summation is empty and RHS of \eqref{L2+L3;expression;1} and \eqref{Pij;expression;2} are both 0. This establishes the induction base.

	Suppose that the Theorem is true for all $i+j = 1,2,...,d$ for some $d_0 \leq d\leq N-1$. Then we will work with $i+j = d+1$.
	For each $\ba = (\ba^{-}_2, \ba^{+}_2, \ba^{-}_3, \ba^{+}_3, \ba^{-}_4, \ba^{+}_4...)$ with $|S(\ba)| \leq d$ and $|\ba|\geq 1$ we define
	\begin{align}\label{Def;Pa;1}
		P^{(\ba)} :=  C^{(1)}_{\ba}\cdot z\Bz \prod_{k\geq2} \Bigg(\frac{P_{\frac{k(k-1)}{2} ,\frac{k(k+1)}{2}}}{z\Bz}\Bigg)^{\ba^{-}_k}\Bigg(\frac{P_{\frac{k(k+1)}{2} ,\frac{k(k-1)}{2}}}{z\Bz}\Bigg)^{\ba^{+}_k}
	\end{align}
	Then $P^{(\ba)}$ is a monomial of the form $cz^{S_1(\ba)}\Bz^{S_2(\ba)}$. In particular,  $P^{(\ba)} = P^{(\ba)}_{S(\ba)}$, where $P^{(\ba)}_{S(\ba)}$ is the abbreviation for $P^{(\ba)}_{S_1(\ba),S_2(\ba)}$. Moreover, 	
	\begin{align}
		\text{degree}(P^{(\ba)}) = |S(\ba)| 
	\end{align}
	
	By the induction hypothesis, we can decompose degree $\leq d$ terms of $P$ as: 
	\begin{align}\label{P-decomp;1}
		P_{\leq d} = \sum_{|S(\ba)|\leq d, \ |\ba|\geq 1} P^{(\ba)} = \sum_{|S(\ba)|\leq d, \ |\ba|\geq 1} P^{(\ba)}_{S(\ba)} 
	\end{align}
	To see this, it suffices to check for all $i'+j'\leq d$ that:
	\begin{align}\label{P-decomp;2}
		P_{i',j'} = \big(\sum_{|S(\ba)|\leq d, \ |\ba|\geq 1} P^{(\ba)}\big)_{i',j'} =  \sum_{S(\ba) = (i',j')} P^{(\ba)}
	\end{align}
	When $(i',j')\neq (\frac{k(k-1)}{2},\frac{k(k+1)}{2})$ and $(i',j')\neq (\frac{k(k+1)}{2},\frac{k(k-1)}{2})$, we note that $S(\ba) = (i',j')$ forces $|\ba|\geq 2$. Then \eqref{P-decomp;2} is true by the induction hypothesis and definition of $P^{(\ba)}$.  When $(i',j') = (\frac{k(k-1)}{2},\frac{k(k+1)}{2})$ or $(i',j') = (\frac{k(k+1)}{2},\frac{k(k-1)}{2})$, by Definition \ref{Def;C1C2} we find that $(P^{(\ba)})_{i',j'} = \begin{cases}
		0 &\text{ when } |\ba|\geq 2 \text{ and } S(\ba) = (i',j') \\
		P_{i',j'} &\text{ when } |\ba|=1 \text{ and } S(\ba) = (i',j') 	\end{cases} $.  
	Note that there is only one $\ba$ with $|\ba| = 1$ and $S(\ba) = (i',j')$, then \eqref{P-decomp;2} is true by direct computation.  Combining both cases, \eqref{P-decomp;2} holds for all $i'+j'\leq d$.

	Moving forward, we use \eqref{P-decomp;1}, Lemma \ref{Lem;LP;2} and compare the coefficient of the monomial $z^i\Bz^j$ to get:
	\begin{align}\label{Eqn;32}
		(L_2P + L_3P)_{i,j}={}&\sum_{\substack{S(\ba) + S(\ba') = (i+1,j+1)\\ |\ba|, |\ba'|\geq 1}}4C^{(4)}_{S(\ba),S(\ba')} \frac{P^{(\ba)}_{S(\ba)}P^{(\ba')}_{S(\ba')}}{z\Bz}\nonumber\\
		&+ \sum_{\substack{S(\ba) + S(\ba') + S(\ba'') = (i+2,j+2)\\ |\ba|, |\ba'|, |\ba''|\geq 1}} 4C^{(5)}_{S(\ba),S(\ba'),S(\ba'')} \frac{P^{(\ba)}_{S(\ba)}P^{(\ba')}_{S(\ba')}P^{(\ba'')}_{S(\ba'')}}{z^2\Bz^2}.
	\end{align}

	We can check that all $P^{(\ba)}_{S(\ba)}, P^{(\ba')}_{S(\ba')}, P^{(\ba'')}_{S(\ba'')}$ appeared on the RHS of \eqref{Eqn;32} has degree at most $d-1$, thus is valid (since we only defined $P^{(\ba)}$ when degree is at most $d$). Indeed, by \eqref{Ta-Ineq;2} we have $|S(\ba)|\geq 4$ when $|\ba|\geq 1$. Then in \eqref{Eqn;32} each degree $|S(\ba)|, |S(\ba')|, |S(\ba'')|$ must be at most $i+j-2 = d-1$. 
	
	By the definition of $S$ we can directly check the following facts:
	\begin{align}
		S(\ba) + S(\ba') =& S(\ba + \ba') + (1,1)\\
		S(\ba) + S(\ba') +  S(\ba'') =& S(\ba + \ba' + \ba'') + (2,2)
	\end{align}
	\begin{align}
		\frac{P^{(\ba)}P^{(\ba')} }{z\Bz} =  C^{(1)}_{\ba}C^{(1)}_{\ba'}\cdot z\Bz \prod_{k\geq2} \Bigg(\frac{P_{\frac{k(k-1)}{2} ,\frac{k(k+1)}{2}}}{z\Bz}\Bigg)^{(\ba+\ba')^{-}_k}\Bigg(\frac{P_{\frac{k(k+1)}{2} ,\frac{k(k-1)}{2}}}{z\Bz}\Bigg)^{(\ba+\ba')^{+}_k}
	\end{align}
	\begin{align}
		\frac{P^{(\ba)}P^{(\ba')} P^{(\ba'')} }{z^2\Bz^2} =  C^{(1)}_{\ba}C^{(1)}_{\ba'}C^{(1)}_{\ba''}\cdot z\Bz \prod_{k\geq2} \Bigg(\frac{P_{\frac{k(k-1)}{2} ,\frac{k(k+1)}{2}}}{z\Bz}\Bigg)^{(\ba+\ba'+\ba'')^{-}_k}\Bigg(\frac{P_{\frac{k(k+1)}{2} ,\frac{k(k-1)}{2}}}{z\Bz}\Bigg)^{(\ba+\ba'+\ba'')^{+}_k}
	\end{align}
	
	Thus \eqref{Eqn;32} can be rewritten as
	\begin{align}
		&(L_2P + L_3P)_{i,j} =  \sum_{\substack{\ba,\ba'\\S(\ba+\ba') = (i,j)\\ |\ba|, |\ba'|\geq 1}}4 C^{(4)}_{S(\ba),S(\ba')} C^{(1)}_{\ba}C^{(1)}_{\ba'}\cdot z\Bz \prod_{k\geq2} \Bigg(\frac{P_{\frac{k(k-1)}{2} ,\frac{k(k+1)}{2}}}{z\Bz}\Bigg)^{(\ba+\ba')^{-}_k}\Bigg(\frac{P_{\frac{k(k+1)}{2} ,\frac{k(k-1)}{2}}}{z\Bz}\Bigg)^{(\ba+\ba')^{+}_k}\nonumber\\
		+&\sum_{\substack{\ba,\ba',\ba''\\S(\ba+\ba'+\ba'') = (i,j)\\ |\ba|, |\ba'|, |\ba''|\geq 1}} 4 C^{(5)}_{S(\ba),S(\ba'),S(\ba'')}  C^{(1)}_{\ba}C^{(1)}_{\ba'}C^{(1)}_{\ba''}\cdot z\Bz \prod_{k\geq2} \Bigg(\frac{P_{\frac{k(k-1)}{2} ,\frac{k(k+1)}{2}}}{z\Bz}\Bigg)^{(\ba+\ba'+\ba'')^{-}_k}\Bigg(\frac{P_{\frac{k(k+1)}{2} ,\frac{k(k-1)}{2}}}{z\Bz}\Bigg)^{(\ba+\ba'+\ba'')^{+}_k}
	\end{align}
	In the first line, $\ba+\ba'$ appeared both under summation and in the exponenet. Similarly for the second line. Therefore, rearranging the summation and using Definition \ref{Def;C1C2} we get the simplification:
	\begin{align}\label{L2+L3;expression;3}
		(L_2P + L_3P)_{i,j} =  \sum_{\substack{S(\bb) = (i,j)\\|\bb|\geq 2}}C^{(2)}_{\bb}\cdot z\Bz \prod_{k\geq2} \Bigg(\frac{P_{\frac{k(k-1)}{2} ,\frac{k(k+1)}{2}}}{z\Bz}\Bigg)^{\bb^{-}_k}\Bigg(\frac{P_{\frac{k(k+1)}{2} ,\frac{k(k-1)}{2}}}{z\Bz}\Bigg)^{\bb^{+}_k}
	\end{align}
	By definition $C^{(2)}_{\bb}$ depends only on $\bb$, thus \eqref{L2+L3;expression;1} is proved.

	To prove \eqref{Pij;expression;2}, we use Lemma \ref{Lem;LP;2} to write
	\begin{align}\label{L1;expression;1}
		(L_1P)_{i,j} = [i+j - (i-j)^2]P_{i,j}
	\end{align} 
	Since  $i+j\leq N$, \eqref{P-Eqn;2} still holds, namely: $(L_1P)_{i,j} = (L_2P+L_3P)_{i,j}$. Then \eqref{L2+L3;expression;1}and \eqref{L1;expression;1} imply
	\begin{align}
		P_{i,j} = \frac{1}{i+j - (i-j)^2}\sum_{\substack{S(\bb) =  (i,j)}} C^{(2)}_{\bb}\cdot z\Bz \prod_{k\geq2} \Bigg(\frac{P_{\frac{k(k-1)}{2} ,\frac{k(k+1)}{2}}}{z\Bz}\Bigg)^{\bb^{-}_k}\Bigg(\frac{P_{\frac{k(k+1)}{2} ,\frac{k(k-1)}{2}}}{z\Bz}\Bigg)^{\bb^{+}_k} 
	\end{align}
	By Definition \ref{Def;C3C4} and Definition \ref{Def;C1C2},  this can be rewritten as:
	\begin{align}
		P_{i,j} =  \sum_{\substack{S(\bb) =  (i,j)}} C^{(1)}_{\bb}\cdot z\Bz \prod_{k\geq2} \Bigg(\frac{P_{\frac{k(k-1)}{2} ,\frac{k(k+1)}{2}}}{z\Bz}\Bigg)^{\bb^{-}_k}\Bigg(\frac{P_{\frac{k(k+1)}{2} ,\frac{k(k-1)}{2}}}{z\Bz}\Bigg)^{\bb^{+}_k} 
	\end{align}
	whenever $(i,j) \neq (\frac{k(k-1)}{2} , \frac{k(k+1)}{2}) $ and $(i,j)\neq(\frac{k(k+1)}{2} , \frac{k(k-1)}{2})$. 
	Thus \eqref{Pij;expression;2} is proved.

	Finally in the case that $(i,j) = (\frac{k(k-1)}{2} , \frac{k(k+1)}{2}) $ or $(\frac{k(k+1)}{2} , \frac{k(k-1)}{2})$ for some $k \geq 2$, by Lemma \ref{Lem;LP;2} again we find that
	$(L_1P)_{i,j} = 0$. This forces $(L_2P+L_3P)_{i,j} =0$, thus proving \eqref{L2+L3;expression;2}. 
	
	Since we proved all the statements when $i+j = d+1$. By the induction, the Theorem is proved for all $i+j \leq N$. 	
\end{proof}

We now prove that the obstruction equations are genuinely nontrivial. Since the obstruction polynomial contains many nonlinear interaction terms, a direct analysis of the full expression would be difficult. Instead, we isolate a distinguished monomial whose coefficient can be computed explicitly.

The key observation is that certain interaction patterns among the resonant modes are rigid: they cannot be generated in multiple ways through the recursive nonlinear interactions. Consequently, the corresponding coefficient in the obstruction equation cannot cancel with contributions from other terms. To exploit this rigidity, we consider the interaction generated by:
\begin{itemize}
    \item one resonant mode at degree $k^2$,
    \item one resonant mode at degree $(k+1)^2$,
    \item one resonant mode at degree $(k^2+k-2)^2$.
\end{itemize}
The resulting interaction contributes to the obstruction equation at degree $(k^2+k-1)^2$. For example, when $k=2$, the obstruction occurs at degree $(2^2+2-1)^2=25$. In this case, there is a distinguished monomial arising from the interaction among resonant terms of degrees $4$, $9$, and $16$:
\begin{align*}
    z^{10}\bar{z}^{15} = \frac{\big( z^3 \bar{z} \big) \cdot \big( z^3 \bar{z}^6 \big) \cdot\big( z^6 \bar{z}^{10} \big)}{z^2 \bar{z}^2}.
\end{align*}
The following theorem computes the coefficient of this monomial explicitly and proves that it is nonzero. In Appendix \ref{app:compute}, we also compute the cases $k=2,3$ explicitly for the reader's reference.

\begin{theorem}\label{Thm;12}
	Let $k\geq 2$ be a fixed integer.  Suppose that $\ba^{1},\ba^{2}, \ba^{3}$ are double multi-index, each with only \textbf{one} nonzero elements defined by: $(\ba^1)^{+}_k = 1;  \  (\ba^2)^{-}_{k+1} = 1; \  (\ba^3)^{-}_{k^2+k-2} = 1$. Let $\ba = \ba^1 + \ba^2 + \ba^3$,  then 
	\begin{align}
		C^{(2)}_{\ba} = -\frac{2\mathbf{K}^2(\mathbf{K}-1)(\mathbf{K}-2)^2(\mathbf{K}-3)}{(k^3+k^2-2k-1)(k^3+2k^2-k-1)}
	\end{align} 
	where $\mathbf{K}:= k^2+k$. In particular, $C^{(2)}_{\ba} \leq -k^4 < 0$ for all $k\geq 2$.
\end{theorem}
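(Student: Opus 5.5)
We compute $C^{(2)}_{\ba}$ directly from Definition \ref{Def;C1C2}. Since $|\ba|=3$ and $\ba$ is the sum of three distinct unit double multi-indices, the recursion in \eqref{Def;C2;induction} has only a few terms.

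\emph{Cubic terms.} These are the ordered triples $(\ba',\ba'',\ba''')$ that are permutations of $(\ba^1,\ba^2,\ba^3)$. Since $C^{(1)}_{\ba^i}=1$, they contribute
\begin{align*}
4C^{(s5)}_{S(\ba^1),S(\ba^2),S(\ba^3)}.
\end{align*}

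\emph{Quadratic terms.} These are the ordered pairs $(\ba^i,\ba^j+\ba^l)$ and $(\ba^j+\ba^l,\ba^i)$, with $\{i,j,l\}=\{1,2,3\}$. They contribute
\begin{align*}
4\sum_{i}C^{(s4)}_{S(\ba^i),S(\ba^j+\ba^l)}\,C^{(1)}_{\ba^j+\ba^l}.
\end{align*}

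\emph{Second-level coefficients.} Applying the definition again at level $|\bb|=2$ gives $C^{(2)}_{\ba^j+\ba^l}=4C^{(s4)}_{S(\ba^j),S(\ba^l)}$. We then get $C^{(1)}_{\ba^j+\ba^l}=C^{(2)}_{\ba^j+\ba^l}/C^{(3)}_{S(\ba^j+\ba^l)}$, provided $S(\ba^j+\ba^l)$ is not a resonant pair. Combining everything,
\begin{align*}
C^{(2)}_{\ba}=4C^{(s5)}_{S(\ba^1),S(\ba^2),S(\ba^3)}+16\sum_{i}\frac{C^{(s4)}_{S(\ba^i),S(\ba^j+\ba^l)}\,C^{(s4)}_{S(\ba^j),S(\ba^l)}}{C^{(3)}_{S(\ba^j+\ba^l)}}.
\end{align*}

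\emph{Explicit exponents.} Set $m=k^2+k-2=\mathbf K-2$. Then
\begin{align*}
S(\ba^1)&=\Big(\tfrac{k(k+1)}2-1,\ \tfrac{k(k-1)}2-1\Big),\\
S(\ba^2)&=\Big(\tfrac{k(k+1)}2-1,\ \tfrac{(k+1)(k+2)}2-1\Big),\\
S(\ba^3)&=\Big(\tfrac{m(m-1)}2-1,\ \tfrac{m(m+1)}2-1\Big),
\end{align*}
and pairwise sums add $(1,1)$ as in the proof of Theorem \ref{Thm;11}.

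\emph{Non-resonance.} The first concrete step is to check that none of the three pairs hits a resonant exponent $(\frac{p(p\mp1)}2,\frac{p(p\pm1)}2)$, i.e.\ that $C^{(3)}_{S(\ba^j+\ba^l)}=(i+j)-(i-j)^2\neq0$.
\begin{itemize}
\item For $\{\ba^1,\ba^2\}$ the $z$-exponent difference is $-(k+1)+k\cdot(\dots)$, and one checks $C^{(3)}\neq 0$ directly. I expect the corresponding term to simplify, possibly to zero or to a polynomial.
\item For $\{\ba^1,\ba^3\}$ and $\{\ba^2,\ba^3\}$, the values $C^{(3)}$ are quadratics in $\mathbf K$. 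After factoring out constants, I expect them to reduce (up to sign and factors of $k$) to $k^3+k^2-2k-1$ and $k^3+2k^2-k-1$. Both are nonzero for integer $k\geq2$: they have no integer roots by the rational root test.
\end{itemize}
Note that the full triple sum $S(\ba)$ is exactly the resonant exponent at degree $(k^2+k-1)^2$. That is why this $C^{(2)}_\ba$ enters the obstruction equation \eqref{L2+L3;expression;2}.

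\emph{Simplification (main obstacle).} Next, substitute the explicit exponents into the closed formulas for $C^{(4)}$ and $C^{(5)}$ from the remark after Definition \ref{Def;C1C2}, and simplify. This bookkeeping is the main obstacle: each term is a degree-$\sim 12$ polynomial in $k$, and large cancellations must occur. I would organize the computation in the variable $\mathbf K=k^2+k$, using $k(k+1)=\mathbf K$, $(k+1)(k+2)=\mathbf K+2k+2$, $k(k-1)=\mathbf K-2k$, and $m=\mathbf K-2$, so that odd powers of $k$ visibly cancel. I would put everything over the common denominator $(k^3+k^2-2k-1)(k^3+2k^2-k-1)$ and factor the numerator, ideally checking the factorization $-2\mathbf K^2(\mathbf K-1)(\mathbf K-2)^2(\mathbf K-3)$ symbolically. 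As an independent sanity check I would verify the $k=2$ value numerically against the explicit degree-$25$ computation in Appendix \ref{app:compute}.

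\emph{Final inequality.} For $k\geq2$ both denominator factors are positive, and each is at most $2k^3$: $k^3+2k^2-k-1\le 2k^3$ holds since $k^2\ge2k$, and the other factor is smaller still. So the denominator is at most $4k^6$. For the numerator, $\mathbf K-1$, $\mathbf K-2$ and $\mathbf K-3$ are all at least $\mathbf K/2\geq k^2/2$ for $\mathbf K\geq6$, so
\begin{align*}
2\mathbf K^2(\mathbf K-1)(\mathbf K-2)^2(\mathbf K-3)\geq 2k^4\cdot\frac{k^8}{16}=\frac{k^{12}}8.
\end{align*}
Hence $|C^{(2)}_\ba|\geq k^{6}/32\ge k^4$ for $k\ge 6$, and the finitely many cases $k=2,\dots,5$ are checked by direct evaluation. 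This gives $C^{(2)}_\ba\le -k^4<0$.
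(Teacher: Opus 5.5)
Your reduction is correct and is exactly the paper's formula \eqref{Comp;C2;1}. It consists of one cubic term $4C^{(s5)}_{S(\ba^1),S(\ba^2),S(\ba^3)}$ plus three pair-split terms, with $C^{(1)}_{\ba^j+\ba^l}=4C^{(s4)}_{S(\ba^j),S(\ba^l)}/C^{(3)}_{S(\ba^j+\ba^l)}$. Your closing inequality is also valid, but it needs a separate check for $k\le 5$. The paper avoids that check by using $(k^3+k^2-2k-1)(k^3+2k^2-k-1)<(k^3+k^2)(k^3+2k^2+k)=\mathbf K^3$ and $2(\mathbf K-3)\ge \mathbf K$, which give $C^{(2)}_{\ba}\le -(\mathbf K-1)(\mathbf K-2)<-k^4$ directly.

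The gap is in the computation itself, which is the whole content of the theorem.

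\textbf{Wrong exponents.} By definition $S$ carries the offset $(1,1)$, so
$S(\ba^1)=(\frac{k(k+1)}{2},\frac{k(k-1)}{2})$, $S(\ba^2)=(\frac{k(k+1)}{2},\frac{(k+1)(k+2)}{2})$, and $S(\ba^3)=(\frac{(\mathbf K-3)(\mathbf K-2)}{2},\frac{(\mathbf K-2)(\mathbf K-1)}{2})$. What you listed is $S'(\ba^i)=S(\ba^i)-(1,1)$. The identity in the proof of Theorem \ref{Thm;11} is $S(\ba)+S(\ba')=S(\ba+\ba')+(1,1)$, so pairwise sums of true $S$-values subtract $(1,1)$. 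Your rule ``add $(1,1)$'' compensates for the mislabeling and happens to give the right pairwise exponents, e.g.\ $S(\ba^1+\ba^2)=(\mathbf K-1,\mathbf K)$. However, the single exponents also enter $C^{(s4)}_{S(\ba^j),S(\ba^l)}$, $C^{(s4)}_{S(\ba^i),S(\ba^j+\ba^l)}$ and $C^{(s5)}$, and with your values these come out wrong. At $k=2$ you would get $C^{(s4)}_{(2,0),(2,5)}=30$ instead of $C^{(s4)}_{(3,1),(3,6)}=33$. So the computation as you set it up does not produce the stated formula.

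\textbf{The identity is never derived.} Even with correct data, you only expect the denominators and plan to check the numerator factorization. What is needed, and what the paper supplies, is the following list of closed forms.
The three values of $C^{(3)}$ on the pairs $\{1,2\},\{1,3\},\{2,3\}$ are $2\mathbf K-2$, $2(k^3+k^2-2k-1)$ and $-2(k^3+2k^2-k-1)$; the $\mathbf K^2$ terms cancel.
The six $C^{(s4)}$ values include, for example, $C^{(s4)}_{S(\ba^1),S(\ba^2)}=\frac{\mathbf K(2\mathbf K-1)}{2}$ and $C^{(s4)}_{S(\ba^1+\ba^2),S(\ba^3)}=-\frac{(\mathbf K-1)(\mathbf K-2)(3\mathbf K-5)}{2}$.
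You also need $C^{(s5)}$, and then the final rational simplification.

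Two smaller points. The $\{\ba^1,\ba^2\}$ term is not zero: the exponent difference is simply $-1$, and the term equals $-2\mathbf K(2\mathbf K-1)(\mathbf K-2)(3\mathbf K-5)$. A numerical check at $k=2$ against Appendix \ref{app:compute} would catch the exponent error, but it cannot replace the general-$k$ identity.
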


\begin{proof}
 Note that
\begin{align}
	S(\ba^1) =& \Big(\frac{k(k+1)}{2}, \frac{k(k-1)}{2}\Big),\quad \quad   S(\ba^2) = \Big(\frac{k(k+1)}{2}, \frac{(k+1)(k+2)}{2})\Big) \nonumber \\
	 S(\ba^3) =& \Big(\frac{(k^2+k-3)(k^2+k-2)}{2}, \frac{(k^2+k-2)(k^2+k-1)}{2}\Big) \nonumber\end{align} 
We can rewrite the above under the abbreviation $\mathbf{K} = k^2+ k$:
\begin{align}
	S(\ba^1) =& \Big(\frac{\mathbf{K}}{2}
	, \frac{\mathbf{K}}{2} -k\Big),\quad  \ S(\ba^2) = \Big(\frac{\mathbf{K}}{2} ,\frac{\mathbf{K}}{2} +k+1\Big), \quad S(\ba^3) = \Big(\frac{(\mathbf{K}-3)(\mathbf{K}-2)}{2}, \frac{(\mathbf{K}-2)(\mathbf{K}-1)}{2}\Big)
\end{align}
Using definition of $S$, we can continue our computation:
\begin{align}
	S(\ba^1 + \ba^2) =&  (\mathbf{K}-1, \mathbf{K})\\
	  S(\ba^1 + \ba^3) =&   \Big(\frac{(\mathbf{K}-2)^2}{2}, \frac{\mathbf{K}(\mathbf{K}-2)}{2}-k\Big)\\
	 S(\ba^2+\ba^3) =&   \Big(\frac{(\mathbf{K}-2)^2}{2}, \frac{\mathbf{K}(\mathbf{K}-2)}{2} + k+1\Big)
\end{align}
Using definition $\ba = \ba^1 + \ba^2 + \ba^3$, we can further compute:
\begin{align}
	S(\ba^1 + \ba^2 + \ba^3) =  \Big(\frac{(\mathbf{K}-2)(\mathbf{K}-1)}{2}, \frac{\mathbf{K}(\mathbf{K}-1)}{2}\Big)
\end{align}

Then we get an explicit formula for $C^{(2)}_{\ba}$ using \eqref{Def;C2;induction}:
\begin{align*}
	C^{(2)}_{\ba} =& 4C^{(s4)}_{S(\ba^{1} + \ba^{2}), \ S(\ba^{3})}C^{(1)}_{\ba^{1} + \ba^{2}} + 4C^{(s4)}_{S(\ba^{1} + \ba^{3}), \ S(\ba^{2})}C^{(1)}_{\ba^{1} + \ba^{3}} + 4C^{(s4)}_{S(\ba^{2} + \ba^{3}), \ S(\ba^{1})}C^{(1)}_{\ba^{2} + \ba^{3}} + 4C^{(s5)}_{S(\ba^1), S(\ba^2), S(\ba^3)} 
\end{align*}

To justify this formula, we note that $|\ba| = 3$ and thus there are only three ways to break $\ba$ into two parts and only one way to break it into three parts, each part being not identically 0.

Using Definition \ref{Def;C3C4} and Definition \ref{Def;C1C2} we have: 
\begin{align}\label{Comp;C1;2}
	C^{(1)}_{\ba^i+\ba^j} = \frac{C^{(2)}_{\ba^i+\ba^j}}{C^{(3)}_{S(\ba^i+\ba^j)}} = \frac{4C^{(s4)}_{S(\ba^i),\ S(\ba^j)}}{C^{(3)}_{S(\ba^i+\ba^j)}}
\end{align} 
Therefore, we reached the final expansion before plugging in:
\begin{align}\label{Comp;C2;1}
	C^{(2)}_{\ba}=& \frac{4C^{(s4)}_{S(\ba^{1} + \ba^{2}), \ S(\ba^{3})}\cdot 4C^{(s4)}_{S(\ba^{1}), \ S(\ba^{2})}}{C^{(3)}_{S(\ba^1+\ba^2)}} + \frac{4C^{(s4)}_{S(\ba^{1} + \ba^{3}), \ S(\ba^{2})}\cdot 4C^{(s4)}_{S(\ba^{1}), \ S(\ba^{3})}}{C^{(3)}_{S(\ba^1+\ba^3)}}+ \frac{4C^{(s4)}_{S(\ba^{2} + \ba^{3}), \ S(\ba^{1})}\cdot 4C^{(s4)}_{S(\ba^{2}), \ S(\ba^{3})}}{C^{(3)}_{S(\ba^2+\ba^3)}} \nonumber\\
	  &+4C^{(s5)}_{S(\ba^1), S(\ba^2), S(\ba^3)}  
\end{align}

\begin{remark} In order to save spaces, we chose not to repeat symmetric subscripts above. Instead, we encode the symmetrization in the constants $C^{(s4)}, C^{(s5)}$.  That's why our constants are changed from $C^{(4)}, C^{(5)}$ to $C^{(s4)}, C^{(s5)}$.  Therefore, the above form is  equivalent to \eqref{Def;C2;induction} even though it looks different at first glance.
\end{remark}

Now we compute the constant $C^{(3)}, C^{(s4)}, C^{(s5)}$ explicitly:
\begin{align}
	C^{(s4)}_{S(\ba^{1} + \ba^{2}), \ S(\ba^{3})} =& -\frac{(\mathbf{K}-1)(\mathbf{K}-2)(3\mathbf{K}-5)}{2}, \quad C^{(s4)}_{S(\ba^{1}), \ S(\ba^{2})} = \frac{(2\mathbf{K}-1)\mathbf{K}}{2},\quad C^{(3)}_{S(\ba^1+\ba^2)} = 2\mathbf{K}-2 \nonumber
\end{align}
\begin{align}
	C^{(s4)}_{S(\ba^{1} + \ba^{3}), \ S(\ba^{2})} =& -\frac{(k+1)(k^6+2k^5-k^4-2k^3 +k^2-2k-4)}{2} \nonumber \\
	C^{(s4)}_{S(\ba^{1}), \ S(\ba^{3})} =& \frac{k(k-1)(k+2)(k^4+2k^3-2k^2-4k+2)}{2} \nonumber\\
	C^{(3)}_{S(\ba^1+\ba^3)} =& 2k^3+2k^2-4k-2 \nonumber
\end{align}
\begin{align}
	C^{(s4)}_{S(\ba^{2} + \ba^{3}), \ S(\ba^{1})} =& \frac{k(k^6+4k^5+4k^4-2k^3-4k^2+2k-1)}{2} \nonumber \\
	C^{(s4)}_{S(\ba^{2}), \ S(\ba^{3})} =& -\frac{(k-1)(k+1)(k+2)(k^4 + 2k^3 -2k^2-2k+3)}{2} \nonumber\\
	C^{(3)}_{S(\ba^2+\ba^3)} =& -2k^3-4k^2+2k+2 \nonumber
\end{align}
\begin{align*}
	C^{(s5)}_{S(\ba^1), S(\ba^2), S(\ba^3)} = -\frac{k(k-1)(k+1)(k+2)(k^6+3k^5-6k^4-17k^3+15k^2+24k-10)}{2}
\end{align*}

Plugging all above into the formula \eqref{Comp;C2;1} we get:
\begin{align}\label{Comp;C2;3}
	C^{(2)}_{\ba}=& -\frac{2\mathbf{K}^2(\mathbf{K}-1)(\mathbf{K}-2)^2(\mathbf{K}-3)}{(k^3+k^2-2k-1)(k^3+2k^2-k-1)}
\end{align}

To estimate $C^{(2)}_{\ba}$ when $k\geq 2$, we note that  $(k^3+k^2-2k-1)(k^3+2k^2-k-1) < (k^3+k^2)(k^3+2k^2+k) = \mathbf{K}^3$ and $2(\mathbf{K}-3)\geq \mathbf{K}$ . Then $C^{(2)}_{\ba} \leq -(\mathbf{K}-1)(\mathbf{K}-2) < -k^4$. The assertion then follows.
\end{proof}

\section{Parabolic obstruction and nonlinear asymptotics}\label{Sec;parabolic-obstruction}

In the previous section, we derived nonlinear obstruction equations for the Taylor expansion of the arrival time function near a spherical singularity. Those obstruction equations arise from the failure of solvability of the elliptic recursive system at the critical degrees.

The goal of the present section is to transfer these obstruction equations to the parabolic setting. More precisely, using the elliptic-parabolic correspondence established in Section \ref{Sec;elliptic-to-parabolic}, we derive recursive equations for the higher-order asymptotic coefficients of the rescaled mean curvature flow. These equations encode the nonlinear interactions among the exponentially decaying modes of the flow.

A key point is that the obstruction equations obtained in Section \ref{Sec;elliptic} can now be interpreted dynamically: certain combinations of asymptotic modes are incompatible with the existence of a smooth arrival time expansion. Equivalently, the obstruction equations impose nonlinear algebraic constraints on the higher-order asymptotics of the rescaled flow.

In the next section \ref{Sec;prescribe-higher-asymptotic}, we will construct rescaled mean curvature flows with prescribed higher-order asymptotics. By choosing the asymptotic coefficients to violate the obstruction equations derived here, we will obtain solutions whose associated arrival time functions fail to possess the corresponding higher-order Taylor expansions.

\subsection{Setup and definitions}

We retain the setup and notation of the preceding sections.

Consider a convex MCF $\{M_t\}_{t\in[0,T_0)}$ with a spherical singularity $(x_0,T_0)$ and consider the corresponding arrival time function $U\in C^{N}$ for some $N\geq 2$. Assume $x_0 = 0$ and take $P(x)$ from Section \ref{Sec;elliptic}, which is divisible by $z\Bz$ by Corollary \ref{Cor;divisble}.  Then, we consider the corresponding RMCF centered at $(x_0,T_0)$ with graph function $v$ over $\mathbb{S}^n(\sqrt{2n})$.  Then we can use Theorem \ref{Taylor-to-asymptotic} to obtain functions $f_0,...,f_{N-2}$ defined on $\mathbb{S}^n(\sqrt{2n})$.  After that, we will work solely in $\RR^2$, which corresponds to $n=1$.  Consequently,  all the constants and polynomials in this section are dimension independent. Moreover, we will adapt the complex coordinate $z,\Bz$ from Section \ref{Sec;elliptic}.  

\noindent\textbf{Caution:}
 $P$ in this section (and in Section \ref{Sec;elliptic}) do not have degree 2 term, which is different from those appeared in the Section \ref{Sec;elliptic-to-parabolic}. For all other degrees, one should expect that $P$ matches Section \ref{Sec;elliptic-to-parabolic}. To minimize confusion, we avoid writing $P_2$ in this section.

The goal of this section is to obtain obstruction equations for $f$. This will require us to break $f_m$ into finer pieces, as we did for $P_k$. To this end, we first need a Lemma to clarify the structure of $f_m$:
	
\begin{lemma}\label{fm;Lemma;3}
	For each $0\leq m \leq N-2$, there exists a unique degree $m$ homogeneous polynomial $\tilde{f}_m$ such that $f_m$ is the restriction of $\tilde{f}_m$  on $\mathbb{S}^1(\sqrt{2})$. In fact, $\tilde{f}_m$ is given by the RHS of \eqref{fm;formula;4}.
\end{lemma}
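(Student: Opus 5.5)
The plan is to use Corollary \ref{fm;Lemma;2} together with the divisibility statement Corollary \ref{Cor;divisble}. On the sphere $\mathbb{S}^1(\sqrt{2})$ we have $P_2 = \frac{|x|^2}{2} \equiv 1$ for the $P_2$ of Section \ref{Sec;elliptic-to-parabolic}. Hence each quotient $\frac{P_{j+2}}{P_2}$ appearing in \eqref{fm;formula;4} agrees on the sphere with the function $\frac{2P_{j+2}}{|x|^2}$. Here $P_{j+2}$ for $j\geq 1$ is the degree $j+2$ component of the Taylor polynomial. It coincides with the corresponding homogeneous component of the $P$ of Section \ref{Sec;elliptic}, since the two normalizations differ only in the degree-2 term. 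This identification is worth stating explicitly, because \eqref{ansatz-U} writes $T_0-U=\frac1n(\frac{|x|^2}{2}+F)$, matching \eqref{Taylor;2} with $n=1$.

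By Corollary \ref{Cor;divisble}, every component $P_{i,j}$, and hence every homogeneous piece $P_{j+2}$, is divisible by $z\Bz=|x|^2$ as a polynomial. Therefore $Q_j:=\frac{2P_{j+2}}{|x|^2}$ is a genuine homogeneous polynomial of degree $j$ on $\RR^2$. We then define
\begin{align*}
\tilde f_m := -\frac{\sqrt2}{2}\, Q_m + \cQ^{(21)}_m(Q_1,\dots,Q_{m-1}).
\end{align*}
By \eqref{Q21;def}, each monomial of $\cQ^{(21)}_m$ is a product $\bx_{\beta_1}\cdots\bx_{\beta_p}$ with $|\beta|=m$, so substituting $Q_{\beta_i}$ gives a homogeneous polynomial of degree $\sum\beta_i=m$. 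Thus $\tilde f_m$ is homogeneous of degree $m$, and by \eqref{fm;formula;4} its restriction to $\mathbb{S}^1(\sqrt2)$ equals $f_m$. For $m=0$, take $\tilde f_0=\sqrt2$, a constant and hence degree $0$. For $m\le N-2$, all indices satisfy $m+2\le N$, so the $P_{m+2}$ used are available from the $C^N$ Taylor expansion.

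Uniqueness is elementary. If two homogeneous polynomials of degree $m$ agree on a circle centered at the origin, then by homogeneity they agree on $\RR^2\setminus\{0\}$, and hence everywhere.

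The main obstacle is bookkeeping rather than analysis. One has to check carefully that the $P_k$ of Section \ref{Sec;elliptic-to-parabolic} and those of Section \ref{Sec;elliptic} agree in degrees $\ge3$, including the factor $\frac1n$ with $n=1$. One must also note that Corollary \ref{Cor;divisble} applies to all $P_{i,j}$ with $i+j\le N$, which covers every $P_{m+2}$ needed. Everything else is degree counting.
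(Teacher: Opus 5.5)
Your proof is correct and is essentially the paper's argument: Corollary \ref{Cor;divisble} makes each $P_{j+2}/P_2 = 2P_{j+2}/(z\Bz)$ a homogeneous polynomial of degree $j$, so the index-sum structure of $\cQ^{(21)}_m$ makes the right-hand side of \eqref{fm;formula;4} homogeneous of degree $m$, and uniqueness follows because a homogeneous polynomial is determined by its values on a circle centered at the origin. You also check explicitly that the $P_k$ of Sections \ref{Sec;elliptic-to-parabolic} and \ref{Sec;elliptic} agree in degrees $\geq 3$, and you treat $m=0$ separately; these fill in details the paper leaves implicit.
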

\begin{proof}
	By Corollary \ref{Cor;divisble},  $P_k$ is divisible by $z\Bz$ for all $k \geq 0$. Then the RHS of \eqref{fm;formula;4} is indeed a degree $m$ homogeneous polynomial. Since the restriction of two different homogeneous degree $m$ polynomials on sphere must be different, the uniqueness follows.
\end{proof}

\begin{remark}
	Uniqueness fails if we don't require $\tilde{f}_m$ to be homogeneous degree $m$. For instance, one can multiply $\tilde{f}_m$ by $\frac{z\Bz  }{\sqrt{2n}}$. 
\end{remark}
\begin{remark}
	This Lemma is indeed true in any dimension, not just in $\RR^2$. But we don't need it in our paper.
\end{remark}

From this Lemma, the following definition makes senses:
\begin{definition}\label{Def;fij}
	For all $0\leq m\leq N-2$, let $f_m$ be the restriction of the degree $m$ homogeneous polynomial $\tilde{f}_m$ on $\mathbb{S}^1(\sqrt{2})$ as in Lemma \ref{fm;Lemma;3}. Then for all $i,j\geq 0$ with $i+j\leq N-2$,   we define $\tilde{f}_{i,j}$ to be the sum of all monomials  of the form $cz^i\Bz^j$ in $\tilde{f}_{i+j}$. Define $f_{i,j}$ to be the restriction of $\tilde{f}_{i,j}$ onto $\mathbb{S}^1(\sqrt{2})$. In particular, $\displaystyle f_m = \sum_{i+j = m}f_{i,j}$. We then identify $f_m$ and $f_{i,j}$ as degree $m$ homogeneous polynomials on $\mathbb{S}^1(\sqrt{2})$. 
\end{definition}

\begin{definition}\label{Def;S'}
For any double multi-index, we define
	\begin{align}\label{Def;S';2}
	S'(\ba) = (S'_1(\ba),S'_2(\ba)) =   \sum_{k\geq2} \ba^{-}_k\Big(\frac{k(k-1)}{2}-1 ,\  \frac{k(k+1)}{2}-1\Big)  + \ba^{+}_k\Big(\frac{k(k+1)}{2}-1 ,\  \frac{k(k-1)}{2}-1\Big)
\end{align}
Note that $S'(\ba) = S(\ba) - (1,1)$.
\end{definition}

For the sake of simplicity, we adapt the following abbreviations:
\begin{definition}\label{Def;bfp;1}
For all integer $k\geq 2$ with $k^2\leq N$ we define
\begin{align}
	\mathbf{p}^{-}_k:= \frac{P_{\frac{k(k-1)}{2} ,\frac{k(k+1)}{2}}}{z\Bz},\quad \mathbf{p}^{+}_k:= \frac{P_{\frac{k(k+1)}{2} ,\frac{k(k-1)}{2}}}{z\Bz}
\end{align}
\begin{align}
	\mathbf{f}^{-}_k:= f_{\frac{k^2-k-2}{2} ,\frac{k^2+k-2}{2}},\quad \mathbf{f}^{+}_k:= f_{\frac{k^2+k-2}{2} ,\frac{k^2-k-2}{2}}  
\end{align}
For integer $k$ with $k^2 > N$, we simply set $\bp^{-}_k = \bp^{+}_k = \mathbf{f}^{-}_k =\mathbf{f}^{+}_k = 0$. 
Set $\bp^{*} = (\bp^{-}_2,\bp^{+}_2, \bp^{-}_3,\bp^{+}_3, ...)$ and $\mathbf{f}^{*} = (\mathbf{f}^{-}_2,\mathbf{f}^{+}_2, \mathbf{f}^{-}_3,\mathbf{f}^{+}_3, ...)$
\end{definition}

From the definition, it is clear that each $\bp^{\pm}_k$ or $\mathbf{f}^{\pm}_{k}$ is an element of $\dC[z,\Bz]$.  We will see later in this section that the coefficient of any term involving $\bp^{\pm}_k$ or $\mathbf{f}^{\pm}_{k}$ with $k^2 > N$ must be 0, so they are irrelevant. 
We still define infinitely many $\bp^{\pm}_k$ or $\mathbf{f}^{\pm}_k$ here so that we can keep the notation $\bp^{*}$ and $\mathbf{f}^{*}$ independent of $N$.

\begin{definition}
Let $\bx^{*}$ be the abbreviation the collection of double variables $(\bx_2^{-}, \bx_2^{+}, \bx_3^{-}, \bx_3^{+}, ...)$. Define $\dC[\bx^{*}]$ to be the polynomial ring over $\dC$ of double variables $(\bx_2^{-}, \bx_2^{+}, \bx_3^{-}, \bx_3^{+}, ...)$. For a double multi-index $\ba$ with finitely many nonzeros, we abbreviate 
\begin{align}
	(\bx^{*})^\ba := \prod_{k\geq 2} (\bx_k^{-})^{\ba^{-}_k}(\bx_k^{+})^{\ba^{+}_k} 
\end{align}
\end{definition}

Note that $\bx^{*}$ contains infinitely many variables starting from lower index 2, but each $\cQ\in \dC[\bx^{*}]$ is only allowed to have finitely many nonzero terms. 

Suppose that $\cQ\in \dC[\bx^{*}]$. If we write $\cQ(\bx^{*})$ or $\cQ$, we consider it as a polynomial with variable $\bx^{*}$. However, if we take $\bp^{*}$ and $\mathbf{f}^{*}$ from Definition \ref{Def;bfp;1}, then $\cQ(\bp^{*})$ and  $\cQ(\mathbf{f}^{*})$ are considered as an element of $\dC[z,\Bz]$ (a polynomial of $z,\Bz$) by substituting $\bp_k^{\pm}$ or $\mathbf{f}^{\pm}_k$ with the corresponding element in $\dC[z,\Bz]$, specified by Definition \ref{Def;bfp;1}. 

If a polynomial $\cQ\in \dC[\bx^{*}]$ is proved to depend only on part of its variable, we might drop the dependence of remaining variables. For example, if we found that $\cQ(\bx^{*})$ depends only on $\bx^{-}_2,\bx^{+}_2,...,\bx^{-}_k,\bx^{+}_k$, then we might write  $\cQ(\bx^{*}) = \cQ(\bx^{-}_2,\bx^{+}_2,...,\bx^{-}_k,\bx^{+}_k)$.

\begin{definition}\label{Def;S';3}
	For any monomial $\cR \in \dC[\bx^{*}]$ in the form of
\begin{align}
	\cR(\bx) = c(\bx^{*})^{\ba} = c\prod_{k\geq 2} (\bx_k^{-})^{\ba^{-}_k}(\bx_k^{+})^{\ba^{+}_k} 
\end{align}	
where $c\in \dC$ and $\ba$ is a double multi-index with finitely many nonzeros, define the $S'$-degree of $\cR$ to be \textbf{a pair of numbers} given by: 
\begin{align}
	deg_{S'}(\cR) := S'(\ba) 
\end{align}
define the $|S'|$-degree of $\cR$ to be a number given by 
\begin{align}
	deg_{|S'|}\cR := |S'(\ba)|
\end{align}
For any polynomial $\cQ\in \dC[\bx^{*}]$, we define the $|S'|$-degree of $\cQ$ as the highest $|S'|$-degree among all monomials in $\cQ$. 
We define $deg_{S'}(c) := (0,0)$ and $deg_{|S'|}(c) := 0 $ for constant $c \in \dC$.
\end{definition}

\begin{definition}\label{Def;S';4}
	A polynomial $\cQ$ is said to be $S'$-homogenous, if all of the monomials of $\cQ$ has the same $S'$-degree (as a pair of numbers), and we define the homogeneous $S'$-degree of $\cQ$ to be the $S'$-degree of either of its monomial, written as $deg^h_{S'}(\cQ)$.\\
A polynomial $\cQ$ is said to be $|S'|$-homogenous, if all of the monomials of $\cQ$ has the same $|S'|$-degree, and we define the homogeneous $|S'|$-degree of $\cQ$ to be the $|S'|$-degree of either of its monomial, written as $deg^h_{|S'|}(\cQ)$.\\
We will treat the homogeneous $S'$-degree and $|S'|$-degree of 0 as arbitrary number. That is, any equality of the form $deg^h_{S'}0 = \cdot$ or $deg^h_{|S'|} 0 = \cdot$ is always considered to be true. 
\end{definition}
From the definition, we see that an $S'$-homogeneous polynomial of $S'$-degree $(i,j)$ is automatically $|S'|$-homogeneous with $|S'|$-degree $i+j$, but not vice versa.  
We will use this fact frequently in the following rest of the section.

Next, we define how to extract $S'$ or $|S'|$-homogeneous terms out of a general polynomial in $\dC[\bx^{*}]$: 

\begin{definition}\label{Def;extraction;1}
	For any polynomial $\cQ \in \dC[\bx^{*}]$, define $[\cQ]_{i,j}\in \dC[\bx^{*}]$ to be the sum of all monomials in $\cQ$ with $S'$-degree equal to $(i,j)$.
	Define $[\cQ]_{k}\in \dC[\bx^{*}]$ to be the sum of all monomials in $\cQ$ with $|S'|$-degree equal to $k$. In particular, $[\cQ]_{i,j}$ is $S'$-homogeneous with $S'$-degree $(i,j)$ and $[\cQ]_{k}$ is $|S'|$-homogeneous with $|S'|$-degree equal to $k$.
\end{definition}

Since there is no canonical way to order pairs of numbers, we don't define $S'$-degree for non-$S'$-homogeneous polynomial in $\dC[\bx^{*}]$. 

Now with $S'$-degree and $|S'|$-degree defined, we will still frequently use the usual degree for a polynomial as well. The term "degree", when used without qualifiers $S'$ or $|S'|$ in front, refers to the usual polynomial degree.

\begin{lemma}\label{Lem;extraction;2}
Suppose that $\cQ\in \dC[\bx^{*}]$. Take $\bp^{*}, \mathbf{f}^{*}$ from Definition \ref{Def;bfp;1}. Then $[\cQ]_{i,j}(\bp^{*})$ and $[\cQ]_{i,j}(\mathbf{f}^{*})$ are both monomials of $\dC[z,\Bz]$ in the form $cz^{i}\Bz^j$  for any $i,j\geq 0$.
\end{lemma}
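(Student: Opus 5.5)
By linearity of evaluation, it suffices to treat a single monomial $\cR = c(\bx^{*})^{\ba}$ with $S'(\ba) = (i,j)$. The polynomial $[\cQ]_{i,j}$ is a finite sum of such monomials, all sharing the same $S'$-degree $(i,j)$. A sum of terms of the form $c_\ell z^i\Bz^j$ is again of that form, so the general statement will follow once each monomial is handled.

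For a single monomial, I will first record the monomial type of each building block. By definition, $P_{\frac{k(k-1)}{2},\frac{k(k+1)}{2}}$ is a multiple of $z^{\frac{k(k-1)}{2}}\Bz^{\frac{k(k+1)}{2}}$. By Corollary \ref{Cor;divisble} (or directly, since both exponents are $\geq 1$ when $k\geq 2$), it is divisible by $z\Bz$. Hence $\bp^-_k = c^-_k z^{\frac{k(k-1)}{2}-1}\Bz^{\frac{k(k+1)}{2}-1}$ for some $c^-_k\in\dC$, and similarly $\bp^+_k = c^+_k z^{\frac{k(k+1)}{2}-1}\Bz^{\frac{k(k-1)}{2}-1}$. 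For $\mathbf{f}^{\pm}_k$, Definition \ref{Def;fij} says that $\tilde f_{a,b}$ is the sum of the monomials $cz^a\Bz^b$ in $\tilde f_{a+b}$, which is a single monomial $c\,z^a\Bz^b$. With $(a,b) = (\frac{k^2-k-2}{2},\frac{k^2+k-2}{2})$ this is exactly $z^{\frac{k(k-1)}{2}-1}\Bz^{\frac{k(k+1)}{2}-1}$, and symmetrically for $\mathbf{f}^+_k$. If $k^2>N$, all of these are set to $0$, which is trivially of the required form with $c=0$.

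Multiplying these together, the exponents add. The $z$- and $\Bz$-exponents of $\prod_k(\bp^-_k)^{\ba^-_k}(\bp^+_k)^{\ba^+_k}$ are therefore exactly $S'_1(\ba)$ and $S'_2(\ba)$ from Definition \ref{Def;S'}, so $\cR(\bp^*) = c' z^{i}\Bz^{j}$. The same computation applies to $\cR(\mathbf f^*)$. The constant monomial corresponds to $\ba=0$ and $S'$-degree $(0,0)$, and is consistent with this.

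The only point needing care is the identification of $f_{i,j}$ as a polynomial rather than as a function on $\mathbb S^1(\sqrt2)$. On the circle $z\Bz=2$, so distinct monomials can coincide there, and the identification must go through $\tilde f_{i,j}$ as fixed in Definition \ref{Def;fij}. That definition is well posed because of the uniqueness in Lemma \ref{fm;Lemma;3}. With this identification in place the argument is a bookkeeping check, and I expect no real obstacle.
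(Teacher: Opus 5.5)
Your proposal is correct and follows the same route as the paper. You reduce by linearity to a single monomial $(\bx^{*})^{\ba}$ with $S'(\ba)=(i,j)$, note that each $\bp^{\pm}_k$ (resp.\ $\mathbf{f}^{\pm}_k$) is a single monomial with $(z,\Bz)$-exponents $\bigl(\frac{k(k\pm1)}{2}-1,\frac{k(k\mp1)}{2}-1\bigr)$, and add exponents to obtain $S'(\ba)$; your remarks on divisibility by $z\Bz$ and on reading $f_{i,j}$ through $\tilde f_{i,j}$ just make explicit what the paper leaves implicit.
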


\begin{proof}
	Suppose that $\cQ = \sum_{|S'(\ba)|<k} C_{\ba}\cdot(\bx^{*})^{\ba}$ for some $k<\infty$.
	Then $[\cQ]_{i,j}(\bx^{*}) = \sum_{S'(\ba) = (i,j)} C_{\ba}\cdot(\bx^{*})^{\ba}$. If we plug in $\bp^{*}$, then 
	\begin{align}
		[\cQ]_{i,j}(\bp^{*}) = \sum_{S'(\ba) = (i,j)} C_{\ba}\cdot(\bp^{*})^{\ba} = \sum_{S'(\ba) = (i,j)} C_{\ba}\cdot \prod_{l\geq 2}(\bp^{-}_l)^{\ba^{-}_l}(\bp^{+}_l)^{\ba^{+}_l}
	\end{align}  
	It suffices to show that for each $\ba$ with $S'(\ba) =  (S'_1(\ba), S'_2(\ba)) = (i,j)$, the term $\prod_{l\geq 2}(\bp^{-}_l)^{\ba^{-}_l}(\bp^{+}_l)^{\ba^{+}_l}$ must be in the form of $Cz^i\Bz^j$. 
	We use $C$ to represent constants that might change line by line.
	By Definition \ref{Def;bfp;1} we know that $(\bp^{-}_j)^{\ba^{-}_j} = Cz^{\ba^{-}_j\cdot\frac{j(j-1)-2}{2}}\Bz^{\ba^{-}_j\cdot{\frac{j(j+1)-2}{2}}}$ and $(\bp^{+}_j)^{\ba^{+}_j} = Cz^{\ba^{+}_j\cdot\frac{j(j+1)-2}{2}}\Bz^{\ba^{+}_j\cdot{\frac{j(j-1)-2}{2}}}$. Therefore, 
	\begin{align}
		\prod_{l\geq 2}(\bp^{-}_l)^{\ba^{-}_l}(\bp^{+}_l)^{\ba^{+}_l} =& C \cdot z^{\sum_{l\geq 2}\ba^{-}_l\cdot \frac{l(l-1)-2}{2} + \ba^{+}_l\cdot \frac{l(l+1)-2}{2}} \Bz^{\sum_{l\geq 2}\ba^{-}_l\cdot \frac{l(l+1)-2}{2} + \ba^{+}_l\cdot \frac{l(l-1)-2}{2}} \nonumber\\
		=& C\cdot z^{S'_1(\ba)}\Bz^{S'_2(\ba)} \nonumber\\
		=& Cz^i\Bz^j \nonumber
	\end{align}
	The assertion then follows. 
	
\end{proof}

Lemma \ref{Lem;extraction;2} shows that the $S'$-degree represents the $z,\Bz$ degree after plugging $z,\Bz$ polynomial such as $\bp^{*}, \mathbf{f}^{*}$ into $\cQ$. Similar for $|S'|$-degree. However, we want to keep the polynomial $\cQ$ an independent algebraic object without mentioning $\bp^{*}$ or $\mathbf{f}^{*}$, hence we prefer using $S'$ or $|S'|$ degree instead of  $z,\Bz$ degree for most of the time.

 In the next Lemma, we discuss how terms or variables are absent:

\begin{lemma}\label{Lem;Sdeg;1}
Let us assume that $\cQ \in \dC[\bx^{*}]$. The following facts are independent of each other:
\begin{enumerate}
	\item If $\cQ$ is $S'$-homogeneous with $S'$-degree $=(i,j)$. Further assume that  $(i,j)\neq \big(\frac{k^2-k-2}{2},\frac{k^2+k-2}{2}\big)$ and $(i,j)\neq \big(\frac{k^2+k-2}{2},\frac{k^2-k-2}{2}\big)$ for any $k\geq 2$. Then  there is no degree 1 term in $\cQ$.
	\item  If $\cQ$ is $|S'|$-homogeneous with $|S'|$-degree $< k^2-2$ for some $k \geq 2$, then $\cQ(\bx^{*}) = \cQ(\bx^{-}_2,\bx^{+}_2,...,\bx^{-}_{k-1}, \bx^{+}_{k-1})$. In other words, $\cQ$ is independent of $\bx^{\pm}_{l}$ for all $l \geq k$.
	\item If $\cQ$ is $|S'|$-homogeneous with $|S'|$-degree  $\leq k^2-2$ for some $k \geq 2$ and with no degree 1 term, then $\cQ(\bx^{*}) = \cQ(\bx^{-}_2,\bx^{+}_2,...,\bx^{-}_{k-1}, \bx^{+}_{k-1})$. In other words, $\cQ$ is independent of $\bx^{\pm}_{l}$ for all $l \geq k$.
\end{enumerate}
\end{lemma}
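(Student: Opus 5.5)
All three items reduce to bookkeeping with the $S'$-degree of a single monomial $(\bx^{*})^{\ba}$. Each variable has a fixed contribution:
\begin{align*}
deg_{S'}(\bx^{-}_l)=\Big(\tfrac{l(l-1)}{2}-1,\ \tfrac{l(l+1)}{2}-1\Big),\qquad deg_{S'}(\bx^{+}_l)=\Big(\tfrac{l(l+1)}{2}-1,\ \tfrac{l(l-1)}{2}-1\Big).
\end{align*}
In particular $deg_{|S'|}(\bx^{\pm}_l)=l^2-2\geq 2$. Since $S'$ is additive in $\ba$, we get
\begin{align*}
|S'(\ba)|=\sum_{l\geq2}(\ba^-_l+\ba^+_l)(l^2-2).
\end{align*}
Every monomial of positive degree therefore has $|S'|$-degree at least $l^2-2$ for any variable $\bx^{\pm}_l$ it contains. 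Constants have $S'$-degree $(0,0)$. I will prove each item by looking at one monomial at a time, which is legitimate because homogeneity is a condition on every monomial separately.

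For (1), a degree 1 monomial is $c\,\bx^{\pm}_l$ for some $l\geq2$. Its $S'$-degree is $\big(\frac{l^2-l-2}{2},\frac{l^2+l-2}{2}\big)$ or $\big(\frac{l^2+l-2}{2},\frac{l^2-l-2}{2}\big)$, since $\frac{l(l\mp1)}{2}-1=\frac{l^2\mp l-2}{2}$. This is exactly one of the excluded pairs with $k=l$. Since $\cQ$ is $S'$-homogeneous of degree $(i,j)$, every monomial of $\cQ$ has $S'$-degree $(i,j)$, so no such monomial can occur.

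For (2), suppose a monomial of $\cQ$ contains $\bx^{\pm}_l$ with $l\geq k$. By the displayed sum, its $|S'|$-degree is at least $l^2-2\geq k^2-2$. This contradicts the fact that every monomial has $|S'|$-degree $<k^2-2$, so $\cQ$ is independent of $\bx^{\pm}_l$ for all $l\geq k$.

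For (3), suppose a monomial contains $\bx^{\pm}_l$ with $l\geq k$. If $l>k$, its $|S'|$-degree is at least $l^2-2>k^2-2$, which is impossible. If $l=k$ and the monomial has polynomial degree at least 2, then it contains a second variable $\bx^{\pm}_{l'}$ (possibly the same one again), which adds at least $2$. The total is then $\geq k^2>k^2-2$, again impossible. The remaining case is the degree 1 monomial $c\,\bx^{\pm}_k$, which is excluded by hypothesis.

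There is no real obstacle here. The only care needed is in (3), where one must use that every variable has $|S'|$-degree at least $2$ (from $l\geq 2$) to rule out $\bx^{\pm}_k$ appearing in a higher-degree monomial.
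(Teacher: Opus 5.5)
Your proof is correct and follows essentially the same route as the paper: per-monomial bookkeeping via $|S'(\ba)|=\sum_{l\geq 2}(l^2-2)(\ba^{-}_l+\ba^{+}_l)$. In (1) you identify degree-one monomials with the excluded pairs, and in (3) you use that any additional factor raises the $|S'|$-degree by at least $2$, so $\bx^{\pm}_k$ can appear only as a linear monomial. Your treatment of (3) also handles the case $l>k$ explicitly, which the paper leaves implicit.
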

\begin{proof}
\begin{enumerate}
	\item If there is a degree 1 term in $\cQ$, it must be $c\bx^{-}_k$ or $c\bx^{+}_k$ for some $k\geq 2$, the $S'$-degree must then be either $\big(\frac{k^2-k-2}{2},\frac{k^2+k-2}{2}\big)$ or $\big(\frac{k^2+k-2}{2},\frac{k^2-k-2}{2}\big)$ , a contradiction.
	\item Suppose that $\cQ$ contains monomial $c\cdot (\bx^{*})^{\ba}$, then its $|S'|$-degree 
	 is equal to $\sum_{l\geq 2} (l^2-2)(\ba^{-}_l + \ba^{+}_l)$, which by assumption is $<k^2-2$. We then deduce that $\ba^{-}_l = \ba^{+}_l = 0$ for all $l \geq k$,  meaning that this monomial is independent of  $\bx^{\pm}_{l}$ for all $l \geq k$. Since this is true for all monomials in $\cQ$, the assertion follows.
	\item  Similar argument. If $\ba^{-}_k \neq 0$ or $\ba^{+}_k \neq 0$, then inequality $\sum_{l\geq 2} (l^2-2)(\ba^{-}_l + \ba^{+}_l) \leq k^2-2$ implies that $|\ba| = 1$. This contradicts the assumption of no degree 1 term. Thus any monomial $c\cdot (\bx^{*})^{\ba}$ in $\cQ$ must have $\ba^{-}_l = \ba^{+}_l = 0$ for all $l \geq k$,  meaning that it is independent of  $\bx^{\pm}_{l}$ for all $l \geq k$. The assertion  then follows.
\end{enumerate}
\end{proof}

We next show that algebraic property of $S'$-degree and $|S'|$-degree are identical to the usual degree:
\begin{lemma}\label{Lem;Sdeg;2}
Let us assume that $\cQ, \cQ' \in \dC[\bx^{*}]$. The following facts are independent of each other:
\begin{enumerate}
	\item If $\cQ, \cQ'$ are both $S'$-homogenous, then so is their product. Moreover
		\begin{align}
			deg^h_{S'}(\cQ \cQ') = deg^h_{S'}(\cQ ) + deg^h_{S'}(\cQ')  
		\end{align}
	\item If $\cQ, \cQ'$ are both $|S'|$-homogenous, then so is their product. Moreover
		\begin{align}
			deg^h_{|S'|}(\cQ \cQ') = deg^h_{|S'|}(\cQ ) + deg^h_{|S'|}(\cQ')  
		\end{align}
\end{enumerate}
\end{lemma}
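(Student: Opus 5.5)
The plan is to reduce both statements to the corresponding facts for single monomials and then extend to sums by distributivity. The key observation is that $S'$ is additive on double multi-indices. From Definition \ref{Def;S'}, $S'(\ba)$ is a sum over $k\geq 2$ of $\ba^{-}_k$ times a fixed pair plus $\ba^{+}_k$ times a fixed pair. It is therefore a $\mathbb{Z}$-linear function of the entries of $\ba$, so
\begin{align*}
S'(\ba+\ba') = S'(\ba)+S'(\ba'),
\end{align*}
and taking the sum of the two components gives $|S'(\ba+\ba')| = |S'(\ba)|+|S'(\ba')|$.

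Next we handle monomials. If $\cR = c(\bx^{*})^{\ba}$ and $\cR' = c'(\bx^{*})^{\ba'}$, then in the commutative ring $\dC[\bx^{*}]$ their product is $cc'(\bx^{*})^{\ba+\ba'}$. If $cc'\neq 0$, Definition \ref{Def;S';3} and the additivity above give
\begin{align*}
deg_{S'}(\cR\cR') = S'(\ba)+S'(\ba') = deg_{S'}(\cR)+deg_{S'}(\cR'),
\end{align*}
and the same identity holds for $deg_{|S'|}$.

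Now we extend to polynomials. For (1), write $\cQ=\sum_{\ba} C_{\ba}(\bx^{*})^{\ba}$ with every nonzero term of $S'$-degree $(i,j)$, and $\cQ'=\sum_{\ba'} C'_{\ba'}(\bx^{*})^{\ba'}$ with every nonzero term of $S'$-degree $(i',j')$. Expanding the product gives
\begin{align*}
\cQ\cQ' = \sum_{\ba,\ba'} C_{\ba}C'_{\ba'}(\bx^{*})^{\ba+\ba'}.
\end{align*}
Each summand has $S'$-degree $(i+i',j+j')$. Collecting like terms may produce cancellations, but every monomial that survives still has exponent of the form $\ba+\ba'$, hence $S'$-degree $(i+i',j+j')$. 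So $\cQ\cQ'$ is $S'$-homogeneous of the claimed degree. Statement (2) follows by the identical argument with $|S'|$ in place of $S'$.

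The only delicate point is the zero polynomial, and Definition \ref{Def;S';4} settles it. If $\cQ$, $\cQ'$, or the product is $0$ (the product can only vanish when a factor does, since $\dC[\bx^{*}]$ is an integral domain), then the homogeneous degree of $0$ is treated as arbitrary and the asserted equality holds vacuously. The same convention covers constants, which have degree $(0,0)$ and are consistent with $S'(\mathbf{0})=(0,0)$. Nothing here is a genuine obstacle; the only care needed is to state that cancellation cannot change the degree of the surviving monomials and to invoke the zero-polynomial convention.
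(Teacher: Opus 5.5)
Your proposal is correct and follows the paper's proof: both reduce to products of monomials $c(\bx^{*})^{\ba}\cdot c'(\bx^{*})^{\ba'}=cc'(\bx^{*})^{\ba+\ba'}$ and use the additivity $S'(\ba+\ba')=S'(\ba)+S'(\ba')$, together with the corresponding additivity of $|S'|$. Your remarks on cancellation after collecting like terms, and on the zero-polynomial convention of Definition~\ref{Def;S';4}, fill in details the paper leaves implicit.
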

\begin{proof}
	It suffices to check for the product of monomials. For $\cQ = c(\bx^{*})^{\ba}$ and $\cQ' = c'(\bx^{*})^{\ba'}$, their product is $cc' (\bx^{*})^{\ba + \ba'}$, whose $S'$-degree is $deg^h_{S'}(\cQ\cQ') = S'(\ba + \ba') = S'(\ba) + S'(\ba') = deg^h_{S'}(\cQ ) + deg^h_{S'}(\cQ')$. Similarly we have $deg^h_{|S'|}(\cQ\cQ') = |S'(\ba + \ba')| = |S'(\ba)| + |S'(\ba')| = deg^h_{|S'|}(\cQ ) + deg^h_{|S'|}(\cQ')$
\end{proof}

We then record a lemma about how to compute the $S'$-degree for polynomial composition:
\begin{lemma}\label{Lem;poly-comp}
\begin{enumerate}
	\item Given an $S'$-homogeneous polynomial $\cQ(\bx^{*}) = \cQ(\bx^{-}_2,\bx^{+}_2, ...,\bx^{-}_k,\bx^{+}_k) \in \dC[\bx^{*}]$ depending only on $\bx^{-}_2,\bx^{+}_2, ...,\bx^{-}_k,\bx^{+}_k$. Suppose that $\cQ^{-}_{l}, \cQ^{+}_{l}\in \dC[\bx^{*}]$ are $S'$-homogeneous with $deg^h_{S'}(\cQ^{-}_{l}) = (\frac{l(l-1)-2}{2}, \frac{l(l+1)-2}{2})$ and $deg^h_{S'}(\cQ^{+}_{l}) = (\frac{l(l+1)-2}{2}, \frac{l(l-1)-2}{2})$ for all $2\leq l \leq k$, then 
	\begin{align}
	\tilde{\cQ}(\bx^{*}):= \cQ\Big(\cQ^{-}_{2}(\bx^{*}), \cQ^{+}_{2}(\bx^{*}), ..., \cQ^{-}_{k}(\bx^{*}), \cQ^{+}_{k}(\bx^{*})\Big)  
	\end{align}
	is an $S'$-homogeneous polynomial with $deg^h_{S'} (\tilde{\cQ}) = deg^h_{S'} (\cQ)$ 
	\item Given an $|S'|$-homogeneous polynomial $\cQ(\bx^{*}) = \cQ(\bx^{-}_2,\bx^{+}_2, ...,\bx^{-}_k,\bx^{+}_k) \in \dC[\bx^{*}]$ depending only on $\bx^{-}_2,\bx^{+}_2, ...,\bx^{-}_k,\bx^{+}_k$. Suppose that $\cQ^{-}_{l}, \cQ^{+}_{l}\in \dC[\bx^{*}]$ are $|S'|$-homogeneous with $deg^h_{|S'|}(\cQ^{-}_{l}) = deg^h_{|S'|}(\cQ^{+}_{l}) = l^2-2$ for all $2\leq l \leq k$, then 
	\begin{align}
	\tilde{\cQ}(\bx^{*}):= \cQ\Big(\cQ^{-}_{2}(\bx^{*}), \cQ^{+}_{2}(\bx^{*}), ..., \cQ^{-}_{k}(\bx^{*}), \cQ^{+}_{k}(\bx^{*})\Big)  
	\end{align}
	is an $|S'|$-homogeneous polynomial with $deg^h_{|S'|} (\tilde{\cQ}) = deg^h_{|S'|} (\cQ)$.
	\item Given an $S'$-homogeneous polynomial $\cQ(\bx^{*}) \in  \dC[\bx^{*}]$. Suppose that $\cQ^{-}_{l}, \cQ^{+}_{l}\in \dC[\bx^{*}]$ are $S'$-homogeneous with $deg^h_{S'}(\cQ^{-}_{l}) = (\frac{l(l-1)-2}{2}, \frac{l(l+1)-2}{2})$ and $deg^h_{S'}(\cQ^{+}_{l}) = (\frac{l(l+1)-2}{2}, \frac{l(l-1)-2}{2})$ for all $l \geq 2$, then 
	\begin{align}
	\tilde{\cQ}(\bx^{*}):= \cQ\Big(\cQ^{-}_{2}(\bx^{*}), \cQ^{+}_{2}(\bx^{*}), \cQ^{-}_{3}(\bx^{*}), \cQ^{+}_{3}(\bx^{*}),....\Big)  
	\end{align}
	is an $S'$-homogeneous polynomial with $deg^h_{S'} (\tilde{\cQ}) = deg^h_{S'} (\cQ)$.
\end{enumerate}
\end{lemma}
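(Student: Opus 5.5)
The plan is to reduce all three statements to the behaviour of a single monomial under substitution, and then extend by linearity, using Lemma \ref{Lem;Sdeg;2} for products.

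For part (1), write $\cQ = \sum_{\ba} C_{\ba} (\bx^{*})^{\ba}$. Since $\cQ$ depends only on $\bx^{\pm}_l$ with $2\le l\le k$ and is $S'$-homogeneous, every $\ba$ appearing with $C_\ba\neq 0$ is supported on indices $l\le k$ and satisfies $S'(\ba)=deg^h_{S'}(\cQ)$. Substituting gives
\begin{align*}
\tilde\cQ = \sum_{\ba} C_{\ba}\prod_{l=2}^{k}\big(\cQ^{-}_l\big)^{\ba^{-}_l}\big(\cQ^{+}_l\big)^{\ba^{+}_l}.
\end{align*}
Each factor $\cQ^{\pm}_l$ is $S'$-homogeneous of $S'$-degree equal to the $S'$-degree of the variable $\bx^{\pm}_l$; this is exactly $S'$ applied to the unit double multi-index at position $(l,\pm)$, by Definition \ref{Def;S'}. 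Applying Lemma \ref{Lem;Sdeg;2}(1) repeatedly, each product $\prod_l (\cQ^{-}_l)^{\ba^-_l}(\cQ^{+}_l)^{\ba^+_l}$ is $S'$-homogeneous of $S'$-degree
\begin{align*}
\sum_{l}\ba^{-}_l\Big(\tfrac{l(l-1)-2}{2},\tfrac{l(l+1)-2}{2}\Big)+\ba^{+}_l\Big(\tfrac{l(l+1)-2}{2},\tfrac{l(l-1)-2}{2}\Big)=S'(\ba).
\end{align*}
If some factor is $0$, the product is $0$, which is allowed under the convention of Definition \ref{Def;S';4}. A constant monomial ($\ba=0$) has degree $(0,0)$, which is consistent. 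A sum of $S'$-homogeneous polynomials of a common $S'$-degree is again $S'$-homogeneous of that degree, since the monomials in the sum, after cancellation, still all have that degree. Since $S'(\ba)=deg^h_{S'}(\cQ)$ for every $\ba$ in the sum, this gives $deg^h_{S'}(\tilde\cQ)=deg^h_{S'}(\cQ)$.

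Part (2) is the same argument with $|S'|$ in place of $S'$. Here I use Lemma \ref{Lem;Sdeg;2}(2) and the identity $|S'(\ba)| = \sum_l (l^2-2)(\ba^-_l+\ba^+_l)$ from the proof of Lemma \ref{Lem;Sdeg;1}. Each variable $\bx^{\pm}_l$ has $|S'|$-degree $l^2-2$, which matches the hypothesis on $\cQ^{\pm}_l$.

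For part (3), where $\cQ$ may involve infinitely many variable slots, I note that $\cQ$ is a polynomial, so it has only finitely many monomials. Hence there is some $k$ such that $\cQ$ depends only on $\bx^{\pm}_l$ for $l\le k$. The substitution then only uses $\cQ^{\pm}_l$ for $l\le k$, and part (1) applies directly.

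I expect no real obstacle. The only point needing care is that cancellations, or vanishing of some $\cQ^{\pm}_l$, could make $\tilde\cQ$ zero or drop monomials. This is handled by the convention that homogeneity of $0$ holds with arbitrary degree, and by the fact that cancellation never creates monomials of a new degree.
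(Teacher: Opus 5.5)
Your proposal is correct and follows the paper's proof essentially verbatim: you expand $\cQ$ into monomials, substitute, use Lemma \ref{Lem;Sdeg;2} to see that each substituted monomial has $S'$-degree (resp.\ $|S'|$-degree) equal to $S'(\ba)$ (resp.\ $|S'(\ba)|$), and reduce part (3) to part (1) by a finiteness argument. The only cosmetic difference is in (3): you use that $\cQ$ has finitely many monomials, while the paper uses that only finitely many $\ba$ satisfy $S'(\ba)=(i,j)$; either justification suffices.
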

 \begin{proof}
 All constants $C$ appeared in the proof are local and will not interact with the other part of the paper.
 \begin{enumerate}
 	\item We assume that $deg^h_{S'}(\cQ) = (i,j)$, then we can write
 	\begin{align}
 		\cQ(\bx^{-}_2,\bx^{+}_2, ...,\bx^{-}_k,\bx^{+}_k)  = \sum_{\substack{S'(\ba) = (i,j)\\ \ba^{-}_{k+1} = \ba^{+}_{k+1} = \ba^{-}_{k+2} = \ba^{+}_{k+2} = \cdots = 0}} C_{\ba}\cdot \prod_{l=  2}^{k} (\bx_l^{-})^{\ba^{-}_l}(\bx_l^{+})^{\ba^{+}_l}  
 	\end{align} 
 		Plugging in we get:
 		\begin{align}
 			\tilde{\cQ}(\bx) =& \sum_{\substack{S'(\ba) = (i,j)\\ \ba^{-}_{k+1} = \ba^{+}_{k+1} = \ba^{-}_{k+2} = \ba^{+}_{k+2} = \cdots = 0}} C_{\ba}\cdot \prod_{l= 2}^{k} (\cQ_l^{-})^{\ba^{-}_l}(\cQ_l^{+})^{\ba^{+}_l} 
 		\end{align}
 		By Definition \ref{Lem;Sdeg;2} we see that 
 		\begin{align}
 			deg^h_{S'} \Bigg[ \prod_{l= 2}^{k} (\cQ_l^{-})^{\ba^{-}_l}(\cQ_l^{+})^{\ba^{+}_l} \Bigg] =& \sum_{l= 2}^{k} \ba^{-}_l \cdot deg^h_{S'}(\cQ_l^{-}) + \ba^{+}_l \cdot deg^h_{S'}(\cQ_l^{+}) \\
 			=&\sum_{l= 2}^{k} \ba^{-}_l \cdot \Big(\frac{l(l-1)-2}{2}, \frac{l(l+1)-2}{2}\Big) + \ba^{+}_l \cdot \Big(\frac{l(l+1)-2}{2}, \frac{l(l-1)-2}{2}\Big)\\
 			=& S'(\ba) = (i,j)
 		\end{align}
 		Therefore, each summand of $\tilde{\cQ}$ has $S'$-degree $=(i,j)$. The result then follows
 		\item  We assume that $deg^h_{|S'|}(\cQ) = m$, then we can write
 	\begin{align}
 		\cQ(\bx^{-}_2,\bx^{+}_2, ...,\bx^{-}_k,\bx^{+}_k)  = \sum_{\substack{|S'(\ba)| = m\\ \ba^{-}_{k+1} = \ba^{+}_{k+1} = \ba^{-}_{k+2} = \ba^{+}_{k+2} = \cdots = 0}} C_{\ba}\cdot \prod_{l=  2}^{k} (\bx_l^{-})^{\ba^{-}_l}(\bx_l^{+})^{\ba^{+}_l}  
 	\end{align} 
 		Plugging in we get:
 		\begin{align}
 			\tilde{\cQ}(\bx) =& \sum_{\substack{|S'(\ba)| = m\\ \ba^{-}_{k+1} = \ba^{+}_{k+1} = \ba^{-}_{k+2} = \ba^{+}_{k+2} = \cdots = 0}} C_{\ba}\cdot \prod_{l= 2}^{k} (\cQ_l^{-})^{\ba^{-}_l}(\cQ_l^{+})^{\ba^{+}_l} 
 		\end{align}
 		By Definition \ref{Lem;Sdeg;2} we see that 
 		\begin{align}
 			deg^h_{|S'|} \Bigg[ \prod_{l= 2}^{k} (\cQ_l^{-})^{\ba^{-}_l}(\cQ_l^{+})^{\ba^{+}_l} \Bigg] =& \sum_{l= 2}^{k} \ba^{-}_l \cdot deg^h_{|S'|}(\cQ_l^{-}) + \ba^{+}_l \cdot deg^h_{|S'|}(\cQ_l^{+}) =\sum_{l= 2}^{k} (\ba^{-}_l + \ba^{+}_l) \cdot (l^2-2) \\
 		=& |S'(\ba)| = m
 		\end{align}
 		Therefore, each summand of $\tilde{\cQ}$ has $|S'|$-degree $=m$. The result then follows.
 		\item Any $S'$-homogeneous polynomial $\cQ$ is in the form of $\cQ(\bx^{*})= \sum_{S'(\ba) = (i,j)}C_{\ba} \cdot (\bx^{*})^{\ba}$. 
 			Since there are only finitely many $\ba$ satisfying $S'(\ba) = (i,j)$, we can find $k\geq 2$ such that $\cQ(\bx^{*}) = \cQ(\bx^{-}_2,\bx^{+}_2,\cdots, \bx^{-}_k,\bx^{+}_k)$. Then we are back to the case (1) and hence the result is true. 

 \end{enumerate}
 \end{proof}
 
\subsection{Transferring between $\bp^{*}$ and $\mathbf{f}^{*}$}

We first define some universal polynomials.  All polynomials defined here depends only on their indices, and they are defined for all large indices, including those beyond $N$.
\begin{definition}\label{Q;def;1}
\begin{itemize}
	\item Define $\cQ^{(25)}, \cQ^{(26)} \in \dC[\bx^{*}]$ in the following way:
	\begin{align}\label{Def;Q26}
	\cQ^{(26)}_{i,j}(\bx^{*}) : = \sum_{\substack{S'(\ba) = (i,j)\\|\ba| \ge 1}} C^{(1)}_{\ba}\prod_{k\geq 2} (\bx^{-}_k)^{\ba^{-}_k}(\bx^{+}_k)^{\ba^{+}_k} \in \dC[\bx^{*}],\quad \quad i, j \geq 0
	\end{align}
	\begin{align}
		\cQ^{(25)}_m := \sum_{i+j = m}\cQ^{(26)}_{i,j} \in \dC[\bx^{*}],\quad \quad m \geq 0
	\end{align}

\item Define $\cQ^{(23)}, \cQ^{(24)}, \cQ^{(27)}\in \dC[\bx^{*}]$ by
\begin{align}\label{Def;Q23}
	\cQ^{(23)}_m(\bx^{*}) : = -\sqrt{2}\cQ^{(25)}_m(\bx^{*}) + \cQ^{(21)}_m\Big(2\cQ^{(25)}_1(\bx^{*}), \cdots, 2\cQ^{(25)}_{m-1}(\bx^{*})\Big), \quad \quad m\geq 1
\end{align}
\begin{align}
	\cQ^{(24)}_{i,j} : = [\cQ^{(23)}_{i+j}]_{i,j},\quad \quad i,j \geq 0, \ i+j \geq 1
\end{align}
\begin{align}\label{Def;Q27}
	\cQ^{(27)}_m := \cQ^{(11)}_{m}\Big(\cQ^{(23)}_1(\bx^{*}), \cdots , \cQ^{(23)}_{m-1} (\bx^{*})\Big), \quad \quad m\geq 1
\end{align}

\item Define $\cQ^{(22)}_{i,j}  \in \dC[\bx^{*}]$ for all $i,j\geq 0$ as follows. If $(i,j)\neq (\frac{k^2-k-2}{2}, \frac{k^2+k-2}{2})$ and $(i,j)\neq(\frac{k^2+k-2}{2}, \frac{k^2-k-2}{2})$ for any $k\geq 2$, define
\begin{align}
	\cQ^{(22)}_{i,j}(\bx^{*}) :=  \cQ^{(26)}_{i,j}(\bx^{*})
\end{align}
Otherwise if  $(i,j) = (\frac{k^2-k-2}{2}, \frac{k^2+k-2}{2})$ or $(i,j) = (\frac{k^2+k-2}{2}, \frac{k^2-k-2}{2})$ for some $k\geq 2$,  define
\begin{align}
	\cQ^{(22)}_{i,j}(\bx^{*}) :=   [\cQ^{(27)}_{i+j}]_{i,j}(\bx^{*})  
\end{align}
\textbf{Remark:} In the case of $(i,j)\neq (\frac{k^2-k-2}{2}, \frac{k^2+k-2}{2})$ and $(i,j)\neq(\frac{k^2+k-2}{2}, \frac{k^2-k-2}{2})$ for any $k\geq 2$, one can alternatively define $\cQ^{(22)}_{i,j}(\bx^{*}) :=  -\frac{1}{\sqrt{2}}\cQ^{(24)}_{i,j}(\bx^{*}) + [\cQ^{(27)}_{i+j}]_{i,j}(\bx^{*})$.  This is equivalent to the previous definition, but we will not use it and the equivalence is not obvious at first glance.

\end{itemize}	
\end{definition}

\begin{corollary}\label{Pij;cor;3}[c.f Theorem \ref{Thm;11}] 
We can rewrite Theorem \ref{Thm;11} as 
\begin{align}
	\frac{P_{i+1,j+1}}{z\Bz} =  \cQ^{(26)}_{i,j}(\bp^{*}) \quad \quad \forall i+j\in [0, N-2].
\end{align}
\end{corollary}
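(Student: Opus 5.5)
This corollary is a notational repackaging of Theorem \ref{Thm;11}, and the plan is to unwind the definitions on both sides. Fix $(i,j)$ with $i+j\in[0,N-2]$, so the target index $(i+1,j+1)$ has total degree $i+j+2\le N$ and Theorem \ref{Thm;11} applies to it. By Definition \ref{Def;S'}, $S(\ba)=(i+1,j+1)$ if and only if $S'(\ba)=(i,j)$. Comparing Definition \ref{Def;bfp;1} with \eqref{Def;Q26}, substituting $\bp^{*}$ into $\cQ^{(26)}_{i,j}$ gives
\begin{align*}
\cQ^{(26)}_{i,j}(\bp^{*}) = \sum_{\substack{S(\ba)=(i+1,j+1)\\ |\ba|\ge1}} C^{(1)}_{\ba}\prod_{k\ge2}\Bigg(\frac{P_{\frac{k(k-1)}{2},\frac{k(k+1)}{2}}}{z\Bz}\Bigg)^{\ba^-_k}\Bigg(\frac{P_{\frac{k(k+1)}{2},\frac{k(k-1)}{2}}}{z\Bz}\Bigg)^{\ba^+_k}.
\end{align*}
Whenever $k^2>N$ we set $\bp^{\pm}_k=0$, and by Remark \ref{Rem;12} no such $k$ can occur in a term with $|S(\ba)|\le N$ anyway. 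So the truncation in Definition \ref{Def;bfp;1} does not affect the sum.

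We then split into two cases. \emph{Non-resonant case:} $(i+1,j+1)$ is not of the form $(\frac{k(k\mp1)}{2},\frac{k(k\pm1)}{2})$. Then no $\ba$ with $|\ba|=1$ satisfies $S(\ba)=(i+1,j+1)$, since $|\ba|=1$ forces $S(\ba)$ to be exactly such a resonant pair. Hence the sum above is the same as the sum over $|\ba|\ge2$. By \eqref{Pij;expression;2} this sum equals $P_{i+1,j+1}/(z\Bz)$, where we divide the factor $z\Bz$ out of \eqref{Pij;expression;2}. The division is legitimate by Corollary \ref{Cor;divisble}. \emph{Resonant case:} $(i+1,j+1)=(\frac{k(k-1)}{2},\frac{k(k+1)}{2})$ (the conjugate case is symmetric). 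Every $\ba$ with $|\ba|\ge2$ and $S(\ba)=(i+1,j+1)$ has $C^{(1)}_{\ba}=0$ by Definition \ref{Def;C1C2}. The unique $\ba$ with $|\ba|=1$ is $\ba^-_k=1$, with $C^{(1)}_{\ba}=1$, and it contributes $\bp^-_k=P_{i+1,j+1}/(z\Bz)$. This is exactly the content of the remark following Theorem \ref{Thm;11} that \eqref{Pij;expression;2} holds for all $(i,j)$ once $|\ba|=1$ is allowed.

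There is no real obstacle here. The only points needing care are the index shift between $S$ and $S'$, and the observation that in the non-resonant case the $|\ba|=1$ terms are absent, while in the resonant case they are the only surviving terms. I will also note that the range $i+j\le N-2$ is precisely what makes Theorem \ref{Thm;11} applicable at degree $i+j+2\le N$.
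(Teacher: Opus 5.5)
Your proof is correct and takes essentially the paper's route. The paper states this corollary as a direct rewriting of Theorem \ref{Thm;11}, via $S'=S-(1,1)$, the definition of $\bp^{*}$, and the remark after Theorem \ref{Thm;11} that \eqref{Pij;expression;2} holds for all $(i,j)$ once $|\ba|=1$ is allowed (this is also identity \eqref{P-decomp;2} in its proof). Your resonant/non-resonant case split and your check that setting $\bp^{\pm}_k=0$ for $k^2>N$ changes nothing simply make that rewriting explicit.
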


\begin{lemma}\label{Q;deg;lem;2}
We analyze the degree of the polynomials we just defined:
\begin{enumerate}
	\item $\cQ^{(25)}_m$ is $|S'|$-homogeneous with $|S'|$-degree $=m$ for all $m\geq 0$ 
	\item  $\cQ^{(23)}_m, \cQ^{(27)}_m$ are $|S'|$-homogeneous with $|S'|$-degree $=m$ for all $m \geq 1$.
	\item $\cQ^{(22)}_{i,j}, \cQ^{(26)}_{i,j}$ is $S'$-homogeneous with $S'$-degree $=(i,j)$ for all $i,j\geq 0$.
	\item $\cQ^{(24)}_{i,j}$ is $S'$-homogeneous with $S'$-degree $=(i,j)$ for all $i+j\geq 1$.
	\item \label{Q;deg;5} There is no (usual) degree 1 term in $\cQ^{(22)}_{i,j}$  ($i,j \geq 0$) or $\cQ^{(27)}_m$ ($m\geq 1$). In particular, when $i+j \leq k^2-2$ ($k\geq 2$), polynomials $\cQ^{(22)}_{i,j}$ and $\cQ^{(27)}_{k^2-2}$  depend only on $\bx^{-}_2,\bx^{+}_2,...,\bx^{-}_{k-1}, \bx^{+}_{k-1}$.
\end{enumerate}
	
\end{lemma}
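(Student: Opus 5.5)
The plan is to verify each item directly from the definitions, using Lemma \ref{Lem;Sdeg;2} for products and Lemma \ref{Lem;poly-comp} for compositions. The items are proved in the order listed, since each relies on the previous ones.

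For (3) and (1), every monomial in $\cQ^{(26)}_{i,j}$ is $C^{(1)}_{\ba}(\bx^{*})^{\ba}$ with $S'(\ba)=(i,j)$. Hence $\cQ^{(26)}_{i,j}$ is $S'$-homogeneous of $S'$-degree $(i,j)$, and therefore $|S'|$-homogeneous of degree $i+j$. Summing over $i+j=m$ shows that $\cQ^{(25)}_m$ is $|S'|$-homogeneous of degree $m$.

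For (2), the key point is the form \eqref{Q21;def} of $\cQ^{(21)}_m$: it is a linear combination of $\bx_{\beta_1}\cdots\bx_{\beta_p}$ with $\sum\beta_i=m$ and $\beta_i\le m-1$. If we substitute $\bx_\ell\mapsto 2\cQ^{(25)}_\ell$, which has $|S'|$-degree $\ell$, then by Lemma \ref{Lem;Sdeg;2}(2) each product is $|S'|$-homogeneous of degree $\sum\beta_i=m$. Here the substituted variables are indexed by a weight $\ell$ rather than by the $\bx^{\pm}_k$ slots, so I am applying the product lemma directly rather than Lemma \ref{Lem;poly-comp}. Together with $-\sqrt2\cQ^{(25)}_m$, this gives that $\cQ^{(23)}_m$ is $|S'|$-homogeneous of degree $m$. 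The same argument with \eqref{Q11;def} and the substitution $\bx_\ell\mapsto\cQ^{(23)}_\ell$ gives the claim for $\cQ^{(27)}_m$. Item (4) is then immediate, because $[\cdot]_{i,j}$ extracts precisely the monomials of $S'$-degree $(i,j)$. For $\cQ^{(22)}_{i,j}$ in the resonant case, item (3) follows for the same reason; in the nonresonant case it is just $\cQ^{(26)}_{i,j}$.

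For (5), I first treat $\cQ^{(27)}_m$. Every monomial of $\cQ^{(11)}_m$ has $p\ge2$ factors, since $\beta_i\le m-1$ while $|\beta|=m$. Each substituted $\cQ^{(23)}_\ell$ has no constant term: $\cQ^{(25)}_\ell$ has none because $|\ba|\ge1$, and $\cQ^{(21)}_\ell$ likewise consists of products of at least two factors with no constant terms. A product of $p\ge2$ polynomials without constant term has no usual-degree-$\le1$ terms, so $\cQ^{(27)}_m$ has no degree 1 term, and neither do its extractions $[\cQ^{(27)}_{i+j}]_{i,j}$. For nonresonant $(i,j)$, $\cQ^{(22)}_{i,j}=\cQ^{(26)}_{i,j}$ is $S'$-homogeneous of non-resonant $S'$-degree, so Lemma \ref{Lem;Sdeg;1}(1) excludes degree 1 terms. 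The final assertion follows from Lemma \ref{Lem;Sdeg;1}(3), applied with $|S'|$-degree $i+j\le k^2-2$, or exactly $k^2-2$ for $\cQ^{(27)}_{k^2-2}$.

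The only mildly delicate point is this degree-1 bookkeeping for $\cQ^{(27)}$: checking that the substituted polynomials indeed have no constant terms, and that $p\ge2$ holds in \eqref{Q11;def}.
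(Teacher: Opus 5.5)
Your proposal is correct and follows essentially the same route as the paper. Homogeneity comes from the definitions together with Lemma \ref{Lem;Sdeg;2}, applied termwise to the expansions of $\cQ^{(21)}_m$ and $\cQ^{(11)}_m$. Absence of degree-1 terms comes from $p\ge 2$ in \eqref{Q11;def} together with the no-constant-term property of $\cQ^{(23)}_\ell$, and, for non-resonant $\cQ^{(22)}_{i,j}=\cQ^{(26)}_{i,j}$, from the fact that $|\ba|=1$ forces a resonant $S'$-degree. Your explicit appeal to Lemma \ref{Lem;Sdeg;1}(3) for the ``in particular'' clause fills in a step the paper leaves implicit.
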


\begin{proof}
Recall that $\cQ^{(26)}_{i,j}(\bx^{*}) = \sum_{\substack{S'(\ba) = (i,j)\\ |\ba|\ge 1}} C^{(1)}_{\ba}\prod_{k\geq 2} (\bx^{-}_k)^{\ba^{-}_k}(\bx^{+}_k)^{\ba^{+}_k}$ and $\cQ^{(25)}_m = \sum_{i+j = m}\cQ^{(26)}_{i,j}$. 
	By  Definition \ref{Def;S';3} and  \ref{Def;S';4},  $\cQ^{(26)}_{i,j}$  is $S'$-homogeneous with $S'$ degree equal to $(i,j)$ and $\cQ^{(25)}_{m}$  is $|S'|$-homogeneous with $|S'|$ degree equal to $m$.

Recall that $\cQ^{(21)}_m(\bx_1,...,\bx_{m-1}) = \sum_{\substack{|\beta|=m \\ 1\leq \beta_1 \leq\cdots \leq \beta_p\leq m-1}} C^{(21)}_{\beta}\cdot \bx_{\beta_1}\cdots\bx_{\beta_p}$.Then we compute 
\begin{align}
	\cQ^{(21)}_m\Big(2\cQ^{(25)}_1(\bx^{*}), \cdots, 2\cQ^{(25)}_{m-1}(\bx^{*})\Big) = \sum_{\substack{|\beta|=m \\ 1\leq \beta_1 \leq\cdots \leq \beta_p\leq m-1}} 2^{p}C^{(21)}_{\beta}\cdot \cQ^{(25)}_{\beta_1}(\bx^{*})\cdots \cQ^{(25)}_{\beta_p}(\bx^{*})
\end{align}
By Definition \ref{Lem;Sdeg;2}, each monomial has $|S'|$-degree given by:
\begin{align}
 			deg^h_{|S'|} \Bigg[2^{p}C^{(21)}_{\beta}\cdot \cQ^{(25)}_{\beta_1}(\bx^{*})\cdots \cQ^{(25)}_{\beta_p}(\bx^{*})
 \Bigg] =& \sum_{l= 1}^{p} \deg^h_{|S'|}(\cQ^{(25)}_{\beta_l}) =|\beta| = m
\end{align}
From definition \ref{Q;def;1},  $\cQ^{(23)}_m$ is $|S'|$-homogeneous with $|S'|$-degree equal to $m$. 

We argue similarly for $\cQ^{(27)}$. First recall $\cQ^{(11)}_m(\bx_1,...,\bx_{m-1}) = \sum_{\substack{|\beta|=m \\ 1\leq \beta_1 \leq\cdots \leq \beta_p\leq m-1}} C^{(11)}_{\beta}\cdot \bx_{\beta_1}\cdots \bx_{\beta_p}$. Then we compute 
\begin{align}\label{Q27;2}
	 \cQ^{(27)}_m(\bx^{*}) = \cQ^{(11)}_{m}\Big(\cQ^{(23)}_1(\bx^{*}), \cdots , \cQ^{(23)}_{m-1} (\bx^{*})\Big) = \sum_{\substack{|\beta|=m \\ 1\leq \beta_1 \leq\cdots \leq \beta_p\leq m-1}} C^{(11)}_{\beta}\cdot \cQ^{(23)}_{\beta_1}(\bx^{*})\cdots \cQ^{(23)}_{\beta_p}(\bx^{*})
\end{align}
Next, by Definition \ref{Lem;Sdeg;2}, each monomial has $|S'|$-degree given by:
\begin{align}
 			deg^h_{|S'|} \Bigg[C^{(11)}_{\beta}\cdot \cQ^{(23)}_{\beta_1}(\bx^{*})\cdots \cQ^{(23)}_{\beta_p}(\bx^{*})
 \Bigg] =& \sum_{l= 1}^{p} \deg^h_{|S'|}(\cQ^{(23)}_{\beta_l}) =|\beta| = m.
\end{align}
Therefore, $\cQ^{(27)}_m$ is $|S'|$-homogeneous with $|S'|$-degree equal to $m$. 

The $S'$-degree of $\cQ^{(22)}_{i,j}$ and $\cQ^{(24)}_{i,j}$ are equal to $(i,j)$ by definition. 

Next we justify \eqref{Q;deg;5}.

From \eqref{Q27;2} we see that the length of $\beta$ in \eqref{Q27;2} must be at least 2, because $|\beta| = m$ while each $\beta_1,...,\beta_p \leq m-1$. Since $\cQ^{(23)}_{\beta_1},...,\cQ^{(23)}_{\beta_p}$ has no (usual) degree 0 term, the RHS of \eqref{Q27;2} has no (usual) degree 1 term, equivalently $\cQ^{(27)}$ has no (usual) degree 1 term.

Finally we inspect $\cQ^{(22)}_{i,j}$. If $(i,j) = (\frac{k^2-k-2}{2}, \frac{k^2+k-2}{2})$ or $(i,j) = (\frac{k^2+k-2}{2}, \frac{k^2-k-2}{2})$ for some $k\geq 2$, then $\cQ^{(22)}_{i,j} = [\cQ^{(27)}_{i+j}]_{i,j}$, hence has no (usual) degree 1 term. Otherwise, $\cQ^{(22)}_{i,j} = \cQ^{(26)}_{i,j}$. Observe that there does not exist $\ba$ with $|\ba|=1$ and $S(\ba) \neq (\frac{k(k-1)}{2}, \frac{k(k+1)}{2})$ and $S(\ba) \neq (\frac{k(k+1)}{2}, \frac{k(k-1)}{2})$. Then in the summation of \eqref{Def;Q26}, $|\ba|\geq 2$ and therefore $\cQ^{(22)}_{i,j}$ does not have (usual) degree 1 term. 
\end{proof}

\begin{definition}\label{Q;def;3}
We define $\cQ^{(12)}_{i,j}, \cQ^{(14)}_{i,j} \in\dC[\bx^{*}] $ for all $i,j\geq 0$ inductively. First we let $\cQ^{(14)}_{0,2}(\bx^{*}) := \bx^{-}_2, \ \cQ^{(14)}_{2,0} := \bx^{+}_2, \cQ^{(14)}_{1,1} :=0 $ and
$ \cQ^{(14)}_{1,1} =  \cQ^{(12)}_{0,0} =  \cQ^{(12)}_{1,0} = \cQ^{(12)}_{0,1}  = \cQ^{(12)}_{0,2} =\cQ^{(12)}_{1,1}  =  \cQ^{(12)}_{2,0} := 0$. 
Suppose that  $\cQ^{(12)}_{i,j}, \cQ^{(14)}_{i,j}$ have been defined for all $0\leq i+j \leq m$. Then for any $i,j\geq 0$ with $i+j = m+1$, we find the largest integer $k$ such that $k^2 -2 < m+1$. By the last item of Lemma \ref{Q;deg;lem;2}, it is valid to define:
\begin{align}\label{Def;Q12}
	\cQ^{(12)}_{i,j}(\bx^{*}): 	= \cQ^{(22)}_{i,j}\Bigg(\cQ^{(14)}_{\frac{2^2-2-2}{2},\frac{2^2+2-2}{2}}(\bx^{*}),\cQ^{(14)}_{\frac{2^2+2-2}{2},\frac{2^2-2-2}{2}}(\bx^{*}),\cdots ,\cQ^{(14)}_{\frac{k^2 -k -2}{2},\frac{k^2+k-2}{2}}(\bx^{*}),\cQ^{(14)}_{\frac{k^2+k-2}{2},\frac{k^2-k-2}{2}}(\bx^{*})\Bigg)
\end{align}
Lastly we define $\cQ^{(14)}_{i,j}$ as follows.  If $(i,j) = (\frac{k'^2-k'-2}{2}, \frac{k'^2+k'-2}{2})$ for some $k'\geq 2$ we define  
\begin{align}
	\cQ^{(14)}_{\frac{k'^2-k'-2}{2}, \frac{k'^2+k'-2}{2}} (\bx^{*}): = - \frac{1}{\sqrt{2}} \bx^{-}_{k'} + \cQ^{(12)}_{\frac{k'^2-k'-2}{2}, \frac{k'^2+k'-2}{2}}(\bx^{*})
\end{align}
If $(i,j) = (\frac{k'^2+k'-2}{2}, \frac{k'^2-k'-2}{2})$ for some $k'\geq 2$ we define
\begin{align}
	\cQ^{(14)}_{\frac{k'^2+k'-2}{2}, \frac{k'^2-k'-2}{2}}(\bx^{*}): =  - \frac{1}{\sqrt{2}}\bx^{+}_{k'} + \cQ^{(12)}_{\frac{k'^2+k'-2}{2}, \frac{k'^2-k'-2}{2} }(\bx^{*})
\end{align}
Otherwise, we define $\cQ^{(14)}_{i,j} := \cQ^{(12)}_{i,j}$. 
\end{definition}

\begin{lemma}\label{Q;deg;lem;3}
\begin{enumerate}
	\item  $\cQ^{(12)}_{i,j}$ and $\cQ^{(14)}_{i,j}$ are $S'$-homogeneous with $S'$-degree $=(i,j)$ for all $i,j\geq 0$
	\item There is no (usual) degree  0, 1 term in $\cQ^{(12)}_{i,j}$ for all $i,j\geq 0$.
\end{enumerate}
\end{lemma}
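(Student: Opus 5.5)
The natural approach is strong induction on $m=i+j$, following the inductive Definition \ref{Q;def;3}, proving (1) and (2) together for $\cQ^{(12)}_{i,j}$ and $\cQ^{(14)}_{i,j}$.

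\emph{Base cases.} For $i+j\le 2$, every $\cQ^{(12)}_{i,j}$ and $\cQ^{(14)}_{1,1}$ vanishes. By the convention in Definition \ref{Def;S';4}, zero is homogeneous of any degree and contains no terms at all. The two remaining base polynomials are $\cQ^{(14)}_{0,2}=\bx^{-}_2$ and $\cQ^{(14)}_{2,0}=\bx^{+}_2$. By Definition \ref{Def;S'},
\begin{align*}
S'(\ba)=\big(\tfrac{2\cdot1}{2}-1,\tfrac{2\cdot3}{2}-1\big)=(0,2)
\end{align*}
for $\ba$ the indicator of $\bx^-_2$, and symmetrically $(2,0)$ for $\bx^+_2$. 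So the base cases hold.

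\emph{Inductive step for $\cQ^{(12)}_{i,j}$.} Let $i+j=m+1$, and let $k$ be the largest integer with $k^2-2<m+1$. Every index pair $\big(\tfrac{l^2\mp l-2}{2},\tfrac{l^2\pm l-2}{2}\big)$ with $l\le k$ has total $l^2-2\le m$. By the induction hypothesis, the substituted polynomials $\cQ^{(14)}_{\cdot,\cdot}$ are therefore $S'$-homogeneous of exactly the degrees $\big(\tfrac{l(l\mp1)-2}{2},\tfrac{l(l\pm1)-2}{2}\big)$ required in Lemma \ref{Lem;poly-comp}(1).

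By Lemma \ref{Q;deg;lem;2}(3),(5), $\cQ^{(22)}_{i,j}$ is $S'$-homogeneous of degree $(i,j)$. Since $i+j=m+1\le (k+1)^2-2$, it also depends only on $\bx^\pm_2,\dots,\bx^\pm_k$; this uses the maximality of $k$, which gives $(k+1)^2-2\ge m+1$. Lemma \ref{Lem;poly-comp}(1) then shows that $\cQ^{(12)}_{i,j}$ is $S'$-homogeneous of degree $(i,j)$.

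For (2), note that $\cQ^{(22)}_{i,j}$ has no constant term: each monomial $(\bx^*)^\ba$ appearing in it has $|\ba|\ge1$, in $\cQ^{(26)}$ by definition and in $\cQ^{(27)}$ because the $\cQ^{(23)}$ have positive $|S'|$-degree. By Lemma \ref{Q;deg;lem;2}(5) it also has no degree-1 term, so every monomial of $\cQ^{(22)}_{i,j}$ is a product of at least two variables.

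Each substituted $\cQ^{(14)}$ has no constant term. This holds either because it is $\pm\tfrac1{\sqrt2}\bx^\pm_{l}$ plus a $\cQ^{(12)}$ with no constant term (induction), or because it is such a $\cQ^{(12)}$ itself. Hence each substituted monomial becomes a product of at least two factors with no constant term, and so has only terms of degree $\ge2$. This gives (2) for $\cQ^{(12)}_{i,j}$.

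\emph{Inductive step for $\cQ^{(14)}_{i,j}$.} At a resonant pair $(i,j)=\big(\tfrac{k'^2-k'-2}{2},\tfrac{k'^2+k'-2}{2}\big)$, the added term $-\tfrac1{\sqrt2}\bx^-_{k'}$ has $S'$-degree exactly $(i,j)$ by Definition \ref{Def;S'}. The mirror pair is handled the same way with $\bx^+_{k'}$. Otherwise $\cQ^{(14)}_{i,j}=\cQ^{(12)}_{i,j}$. So (1) holds for $\cQ^{(14)}_{i,j}$, which closes the induction.

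The only delicate point is the variable-range bookkeeping in the $\cQ^{(12)}$ step. We must check that $\cQ^{(22)}_{i,j}$ involves no $\bx^\pm_l$ with $l>k$, so that the composition is well defined and Lemma \ref{Lem;poly-comp}(1) applies. We must also check that every $\cQ^{(14)}$ used has index total at most $m$, so the induction hypothesis is available. Both follow from the maximality of $k$ together with Lemma \ref{Q;deg;lem;2}(5); the rest is routine degree additivity from Lemma \ref{Lem;Sdeg;2}.
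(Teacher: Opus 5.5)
Your proof is correct and follows essentially the same route as the paper. It uses induction on $i+j$ with Lemma \ref{Lem;poly-comp} for $S'$-homogeneity, and for (2) the fact that $\cQ^{(22)}_{i,j}$ has no constant or linear term (Lemma \ref{Q;deg;lem;2}) while every substituted $\cQ^{(14)}$ has no constant term. The differences are cosmetic: the paper rules out constant terms via $S'$-homogeneity (a constant has $S'$-degree $(0,0)$ and $\cQ^{(12)}_{0,0}=0$), whereas you get degree $\ge 2$ directly from the composition, and you spell out the variable-range check that the paper leaves inside Definition \ref{Q;def;3}.
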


\begin{proof}
	Statement (1) follows from Lemma \ref{Lem;poly-comp} and induction.
	Since a (usual) degree 0 term has $S'$-degree $(0,0)$, there is no (usual) degree 0 term in $\cQ^{(12)}_{i,j}$ or  $\cQ^{(14)}_{i,j}$ unless $i=j=0$. But we have defined $\cQ^{(12)}_{0,0}=0$ and $\cQ^{(14)}_{0,0}=0$.  This rules out (usual) degree 0 term for all $i,j\geq 0$. 
	
	By Lemma \ref{Q;deg;lem;2}, $\cQ^{(22)}_{i,j}$ has neither a constant term nor a (usual) degree 1 term, whereas every $\cQ_{a,b}^{(14)}$ has no constant term. Hence every monoial in hte composition \eqref{Def;Q12} has usual degree at least two. This proves the Lemma. 
\end{proof}

\subsection{Expressing $\bp^{*}$ in terms of $\mathbf{f}^{*}$} 
\begin{proposition}\label{Prop;ptof;1}
	For all $k\geq 2$ with $k^2\leq N$:
		\begin{align}
			\mathbf{p}^{-}_k = -\frac{1}{\sqrt{2}}\mathbf{f}^{-}_k + \cQ^{(12)}_{\frac{k^2-k-2}{2},\frac{k^2+k-2}{2}}(\mathbf{f}^{*})\label{Pij;expression;13} \\
			\mathbf{p}^{+}_k = -\frac{1}{\sqrt{2}}\mathbf{f}^{+}_k + \cQ^{(12)}_{\frac{k^2+k-2}{2},\frac{k^2-k-2}{2}}(\mathbf{f}^{*})\label{Pij;expression;14}
		\end{align}
Moreover, for all $i,j \geq 0$ with $i+j\leq N-2$:
		\begin{align}
			\frac{P_{i+1,j+1}}{z\Bz} = \cQ^{(14)}_{i,j}(\mathbf{f}^{*}) \label{Pij;expression;12}
		\end{align}

\end{proposition}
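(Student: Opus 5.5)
We argue by strong induction on $m=i+j$, proving \eqref{Pij;expression;12} for all $i+j=m$ simultaneously, for $m=0,\dots,N-2$. The two identities \eqref{Pij;expression;13}--\eqref{Pij;expression;14} then drop out as special cases: for $(i,j)=(\frac{k^2\mp k-2}{2},\frac{k^2\pm k-2}{2})$ we have $P_{i+1,j+1}/(z\Bz)=\bp^{\mp}_k$, and $\cQ^{(14)}_{i,j}$ is by definition $-\frac1{\sqrt2}\bx^{\mp}_k+\cQ^{(12)}_{i,j}$. So the core task is to identify, degree by degree, each $\frac{P_{i+1,j+1}}{z\Bz}$ as a universal polynomial in $\mathbf f^{*}$.

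The algebraic input comes from two directions. First, Corollary \ref{fm;Lemma;2}, combined with Corollary \ref{Pij;cor;3} and $P_2=z\Bz/2$ (so $P_{m+2}/P_2=2\sum_{i+j=m}\cQ^{(26)}_{i,j}(\bp^*)=2\cQ^{(25)}_m(\bp^*)$), gives $\tilde f_m=\cQ^{(23)}_m(\bp^{*})$. Taking the $z^i\Bz^j$ component via Lemma \ref{Lem;extraction;2} yields $f_{i,j}=\cQ^{(24)}_{i,j}(\bp^*)$; at resonant indices this identifies $\mathbf f^{\pm}_k$ in terms of $\bp^*$. Second, Corollary \ref{fm;Lemma;1} inverts this: $\frac{P_{m+2}}{2P_2}=-\frac1{\sqrt2}f_m+\cQ^{(11)}_m(f_1,\dots,f_{m-1})$. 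Substituting $f_l=\cQ^{(23)}_l(\bp^*)$ for $l<m$ gives
\begin{align*}
\cQ^{(25)}_m(\bp^*)=-\tfrac1{\sqrt2}f_m+\cQ^{(27)}_m(\bp^*).
\end{align*}
Extracting the $(i,j)$ part (Lemma \ref{Lem;extraction;2}) at a resonant index $(i,j)$ then gives $\bp^{\mp}_k=-\frac1{\sqrt2}\mathbf f^{\mp}_k+[\cQ^{(27)}_{i+j}]_{i,j}(\bp^*)=-\frac1{\sqrt2}\mathbf f^{\mp}_k+\cQ^{(22)}_{i,j}(\bp^*)$. At nonresonant indices we simply have $\frac{P_{i+1,j+1}}{z\Bz}=\cQ^{(26)}_{i,j}(\bp^*)=\cQ^{(22)}_{i,j}(\bp^*)$. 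In both cases,
\begin{align*}
\frac{P_{i+1,j+1}}{z\Bz}=\delta_{\mathrm{res}}\cdot\big(-\tfrac1{\sqrt2}\mathbf f^{\mp}_k\big)+\cQ^{(22)}_{i,j}(\bp^*).
\end{align*}

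For the inductive step, Lemma \ref{Q;deg;lem;2}(5) says $\cQ^{(22)}_{i,j}$ depends only on $\bx^{\pm}_l$ with $l^2-2<i+j$, i.e.\ only on $\bp^{\pm}_l$ of strictly lower index $l^2-2<m$. By the induction hypothesis each of these equals $\cQ^{(14)}_{\cdot,\cdot}(\mathbf f^*)$. Substituting gives exactly the composition \eqref{Def;Q12}, so $\cQ^{(22)}_{i,j}(\bp^*)=\cQ^{(12)}_{i,j}(\mathbf f^*)$, and hence $\frac{P_{i+1,j+1}}{z\Bz}=\cQ^{(14)}_{i,j}(\mathbf f^*)$. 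The base cases $m\le 2$ are checked directly: $P$ has no low-degree terms, while $\bp^{\pm}_2=-\frac1{\sqrt2}\mathbf f^\pm_2$ follows from the formula above, since $\cQ^{(22)}$ has no degree-one terms and $|S'|$-degree 2 then forces it to vanish.

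The main obstacle is the bookkeeping in the resonant extraction. One must check that extracting the $(i,j)$ component commutes with substituting $\bp^*$; this is Lemma \ref{Lem;extraction;2} together with the fact that distinct $S'$-degrees give distinct monomials $z^i\Bz^j$. One must also check that the constant normalisations really produce $-\frac1{\sqrt2}$ and $2\cQ^{(25)}$: here $n=1$, $P_2/z\Bz=\tfrac12$, and there is a factor $(2n)^{\pm1/2}$. Finally, the indices must stay within $i+j\le N-2$, so that Corollary \ref{Pij;cor;3} and Theorem \ref{Taylor-to-asymptotic} apply. I would verify the normalisation first, on the case $k=2$.
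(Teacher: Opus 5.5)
Your proposal is correct and is essentially the paper's proof: you write $f_m=\cQ^{(23)}_m(\bp^{*})$, substitute into Corollary \ref{fm;Lemma;1} and extract the resonant $z^i\Bz^j$ component to get $\bp^{\mp}_k=-\frac{1}{\sqrt2}\mathbf f^{\mp}_k+\cQ^{(22)}_{i,j}(\bp^{*})$, then use Lemma \ref{Q;deg;lem;2}(5) and the induction hypothesis to turn $\cQ^{(22)}_{i,j}(\bp^{*})$ into $\cQ^{(12)}_{i,j}(\mathbf f^{*})$, and you handle nonresonant indices via $\cQ^{(26)}_{i,j}=\cQ^{(22)}_{i,j}$. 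The only differences are organizational and notational. You run one strong induction on $i+j$, whereas the paper inducts on $k$ for the resonant identities and treats nonresonant indices in a separate step. Your base case also correctly uses $\cQ^{(14)}_{0,2}=-\frac{1}{\sqrt2}\bx^{-}_2$, the general rule of Definition \ref{Q;def;3}, rather than the literal initial value $\bx^{-}_2$ written there, which is a typo inconsistent with the statement.
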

 
\begin{proof}
We will break into several steps. Induction is only needed in the second step.

\noindent\textbf{Step 1:} Express $f_m$ and $f_{i,j}$ in terms of $\bp^{*}$. This step does \textbf{not} use induction.
 
By \eqref{fm;formula;4}, using $P_2=z\bar z/2$ in the planar polynomial convention,  and Definition \ref{Q;def;1}, we can write 
\begin{align}\label{fm;formula;9}
	f_m  =& -\sqrt{2}\cdot \frac{P_{m+2}}{z\Bz} + \cQ^{(21)}_{m} \Bigg(\frac{2P_{1+2}}{z\Bz},\cdots,\frac{2P_{(m-1)+2}}{z\Bz}\Bigg) \nonumber\\
	=&-\sqrt{2}\cdot \cQ^{(25)}_m(\bp^{*}) + \cQ^{(21)}_{m} \Bigg(2\cQ^{(25)}_1(\bp^{*}) ,\cdots,2\cQ^{(25)}_{m-1}(\bp^{*}) \Bigg)\nonumber\\
	=& \cQ^{(23)}_m(\bp^{*})
\end{align}
for each $1\leq m\leq N-2$. In addition, if $1\leq m\leq N-2$ and $i+j =m$, Lemma \ref{Lem;extraction;2} and Definition \ref{Q;def;1} implies: 
\begin{align}\label{fij;expression;3}
	f_{i,j} =[\cQ^{(23)}_m]_{i,j}(\bp^{*}) = \cQ^{(24)}_{i,j}(\bp^{*}) 
\end{align}

\noindent\textbf{Step 2:} Establish \eqref{Pij;expression;13} and \eqref{Pij;expression;14}  by induction.

\textbf{Induction base:}
We first establish the induction base. Set $m= 2$ in Corollary \ref{fm;Lemma;2}, using the explicit form of $\cQ^{(21)}$  we get:
\begin{align}\label{fm;formula;12}
	f_2=-\sqrt{2} \frac{P_{4}}{z\Bz} +  C^{(21)}_{(1,1)} \cdot \Big(\frac{2P_{3}}{z\Bz}\Big)^2 
\end{align}

By Theorem \ref{Thm;11}, we know that $P_3 = 0$ and $P_4 = P_{1,3} + P_{3,1}$. This means that
\begin{align}\label{fm;formula;13}
	f_2 = -\sqrt{2}\frac{P_4}{z\Bz} = -\sqrt{2}\frac{P_{1,3} + P_{3,1}}{z\Bz} 
\end{align}
Comparing the degree, we find that 
\begin{align}
	\mathbf{f}^{-}_2 = f_{0,2} = -\sqrt{2}\frac{P_{1,3}}{z\Bz} = -\sqrt{2}\bp^{-}_2, \quad \quad \mathbf{f}^{+}_2 = f_{2,0} = -\sqrt{2}\frac{P_{3,1}}{z\Bz} = -\sqrt{2}\bp^{+}_2 \quad \quad  f_{1,1} = 0,
\end{align}
By Definition \ref{Q;def;3}, $\cQ^{(12)}_{0,2} = \cQ^{(12)}_{1,1} = \cQ^{(12)}_{2,0} \equiv 0$, therefore \eqref{Pij;expression;13} and \eqref{Pij;expression;14} are true for $k=2$.  

 \textbf{Induction step:}
 Suppose that  \eqref{Pij;expression;13} and \eqref{Pij;expression;14} are true for $2,...,k$. 
 We may assume that $(k+1)^2 \leq N$, otherwise there is nothing to prove. 
By Definition \ref{Q;def;3}, an alternative form of the induction hypothesis is:
\begin{align}
	\mathbf{p}^{-}_l =  \cQ^{(14)}_{\frac{l^2-l-2}{2},\frac{l^2+l-2}{2}}(\mathbf{f}^{*})\label{Pij;expression;15} \\
	\mathbf{p}^{+}_l =  \cQ^{(14)}_{\frac{l^2+l-2}{2},\frac{l^2-l-2}{2}}(\mathbf{f}^{*})\label{Pij;expression;16} 
\end{align}
hold for all $2\leq l \leq k$.

By setting $m= (k+1)^2-2$ in \eqref{fm;formula;3} we get
 \begin{align}\label{Pij;expression;17}
 	\frac{P_{(k+1)^2}}{z\Bz} =& -\frac{1}{\sqrt{2}}f_{(k+1)^2-2} + \cQ^{(11)}_{(k+1)^2-2}(f_1,...,f_{(k+1)^2-3})
 \end{align}
Using Corollary \ref{Pij;cor;3} on the LHS and using \eqref{fm;formula;9}, \eqref{fij;expression;3}  on RHS we get:
\begin{align}
	&\bp^{-}_{k+1} + \bp^{+}_{k+1} + \sum_{\substack{i+j=(k+1)^2-2, \ i,j\geq 0\\ (i,j)\neq ({\frac{k(k+1)-2}{2}, \frac{(k+1)(k+2)-2}{2}})\\(i,j)\neq (\frac{(k+1)(k+2)-2}{2}, \frac{k(k+1)-2}{2})} } \cQ^{(26)}_{i,j}(\bp^{*}) \nonumber\\
	= -&\frac{1}{\sqrt{2}} \mathbf{f}^{-}_{k+1} - \frac{1}{\sqrt{2}} \mathbf{f}^{+}_{k+1} - \frac{1}{\sqrt{2}} \cdot \sum_{\substack{i+j=(k+1)^2-2, \ i,j\geq 0\\ (i,j)\neq ({\frac{k(k+1)-2}{2}, \frac{(k+1)(k+2)-2}{2}})\\(i,j)\neq (\frac{(k+1)(k+2)-2}{2}, \frac{k(k+1)-2}{2})} } \cQ^{(24)}_{i,j}(\bp^{*})+\cQ^{(11)}_{(k+1)^2-2}(\cQ^{(23)}_1(\bp^{*}),..., \cQ^{(23)}_{(k+1)^2-3}(\bp^{*})) \nonumber
\end{align}

Using Definition \ref{Q;def;1} and Lemma \ref{Lem;extraction;2} to extract and compare the terms in the form of $cz^{\frac{k^2+k-2}{2}}\Bz^{ \frac{k^2+3k}{2}}$:
\begin{align}
	\bp^{-}_{k+1}  = -\frac{1}{\sqrt{2}} \mathbf{f}^{-}_{k+1} +\cQ^{(22)}_{{\frac{k(k+1)-2}{2}, \frac{(k+1)(k+2)-2}{2}}}(\bp^{*}) 
\end{align}
Similarly we get:
\begin{align}
	\bp^{+}_{k+1}  = -\frac{1}{\sqrt{2}} \mathbf{f}^{+}_{k+1} +\cQ^{(22)}_{\frac{(k+1)(k+2)-2}{2}, \frac{k(k+1)-2}{2}}(\bp^{*}) 
\end{align}
Since $\frac{(k+1)(k+2)-2+k(k+1)-2}{2}=(k+1)^2-2$, Lemma \ref{Q;deg;lem;2} implies that $\cQ^{(22)}_{\frac{k^2+k-2}{2}, \frac{k^2+3k}{2}}(\bx^{*})$ and $\cQ^{(22)}_{\frac{k^2+3k}{2}, \frac{k^2+k-2}{2}}(\bx^{*})$ depend only on $\bx^{-}_2, \bx^{+}_2,..., \bx^{-}_k, \bx^{+}_k$. This allows us to use our induction hypothesis \eqref{Pij;expression;15}, \eqref{Pij;expression;16} to write:
\begin{align}
	&\bp^{-}_{k+1} = -\frac{1}{\sqrt{2}} \mathbf{f}^{-}_{k+1} \nonumber  \\
    &+\cQ^{(22)}_{{\frac{k(k+1)-2}{2}, \frac{(k+1)(k+2)-2}{2}}}\Big(\cQ^{(14)}_{\frac{2^2-2-2}{2},\frac{2^2+2-2}{2}}(\mathbf{f^{*}}), \cQ^{(14)}_{\frac{2^2+2-2}{2},\frac{2^2-2-2}{2}}(\mathbf{f^{*}}),...,\cQ^{(14)}_{\frac{k^2-k-2}{2},\frac{k^2+k-2}{2}}(\mathbf{f^{*}}), \cQ^{(14)}_{\frac{k^2+k-2}{2},\frac{k^2-k-2}{2}}(\mathbf{f^{*}})\Big) \nonumber 
\end{align}
Then Definition \ref{Q;def;3} gives
\begin{align}
	\bp^{-}_{k+1}=  -\frac{1}{\sqrt{2}} \mathbf{f}^{-}_{k+1}  +\cQ^{(12)}_{{\frac{k(k+1)-2}{2}, \frac{(k+1)(k+2)-2}{2}}}(\mathbf{f^{*}})  
\end{align}
Similarly we get:
\begin{align}
	\bp^{+}_{k+1}=   - \frac{1}{\sqrt{2}} \mathbf{f}^{+}_{k+1} +\cQ^{(12)}_{\frac{(k+1)(k+2)-2}{2}, \frac{k(k+1)-2}{2}}(\mathbf{f^{*}})  
\end{align}
This finishes the induction step. 

\noindent\textbf{Step 3:} Establish \eqref{Pij;expression;12}.

By Definition \ref{Q;def;3}, an alternative form of the Step 2 is $\mathbf{p}^{-}_k =  \cQ^{(14)}_{\frac{k^2-k-2}{2},\frac{k^2+k-2}{2}}(\mathbf{f}^{*})$ and $
	\mathbf{p}^{+}_k =  \cQ^{(14)}_{\frac{k^2+k-2}{2},\frac{k^2-k-2}{2}}(\mathbf{f}^{*})$ for all $k\geq 2$. 
This already covers \eqref{Pij;expression;12} when $(i,j)= (\frac{k^2-k-2}{2}, \frac{k^2+k-2}{2})$ and $(i,j)= (\frac{k^2+k-2}{2}, \frac{k^2-k-2}{2})$ for some $k\geq 2$. 
Therefore, it suffices to only consider the case that $(i,j)\neq (\frac{k^2-k-2}{2}, \frac{k^2+k-2}{2})$ and $(i,j)\neq (\frac{k^2+k-2}{2}, \frac{k^2-k-2}{2})$ for all $k\geq 2$. 
 Recall that Corollary \ref{Pij;cor;3} gives:
\begin{align}
	\frac{P_{i+1,j+1}}{z\Bz} =  \cQ^{(26)}_{i,j}(\bp^{*}) = \cQ^{(22)}_{i,j}(\bp^{*})
\end{align}
Then we can use Definition \ref{Q;def;3} to further compute:
\begin{align}
	\frac{P_{i+1,j+1}}{z\Bz} =&\cQ^{(22)}_{i,j}(\bp^{*}) \nonumber \\
	=& \cQ^{(22)}_{i,j}\Bigg(\cQ^{(14)}_{\frac{2^2-2-2}{2},\frac{2^2+2-2}{2}}(\mathbf{f}^{*}),\cQ^{(14)}_{\frac{2^2+2-2}{2},\frac{2^2-2-2}{2}}(\mathbf{f}^{*}),\cdots ,\cQ^{(14)}_{\frac{k^2 -k -2}{2},\frac{k^2+k-2}{2}}(\mathbf{f}^{*}),\cQ^{(14)}_{\frac{k^2+k-2}{2},\frac{k^2-k-2}{2}}(\mathbf{f}^{*})\Bigg) \nonumber \\
	=& \cQ^{(12)}_{i,j}(\mathbf{f}^{*}) \nonumber\\
	=&  \cQ^{(14)}_{i,j}(\mathbf{f}^{*}) \nonumber
\end{align}
where $k$ is the largest integer such that $k^2-2 < i+j$. The last line is due to the assumption that  $(i,j)\neq (\frac{k^2-k-2}{2}, \frac{k^2+k-2}{2})$ and $(i,j)\neq (\frac{k^2+k-2}{2}, \frac{k^2-k-2}{2})$ for all $k\geq 2$. The formula \eqref{Pij;expression;12} is now established.

Combining Step 1-3, the Proposition then follows.
\end{proof}

\subsection{Obstruction equation}
\begin{lemma}\label{Lem;decomp;1}
Fix any $k\geq 2$. Suppose that $\cQ\in \dC[\bx^{*}]$ is $S'$-homogeneous with:
\begin{align*}
	deg^h_{S'}(\cQ) = \Bigg(\frac{k(k-1)-2}{2},\frac{k(k+1)-2}{2}\Bigg)
\end{align*}
Then there is no (usual) degree 2 term in $\cQ$.
\begin{proof}
If $\cQ$ contains degree 2 term, assume that it is in the form of $c\cdot (\bx^{*})^{\ba}$ where $c\neq 0$,  $|\ba| =2$	and $S'(\ba) = (\frac{(k-1)k-2}{2},\frac{k(k+1)-2}{2})$.

We let $\ba = \ba^1 + \ba^2$ where $|\ba^1| = |\ba^2|=1$.  There exists integers $i,j\geq 2$ such that $(\ba^1)^{-}_i = 1$ or $(\ba^1)^{+}_i = 1$, and  $(\ba^1)^{-}_j = 1$ or $(\ba^1)^{+}_j = 1$. We may assume $i\leq j$.  
First we compute 
\begin{align}\label{S;ineq;5}
	|S'(\ba)| = (i^2-2)\Big[(\ba^1)^{+}_i + (\ba^1)^{-}_i\Big] + (j^2-2)\Big[(\ba^2)^{+}_j + (\ba^2)^{-}_j\Big] = k^2-2
\end{align}
This implies that $i,j<k$. On the other hand, 
\begin{align}\label{S;ineq;6}
	S'_2(\ba) - S'_1(\ba) = \Big[(\ba^1)^{+}_i - (\ba^1)^{-}_i\Big] \cdot i  +  \Big[(\ba^2)^{+}_j - (\ba^2)^{-}_j\Big] \cdot j = k
\end{align}
Note  $(\ba^1)^{+}_i - (\ba^1)^{-}_i$ and $(\ba^2)^{+}_j - (\ba^2)^{-}_j$  can only take values 1 or $-1$. However, none of them can take $-1$ for otherwise the $S'_2(\ba) - S'_1(\ba) $ will be at most $j - i < k$. Therefore, it must be that $(\ba^1)^{+}_i  = (\ba^2)^{+}_j =1 $ and $(\ba^1)^{-}_i = (\ba^2)^{-}_j = 0$. Then \eqref{S;ineq;5} and \eqref{S;ineq;6} imply the equation:
\begin{align*}
	i+j =k ,\quad  \quad  i^2 + j^2 =k^2 + 2
\end{align*}
 However, this equation has no solution when $k\geq 2$, a contradiction. The Lemma then follows.
\end{proof}

\end{lemma}
\begin{lemma}\label{Lem;decomp;2}
Fix any $k\geq 2$. Suppose that $\cQ\in \dC[\bx^{*}]$ is $S'$-homogeneous with:
\begin{align*}
	deg^h_{S'}(\cQ) = \Bigg(\frac{(k^2+k-1)(k^2+k-2)-2}{2},\frac{(k^2+k-1)(k^2+k)-2}{2}\Bigg)
\end{align*} 
and with no (usual) degree 1 term. Then $\cQ$ admits the decomposition
\begin{align}
	\cQ(\bx^{*}) = c\bx^{+}_k \bx^{-}_{k+1}\bx^{-}_{k^2+k-2} + \cQ'(\bx^{*}) + \cQ''(\bx^{*}) 
\end{align}  where $c\in \dC$ is the coefficient of $\bx^{+}_k \bx^{-}_{k+1}\bx^{-}_{k^2+k-2}$ in $\cQ$ and polynomials $\cQ', \cQ''\in \dC[\bx^{*}]$ satisfy: 
\begin{itemize}
	\item $\cQ'(\bx^{*})$ depends only on $\bx^{-}_k, \bx^{+}_k, \cdots, \bx^{-}_{k^2+k-3}, \bx^{+}_{k^2+k-3}$.
	\item $\cQ''(\bx^{*})$ depends only on $\bx^{-}_2, \bx^{+}_2, \cdots, \bx^{-}_{k^2+k-2}, \bx^{+}_{k^2+k-2}$ and each monomial in $\cQ''(\bx^{*})$ contains at least one of $\bx^{-}_2, \bx^{+}_2, \cdots, \bx^{-}_{k-1}, \bx^{+}_{k-1}$. In other words, if $\bx^{-}_2 = \bx^{+}_2 = \cdots = \bx^{-}_{k-1} = \bx^{+}_{k-1} = 0$, then $\cQ''(\bx^{*}) \equiv 0$.
	\item There is no (usual) degree 1, 2 term in  $\cQ'$ and $\cQ''$.
\end{itemize} 
\end{lemma}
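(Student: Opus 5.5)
Let $K:=k^2+k-1$, so the target $S'$-degree is $\big(\tfrac{K(K-1)-2}{2},\tfrac{K(K+1)-2}{2}\big)$ with $|S'|$-degree $K^2-2$ and $S'_2-S'_1=K$. The plan is a monomial-by-monomial sorting. Every monomial $c(\bx^{*})^{\ba}$ of $\cQ$ satisfies
\[
\sum_l (l^2-2)(\ba^-_l+\ba^+_l)=K^2-2,\qquad \sum_l l(\ba^-_l-\ba^+_l)=K,
\]
using the convention of Lemma \ref{Lem;decomp;1}. By hypothesis $|\ba|\ge2$, so Lemma \ref{Lem;Sdeg;1}(3) shows that no variable with index $l\ge K$ occurs. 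Every variable therefore has index in $[2,k^2+k-2]$. We then assign monomials as follows. If a monomial contains some $\bx^\pm_l$ with $l\le k-1$, it goes into $\cQ''$. If it is exactly $\bx^{+}_k\bx^{-}_{k+1}\bx^{-}_{k^2+k-2}$, it is the $c$-term. Otherwise all its indices lie in $[k,k^2+k-2]$, and it should go into $\cQ'$. This requires showing that such a monomial cannot contain $\bx^\pm_{k^2+k-2}$, and that neither $\cQ'$ nor $\cQ''$ contains degree $1$ or $2$ terms.

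\textbf{Degree 1 and 2 terms.} Degree $1$ terms are excluded by hypothesis. For degree $2$, we redo the argument of Lemma \ref{Lem;decomp;1} with $k$ replaced by $K$: the $|S'|$ equation forces $i,j<K$, and the difference equation forces both signs to be $-$ (otherwise the difference is at most $j-i<K$). We then need $i+j=K$ and $i^2+j^2=K^2+2$, which gives $ij=-1$ and is impossible. So $\cQ$ has no degree $2$ terms, and hence neither does any sub-sum of its monomials.

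\textbf{The main obstacle.} The key step is to show that a monomial with all indices in $[k,K-1]$ and containing $\bx^\pm_{K-1}$ must be exactly $\bx^{+}_k\bx^{-}_{k+1}\bx^{-}_{K-1}$. Write $\ba=e+\ba'$, where $e$ is the unit index at $\bx^\pm_{K-1}$. Then
\[
|S'(\ba')|=K^2-2-((K-1)^2-2)=2K-1=2k^2+2k-3 .
\]
Every remaining variable has weight $l^2-2\ge k^2-2$, and $|\ba'|\ge1$ by the degree considerations above.

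If $|\ba'|\ge3$, the weight is at least $3(k^2-2)$. This exceeds $2k^2+2k-3$ when $k^2-2k-3>0$, i.e.\ when $k\ge4$; the cases $k=2,3$ we check by hand. If $|\ba'|=1$, then $l^2=2K+1$ must hold, and the difference equation gives $\pm(K-1)\pm l=K$; both fail by size and parity, and we will verify this. If $|\ba'|=2$ with indices $i\le j$ in $[k,K-1]$, we get $i^2+j^2=2k^2+2k+1=k^2+(k+1)^2$. Since $i\ge k$, this forces $(i,j)=(k,k+1)$: indeed, $j\ge k+2$ would give $i^2\le k^2-2k-3<k^2$. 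The difference equation then reads
\[
\epsilon_0(K-1)+\epsilon_1 k+\epsilon_2(k+1)=K ,
\]
and the only admissible signs are $\epsilon_0=+$, with the $k$-variable of type $+$ and the $(k+1)$-variable of type $-$. Here $\epsilon=+1$ corresponds to a $-$ variable. This is precisely the distinguished monomial. We also need to exclude $(\bx^\pm_{K-1})^2$, which is immediate because its weight is too large.

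\textbf{Conclusion.} With the three classes defined, $\cQ''$ depends only on indices $\le k^2+k-2$ and each of its monomials contains a low-index variable. $\cQ'$ depends only on $\bx^\pm_k,\dots,\bx^\pm_{k^2+k-3}$. The coefficient of the distinguished monomial is $c$.
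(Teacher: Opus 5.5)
Your proposal is correct and follows essentially the paper's proof: the same three-way sorting of monomials, the same use of Lemma \ref{Lem;decomp;1} with $k^2+k-1$ in place of $k$ to exclude degree-2 terms, and the same exponent bookkeeping to isolate $\bx^{+}_k\bx^{-}_{k+1}\bx^{-}_{k^2+k-2}$; the only difference is that you use the constraints $(|S'|,\,S'_2-S'_1)$ where the paper uses $(S'_1,\,|S'|)$. The small cases you left to check by hand do go through: for $k=2$ the weight equation $2a+7b=9$ forces the two remaining indices to be $2$ and $3$, and for $k=3$ the only extra candidate, three index-$3$ factors, is excluded by the difference equation since $\pm 10+3(\pm1\pm1\pm1)\neq 11$.
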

\begin{proof}
Throughout the proof, all constants are local to the Lemma and will not interact with any other part of the paper.

The whole matter  is about the index, as we will see in the following.  Since $\cQ$ does not contain (usual) degree 1 term, and Lemma \ref{Lem;decomp;1}, applied with $k^2+k-1$ in place of $k$, implies that $\cQ$ also does not contain (usual) degree 2 term,  one can write:  
\begin{align}
	\cQ(\bx^{*}) =& \sum_{\substack{S'(\ba) = (\frac{(k^2+k-1)(k^2+k-2)-2}{2},\frac{(k^2+k-1)(k^2+k)-2}{2})\\ |\ba|\geq3}} C_{\ba} (\bx^{*})^{\ba}  
\end{align}
Since 
\begin{align}\label{S;ineq;1}
	|S'(\ba)| = \sum_{j\geq 2} (j^2-2)(\ba^{-}_{j} + \ba^{+}_{j}) 
\end{align}
the summation only involves finitely many $\ba$, and so are all summations below.

 Define 
\begin{align}
	\bar{\cQ}(\bx^{*}) = \sum_{\substack{S'(\ba) = (\frac{(k^2+k-1)(k^2+k-2)-2}{2},\frac{(k^2+k-1)(k^2+k)-2}{2})\\ \ba^{-}_2 =\ba^{+}_2 =\cdots = \ba^{-}_{k-1} = \ba^{+}_{k-1} = 0 \\ \ba^{-}_{k^2+k-2} + \ba^{+}_{k^2+k-2} + \ba^{-}_{k^2+k-1} + \ba^{+}_{k^2+k-1} + \cdots \geq 1\\\\ |\ba|\geq3}} C_{\ba}\cdot (\bx^{*})^{\ba} 
\end{align}
\begin{align}
	\cQ'(\bx^{*}) = \sum_{\substack{S'(\ba) = (\frac{(k^2+k-1)(k^2+k-2)-2}{2},\frac{(k^2+k-1)(k^2+k)-2}{2})\\ \ba^{-}_2 =\ba^{+}_2 =\cdots = \ba^{-}_{k-1} = \ba^{+}_{k-1} = 0 \\ \ba^{-}_{k^2+k-2} = \ba^{+}_{k^2+k-2} = \ba^{-}_{k^2+k-1} = \ba^{+}_{k^2+k-1} = \cdots = 0\\ |\ba|\geq 3}} C_{\ba}\cdot (\bx^{*})^{\ba} 
\end{align}
\begin{align}
	\cQ''(\bx^{*}) :=&\sum_{\substack{S'(\ba) = (\frac{(k^2+k-1)(k^2+k-2)-2}{2},\frac{(k^2+k-1)(k^2+k)-2}{2})\\ \ba^{-}_2 +\ba^{+}_2 +\cdots + \ba^{-}_{k-1} + \ba^{+}_{k-1} \geq 1\\ |\ba|\geq 3 }} C_{\ba}\cdot  (\bx^{*})^{\ba} 
\end{align}

By definition we can directly check that $\cQ'$ and $\cQ''$ satisfies the conditions in the Lemma, together with:
\begin{align}
	\cQ(\bx^{*}) = \bar{\cQ}(\bx^{*}) +  \cQ'(\bx^{*}) + \cQ''(\bx^{*}) 
\end{align}
Clearly, $\bar{\cQ}$ contains $c\bx^{+}_k \bx^{-}_{k+1}\bx^{-}_{k^2+k-2}$ since this term satisfies all conditions under the summation. It suffices to show that this is the only term.
 
Consider a nonnegative double multi-index $\ba = (\ba^{-}_2, \ba^{+}_2,\ba^{-}_3, \ba^{+}_3,....)$ with finitely many nonzero satisfying $\ba^{-}_2 =\ba^{+}_2 =\cdots = \ba^{-}_{k-1} = \ba^{+}_{k-1} = 0$, \ $ \ba^{-}_{k^2+k-2} + \ba^{+}_{k^2+k-2} + \ba^{-}_{k^2+k-1} + \ba^{+}_{k^2+k-1} + \cdots \geq 1$ and 
\begin{align}
	S'(\ba) = \Bigg(\frac{(k^2+k-1)(k^2+k-2)-2}{2},\frac{(k^2+k-1)(k^2+k)-2}{2}\Bigg)
\end{align}
Moreover, by the previous step, we may assume that $|\ba| \geq 3$. It suffices to show that such $\ba$ has precisely three nonzero slot: $\ba^{+}_k = \ba^{-}_{k+1} = \ba^{-}_{k^2+k-2} = 1$. This will follow from a sequel of claims below:

\textbf{Claim 1}: $\ba^{-}_m = \ba^{+}_m = 0$ for all $m \geq k^2+k-1$.

Since $|S'(\ba)| = (k^2+k-1)^2-2$, \eqref{S;ineq;1} implies that $\ba^{-}_m = \ba^{+}_m = 0$ for all $m > k^2+k-1$. In addition, if $\ba^{-}_{k^2+k-1} \geq 1$ or $\ba^{+}_{k^2+k-1}\geq 1$, this will be incompatible with $|\ba| \geq 3$ and \eqref{S;ineq;1}. The claim follows.

Now we are forced to have $\ba^{-}_{k^2+k-2} \geq 1$ or $\ba^{+}_{k^2+k-2}\geq 1$. We need to use formula for $S'$ in the next:
\begin{align}\label{S;ineq;3}
	S'(\ba) =& (S'_1(\ba), S'_2(\ba)) \nonumber\\
    =& \Big(\sum_{j\geq 2} \frac{j(j-1)-2}{2} \cdot \ba^{-}_{j} + \frac{j(j+1)-2}{2}\cdot \ba^{+}_{j},\quad  \sum_{j\geq 2} \frac{j(j+1)-2}{2} \cdot \ba^{-}_{j} +  \frac{j(j-1)-2}{2}\cdot \ba^{+}_{j}\Big)
\end{align}

This claim shows that all polynomials throughout the Lemma depends at most on $\bx^{-}_2, \bx^{+}_2, \cdots, \bx^{-}_{k^2+k-2}, \bx^{+}_{k^2+k-2}$.

\textbf{Claim 2}: $\ba^{+}_{k^2+k-2} = 0$.

Otherwise if $\ba^{+}_{k^2+k-2}\geq 1$, then comparing $S'_1(\ba)$ we find that $\ba^{+}_{k^2+k-2} = 1$ and that every remaining factor must be $\bx_2^-$. Such factors are forbidden for $k\ge 3$. When $k=2$, comparison with $S'_2(\ba)$ gives $2\ba^-_2+5=14$, which is impossible. 

Therefore, we are forced to have $\ba^{-}_{k^2+k-2}\geq 1$ and $\ba^{+}_{k^2+k-2} = 0$. 

\textbf{Claim 3}: $\ba^{-}_{k^2+k-2} = 1$. This follows from comparing $S'_1(\ba)$. Indeed, we get inequality
\begin{align}
	\ba^{-}_{k^2+k-2}\cdot  [(k^2+k-2)(k^2+k-3)-2] \leq (k^2+k-1)(k^2+k-2)-2
\end{align}
Since $k\geq 2$, this implies $\ba^{-}_{k^2+k-2} \leq \frac{9}{5} < 2$, which forces $\ba^{-}_{k^2+k-2} = 1$. 

\textbf{Claim 4}: $\ba^{+}_k = \ba^{-}_{k+1} = 1$, $\ba^{-}_k = \ba^{+}_{k+1} = 0$ and $\ba^{-}_{j} = \ba^{+}_j = 0$ for all $k+2\leq j \leq k^2+k-3$

This can be done by checking $S'_1(\ba)$. By all above we shall get
\begin{align}\label{S;ineq;4}
	& \sum_{j = k}^{k^2+k-3}\ba^{-}_{j}  \cdot \frac{j(j-1)-2}{2}  + \ba^{+}_{j}\cdot \frac{j(j+1)-2}{2}  \nonumber\\
	=& \frac{(k^2+k-2)(k^2+k-1)-2}{2} -\ba^{-}_{k^2+k-2} \cdot  \frac{(k^2+k-2)(k^2+k-3)-2}{2} \nonumber\\
	=& k^2+k-2
\end{align} 
If $\ba^{-}_j \geq 1$ or $\ba^{+}_j \geq 1$ for some $k+2\leq j \leq k^2+k-3$ then by the fact that $|\ba| \geq 3$ we get 
\begin{align}
	 \sum_{j = k}^{k^2+k-3}\ba^{-}_{j}  \cdot \frac{j(j-1)-2}{2}  + \ba^{+}_{j}\cdot \frac{j(j+1)-2}{2}   \geq \frac{k(k-1)-2}{2} + \frac{(k+2)(k+1)-2}{2} = k^2+k-1
\end{align}
A contradiction. The same contradiction arises if we assume $\ba^{+}_{k+1} \geq 1$, hence we get $\ba^{+}_{k+1} = 0$. 

Finally we look at $\ba^{-}_k, \ba^{+}_k,  \ba^{-}_{k+1}$. To that end, it will be more convenient to look at $|S'(\ba)|$. By our assumption and above results
\begin{align}
	\sum_{j=k}^{k^2+k-3} (j^2-2)(\ba^{-}_j + \ba^{+}_j) &= (k^2-2) (\ba^{-}_k + \ba^{+}_k)  + [(k+1)^2-2] \ba^{-}_{k+1}  \nonumber\\
	&= (k^2+k-1)^2- (k^2+k-2)^2 \\
	&= 2k^2+2k-3 \nonumber
\end{align}
Since $k\geq 2$ , this  equation forces $\ba^{-}_k + \ba^{+}_k = \ba^{-}_{k+1} = 1$ when $k\neq 3$. By \eqref{S;ineq;4} we immediately get $\ba^{+}_k = 1$ and $\ba^{-}_k = 0$. 
When $k=3$, there is an additional solution $\ba^{-}_3 + \ba^{+}_3 = 3$ and $\ba^{-}_4 = 0$. However, one can quickly check that this must violate \eqref{S;ineq;4} . Therefore,  we are still left with the only choice $\ba^{+}_k = 1$ and $\ba^{-}_k = 0$ for all $k\geq 2$. 
This finishes the claim.

So far, we proved that there is at most one possible $\ba$ in the summation of $\bar{\cQ}$, which corresponds to the desired term $c\bx^{+}_k \bx^{-}_{k+1}\bx^{-}_{k^2+k-2}$. This finishes the proof. 

\end{proof}

We are ready to obtain the main result of this section:
\begin{theorem}[Obstruction equation] \label{Thm;14}
For each $k\geq 2$, there exist universal polynomials $\cQ^{(7)}_k, \cQ^{(8)}_k\in \dC[\bx^*]$ depending only on $k$  satisfying:
\begin{itemize}
	\item $\cQ^{(7)}_k, \cQ^{(8)}_k$ are $S'$-homogeneous with $deg^h_{S'}(\cQ) = \Bigg(\frac{(k^2+k-1)(k^2+k-2)-2}{2},\frac{(k^2+k-1)(k^2+k)-2}{2}\Bigg)$. 
	\item $\cQ^{(7)}_k(\bx^{*})$ depends only on $\bx^{-}_k, \bx^{+}_k, \cdots, \bx^{-}_{k^2+k-3}, \bx^{+}_{k^2+k-3}$.
	\item $\cQ^{(8)}_k(\bx^{*})$ depends only on $\bx^{-}_2, \bx^{+}_2, \cdots, \bx^{-}_{k^2+k-2}, \bx^{+}_{k^2+k-2}$ and each monomial in $\cQ^{(8)}_k(\bx^{*})$ contains at least one of $\bx^{-}_2, \bx^{+}_2, \cdots, \bx^{-}_{k-1}, \bx^{+}_{k-1}$. 
\end{itemize} 
Moreover, the following obstruction equation holds for all $k\geq 2$ with $(k^2+k-1)^2\leq N$:
\begin{align}\label{f;obstruction;2}
	C^{(6)}_k\cdot \mathbf{f}^{+}_k\mathbf{f}^{-}_{k+1}\mathbf{f}^{-}_{k^2+k-2} + \cQ^{(7)}_{k}(\mathbf{f}^{*}) + \cQ^{(8)}_{k}(\mathbf{f}^{*})=0
\end{align}
where $$C^{(6)}_k = 2^{-3/2}\cdot \frac{2(k^2+k)^2(k^2+k-1)(k^2+k-2)^2(k^2+k-3)}{(k^3+k^2-2k-1)(k^3+2k^2-k-1)} >0.$$
\end{theorem}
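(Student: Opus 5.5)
Fix $k\ge2$ with $(k^2+k-1)^2\le N$ and set $K^\ast:=k^2+k-1$. The plan is to start from the elliptic obstruction equation of Theorem \ref{Thm;11}, rewrite it as a polynomial identity in $\bp^{*}$, and then substitute Proposition \ref{Prop;ptof;1} to pass to $\mathbf f^{*}$.

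\textbf{Step 1: the obstruction in $\bp^{*}$.} The index $(i,j)=(\frac{K^\ast(K^\ast-1)}{2},\frac{K^\ast(K^\ast+1)}{2})$ is critical, so \eqref{L2+L3;expression;2} gives $\mathcal O(\bp^{*})=0$, where
\begin{align*}
\mathcal O(\bx^{*}):=\sum_{S(\ba)=(i,j),\,|\ba|\ge2}C^{(2)}_{\ba}(\bx^{*})^{\ba}.
\end{align*}
This polynomial is $S'$-homogeneous of $S'$-degree $(i-1,j-1)$, which is exactly the degree in the statement, and it has no degree-one term. By Lemma \ref{Lem;decomp;1}, applied with $K^\ast$ in place of $k$, it also has no degree-two term. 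Lemma \ref{Lem;decomp;2} then splits it as
\begin{align*}
\mathcal O=c\,\bx^{+}_k\bx^{-}_{k+1}\bx^{-}_{k^2+k-2}+\mathcal O'+\mathcal O''.
\end{align*}
Here $c=C^{(2)}_{\ba}$ for the $\ba$ of Theorem \ref{Thm;12}, so $c=-\frac{2\mathbf K^2(\mathbf K-1)(\mathbf K-2)^2(\mathbf K-3)}{(\cdots)(\cdots)}$.

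\textbf{Step 2: substitution.} Proposition \ref{Prop;ptof;1} gives
\begin{align*}
\bp^{\pm}_l=-\tfrac{1}{\sqrt2}\mathbf f^{\pm}_l+\cQ^{(12)}_{\ast}(\mathbf f^{*}),
\end{align*}
where, by Lemma \ref{Q;deg;lem;3}, $\cQ^{(12)}_{\ast}$ is $S'$-homogeneous with the same $S'$-degree as $\bx^{\pm}_l$ and has no degree-zero or degree-one terms. Define
\begin{align*}
\tilde{\mathcal O}(\bx^{*}):=\mathcal O\bigl(\cQ^{(14)}_{\ast}(\bx^{*})\bigr).
\end{align*}
By Lemma \ref{Lem;poly-comp}, $\tilde{\mathcal O}$ is $S'$-homogeneous of the same $S'$-degree. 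Each factor of $\mathcal O$ is linear-plus-higher after substitution, and $\mathcal O$ has minimal degree three, so $\tilde{\mathcal O}$ has no degree-one or degree-two terms. Lemma \ref{Lem;decomp;2} applies again and gives
\begin{align*}
\tilde{\mathcal O}=c'\bx^{+}_k\bx^{-}_{k+1}\bx^{-}_{k^2+k-2}+\cQ^{(7)}_k+\cQ^{(8)}_k,
\end{align*}
with $\cQ^{(7)}_k$ and $\cQ^{(8)}_k$ having exactly the listed dependence properties. The obstruction equation \eqref{f;obstruction;2} is then $\tilde{\mathcal O}(\mathbf f^{*})=\mathcal O(\bp^{*})=0$, with $C^{(6)}_k:=c'$.

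\textbf{Step 3: computing $c'$ (the main obstacle).} The identity $c'=(-\tfrac1{\sqrt2})^3c=2^{-3/2}|c|>0$ is the delicate point. A degree-three monomial in $\tilde{\mathcal O}$ can only arise from a degree-three monomial of $\mathcal O$ with each factor replaced by its linear part. Degree-two monomials of $\mathcal O$ would need a quadratic correction in one slot, but they are absent by Lemma \ref{Lem;decomp;1}. Monomials of degree four or more only produce degree four or more.

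So I would track which linear parts can yield $\bx^{+}_k\bx^{-}_{k+1}\bx^{-}_{k^2+k-2}$. The linear part of $\cQ^{(14)}$ at slot $\bx^{\pm}_l$ is exactly $-\frac1{\sqrt2}\bx^{\pm}_l$, because $\cQ^{(12)}$ has no linear terms. Hence the only contributing monomial of $\mathcal O$ is $\bx^{+}_k\bx^{-}_{k+1}\bx^{-}_{k^2+k-2}$ itself, and it contributes $(-\frac1{\sqrt2})^3c=-2^{-3/2}c>0$. Plugging in Theorem \ref{Thm;12} yields the stated formula for $C^{(6)}_k$.

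Universality holds because every ingredient ($C^{(2)}$, $\cQ^{(14)}$, and the splitting) depends only on $k$. The bookkeeping check that Lemma \ref{Lem;decomp;2}'s decomposition is canonical, meaning it is defined by the index conditions so that $\cQ^{(7)}$ and $\cQ^{(8)}$ are well-defined universal polynomials, is routine.
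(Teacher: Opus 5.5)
Your proposal is correct and follows the paper's proof essentially step by step. You form the resonant obstruction polynomial in $\bp^{*}$, substitute $\cQ^{(14)}$ (linear part $-\tfrac{1}{\sqrt2}\bx^{\pm}_l$ plus terms of degree $\ge 2$), use the absence of degree-one and degree-two terms (Lemma \ref{Lem;decomp;1}) to read off $C^{(6)}_k=-2^{-3/2}C^{(2)}_{\bar{\ba}}$, and split via Lemma \ref{Lem;decomp;2}.

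The one point you leave implicit, which the paper states, is that the obstruction polynomial involves only $\bx^{\pm}_l$ with $l\le k^2+k-2$ (Lemma \ref{Lem;Sdeg;1}). Since $(k^2+k-1)^2\le N$, Proposition \ref{Prop;ptof;1} then applies to every slot, which is what justifies $\tilde{\mathcal O}(\mathbf f^{*})=\mathcal O(\bp^{*})$.
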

\begin{proof}
Taking $C^{(2)}$ from Definition \ref{Def;C1C2} and define for all $i,j\geq 0$:
\begin{align}
	\cQ^{(2)}_{i,j}(\bx^{*}):= \sum_{\substack{S'(\ba) = (i,j) \\ |\ba| \geq 2}}C^{(2)}_{\ba} \prod_{l\geq2}(\bx^{-}_{l})^{\ba^{-}_l}(\bx^{+}_{l})^{\ba^{+}_l} =  \sum_{\substack{S(\ba) = (i+1,j+1) \\ |\ba| \geq 2}}C^{(2)}_{\ba}\prod_{l\geq2}(\bx^{-}_{l})^{\ba^{-}_l}(\bx^{+}_{l})^{\ba^{+}_l}
\end{align}
Then $\cQ^{(2)}_{i,j}$ is $S'$-homogeneous with $deg^h_{S'}(\cQ^{(2)}) = (i,j)$. In particular, $deg^h_{|S'|}(\cQ^{(2)}_{i,j}) = i+j$.  Moreover,  $\cQ^{(2)}_{i,j}$ contains no (usual) degree 1 term. 
Then Lemma \ref{Lem;Sdeg;1} implies that $\cQ^{(2)}_{i,j}(\bx^{*}) = \cQ^{(2)}_{i,j}(\bx^{-}_2, \bx^{+}_2,\cdots, \bx^{-}_{l-1}, \bx^{+}_{l-1})$, where $l$ is the smallest integer satisfying $l^2-2\geq i+j$. 

Next we define 
\begin{align}
	\cQ^{(3)}_{i,j}(\bx^{*}) := \sum_{\substack{S'(\ba) = (i,j) \\ |\ba| \geq 2}} \prod_{l\geq2} C^{(2)}_{\ba}\Big[-2^{-1/2}\bx^{-}_{l} + \cQ^{(12)}_{\frac{l^2-l-2}{2},\frac{l^2+l-2}{2}}(\bx^{*})\Big]^{\ba^{-}_l}\Big[-2^{-1/2}\bx^{+}_{l} + \cQ^{(12)}_{\frac{l^2+l-2}{2},\frac{l^2-l-2}{2}}(\bx^{*})\Big]^{\ba^{+}_l} 
\end{align}
By Lemma \ref{Lem;poly-comp},  $\cQ^{(3)}_{i,j}$ is also $S'$-homogeneous with $deg^h_{S'}(\cQ^{(3)}) = (i,j)$. By Lemma \ref{Lem;decomp;1}, when $(i,j) = (\frac{m^2-m-2}{2}, \frac{m^2+m-2}{2})$ or $(i,j) = (\frac{m^2+m-2}{2}, \frac{m^2-m-2}{2})$, the summation only contains $\ba$ with $|\ba|\geq 3$. By Lemma \ref{Q;deg;lem;3} there is no (usual) degree 0, 1 term in $\cQ^{(12)}_{i,j}$ for all $i+j\geq 0$ , we then conclude that there is no (usual) degree 0,1,2 terms in  $\cQ^{(3)}_{\frac{m^2-m-2}{2}, \frac{m^2+m-2}{2}}$. Moreover, the homogeneous (usual) degree 3 part of $\cQ^{(3)}_{\frac{m^2-m-2}{2}, \frac{m^2+m-2}{2}}$ is precisely 
\begin{align}
	-2^{-3/2}\sum_{\substack{S'(\ba) = (\frac{m^2-m-2}{2}, \frac{m^2+m-2}{2}) \\ |\ba| = 3}} \prod_{l\geq2} C^{(2)}_{\ba}\big(\bx^{-}_{l}\big)^{\ba^{-}_l}\big(\bx^{+}_{l}\big)^{\ba^{+}_l} 
\end{align}

Using Definition \ref{Def;bfp;1} we can simplify the obstruction equation in Theorem \ref{Thm;11} as 
\begin{align}\label{Pij,expression;21}
	\cQ^{(2)}_{\frac{m(m-1)-2}{2},\frac{m(m+1)-2}{2}}(\bp^{*}) = 0
\end{align} 
for all $m \geq 2$ with $m^2 \leq N$. Note that $\cQ^{(2)}_{\frac{m(m-1)-2}{2},\frac{m(m+1)-2}{2}}(\bx^{*})  = \cQ^{(2)}_{\frac{m(m-1)-2}{2},\frac{m(m+1)-2}{2}}(\bx^{-}_2, \bx^{+}_2,\cdots, \bx^{-}_{m-1}, \bx^{+}_{m-1}) $. 

Setting $m = k^2+k-1$. To compute \eqref{Pij,expression;21} we only to know $(\bp^{-}_2, \bp^{+}_2,\cdots, \bp^{-}_{k^2+k-2}, \bp^{+}_{k^2+k-2})$, which are all described in Proposition \ref{Prop;ptof;1}. This allows us to use Propsotion \ref{Prop;ptof;1} to  substitute $\bp^{*}$ by expressions of $\mathbf{f}^{*}$: 
\begin{align}
	\cQ^{(3)}_{\frac{(k^2+k-1)(k^2+k-2)-2}{2}, \frac{(k^2+k-1)(k^2+k)-2}{2}}(\mathbf{f}^{*}) = 0
\end{align}

Putting them together, the coefficient  $C^{(6)}_k$ of $\bx^{+}_k \bx^{-}_{k+1}\bx^{-}_{k^2+k-2}$ in $\cQ^{(3)}_{\frac{(k^2+k-1)(k^2+k-2)-2}{2}, \frac{(k^2+k-1)(k^2+k)-2}{2}}(\mathbf{x}^{*})$ must be:
\begin{align}
	C^{(6)}_k = -2^{-3/2}\cdot C^{(2)}_{\bar{\ba}}
\end{align}
where $\bar{\ba}$ a double multi-index with the only nonzero elements to be $\bar{\ba}^{+}_k = \bar{\ba}^{-}_{k+1} = \bar{\ba}^{-}_{k^2+k-2}=1$. By Theorem \ref{Thm;12} we can further compute
\begin{align}
	C^{(6)}_k = 2^{-3/2}\cdot \frac{2(k^2+k)^2(k^2+k-1)(k^2+k-2)^2(k^2+k-3)}{(k^3+k^2-2k-1)(k^3+2k^2-k-1)}
\end{align}
Finally, we apply Lemma \ref{Lem;decomp;1} and Lemma \ref{Lem;decomp;2}, the Theorem follows immediately.
\end{proof}

\section{Prescribing higher order asymptotics}\label{Sec;prescribe-higher-asymptotic}
{\subsection{Introduction}
In this section, we will always discuss general dimension $n\geq 1$.  Recall that the MCF with spherical singularity corresponds to a RMCF with graph function $v$ converging to 0 over a sphere $\mathbb{S}^n(\sqrt{2n})$.   The graph function $v$ satisfies
\begin{align}\label{v;equation;1}
	\partial_{\tau}v = L v + N(v,\nabla v, \nabla^2v)
\end{align}
where $Lv = \Delta v + v$ and $N$ is the nonlinear term. We shall abbreviate $N(v,\nabla v, \nabla^2 v)$ as $N(v)$.

Recall from  Section \ref{section;pre} that $E_k$ is the $k$-th eigenspace of the linearized operator $L = \Delta + 1$ on the sphere $\mathbb{S}^n(\sqrt{2n})$. The main result of this section is the following:
\begin{theorem}\label{Thm;prescribe-diff-asymptotic;2}
	Fix any $r > \frac{n}{2}+2, k\geq2$, given  an admissible solution $v_0$ to \eqref{v;equation;1} on $[\tau_0,\infty)$ and given $Q_k\in E_k$, set $\eta:= \frac{1}{4n}$. There exists a constant $C>0$, $\tau_1\geq \tau_0$ and another admissible solution $v_1$ of \eqref{v;equation;1} defined on $[\tau_1,\infty)$ such that 
\begin{align}
	  |e^{\lambda_k \tau}(v_1-v_0) - Q_k|_{C^0(\mathbb{S}^n)} = O( e^{-\eta \tau}).
\end{align}
\end{theorem}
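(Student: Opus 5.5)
We will construct $v_1 = v_0 + w$, where the difference $w$ solves the equation obtained by subtracting the equations for $v_1$ and $v_0$:
\begin{align*}
	\partial_\tau w = Lw + \big[N(v_0+w)-N(v_0)\big] =: Lw + \mathcal{N}_{v_0}(w).
\end{align*}
This is a nonautonomous perturbation of $\partial_\tau w = Lw$. The forcing has the form $\mathcal{N}_{v_0}(w) = A(v_0)\cdot(w,\nabla w,\nabla^2 w) + O(|w|^2_{C^2})$, where the linear coefficient $A(v_0)$ is $O(|v_0|_{C^{2}})$. By admissibility of $v_0$ (Strehlke's asymptotics), this coefficient decays like $e^{-\delta\tau}$ for some $\delta>0$; concretely $\delta\ge\lambda_2=\frac{1}{n}$, and this is where the choice $\eta=\frac1{4n}$ will come from.

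The plan is a stable manifold / Lyapunov--Perron construction for $w$ in a weighted space. We work in $H^r(\mathbb{S}^n)$ with $r>\frac n2+2$, so that $H^r\hookrightarrow C^2$ and $N$ is smooth and tame on small balls. We split $H^r$ into the modes $\bigoplus_{j\le k}E_j$ (unstable or slower decay relative to rate $\lambda_k$) and their complement $\bigoplus_{j>k}E_j$ (faster decay).

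We seek $w$ with $\|e^{\lambda_k\tau}w(\tau)\|_{H^r}$ bounded, and write $w = e^{-\lambda_k\tau}Q_k + z$. The correction $z$ is determined by the variation of constants formula, integrating from $\tau=\infty$ for the projections onto $E_j$ with $j\le k$ and from $\tau_1$ for the projection onto $j>k$:
\begin{align*}
	z(\tau) = -\int_\tau^\infty e^{(\tau-s)L}\Pi_{\le k}\,\mathcal{N}_{v_0}(w(s))\,ds + \int_{\tau_1}^\tau e^{(\tau-s)L}\Pi_{>k}\,\mathcal{N}_{v_0}(w(s))\,ds,
\end{align*}
with initial datum $\Pi_{>k}z(\tau_1)=0$. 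We then show that the map defined by the right-hand side is a contraction in the norm
\[
\sup_{\tau\ge\tau_1} e^{(\lambda_k+\eta)\tau}\|z(\tau)\|_{H^{r}},
\]
for $\tau_1$ large. The smoothing of $e^{(\tau-s)L}$ on the high-mode part absorbs the loss of two derivatives in $\mathcal{N}$, via the usual $\int (\tau-s)^{-1}$-type bounds, or alternatively by parabolic energy estimates with one derivative gained. The integrals converge because
\[
\|\mathcal{N}_{v_0}(w)\|\lesssim e^{-\delta s}e^{-\lambda_k s} + e^{-2\lambda_k s},
\]
which is $\lesssim e^{-(\lambda_k+2\eta)s}$, and $2\eta<\delta$. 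On the $E_j$ with $j\le k$ the factor $e^{\lambda_j(s-\tau)}$ with $\lambda_j\le\lambda_k$ is harmless. For $j=k$ itself, integrating from infinity preserves the coefficient $Q_k$ exactly: the $E_k$-component of $e^{\lambda_k\tau}w$ converges to $Q_k$ at rate $e^{-2\eta\tau}$. On the modes with $j>k$ we use the gap $\lambda_{k+1}-\lambda_k\geq \frac{1}{n}>\eta$.

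Finally, $v_1=v_0+w$ solves the equation, and $|e^{\lambda_k\tau}(v_1-v_0)-Q_k|_{C^0}\le C\|e^{\lambda_k\tau}z\|_{H^r}=O(e^{-\eta\tau})$. Admissibility of $v_1$ (smallness in all $C^l$, and convexity of the corresponding flow if that is part of the definition) follows for large $\tau_1$ from smallness of $w$ together with parabolic smoothing, since $v_0$ already has these properties with margin.

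The main obstacle is the derivative loss in the quasilinear term $\mathcal{N}_{v_0}(w)$ combined with integrating the low modes backward from infinity. The low-mode part is finite-dimensional, so it poses no regularity issue. The real difficulty is closing the high-mode estimate in $H^r$ uniformly in time with a merely exponentially decaying, time-dependent linear coefficient. I would first try maximal-regularity / energy estimates, as in Naito's and Strehlke's stable manifold theorems, applied to the perturbed operator $L+A(v_0)$. If needed, I would treat $A(v_0)$ as part of the linear operator and use its $e^{-\delta\tau}$ decay to show that the perturbed propagator satisfies the same dichotomy bounds, up to losing an arbitrarily small exponent.
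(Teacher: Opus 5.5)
Your overall architecture is sound, and it differs from the paper's. The paper parametrizes solutions of the difference equation by initial data $q_0\in F_k$. All modes $j\ge k$ are run forward from time $0$ at rate $\sigma=\lambda_k-\eta/2$, and modes $j<k$ are run from $\infty$. It then proves separately that $e^{\lambda_k\tau}w\to F(q_0)\in E_k$ at rate $e^{-\eta\tau}$. Next it shows $F-\mathrm{Id}$ is a $\tfrac12$-contraction on a small ball of $E_k$, which hits small $Q_k$, and finally it shifts time to reach arbitrary $Q_k$.

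You instead move $E_k$ into the block integrated from $\infty$ and work at the faster rate $\lambda_k+\eta$. This prescribes $Q_k$ directly, so the refined-asymptotics proposition, the map $F$ and its inversion collapse into a single fixed point, and large $Q_k$ is absorbed by taking $\tau_1$ large. Your rate bookkeeping is correct: the forcing is $O(e^{-(\lambda_k+\lambda_2)s})$ in $H^{r-2}$, and the gap is $\lambda_{k+1}-\lambda_k=\frac{2k+n}{2n}>\eta$. What the paper's route buys is the initial-data parametrization (Lipschitz dependence on $q_0$, injectivity of $F$, reading off $\Pi_{k_1+1}w(0)$), which it reuses when constructing the non-smooth examples.

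The gap is the function space, and this is the real analytic content of the paper's proof. Your contraction norm $\sup_\tau e^{(\lambda_k+\eta)\tau}\|z(\tau)\|_{H^r}$ does not close. $N$ is quasilinear, so for $w\in H^r$ you only get $\mathcal{N}_{v_0}(w)\in H^{r-2}$. On the high modes $\|e^{tL}\|_{H^{r-2}\to H^r}\sim t^{-1}$, so the ``usual $\int(\tau-s)^{-1}$ bound'' diverges logarithmically at $s=\tau$; maximal regularity genuinely fails in $L^\infty_t$. You must add a weighted $L^2_tH^{r+1}$ component and close the high modes with the energy inequality $\frac{d}{d\tau}\|u\|_{H^r}^2+\|u\|_{H^{r+1}}^2\le\|(\partial_\tau-L)u\|_{H^{r-1}}^2$, using $\|N(v_1)-N(v_2)\|_{H^{r-1}}\lesssim\|v_1\|_{H^r}\|v_1-v_2\|_{H^{r+1}}+\|v_2\|_{H^{r+1}}\|v_1-v_2\|_{H^r}$.

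Your fallback ``as in Strehlke'' fails here for a specific reason. Strehlke's norm has an \emph{unweighted} $\int_0^\infty\|u\|_{H^{r+1}}^2\,ds$ and bounds the cross term by putting the exponential weight on the non-difference factor, via $\sup_s e^{2\sigma s}\|v_i(s)\|_{H^r}^2$. With $v_i=v_0+w_i$ that supremum is in general infinite, because $v_0$ need only decay like $e^{-\lambda_2 s}$ while $\sigma$ is near $\lambda_k$. The weight has to sit on the $L^2_tH^{r+1}$ norm of the difference, which is exactly why the paper introduces $\|\cdot\|_{r,\sigma,\eta}$.

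In your setup the repair is clean. With $\mu=\lambda_k+\eta$, use $\sup_\tau e^{\mu\tau}\|z\|_{H^r}+\bigl(\int_{\tau_1}^\infty e^{2\mu s}\|z\|_{H^{r+1}}^2\,ds\bigr)^{1/2}$. Only modes $j>k$ run forward and $\lambda_{k+1}>\mu$, so the weighted energy estimate gives the full weight $e^{2\mu s}$ there. The finite-dimensional low modes need only the $H^{r-2}$ bound. The bad cross term is then controlled by $\sup_s\|v_0(s)\|_{H^r}\bigl(\int e^{2\mu s}\|z_1-z_2\|_{H^{r+1}}^2\,ds\bigr)^{1/2}$. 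Without this change the contraction step, as you wrote it, fails.
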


Let us outline this section. In the subsection \ref{subsec;stable-manifold}, we will establish a stable manifold construction for the difference equation \eqref{w;equation;1} with a base solution $v_0$ solving \eqref{v;equation;1}, which gives existence and uniqueness of the solution $w$ to \eqref{w;equation;1} in a small ball of suitable Banach space, see Corollary \ref{Cor;existence;1}. 
Moreover, our design of the Banach space ensures that the solution $w$ satisfies a rough asymptotic decay. Next in the subsection \ref{subsec;refined-asymptotics}, we first refine the asymptotic decay rate to the optimal one: $w = e^{-\lambda_k \tau}Q_k + O(e^{-(\lambda_k + \eta)\tau})$, see Proposition \ref{Prop;fine-asymptotic;1}. Then we use a fixed point argument to show that all possible asymptotics with small $Q_k$ can be realized, see Theorem \ref{Thm;prescribe-diff-asymptotic;1}. Finally, the main Theorem \ref{Thm;prescribe-diff-asymptotic;2} follows from a scaling and shifting argument. 

This section follows the spirit of Strehlke's work \cite{str}. However, a key difficulty arises from the mismatch of asymptotic decay rates between the base solution $v_0$ and the difference solution $w$. The norm $||\cdot||_{r,\sigma}$ introduced in \cite{str} cannot handle different asymptotic rates simultaneously. In particular, the estimate of $\Pi_k D$ in the proof of Theorem 3.7 in \cite{str} breaks down. To overcome this difficulty, we introduce an additional Banach space with norm $||\cdot||_{r,\sigma,\eta}$. The extra parameter $\eta$ accommodate different decay rates and eventually allows us to piece all the estimates together.

\subsection{Setup}\label{subsec;prescribe;setup}

The following set up follows closely the framework of Strehlke \cite{str}:
\begin{itemize}
	\item Define inner product\begin{align*}
			\langle u,v\rangle = \int_{\mathbb{S}^n}uv 
		  \end{align*}
	\item Define \begin{align*}
			F_k = \bigoplus_{j=k}^{\infty} E_j
		  \end{align*}
	\item Let $\{\phi_{i,j}\}_{1\leq j \leq d_i}$ be $L^2$ orthonormal basis of $E_i$. Therefore thus $\{\phi_{i,j}\}_{i\geq 0, 1\leq j \leq d_i}$ is an orthonormal basis of $L^2(\mathbb{S}^n)$. Then for any $v\in L^2(\mathbb{S}^n)$ one can decompose $ v = \sum_{i\geq 0, 1\leq j \leq d_i} a_{i,j}\phi_{i,j}$. Clearly $||v||_{L^2}^2 = \sum_{i\geq 0, 1\leq j \leq d_i}a_{i,j}^2$. We define
		\begin{align*}
			||v||_{H^r}^2 = \sum_{\substack{i\geq 0\\ 1\leq j\leq d_i}} a_{i,j}^2 |\lambda_i|^{r} 
		\end{align*}
		Since $Ker(L)$ is empty on $\mathbb{S}^n$, the above definition is equivalent to usual $H^r$ norm. Moreover,
		\begin{align*}
			||v||_{H^r}^2 = \langle(-L)^r v, v\rangle
		\end{align*}
		when $v\in F_2 $ (which contains any $F_k$, $k\geq 2$.) 
		
	\item Define $\Pi_k, \pi_k$ to be the $L^2$ orthogonal projection onto $F_k, E_k$ resp. Indeed, it is equivalent to $H^r$ orthogonal projection for any $r\geq 0$.
	\item For $\sigma \geq 0$, define the norm $||\cdot||_{r,\sigma}$
		  \begin{align*}
		  	 ||v||_{r,\sigma} = \Big(\int_0^{\infty} ||v(s)||^2_{H^{r+1}}ds\Big)^{1/2} + \sup\limits_{s\geq 0}e^{\sigma s}||v(s)||_{H^r}
		  \end{align*}
	\item Define $X_{r,\sigma} = \{v: [0,\infty) \rightarrow H^{r+1} \ \big| \ ||v||_{r,\sigma} < \infty\}$ to be the space of functions with finite $||\cdot||_{r,\sigma}$ norm.
	\item For $p_0\in F_k$ and $v \in X_{r,\sigma}$, define 
		  \begin{align}\label{Def;T}
			T_0(v,p_0)\Big|_{\tau} = e^{L\tau}p_0 + \int_{0}^{\tau} e^{L(\tau-s)}\Pi_k(N(v(s)))ds - \int_{\tau}^{\infty} e^{L(\tau-s)}(1-\Pi_k)(N(v(s)))ds  
		  \end{align}
		Note that the integral converges, this can be seen from the spirit of the proof of Theorem \ref{Thm;contraction-map;1}.
		\item We will frequently use the following nonlinear estimate (see \cite[Lemma 3.5]{str}): if $r > \frac{n}{2}+1$, then for any $v_1, v_2\in H^r$ with $||v_1||_{H^r}\leq 1$ and $||v_2||_{H^r}\leq  1$:
			\begin{align}\label{Nonlinear;1}
			||N(v_1) - N(v_2)||_{H^{r-1}} \leq C_3\cdot \Big( ||v_1||_{H^r}  ||v_1-v_2||_{H^{r+1}} + ||v_2||_{H^{r+1}}  ||v_1-v_2||_{H^{r}} \Big)
			\end{align} 
		where $C_3$ depends only on $r$. 

\end{itemize}

\subsection{Stable manifold construction}\label{subsec;stable-manifold}
We would like to modify the Theorem 3.7 in Strehlke's work \cite{str}.  To this end, we need to define another norm $||\cdot||_{r,\sigma,\eta}$ for $0\leq \eta\leq \sigma$:
\begin{align}\label{Def;r-sigma-eta-norm}
	||v||_{r,\sigma,\eta} := \Big(\int_0^{\infty} e^{2(\sigma -\eta)s} ||v(s)||^2_{H^{r+1}}ds\Big)^{1/2} + \sup\limits_{s\geq 0}e^{\sigma s}||v(s)||_{H^r}
\end{align}
We shall also define $X_{r,\sigma,\eta}$ to be the space of functions with finite $||\cdot||_{r,\sigma,\eta}$:
\begin{align} 
	X_{r,\sigma,\eta} = \{v: [0,\infty)\rightarrow H^{r+1} \ \big| \ ||v||_{r,\sigma,\eta} < \infty\}
\end{align}

It is clear that $||\cdot||_{r,\sigma,\sigma} = ||\cdot||_{r,\sigma}$. Moreover, if $0 \leq \eta \leq \sigma$, then $||\cdot||_{r,\eta}\leq ||\cdot||_{r,\sigma}\leq ||\cdot||_{r,\sigma,\eta} $.

Since we will only be interested in the asymptotic behavior, we can define smaller class of solutions:
\begin{definition}
	We say $v$ is an admissible solution to \eqref{v;equation;1}, if $v$ converges to 0 in $C^2$ as $\tau\rightarrow\infty$.
\end{definition}

\begin{lemma}\label{Lem;admissible-norm}
	Suppose that $v$ is defined on  $[\tau_0,\infty)$, then $v$ is admissible  if and only if $||v(\cdot, \cdot + \tau_0)||_{r,\eta} < \infty$ for some $r > \frac{n}{2}+2 $ and $\eta \in (0,\lambda_2)$.
\end{lemma}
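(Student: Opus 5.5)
The plan is to prove the two implications separately. The direction ``finite norm $\Rightarrow$ admissible'' is a Sobolev embedding. The direction ``admissible $\Rightarrow$ finite norm'' reduces to the exponential decay supplied by Strehlke's theorem (\cite[Theorem 2.2]{str}), recalled in Section \ref{section;pre}.

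\emph{($\Leftarrow$).} Suppose $\|v(\cdot,\cdot+\tau_0)\|_{r,\eta}<\infty$ for some $r>\frac n2+2$ and $\eta\in(0,\lambda_2)$. The supremum part of the norm gives
\[
\|v(\tau)\|_{H^r}\le \|v(\cdot,\cdot+\tau_0)\|_{r,\eta}\, e^{-\eta(\tau-\tau_0)} .
\]
As noted in Section \ref{subsec;prescribe;setup}, the spectral $H^r$ norm is equivalent to the usual one. Since $r>\frac n2+2$, the Sobolev embedding $H^r(\mathbb{S}^n)\hookrightarrow C^2(\mathbb{S}^n)$ on the compact sphere then gives $\|v(\tau)\|_{C^2}\le C e^{-\eta(\tau-\tau_0)}\to 0$. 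Hence $v$ is admissible.

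\emph{($\Rightarrow$).} Let $v$ be admissible. I will show the norm is finite for every $r>\frac n2+2$ and every $\eta\in(0,\lambda_2)$, which is more than required.

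\emph{Step 1: higher regularity.} Once $\|v\|_{C^2}$ is small, \eqref{v;equation;1} is a uniformly parabolic quasilinear equation with smooth structure. Standard interior parabolic (Schauder and bootstrapping) estimates on unit time intervals $[\tau-1,\tau+1]$ then show that $v$ is smooth for $\tau\ge\tau_0+1$. They also give $\|v(\tau)\|_{C^l}\to0$ for every $l$. After shifting $\tau_0$ slightly, which is harmless since it changes the norm only by a finite amount, $v(\tau)\in H^{r+1}$ for all $\tau$ and the norm makes sense.

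\emph{Step 2: exponential decay.} The graph $v$ describes an RMCF converging smoothly to $\mathbb{S}^n(\sqrt{2n})$, so it corresponds to an MCF with a spherical singularity. If the flow is the self-similarly shrinking sphere, then $v\equiv0$ and there is nothing to prove. Otherwise \cite[Theorem 2.2]{str} yields $k\ge2$ and $P_k\in E_k$ with
\[
|v-e^{-\lambda_k\tau}P_k|_{C^l}\le Ce^{-(\lambda_k+\sigma)\tau}
\quad\text{for every } l .
\]
Because $\lambda_k\ge\lambda_2$, this gives $\|v(\tau)\|_{C^l}\le C_l e^{-\lambda_2\tau}$ for all $l$, and therefore $\|v(\tau)\|_{H^{r+1}}\le Ce^{-\lambda_2\tau}$.

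\emph{Step 3: conclusion.} From Step 2,
\[
\int_0^\infty\|v(s+\tau_0)\|_{H^{r+1}}^2\,ds<\infty
\quad\text{and}\quad
\sup_{s\ge 0} e^{\eta s}\|v(s+\tau_0)\|_{H^r}<\infty ,
\]
the latter because $\eta<\lambda_2$. Hence $\|v(\cdot,\cdot+\tau_0)\|_{r,\eta}<\infty$.

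\emph{Main obstacle.} The substantive point is Step 2, namely that mere $C^2$ convergence forces decay at rate at least $\lambda_2$. The difficulty is that the modes $E_0,E_1$ have negative eigenvalues $\lambda_0=-1$ and $\lambda_1=-\frac12$. I plan to invoke Strehlke's theorem for this, after checking that an admissible solution on $[\tau_0,\infty)$ indeed arises from such a flow, by undoing the rescaling \eqref{Def;RMCF;1}. If one prefers a self-contained argument, I would instead run a Merle--Zaag type ODE argument on the projections onto $E_0\oplus E_1$, $E_2$ and $F_3$, using the quadratic bound \eqref{Nonlinear;1}. That argument would first show the unstable and neutral projections are dominated by the stable part, and then derive $\frac{d}{d\tau}\|v\|_{H^r}^2\le-(2\lambda_2-\epsilon)\|v\|_{H^r}^2$, which gives decay faster than $e^{-\eta\tau}$ for any $\eta<\lambda_2$.
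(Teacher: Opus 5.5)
Your proposal is correct and follows essentially the same route as the paper. One direction is the Sobolev embedding $H^r\hookrightarrow C^2$, valid since $r>\frac n2+2$. The other is the decay $\|v(\tau)\|_{C^l}=O(e^{-\lambda_2\tau})$ for every $l$: the paper takes this from \v{S}e\v{s}um (with Strehlke as an alternative reference), while you read it off from Strehlke's Theorem 2.2 using $\lambda_k\ge\lambda_2$.

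Your Step 1 and the Merle--Zaag alternative are not needed. Here $v$ is a smooth graph function on the closed interval, and the cited decay already controls all $C^l$ norms.
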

\begin{proof}
	By a time shifting, we may assume that $\tau_0 = 0$.
	
	If $||v||_{r,\eta} <\infty$ for some $\eta \in (0,\lambda_2)$ and $r > \frac{n}{2}+2 $, then by Sobolev embedding, $||v||_{C^2}\rightarrow 0$ and thus is an admissible solution. 
	
	If $v$ is admissible, then by the work of \v{S}e\v{s}um \cite{se} (see also \cite{str}), $||v(t)||_{C^l} =  O(e^{-\lambda_2 \tau})$ as $t\rightarrow\infty$ for each $l\geq 0$. Since $\eta < \lambda_2$ we have $||v||_{r,\eta}<\infty$.
\end{proof}

Our main estimate in this subsection is the following:

\begin{theorem}\label{Thm;contraction-map;1}
If $r> \frac{n}{2}+1, k\geq 2$ and $p_0\in F_k$ and if $\lambda_{k-1} < \sigma < \lambda_k$, assume that $||v_1||_{r,0} + ||v_2||_{r,0} < 1$, then for any $0<\eta \leq\sigma$,
\begin{align}
	||T_0(v_1,p_0) - T_0(v_2,p_0)||_{r,\sigma,\eta}\leq C_1\Big(||v_1||_{r,\eta}+ ||v_2||_{r,\eta}\Big)\cdot ||v_1-v_2||_{r,\sigma,\eta}
\end{align}
where  $C_1=C(n,r)\Big[(1+\sigma^{\frac{1}{2}}\eta^{-\frac{1}{2}})\Big(\frac{\lambda_k}{\lambda_k-\sigma} \Big)^{\frac{1}{2}} + (1+\lambda_k^{\frac{1}{2}}\eta^{-\frac{1}{2}})\frac{\lambda_k}{\sigma - \lambda_{k-1}} \Big]$ depends only on $r, k,\sigma,\eta$.

\end{theorem}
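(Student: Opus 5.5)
The difference $D := T_0(v_1,p_0)-T_0(v_2,p_0)$ does not involve $p_0$. Write $\delta N(s) := N(v_1(s))-N(v_2(s))$. Then
\begin{align*}
D(\tau) = \int_0^\tau e^{L(\tau-s)}\Pi_k\,\delta N(s)\,ds - \int_\tau^\infty e^{L(\tau-s)}(1-\Pi_k)\,\delta N(s)\,ds =: D_1(\tau) - D_2(\tau).
\end{align*}
The plan is to bound each piece in the two components of $\|\cdot\|_{r,\sigma,\eta}$: the weighted $L^2_\tau H^{r+1}$ part and the weighted $\sup_\tau H^r$ part. On $F_k$ the semigroup contracts like $e^{-\lambda_k(\tau-s)}$. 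On the finite-dimensional range of $1-\Pi_k$, the backward flow $e^{L(\tau-s)}$ with $s>\tau$ grows at most like $e^{\lambda_{k-1}(s-\tau)}$, because the eigenvalues $\lambda_0,\lambda_1<0$ only help.

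\textbf{Forcing bound.} By \eqref{Nonlinear;1} and $\|v_i\|_{r,0}<1$,
\begin{align*}
\|\delta N(s)\|_{H^{r-1}} \le C_3\bigl(\|v_1(s)\|_{H^r}\|w(s)\|_{H^{r+1}}+\|v_2(s)\|_{H^{r+1}}\|w(s)\|_{H^r}\bigr), \qquad w=v_1-v_2 .
\end{align*}
In the first term, use $\|v_1(s)\|_{H^r}\le e^{-\eta s}\|v_1\|_{r,\eta}$. Then $e^{\sigma s}\|v_1\|_{H^r}\|w\|_{H^{r+1}}\le \|v_1\|_{r,\eta}\,e^{(\sigma-\eta)s}\|w(s)\|_{H^{r+1}}$, and this is exactly the $L^2$ weight built into $\|w\|_{r,\sigma,\eta}$. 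In the second term, use $e^{\sigma s}\|w(s)\|_{H^r}\le\|w\|_{r,\sigma,\eta}$ together with $\int\|v_2\|^2_{H^{r+1}}\le\|v_2\|^2_{r,\eta}$. Altogether, $g(s):=e^{\sigma s}\|\delta N(s)\|_{H^{r-1}}$ is a sum of an $L^2$ function and an $L^2$ function times a constant. This gives $\|g\|_{L^2}\le C(\|v_1\|_{r,\eta}+\|v_2\|_{r,\eta})\|w\|_{r,\sigma,\eta}$. The extra decay $e^{-\eta s}$ from $v_1$ is what lets the weight $e^{(\sigma-\eta)s}$ be absorbed, and this is why the new norm was introduced.

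\textbf{$D_1$ (the $F_k$ part).} I would work mode by mode. For a component with eigenvalue $\lambda_j\ge\lambda_k$, multiply by $e^{\sigma\tau}$. This turns the Duhamel integral into a convolution of $g$ with the kernel $e^{-(\lambda_j-\sigma)(\tau-s)}$, where $\lambda_j-\sigma\ge\lambda_k-\sigma>0$. The standard maximal-regularity estimate, using the gain of two derivatives $(-L)^{1/2}$ from $H^{r-1}$ to $H^{r+1}$ in $L^2_\tau$, handles the $e^{\sigma\tau}H^{r+1}$ part and produces the factor $(\lambda_k/(\lambda_k-\sigma))^{1/2}$. Since $e^{(\sigma-\eta)\tau}\le e^{\sigma\tau}$, the $L^2$ component of the norm is controlled. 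The sup component follows from Cauchy--Schwarz in $s$. That gives $\sup_\tau e^{\sigma\tau}\|D_1\|_{H^r}\lesssim \|g\|_{L^2}\sup_j\bigl(\lambda_j/(\lambda_j-\sigma)\bigr)^{1/2}$, up to the $\sigma^{1/2}\eta^{-1/2}$ factors that appear when one converts between weights.

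\textbf{$D_2$ (finitely many modes).} All norms are equivalent there, with constants depending on $\lambda_k$. For each mode we have $e^{\sigma\tau}|D_2|\le\int_\tau^\infty e^{-(\sigma-\lambda_j)(s-\tau)}g(s)\,ds$ with $\sigma-\lambda_j\ge\sigma-\lambda_{k-1}>0$. Young's inequality bounds the $L^2$ norm of this by $\|g\|_{L^2}/(\sigma-\lambda_{k-1})$. Cauchy--Schwarz bounds the sup by $\|g\|_{L^2}(\sigma-\lambda_{k-1})^{-1/2}$. The $H^{r+1}$ versus $H^{r-1}$ mismatch costs a factor $\lambda_k$.

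\textbf{Main obstacle.} The delicate step is the $L^2_\tau H^{r+1}$ estimate with weight $e^{(\sigma-\eta)\tau}$. Here the forcing must be placed in a norm compatible with the weight, and the two nonlinear terms carry different weights. Where the weights mismatch, I would first try the crude bound $e^{(\sigma-\eta)\tau}\le e^{\sigma\tau}$ and then use $\int_0^\infty e^{-2\eta s}ds=(2\eta)^{-1}$ to trade sup-norm control for $L^2$ control. This trade is the source of the $\eta^{-1/2}$ factors in $C_1$.
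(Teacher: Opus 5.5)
Your architecture is the paper's. You use the same split $D=\Pi_kD+(1-\Pi_k)D$ and, crucially, the same treatment of the forcing. You pair $e^{-\eta s}\|v_1(s)\|_{H^r}$ with the weaker weight $e^{(\sigma-\eta)s}$ on $\|w(s)\|_{H^{r+1}}$, and $\sup_s e^{\sigma s}\|w(s)\|_{H^r}$ with the unweighted $\int_0^\infty\|v_2\|_{H^{r+1}}^2$. This yields $\int_0^\infty e^{2\sigma s}\|N(v_1)-N(v_2)\|_{H^{r-1}}^2ds\lesssim(\|v_1\|_{r,\eta}+\|v_2\|_{r,\eta})^2\|w\|_{r,\sigma,\eta}^2$.

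Where you differ, you replace the energy inequalities \eqref{energy-estimate;1}--\eqref{energy-estimate;2} by mode-by-mode Duhamel bounds. For the sup component of $\Pi_kD$ this reproduces the paper's factor $(\lambda_k/(\lambda_k-\sigma))^{1/2}$. For $(1-\Pi_k)D$, your Young/Cauchy--Schwarz argument against the same $L^2_\tau$ forcing bound is a bit cleaner than the paper's. The paper instead uses a pointwise-in-time $H^{r-2}$ bound on $N(v_1)-N(v_2)$ and pays $\eta^{-1/2}$ through $\int_0^\infty e^{-2\eta\tau}d\tau$.

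The step that fails as stated is the $L^2_\tau H^{r+1}$ bound for $\Pi_kD$ at weight $e^{\sigma\tau}$. For $j\ge k$, $Y_j:=e^{\sigma\tau}\widehat{D}_j$ solves $Y_j'+(\lambda_j-\sigma)Y_j=e^{\sigma\tau}\widehat{\delta N}_j$ with $Y_j(0)=0$. The $L^2\to L^2$ norm of this solution map is exactly $(\lambda_j-\sigma)^{-1}$, and passing from $H^{r-1}$ to $H^{r+1}$ costs a factor $\lambda_j$. So the best constant in this linear estimate is $\sup_{j\ge k}\lambda_j/(\lambda_j-\sigma)=\lambda_k/(\lambda_k-\sigma)$, not its square root.

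Degrading via $e^{(\sigma-\eta)\tau}\le e^{\sigma\tau}$ afterwards therefore proves the inequality only with constant $C(n,r)\bigl[\lambda_k/(\lambda_k-\sigma)+\lambda_k/(\sigma-\lambda_{k-1})\bigr]$. That is enough for the later fixed-point arguments, where $\sigma=\lambda_k-\eta/2$. It is not dominated by the stated $C_1$, however; for example, when $\eta=\sigma\uparrow\lambda_k$ the stated $C_1$ grows like $(\lambda_k-\sigma)^{-1/2}$ while yours grows like $(\lambda_k-\sigma)^{-1}$.

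The missing point is that the $L^2$ component must be estimated directly at the weight $e^{(\sigma-\eta)\tau}$ that appears in the norm. The paper multiplies \eqref{energy-estimate;4} by $e^{2(\sigma-\eta)\tau}$. The resulting extra term satisfies $2\sigma\int_0^\infty e^{2(\sigma-\eta)s}\|\Pi_kD(s)\|_{H^r}^2ds\le 2\sigma\sup_s e^{2\sigma s}\|\Pi_kD(s)\|_{H^r}^2\int_0^\infty e^{-2\eta s}ds$, and is then controlled by the sup bound. This is exactly the source of $(1+\sigma^{1/2}\eta^{-1/2})(\lambda_k/(\lambda_k-\sigma))^{1/2}$.

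Within your own framework, Young's inequality at weight $e^{(\sigma-\eta)\tau}$ gives the factor $\lambda_k/(\lambda_k-\sigma+\eta)$. This is at most $2(1+\sigma^{1/2}\eta^{-1/2})(\lambda_k/(\lambda_k-\sigma))^{1/2}$. To see this, use $\lambda_k-\sigma+\eta\ge 2\sqrt{(\lambda_k-\sigma)\eta}$ when $\sigma\ge\lambda_k/4$, and $\lambda_k-\sigma\ge\tfrac34\lambda_k$ otherwise.
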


\begin{remark}
	The only place we need $||v_1||_{r,0} + ||v_2||_{r,0} < 1$ is  
	\begin{align*}
		||N(v_1) - N(v_2)||_{H^{r-1}}\leq C(||v_1||_{H^r} + ||v_2||_{H^r}) \cdot ||v_1 - v_2||_{H^{r+1}} + C(||v_1||_{H^{r+1}} + ||v_2||_{H^{r+1}}) \cdot ||v_1 - v_2||_{H^{r}}
	\end{align*}
	where we need to ensure that $C$ depends only on $N$ and $r$. In general, if we only have $||v_1||_{r,0} + ||v_2||_{r,0} < R$  then $C$ will depend on $R$ as well. Since the nonlinear term $N$ is fixed in our paper, we will drop the dependence on $N$ from now on.
	\end{remark}

\begin{remark}
	We only require $||v_i||_{r,0} < 1$ because we only need $H^r$ norm being bounded by 1 for each time slice. No decay is needed. For all later application, we assume smallness of $||v_i||_{r,\eta}$, so this condition will automatically be satisfied. 
\end{remark}

\begin{proof}
As in Strehlke's work \cite{str}, let
\begin{align*}
	D(\tau) := T_0(v_1,p_0) - T_0(v_2,p_0) \Big|_{\tau}=& \int_{0}^{\tau} e^{L(\tau-s)}\Pi_k\Big[N\Big((v_1(s)\Big) - N\Big(v_2(s)\Big)\Big]ds \\
	&-\int_{\tau}^{\infty} e^{L(\tau-s)}(1-\Pi_k)\Big[N\Big((v_1(s)\Big) - N\Big(v_2(s)\Big)\Big]ds 
\end{align*}

Note that 
\begin{align}
	\Pi_k D(\tau) =&  \int_{0}^{\tau} e^{L(\tau-s)}\Pi_k\Big[N\Big((v_1(s)\Big) - N\Big(v_2(s)\Big)\Big]ds \\
	(1 - \Pi_k) D(\tau) =& - \int_{\tau}^{\infty} e^{L(\tau-s)}(1-\Pi_k)\Big[N\Big((v_1(s)\Big) - N\Big(v_2(s)\Big)\Big]ds 
\end{align}
and
\begin{align}
	(\partial_{\tau} - L)(\Pi_k D) = \Pi_k\Big[N\Big((v_1(\tau)\Big) - N\Big(v_2(\tau)\Big)\Big]
\end{align}

We will need the following energy estimate (see \cite[Lemma 3.2]{str}):  for each $k\geq 2$, if $v :\RR_{\geq 0}\rightarrow H^{r+1}\cap F_k$ is a continuously differentiable path, then
\begin{align}\label{energy-estimate;1}
	\frac{1}{2}\frac{d}{d\tau}||v||_{H^r}^2 + (1-\varepsilon)||v||_{H^{r+1}}^2 \leq \frac{1}{4\varepsilon}||(\partial_{\tau} - L)v||_{H^{r-1}}^2
\end{align}
In particular, if we let $\varepsilon = \frac{\lambda_k -\sigma}{\lambda_k}$, use the fact that $||v||_{H^{r+1}}^2 \geq \lambda_k ||v||_{H^r}^2$ and integrate with integrating factor $e^{2\sigma \tau}$, we have (see \cite[Corollary 3.3]{str}):
\begin{align}\label{energy-estimate;2}
	e^{2\sigma \tau}||v(\tau)||_{H^r}^2 \leq ||\Pi_k v(0)||_{H^r}^2 + \frac{\lambda_k}{2(\lambda_k-\sigma)}\int_0^\tau e^{2\sigma s}||(\partial_{\tau} - L)v(s)||_{H^{r-1}}^2ds
\end{align}

\textbf{Estimating} $\Pi_kD$:
We need to estimate each part of the norm. Using \eqref{energy-estimate;2} with  $\Pi_k D$ in place of $v$:
\begin{align*}
	e^{2\sigma \tau}||\Pi_k D(\tau)||_{H^r}^2\leq&  ||\Pi_kD(0)||_{H^r}^2 +   \frac{\lambda_k}{2(\lambda_k-\sigma)} \int_{0}^{\tau}  e^{2\sigma s} \big\Vert (\partial_{t} - L)(\Pi_k D)  \big\Vert_{H^{r-1}}^2  ds\\
	\leq &   \frac{\lambda_k}{2(\lambda_k-\sigma)} \int_{0}^{\tau}  e^{2\sigma s} \big\Vert N\Big((v_1(s)\Big) - N\Big(v_2(s)\Big) \big\Vert_{H^{r-1}}^2  ds\\
	\leq&  \frac{C(r) \cdot \lambda_k}{(\lambda_k-\sigma)} \int_{0}^{\tau}  e^{2\sigma s}  \Big(\Vert v_1(s) \Vert_{H^{r+1}}^2 + \Vert v_2(s) \Vert_{H^{r+1}}^2\Big) \cdot \Vert v_1(s)-v_2(s)\Vert_{H^{r}}^2  ds\\
	&+ \frac{C(r) \cdot \lambda_k}{(\lambda_k-\sigma)}  \int_{0}^{\tau}  e^{2\sigma s}  \Big(\Vert v_1(s) \Vert_{H^{r}}^2 + \Vert v_2(s) \Vert_{H^{r}}^2\Big)\cdot  \Vert v_1(s)-v_2(s)\Vert_{H^{r+1}}^2  ds
\end{align*}
For the first term:
\begin{align*}
	&\int_{0}^{\tau}  e^{2\sigma s}  \Big(\Vert v_1(s) \Vert_{H^{r+1}}^2 + \Vert v_2(s) \Vert_{H^{r+1}}^2\Big) \cdot \Vert v_1(s)-v_2(s)\Vert_{H^{r}}^2  ds\\
	\leq & \Big( \sup\limits_{s\in [0,\tau]} e^{2\sigma s} \Vert v_1(s)-v_2(s)\Vert_{H^{r}}^2\Big)\cdot \int_{0}^{\tau} \Vert v_1(s) \Vert_{H^{r+1}}^2 + \Vert v_2(s) \Vert_{H^{r+1}}^2  ds\\
	\leq& ||v_1-v_2||_{r,\sigma,\eta}^2\cdot (||v_1||_{r,\eta}^2 + ||v_2||_{r,\eta}^2 ) 
\end{align*}

For the second term
\begin{align*}
	&\int_{0}^{\tau}  e^{2\sigma s}  \Big(\Vert v_1(s) \Vert_{H^{r}}^2 + \Vert v_2(s) \Vert_{H^{r}}^2\Big)\cdot  \Vert v_1(s)-v_2(s)\Vert_{H^{r+1}}^2  ds\\
	\leq&\int_{0}^{\tau}   e^{2\eta s} \Big(\Vert v_1(s) \Vert_{H^{r}}^2 + \Vert v_2(s) \Vert_{H^{r}}^2\Big)\cdot  e^{2(\sigma-\eta) s} \Vert v_1(s)-v_2(s)\Vert_{H^{r+1}}^2 ds\\
	\leq&\Big[ \sup\limits_{s\in [0,\tau]}  e^{2\eta s} \Big(\Vert v_1(s) \Vert_{H^{r}}^2 + \Vert v_2(s) \Vert_{H^{r}}^2\Big)\Big]\int_{0}^{\tau}     e^{2(\sigma-\eta) s} \Vert v_1(s)-v_2(s)\Vert_{H^{r+1}}^2 ds\\
	\leq& (||v_1||_{r,\eta} + ||v_2||_{r,\eta} )^2 \cdot ||v_1-v_2||_{r,\sigma,\eta}^2 
\end{align*}

Putting them together gives
\begin{align*}
	e^{2\sigma \tau}||\Pi_k D(\tau)||_{H^r}^2\leq  \frac{C(r) \cdot \lambda_k}{(\lambda_k-\sigma)} \cdot (||v_1||_{r,\eta} + ||v_2||_{r,\eta} )^2 \cdot ||v_1-v_2||_{r,\sigma,\eta}^2. 
\end{align*}

The next is to estimate
\begin{align*}
	\int_0^{\infty} e^{2(\sigma-\eta)s}||\Pi_k D(s)||_{H^{r+1}}^2ds
\end{align*}
We take $\varepsilon = \frac{1}{2}$ in \eqref{energy-estimate;1}:
\begin{align}\label{energy-estimate;4}
	\frac{d}{d\tau}||v||_{H^{r}}^2 + ||v||_{H^{r+1}}^2\leq ||(\partial_{\tau} -L)v||_{H^{r-1}}^2
\end{align}
This implies that
\begin{align}
	\frac{d}{d\tau}\Big(e^{2(\sigma-\eta) \tau}||\Pi_k D||_{H^{r}}^2\Big) + e^{2(\sigma-\eta) \tau}||\Pi_k D||_{H^{r+1}}^2\leq e^{2(\sigma-\eta) \tau}||(\partial_{\tau} -L)\Pi_k D||_{H^{r-1}}^2 + 2(\sigma-\eta) e^{2(\sigma-\eta) \tau}||\Pi_k D||_{H^{r}}^2 
\end{align}
Integrating we get
\begin{align}
	e^{2(\sigma-\eta) T}||\Pi_k D(T)||_{H^{r}}^2 &+ \int_0^{T}e^{2(\sigma-\eta) s}||\Pi_k D(s)||_{H^{r+1}}^2ds \nonumber \\
	&\leq  ||\Pi_k D(0)||_{H^r}^2+ \int_0^{T}e^{2(\sigma-\eta) s}||(\partial_t -L)\Pi_k D(s)||_{H^{r-1}}^2 + 2\sigma  e^{2(\sigma-\eta) s}||\Pi_k D(s)||_{H^{r}}^2ds \nonumber
\end{align}
Dropping the leftmost term and taking $T\rightarrow \infty$ we get
\begin{align}\label{energy-estimate;5}
	\int_0^{\infty}e^{2(\sigma-\eta) s}||\Pi_k D||_{H^{r+1}}^2ds \leq  ||\Pi_k D(0)||_{H^r}^2+ \int_0^{\infty}e^{2(\sigma-\eta) s}||(\partial_t -L)\Pi_k D||_{H^{r-1}}^2 + 2\sigma  e^{2(\sigma-\eta) s}||\Pi_k D||_{H^{r}}^2ds
\end{align}

Note that $\Pi_k D(0) =0$, we get 
\begin{align*}
	\int_0^{\infty} e^{2(\sigma-\eta)s}||\Pi_k D(s)||_{H^{r+1}}^2ds\leq & \int_0^{\infty}e^{2(\sigma-\eta) s}\big\Vert N\Big((v_1(s)\Big) - N\Big(v_2(s)\Big) \big\Vert_{H^{r-1}}^2  + 2\sigma  e^{2(\sigma-\eta) s}||\Pi_k D(s)||_{H^{r}}^2ds\\
\end{align*}
Now we imitate the estimate for $\int_{0}^{\tau}  e^{2\sigma s} \big\Vert N\Big((v_1(s)\Big) - N\Big(v_2(s)\Big) \big\Vert_{H^{r-1}}^2  ds$ with $\tau$ replaced by $\infty$, we get
\begin{align}
	\int_0^{\infty}e^{2(\sigma-\eta) s}\big\Vert N\Big((v_1(s)\Big) - N\Big(v_2(s)\Big) \big\Vert_{H^{r-1}}^2 ds \leq&  C(r)\cdot (||v_1||_{r,\eta} + ||v_2||_{r,\eta} )^2 \cdot ||v_1-v_2||_{r,\sigma,\eta}^2     
\end{align}

Moreover, using the estimate we just did for $e^{2\sigma \tau}||\Pi_kD(\tau)||_{H^{r}}^2$ we get
\begin{align}
	\int_0^{\infty} 2\sigma  e^{2(\sigma-\eta) s}||\Pi_k D(s)||_{H^{r}}^2ds \leq& 2\sigma\sup\limits_{s\geq0} e^{2\sigma s}||\Pi_k D(s)||_{H^{r}}^2\cdot \int_0^{\infty}e^{-2\eta s}ds \nonumber\\
	\leq& \frac{C(r) \cdot \sigma \lambda_k}{(\lambda_k-\sigma)\cdot\eta}\cdot  (||v_1||_{r,\eta} + ||v_2||_{r,\eta} )^2 \cdot ||v_1-v_2||_{r,\sigma,\eta}^2 
\end{align}

Putting them together we get
\begin{align}
	||\Pi_k D||_{r,\sigma,\eta} \leq C(r)\cdot (1+\sigma^{\frac{1}{2}}\eta^{-\frac{1}{2}})\Big(\frac{ \lambda_k}{(\lambda_k-\sigma) }\Big)^{\frac{1}{2}} \cdot ||v_1-v_2||_{r,\sigma,\eta} \cdot \Big(||v_1||_{r,\eta}+||v_2||_{r,\eta}\Big)
\end{align}

\textbf{Estimating} $(1-\Pi_k)D$: 

Since $1-\Pi_k$ projects onto a finite dimensional space on which $H^r$ norm are all equivalent, this part will be similar to Strehlke's work \cite{str}. 
The following fact will be frequently used: for all $r\geq 1, k\geq 2$ and $h\in H^{r}$, we have
\begin{align}
    ||(1-\Pi_k)h||_{H^r} \leq C(n)\cdot \lambda_k^\frac{1}{2}||(1-\Pi_k)h||_{H^{r-1}}
\end{align}
Assume that $r\geq 2$. Then 
\begin{align*}
	&e^{\sigma \tau}||(1-\Pi_k) D(\tau)||_{H^r} \\
    \leq&  C(n)\cdot \lambda_k e^{\sigma \tau}||(1-\Pi_k) D(\tau)||_{H^{r-2}} \\
	 \leq&  C(n)\cdot \lambda_k e^{\sigma \tau}\int_{\tau}^{\infty}  e^{\lambda_{k-1}(s-\tau)}\big\Vert N\Big((v_1(s)\Big) - N\Big(v_2(s)\Big)\big\Vert_{H^{r-2}} ds \\
	 \leq& C(n,r)\cdot \lambda_k e^{\sigma \tau}\int_{\tau}^{\infty}  e^{\lambda_{k-1}(s-\tau)}|| v_1(s)-v_2(s)||_{H^{r}} \Big(||v_1(s)||_{H^r} + ||v_2||_{H^r}\Big)ds \\
	 \leq& C(n,r)\cdot \lambda_k e^{\sigma \tau}\int_{\tau}^{\infty}  e^{\lambda_{k-1}(s-\tau)-\sigma s}\cdot e^{\sigma s}|| v_1(s)-v_2(s)||_{H^{r}} \cdot \Big(||v_1(s)||_{H^r} + ||v_2||_{H^r}\Big)ds\\
	 \leq& C(n,r)\cdot \lambda_k e^{\sigma \tau}\Big(\sup\limits_{s\geq 0} e^{\sigma s}||v_1(s)-v_2(s)||_{H^{r}}\Big)\cdot \Big(\sup\limits_{s\geq 0} \Big(||v_1(s)||_{H^r}+||v_2(s)||_{H^{r}}\Big)\cdot\int_{\tau}^{\infty}  e^{\lambda_{k-1}(s-\tau)-\sigma s}  ds\\
	 \leq& C(n,r)\cdot \lambda_k e^{(\sigma-\lambda_{k-1}) \tau}\cdot ||v_1-v_2||_{r,\sigma,\eta} \cdot \Big(||v_1||_{r,\eta}+||v_2||_{r,\eta}\Big)\cdot\int_{\tau}^{\infty}  e^{(\lambda_{k-1} - \sigma)s}  ds\\
	 = & \frac{C(n,r)\cdot \lambda_k}{\sigma - \lambda_{k-1}}  \cdot  ||v_1-v_2||_{r,\sigma,\eta} \cdot \Big(||v_1||_{r,\eta}+||v_2||_{r,\eta}\Big)\\
\end{align*}
This implies that 
\begin{align}
	\sup\limits_{\tau\geq 0}e^{\sigma \tau}||(1-\Pi_k) D(\tau)||_{H^r} \leq  \frac{C(n,r) \cdot \lambda_k}{\sigma - \lambda_{k-1}}  \cdot  ||v_1-v_2||_{r,\sigma,\eta} \cdot \Big(||v_1||_{r,\eta}+||v_2||_{r,\eta}\Big)
\end{align}
Moreover we get 
\begin{align*}
	e^{(\sigma -\eta) \tau}||(1-\Pi_k) D(\tau)||_{H^{r+1}} \leq&    \lambda_k^{\frac{1}{2}} e^{(\sigma -\eta) \tau}||(1-\Pi_k) D(\tau)||_{H^{r}}\\
	\leq&   \frac{C(n,r) \cdot \lambda_k^{\frac{3}{2}}}{\sigma - \lambda_{k-1}}  \cdot  e^{-\eta \tau} \cdot||v_1-v_2||_{r,\sigma,\eta} \cdot \Big(||v_1||_{r,\eta}+||v_2||_{r,\eta}\Big)
\end{align*}
Integrating against $\tau$ we get
\begin{align}
	\int_0^\infty e^{2(\sigma -\eta) \tau}||(1-\Pi_k) D(\tau)||_{H^{r+1}}^2 d\tau \leq   \frac{C(n,r) \cdot \lambda_k^3}{(\sigma - \lambda_{k-1})^2\cdot\eta}   \cdot||v_1-v_2||_{r,\sigma,\eta}^2 \cdot \Big(||v_1||_{r,\eta}+||v_2||_{r,\eta}\Big)^2
\end{align}
Putting them together we get
\begin{align}
	||(1-\Pi_k)D||_{r,\sigma,\eta} \leq  \frac{C(n,r)\cdot \lambda_k (1+\eta^{-1/2}\cdot \lambda_k^{1/2})}{\sigma - \lambda_{k-1}} \cdot ||v_1-v_2||_{r,\sigma,\eta} \cdot \Big(||v_1||_{r,\eta}+||v_2||_{r,\eta}\Big)
\end{align} 
as desired.
\end{proof}

\bigskip
Let us now consider two solutions $v_0, v$. \textbf{Fix} $v_0$ and define
\begin{align*}
	w := v - v_0
\end{align*}
Then $w$ satisfies the equation 
\begin{align}\label{w;equation;1}
	\partial_{\tau} w = Lw + N_{v_0}(w, \nabla w, \nabla^2 w)
\end{align}
where $N_{v_0}$ is the error term whose structure depends on $v_0$. We shall abbreviate $N_{v_0}(w, \nabla w, \nabla^2 w)$ as $N_{v_0}(w)$.

Remark: Unlike $N$ which only contains quadratic and higher order terms, $N_{v_0}$ has a linear term. Its coefficients involve $v_0$ and $\nabla v_0$ and decay exponentially in our application. Thus $N_{v_0}$ behaves effectively as a superlinear error, which suffices for the subsequent analysis.

By definition, one can check that
\begin{align}\label{Nonlinear;2}
	N_{v_0}(w) = N(w + v_0) - N(v_0)
\end{align}
Therefore, $N_{v_0}$ coincides with $N$ when $v_0\equiv 0$.

For $q_0\in F_k$, $v_0\in X_{r,\eta}$ and $ w\in X_{r,\sigma,\eta}$}, define 
\begin{align}\label{Def;T-v0}
	T_{v_0}(w,q_0)\Big|_{\tau} = e^{L\tau}q_0 + \int_{0}^{\tau} e^{L(\tau-s)}\Pi_k[N_{v_0}(w(s))]ds - \int_{\tau}^{\infty} e^{L(\tau-s)}(1-\Pi_k)[N_{v_0}(w(s))]ds  
\end{align}
Again the integral converges, following the spirit of the proof of Theorem \ref{Thm;contraction-map;2}.

We show that $T_{v_0}$ is a contraction map:
\begin{theorem}\label{Thm;contraction-map;2}
For each $r > \frac{n}{2}+1 , k\geq 2$,  $\lambda_{k-1} < \sigma < \lambda_k$ and $\eta \in (0,\sigma)$, we can find $0 < \varepsilon < \delta \ll 1$ and constant $C=C(n,r,k,\sigma,\eta)>0$ with the following property. Suppose that $q_0\in F_k$ with $||q_0||_{H^r} < \varepsilon$ and $||v_0||_{r,\eta} < \varepsilon$, then $T_{v_0}$ is a contraction map of the ball of radius $\delta$ in $X_{r,\sigma,\eta}$ to itself with contraction factor $1/2$. Moreover, when $||w_1||_{r,\eta} \leq \delta$ and $||w_2||_{r,\eta}\leq \delta$, we have the following inequality:
\begin{align}
	||T_{v_0}(w_1,q_0) - T_{v_0}(w_2,q_0)||_{r,\sigma,\eta}\leq C\Big(||v_0||_{r,\eta} + ||w_1||_{r,\eta}+ ||w_2||_{r,\eta}\Big)\cdot ||w_1-w_2||_{r,\sigma,\eta}
\end{align}
Indeed, we can take $C = C_1=C(n,r)\Big[(1+\sigma^{\frac{1}{2}}\eta^{-\frac{1}{2}})\Big(\frac{\lambda_k}{\lambda_k-\sigma} \Big)^{\frac{1}{2}} + (1+\lambda_k^{\frac{1}{2}}\eta^{-\frac{1}{2}})\frac{\lambda_k}{\sigma - \lambda_{k-1}} \Big]$ from Theorem \ref{Thm;contraction-map;1}, then take $\delta = (10C_1+4)^{-1}$  and $\varepsilon = (6+2\sigma^{\frac{1}{2}}\eta^{-\frac{1}{2}})^{-1}\delta$.
\end{theorem}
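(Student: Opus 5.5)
The plan is to reduce the claim to Theorem \ref{Thm;contraction-map;1}, using the identity \eqref{Nonlinear;2}, $N_{v_0}(w)=N(w+v_0)-N(v_0)$. From it,
\begin{align*}
T_{v_0}(w,q_0)-T_{v_0}(w',q_0) = T_0(v_0+w,q_0)-T_0(v_0+w',q_0),
\end{align*}
because the $N(v_0)$ contributions cancel between the two terms and the $e^{L\tau}q_0$ terms coincide. We then apply Theorem \ref{Thm;contraction-map;1} with $v_1=v_0+w_1$, $v_2=v_0+w_2$, noting $v_1-v_2=w_1-w_2$. Its hypothesis $\|v_1\|_{r,0}+\|v_2\|_{r,0}<1$ holds because $\|\cdot\|_{r,0}\le\|\cdot\|_{r,\eta}$ and $2\varepsilon+2\delta<1$. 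The conclusion is
\begin{align*}
\|T_{v_0}(w_1,q_0)-T_{v_0}(w_2,q_0)\|_{r,\sigma,\eta}\le C_1\big(2\|v_0\|_{r,\eta}+\|w_1\|_{r,\eta}+\|w_2\|_{r,\eta}\big)\|w_1-w_2\|_{r,\sigma,\eta},
\end{align*}
which is the stated inequality after enlarging the constant harmlessly (say $2C_1$). For $w_i$ in the $\delta$-ball of $X_{r,\sigma,\eta}$ we have $\|w_i\|_{r,\eta}\le\|w_i\|_{r,\sigma,\eta}\le\delta$. The prefactor is then at most $C_1(2\varepsilon+2\delta)\le 4C_1\delta<1/2$ when $\delta=(10C_1+4)^{-1}$, which gives the contraction factor $1/2$.

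The remaining task is to show that $T_{v_0}$ maps the $\delta$-ball into itself. Write
\begin{align*}
T_{v_0}(w,q_0)=T_{v_0}(0,q_0)+\big(T_{v_0}(w,q_0)-T_{v_0}(0,q_0)\big).
\end{align*}
The second piece has norm at most $\tfrac12\delta$ by the contraction estimate. Also $N_{v_0}(0)=0$, so $T_{v_0}(0,q_0)=e^{L\tau}q_0$. It therefore suffices to show $\|e^{L\tau}q_0\|_{r,\sigma,\eta}\le \tfrac12\delta$.

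This is a spectral computation for $q_0\in F_k$. The semigroup gives $\|e^{L\tau}q_0\|_{H^r}\le e^{-\lambda_k\tau}\|q_0\|_{H^r}$, so the sup part is bounded by $\|q_0\|_{H^r}$ since $\sigma<\lambda_k$. For the integral part, apply \eqref{energy-estimate;5} with $D$ replaced by $e^{L\tau}q_0$, for which $(\partial_\tau-L)e^{L\tau}q_0=0$. This gives
\begin{align*}
\int_0^\infty e^{2(\sigma-\eta)s}\|e^{Ls}q_0\|^2_{H^{r+1}}\,ds\le \|q_0\|_{H^r}^2+2\sigma\int_0^\infty e^{2(\sigma-\eta)s}e^{-2\lambda_k s}\|q_0\|^2_{H^r}\,ds\le (1+\sigma\eta^{-1})\|q_0\|_{H^r}^2 .
\end{align*}
Hence $\|e^{L\tau}q_0\|_{r,\sigma,\eta}\le(2+\sigma^{1/2}\eta^{-1/2})\|q_0\|_{H^r}<(2+\sigma^{1/2}\eta^{-1/2})\varepsilon\le\tfrac12\delta$ with $\varepsilon=(6+2\sigma^{1/2}\eta^{-1/2})^{-1}\delta$.

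The main point needing care is the well-definedness of $T_{v_0}$ on the ball: convergence of the integrals and membership of $T_{v_0}(w,q_0)$ in $X_{r,\sigma,\eta}$. This follows by running the proof of Theorem \ref{Thm;contraction-map;1} with $v_2=v_0$, $v_1=v_0+w$. That proof shows the Duhamel terms of $N(v_0+w)-N(v_0)$ are finite in $\|\cdot\|_{r,\sigma,\eta}$, with bound $C_1(2\varepsilon+\delta)\delta$. The $v_0$ dependence, with its slower decay rate $\eta$, enters only through $\|v_0\|_{r,\eta}$; this is exactly what the mixed norm $\|\cdot\|_{r,\sigma,\eta}$ was designed to absorb.
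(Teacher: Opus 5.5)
Your proposal is correct and follows the paper's proof essentially step for step. You reduce the difference $T_{v_0}(w_1,q_0)-T_{v_0}(w_2,q_0)$ via $N_{v_0}(w)=N(w+v_0)-N(v_0)$ to Theorem \ref{Thm;contraction-map;1}, which gives the prefactor $C_1(2\|v_0\|_{r,\eta}+\|w_1\|_{r,\eta}+\|w_2\|_{r,\eta})\le 4C_1\delta\le\frac12$. You then obtain the self-map property from $T_{v_0}(0,q_0)=e^{L\tau}q_0$, the bound $\|e^{L\tau}q_0\|_{r,\sigma,\eta}\le(2+\sigma^{1/2}\eta^{-1/2})\|q_0\|_{H^r}$ via \eqref{energy-estimate;5}, and the triangle inequality. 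Two of your remarks are sensible supplements to what the paper leaves implicit: that the factor $2$ in front of $\|v_0\|_{r,\eta}$ is harmless (the paper's own proof also produces it despite stating $C=C_1$), and that well-definedness can be read off from the proof of Theorem \ref{Thm;contraction-map;1} applied to the pair $(v_0+w,v_0)$.
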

\begin{proof}
	The proof is straightforward adaption of Theorem \ref{Thm;contraction-map;1}.
	Plugging \eqref{Nonlinear;2} into \eqref{Def;T-v0} and use \eqref{Def;T} we find that
\begin{align}
	T_{v_0}(w,q_0)  =& e^{L\tau}q_0 +\int_{0}^{\tau} e^{L(\tau-s)}\Pi_k[N_{}((w+v_0)(s)) - N(v_0(s))]ds  \\
    &- \int_{\tau}^{\infty} e^{L(\tau-s)}(1-\Pi_k)[N((w+v_0)(s)) - N(v_0(s))]ds \nonumber\\
	=& e^{L\tau}q_0+ \int_{0}^{\tau} e^{L(\tau-s)}\Pi_k [N((w+v_0)(s))]ds - \int_{\tau}^{\infty} e^{L(\tau-s)}(1-\Pi_k)[N((w+v_0)(s))]ds \nonumber\\
	&- \int_{0}^{\tau} e^{L(\tau-s)}\Pi_k [N(v_0(s))]ds + \int_{\tau}^{\infty} e^{L(\tau-s)}(1-\Pi_k)[N(v_0(s))]ds \nonumber\\
	=& T(w + v_0 , q_0) - \int_{0}^{\tau} e^{L(\tau-s)}\Pi_k [N(v_0(s))]ds + \int_{\tau}^{\infty} e^{L(\tau-s)}(1-\Pi_k)[N(v_0(s))]ds \nonumber
\end{align}
Note that the last two terms are independent of $w$, therefore
\begin{align}
	T_{v_0}(w_1,q_0) - T_{v_0}(w_2,q_0) = T(w_1 + v_0,q_0) - T(w_2 + v_0,q_0)
\end{align}
We also note that
\begin{align}
	||T_{v_0}(0,q_0)||_{r,\sigma,\eta} &= ||e^{L\tau}q_0||_{r,\sigma,\eta}  \nonumber\\
	& = \sup_{s \geq 0} e^{\sigma s}||e^{Ls}q_0||_{H^r} + \Big(\int_{0}^{\infty} e^{2(\sigma-\eta)s}||e^{Ls}q_0||_{H^{r+1}}^2ds \Big)^{\frac{1}{2}}.
\end{align}
Since $||e^{Ls}q_0||_{H^r}$ is bounded by $e^{-\lambda_k s}$, the first term is bounded by $||q_0||_{H^r}$. To estimate the second term, we use $q_0$ in place of $\Pi_k D$ in \eqref{energy-estimate;5} (or equivalently use $e^{Ls} q_0$ in place of $v$ in \eqref{energy-estimate;4} and integrate against the integration factor $e^{2(\sigma-\eta)s}$). Since $(\partial_{\tau} - L)(e^{L\tau} q_0) \equiv 0$, we  obtain: 
\begin{align}
	\int_{0}^{\infty} e^{2(\sigma-\eta)s}||e^{L s} q_0||_{H^{r+1}}^2ds \leq& ||q_0||_{H^r}^2 + \int_0^{\infty} 2\sigma  e^{2(\sigma-\eta) s}||e^{Ls}q_0||_{H^{r}}^2ds\\
	\leq& ||q_0||_{H^r}^2 \Big(1 + \sigma\eta^{-1}\Big)
\end{align}
Putting them together we have
\begin{align}\label{Linear-term;1}
	||T_{v_0}(0,q_0)||_{r,\sigma,\eta} \leq C_2||q_0||_{H^r}
\end{align}
where $C_2 = 2 + \sigma^{\frac{1}{2}}\eta^{-\frac{1}{2}}.$

Then  we can find $0 < \varepsilon < \delta$ such that  $(2C_2+2)\varepsilon \leq  \delta \leq (10C_1+4)^{-1}$, where $C_1$ is from Theorem \ref{Thm;contraction-map;1}. By assumption $||v_0||_{r,0}\leq ||v_0||_{r,\eta}\leq \varepsilon <\delta < 1/4$, then for all $w_1,w_2$  with $||w_i||_{r,\sigma, \eta} < \delta$ ($i=1,2$), one have $||w_i||_{r, \eta} <  ||w_i||_{r, \sigma, \eta} < \delta$ and thus Theorem \ref{Thm;contraction-map;1} can be applied:
\begin{align}\label{Ineq;15}
	||T_{v_0}(w_1,q_0) - T_{v_0}(w_2,q_0)||_{r,\sigma,\eta} \leq& C_1(2||v_0||_{r,\eta} + ||w_1||_{r,\eta} + ||w_2||_{r,\eta})\cdot ||w_1-w_2||_{r,\sigma,\eta} \nonumber\\
	\leq&  4C_1\delta ||w_1-w_2||_{r,\sigma,\eta}  \nonumber\\
	\leq& \frac{1}{2}||w_1-w_2||_{r,\sigma,\eta}
\end{align}

This proves that $T_{v_0}$ is a contraction mapping on ball of radius $\delta$ in $X_{r,\sigma, \eta}$ to itself with contraction factor $1/2$.

It remains to prove that $T_{v_0}$ maps the ball of radius $\delta$ in $X_{r,\sigma, \eta}$ to itself  when $||q_0||_{H^r}<\varepsilon$. To this end, for all $w$ with $||w||_{r,\sigma,\eta} < \delta$, the above inequality gives
\begin{align}
	||T_{v_0}(w,q_0) - T_{v_0}(0,q_0)||_{r,\sigma,\eta} \leq \frac{1}{2}||w||_{r,\sigma,\eta}
\end{align}
Using \eqref{Linear-term;1} and triangle inequality:
\begin{align}\label{Ineq;16}
	||T_{v_0}(w,q_0) ||_{r,\sigma,\eta}  \leq& ||T_{v_0}(0,q_0)||_{r,\sigma,\eta} + \frac{1}{2}||w||_{r,\sigma,\eta}\nonumber\\
	\leq&  C_2||q_0||_{H^r} + \frac{1}{2}\delta\nonumber\\
	\leq& C_2\varepsilon + \frac{1}{2}\delta \nonumber\\
	<& \frac{1}{2}\delta + \frac{1}{2}\delta = \delta
\end{align}
\eqref{Ineq;15} and \eqref{Ineq;16} implies that $T_{v_0}$  is a contraction map from the ball of radius $\delta$ in $X_{r,\sigma, \eta}$ to itself.
\end{proof}

The contraction property in Theorem \ref{Thm;contraction-map;2} allows us to find a unique fixed point of the equation 
\begin{align*}
	w = T_{v_0}(w,q_0)
\end{align*} 
This is equivalently to solving the equation \eqref{w;equation;1}. Therefore, taking $\varepsilon, \delta$ as in Theorem \ref{Thm;contraction-map;2} we obtain:

\begin{corollary}\label{Cor;existence;1}
	For each $r > \frac{n}{2}+1 , k\geq 2$,  $\lambda_{k-1} < \sigma < \lambda_k$ and $\eta \in (0,\sigma)$, there exists constants $\varepsilon, \delta$ with $0<\varepsilon < \delta$ such that for all $q_0\in F_k$ with $||q_0||_{H^r}\leq \varepsilon$ and $v_0\in X_{r,\eta}$ with $||v_0||_{r,\eta} \leq \varepsilon$, there is a unique solution $w$ to \eqref{w;equation;1}  with $||w||_{r,\sigma,\eta} < \delta$ and $\Pi_k w(0) = q_0$.
\end{corollary}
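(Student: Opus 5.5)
The corollary follows from Theorem \ref{Thm;contraction-map;2} via the Banach fixed point theorem, plus a check that fixed points of $T_{v_0}(\cdot,q_0)$ are exactly the solutions of \eqref{w;equation;1} in the ball with $\Pi_k w(0)=q_0$.

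\textbf{Existence.} Fix $r,k,\sigma,\eta$ as in the statement, and take $\varepsilon,\delta$ and $C_1$ exactly as chosen in Theorem \ref{Thm;contraction-map;2}. Let $B_\delta$ be the closed ball of radius $\delta$ in $X_{r,\sigma,\eta}$; it is complete, since $X_{r,\sigma,\eta}$ is a weighted $L^2$-in-time / sup-in-time space and hence a Banach space. For $\|q_0\|_{H^r}\le\varepsilon$ and $\|v_0\|_{r,\eta}\le\varepsilon$, Theorem \ref{Thm;contraction-map;2} says $T_{v_0}(\cdot,q_0)$ maps $B_\delta$ into itself with contraction factor $1/2$. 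The estimate \eqref{Ineq;16} in fact gives strict inequality $<\delta$, so the fixed point $w$ satisfies $\|w\|_{r,\sigma,\eta}<\delta$.

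\textbf{Fixed points solve the equation.} Evaluating \eqref{Def;T-v0} at $\tau=0$, the first integral vanishes and the last term lies in the range of $1-\Pi_k$, so $\Pi_k w(0)=\Pi_k q_0=q_0$ because $q_0\in F_k$. Differentiating the Duhamel formula in $\tau$ gives $\partial_\tau w = Lw+\Pi_k N_{v_0}(w)+(1-\Pi_k)N_{v_0}(w)$, which is \eqref{w;equation;1}. Rigorously, this holds in the mild sense with values in $H^{r-1}$, using $N_{v_0}(w)\in L^2_{loc}H^{r-1}$ from \eqref{Nonlinear;1}. The parabolic regularity needed to upgrade to a classical solution is standard and follows \cite{str}.

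\textbf{Uniqueness.} Suppose $\tilde w$ solves \eqref{w;equation;1} with $\|\tilde w\|_{r,\sigma,\eta}<\delta$ and $\Pi_k\tilde w(0)=q_0$. On $F_k$, integrating from $0$ by variation of constants gives
\[
\Pi_k\tilde w(\tau)=e^{L\tau}q_0+\int_0^\tau e^{L(\tau-s)}\Pi_k N_{v_0}(\tilde w)\,ds .
\]
On the finite-dimensional space $(1-\Pi_k)L^2$, write the same formula from time $T$ down to $\tau$ and let $T\to\infty$. The boundary term $e^{L(\tau-T)}(1-\Pi_k)\tilde w(T)$ has norm at most $e^{\lambda_{k-1}(T-\tau)}e^{-\sigma T}\|\tilde w\|_{r,\sigma,\eta}$, which tends to $0$ because $\sigma>\lambda_{k-1}$. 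Hence $\tilde w=T_{v_0}(\tilde w,q_0)$, and uniqueness of the fixed point in $B_\delta$ gives $\tilde w=w$.

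\textbf{Main obstacle.} The only delicate point is this last step: the backward integral representation of the $(1-\Pi_k)$-component for an arbitrary solution in the ball. It is exactly where the condition $\sigma>\lambda_{k-1}$ enters, together with convergence of the tail integral, which is controlled as in the $(1-\Pi_k)D$ estimate of Theorem \ref{Thm;contraction-map;1}.
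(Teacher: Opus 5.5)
Your proposal is correct and follows the paper's route. The paper also obtains the corollary directly from the contraction property in Theorem \ref{Thm;contraction-map;2}, and simply asserts that the fixed-point equation $w=T_{v_0}(w,q_0)$ is equivalent to solving \eqref{w;equation;1} with $\Pi_k w(0)=q_0$. You additionally supply the two directions of that equivalence, in particular the backward Duhamel representation of the $(1-\Pi_k)$-component for an arbitrary solution in the ball using $\sigma>\lambda_{k-1}$, which the paper leaves implicit.
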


\subsection{Refined asymptotics}\label{subsec;refined-asymptotics}
Fix $r> \frac{n}{2} + 2, k\geq 2$ and let 
\begin{align*}
	\eta = \frac{\lambda_2}{4} = \frac{1}{4n}, \quad \sigma = {\lambda_k - \frac{\eta}{2}}
\end{align*}

\begin{remark}
The condition $r>\frac n2+2$ ensures the Sobolev embedding
$H^{r-2}\hookrightarrow C^0$.
\end{remark}

\begin{definition} The following constants will be used throughout this subsection:
	\begin{itemize}
	\item $C_1 = C(n,r)\Big[(1+\sigma^{\frac{1}{2}}\eta^{-\frac{1}{2}})\Big(\frac{\lambda_k}{\lambda_k-\sigma} \Big)^{\frac{1}{2}} + (1+\lambda_k^{\frac{1}{2}}\eta^{-\frac{1}{2}})\frac{\lambda_k}{\sigma - \lambda_{k-1}} \Big]$ is from Theorem \ref{Thm;contraction-map;1}.
	\item $C_2 = 2 +  \sigma^{\frac{1}{2}}\eta^{-\frac{1}{2}}$. 
    \item $C_3$ is defined in $\eqref{Nonlinear;1}$.
    \item $C_{E,k} = \lambda_k$, so that $||h||_{H^r} \leq C_{E,k} ||h||_{H^{r-2}}$ for all $h\in E_k$.
	\item $\delta = \min\{(10C_1+4)^{-1}, (32C_{E,k}C_2C_3)^{-1}\eta \}$
	\item $\varepsilon = (6+2\sigma^{\frac{1}{2}}\eta^{-\frac{1}{2}})^{-1}\delta = (2+2C_2)^{-1}\delta$
\end{itemize}
\end{definition}

Recall the equation \eqref{w;equation;1}:
\begin{align*}
	\partial_{\tau} w = Lw + N_{v_0}(w)
\end{align*}
Now let us fix an $v_0 \in X_{r,\eta}$ with $||v_0||_{r,\eta}\leq \varepsilon$. This condition can be fulfilled for any admissible $v_0$ by starting $v_0$ at a later time (i.e use $v_0'$ in place of $v_0$, where $v_0'(\tau) = v_0(\tau+T)$ for some $T > 0$), see the proof of Theorem \ref{Thm;prescribe-diff-asymptotic;2}.
By Corollary \ref{Cor;existence;1}, For any $q_0\in F_k$ with $||q_0||_{H^r}\leq \varepsilon$ there is a unique solution $w\in X_{r,\sigma,\eta}$ to \eqref{w;equation;1} with $||w||_{r,\sigma,\eta} < \delta$ and $\Pi_k w(0) =q_0$. In particular for all $\tau\geq 0$
\begin{align}
	||w(\tau)||_{H^{r}}\leq Ce^{-\sigma \tau}
\end{align}
\v{S}e\v{s}um \cite{se} (see also \cite{str}), we have $||v_0(\tau)|| \leq Ce^{-\lambda_2\tau}$. 
Then using \eqref{Nonlinear;2} and \eqref{Nonlinear;1}, we can estimate
\begin{align}\label{N-v0;estimate;1}
	||N_{v_0}(w)||_{H^{r-2}}\leq C(||v_0||_{H^{r}} + ||w||_{H^r}) ||w||_{H^r} \leq C e^{-(\lambda_2 + \sigma)\tau} + Ce^{-2\sigma\tau} \leq C e^{-(\lambda_k + \eta)\tau}.
\end{align}  
In the next proposition, we reveal the sharp decay rate of $w$:

\begin{proposition}\label{Prop;fine-asymptotic;1}
There exists a unique $Q_k\in E_k$  such that
\begin{align}\label{Ineq;fine-asymptotic;2}
	||e^{\lambda_k \tau}w(\tau) - Q_k||_{H^{r-2}} =   O(e^{-\eta \tau}) \quad  \text{ as } \tau\rightarrow\infty
\end{align}
Note that $Q_k$ can possibly be 0. In particular Sobolev embedding $H^{r-2}\rightarrow C^0$ implies \textbf{pointwise} convergence:
\begin{align}
	\lim\limits_{\tau \rightarrow\infty}e^{\lambda_k \tau}w(\tau) = Q_k 
\end{align}
\end{proposition}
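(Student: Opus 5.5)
We decompose $w$ into its $E_k$ component, its low-mode part $(1-\Pi_k)w$, and its high-mode part $\Pi_{k+1}w$, and treat each by Duhamel's formula using the forcing bound \eqref{N-v0;estimate;1}, $\|N_{v_0}(w)\|_{H^{r-2}}\le Ce^{-(\lambda_k+\eta)\tau}$. The decomposition is $w=\pi_k w+(1-\Pi_k)w+\Pi_{k+1}w$. All three pieces satisfy $\partial_\tau(\cdot)=L(\cdot)+(\text{projection of }N_{v_0}(w))$, because the projections commute with $L$.

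\emph{The $E_k$ component.} Since $\pi_k w$ evolves by $\partial_\tau \pi_k w = -\lambda_k \pi_k w + \pi_k N_{v_0}(w)$, we get
\begin{align*}
	\partial_\tau\big(e^{\lambda_k\tau}\pi_k w\big)=e^{\lambda_k\tau}\pi_k N_{v_0}(w).
\end{align*}
The right-hand side has $H^{r-2}$ norm at most $Ce^{-\eta\tau}$, which is integrable. Hence the limit
\begin{align*}
	Q_k:=\pi_k w(0)+\int_0^\infty e^{\lambda_k s}\pi_k N_{v_0}(w(s))\,ds
\end{align*}
exists in $E_k$. The tail estimate gives $\|e^{\lambda_k\tau}\pi_k w(\tau)-Q_k\|_{H^{r-2}}\le C\int_\tau^\infty e^{-\eta s}ds=O(e^{-\eta\tau})$. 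Since $E_k$ is finite dimensional, the choice of norm on it is immaterial.

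\emph{The low modes.} The fixed point construction gives the Duhamel representation $(1-\Pi_k)w(\tau)=-\int_\tau^\infty e^{L(\tau-s)}(1-\Pi_k)N_{v_0}(w(s))\,ds$. On $\mathrm{Ran}(1-\Pi_k)$ the semigroup satisfies $\|e^{L(\tau-s)}\|\le e^{\lambda_{k-1}(s-\tau)}$ for $s\ge\tau$, where $\lambda_{k-1}<\lambda_k$ (possibly negative). Therefore
\begin{align*}
	\|(1-\Pi_k)w(\tau)\|\le C\int_\tau^\infty e^{\lambda_{k-1}(s-\tau)}e^{-(\lambda_k+\eta)s}ds\le C e^{-(\lambda_k+\eta)\tau}.
\end{align*}

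\emph{The high modes.} Write
\begin{align*}
	\Pi_{k+1}w(\tau)=e^{L\tau}\Pi_{k+1}w(0)+\int_0^\tau e^{L(\tau-s)}\Pi_{k+1}N_{v_0}(w(s))\,ds.
\end{align*}
The semigroup decays like $e^{-\lambda_{k+1}(\tau-s)}$ on this range. The integral is then bounded by $C\int_0^\tau e^{-\lambda_{k+1}(\tau-s)}e^{-(\lambda_k+\eta)s}ds$. This is $\le Ce^{-(\lambda_k+\eta)\tau}$ provided $\lambda_{k+1}>\lambda_k+\eta$, which holds since $\lambda_{k+1}-\lambda_k=\frac{2k+n}{2n}>\frac1{4n}$. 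The free term decays like $e^{-\lambda_{k+1}\tau}$, which is faster still. Multiplying everything by $e^{\lambda_k\tau}$ yields \eqref{Ineq;fine-asymptotic;2}.

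Uniqueness of $Q_k$ is immediate: two limits of the same quantity differ by $O(e^{-\eta\tau})$, so they coincide. Pointwise convergence then follows from the embedding $H^{r-2}\hookrightarrow C^0$.

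The main obstacle is regularity bookkeeping. The forcing is only controlled in $H^{r-2}$, so we must run all estimates in $H^{r-2}$ and use the $L^2$-type contraction of the semigroup on each spectral piece. We must also justify \eqref{N-v0;estimate;1} itself, namely that $\|w(\tau)\|_{H^r}\le\delta e^{-\sigma\tau}$ and $2\sigma\ge\lambda_k+\eta$. The latter holds because $\sigma=\lambda_k-\eta/2$ and $\lambda_k\ge\lambda_2=4\eta$, so $2\sigma=2\lambda_k-\eta\ge\lambda_k+3\eta$. The cross term $\lambda_2+\sigma\ge\lambda_k+\eta$ is where the choice $\eta=\lambda_2/4$ is used.
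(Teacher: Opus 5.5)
Your proof is correct and follows essentially the same route as the paper. It uses the same splitting into $\pi_k w$, $\Pi_{k+1}w$ and $(1-\Pi_k)w$, the same integration of $\partial_\tau(e^{\lambda_k\tau}\pi_k w)$ against the forcing bound \eqref{N-v0;estimate;1}, and the same gap condition $\lambda_{k+1}>\lambda_k+\eta$.

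The differences are cosmetic. Where the paper integrates differential inequalities for $\|\Pi_{k+1}w\|_{H^{r-2}}$ and $\|(1-\Pi_k)w\|_{L^2}$, you use Duhamel and semigroup bounds. For the low modes, you read off the backward integral representation directly from the fixed-point map $T_{v_0}$. The paper instead discards the boundary term at infinity using the a priori decay $\|w(\tau)\|_{H^r}\le Ce^{-\sigma\tau}$ with $\sigma>\lambda_{k-1}$; both encode the same information.
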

\begin{proof}
 Throughout this proof, $C$ denote constants depending only on $n,k,r,\sigma, \eta$ that may vary from line to line. We suppress its explicit expression (such as $C_1$ in Theorem \ref{Thm;contraction-map;1}), since this level of detail is unnecessary for the current proposition.

We need to estimate $\pi_k w, \Pi_{k+1} w $ and $(1-\Pi_k) w$ in $H^{r-2}$ norm respectively:

(1) Estimate $\pi_k w$: 

Taking $L^2$ orthogonal projection of \eqref{w;equation;1} onto $E_k$:
\begin{align*}
	\partial_{\tau} \pi_k w = -\lambda_k \pi_k w + \pi_k N_{v_0}(w)
\end{align*}
This implies that
\begin{align}
 	\frac{d}{d\tau} e^{\lambda_k \tau}\pi_k w = e^{\lambda_k \tau}\pi_k N_{v_0}(w)
\end{align}
Then for all $0 \leq \tau_1 < \tau_2$ we have
\begin{align}
	||e^{\lambda_k \tau_2}\pi_k w(\tau_2) - e^{\lambda_k \tau_1}\pi_k w(\tau_1)||_{H^{r-2}}\leq& C\int_{\tau_1}^{\tau_2} e^{\lambda_k s}||N_{v_0}(w(s))||_{H^{r-2}}ds \nonumber\\
	\leq& C\int_{\tau_1}^{\tau_2} e^{-\eta s}ds \nonumber\\
	\leq& \frac{C}{\eta}e^{-\eta \tau_1} 
\end{align}
where we used \eqref{N-v0;estimate;1} in the second line.
This shows that $e^{\lambda_k \tau}\pi_k w(\tau)$ converges in $H^{r-2}$ to a polynomial $Q_k\in E_k$ with desired estimate.

(2) Estimate $\Pi_{k+1} w$:

By the equation \eqref{w;equation;1}, we get
\begin{align}
	||\Pi_{k+1}w||_{H^{r-2}}\frac{d}{d\tau}||\Pi_{k+1}w||_{H^{r-2}}  =\frac{1}{2}\frac{d}{d\tau}||\Pi_{k+1}w||_{H^{r-2}}^2 =& \langle \Pi_{k+1} w,Lw\rangle_{H^{r-2}} +  \langle \Pi_{k+1} w,N_{v_0}w\rangle_{H^{r-2}}\nonumber\\
	 =&-||\Pi_{k+1}w||_{H^{r-1}}^2 +  \langle \Pi_{k+1} w,N_{v_0}w\rangle_{H^{r-2}} \nonumber \\
	 \leq& -\lambda_{k+1} ||\Pi_{k+1}w||_{H^{r-2}}^2 + || \Pi_{k+1}w ||_{H^{r-2}}\cdot  || N_{v_0}w||_{H^{r-2}}
\end{align}
Using \eqref{N-v0;estimate;1} and cancelling a $|| \Pi_{k+1}w ||_{H^{r-2}}$ factor gives:
\begin{align}
	\frac{d}{d\tau}||\Pi_{k+1}w||_{H^{r-2}} +\lambda_{k+1} ||\Pi_{k+1}w||_{H^{r-2}} \leq  || N_{v_0}w ||_{H^{r-2}} \leq C e^{-(\lambda_k + \eta)\tau} \nonumber
\end{align}
Integrating the above we obtain: 
\begin{align}
	e^{\lambda_{k+1}\tau}||\Pi_{k+1}w(\tau)||_{H^{r-2}} \leq  ||\Pi_{k+1}w(0)||_{H^{r-2}} + C\int_0^{\tau} e^{(\lambda_{k+1} - \lambda_k - \eta)s}ds \leq C\cdot (1 + e^{(\lambda_{k+1} - \lambda_k - \eta)\tau}) \nonumber
\end{align}
This implies that
\begin{align}
	||\Pi_{k+1}w||_{H^{r-2}}  = O(e^{-\lambda_{k+1}\tau} + e^{-(\lambda_k + \eta)\tau}) \nonumber
\end{align}
However, by the explicit formula \eqref{eigenvalue;lambda-k;1},  $\lambda_{k+1} =\lambda_k +\frac{2k+n}{2n} > \lambda_k + \eta$. Therefore, $e^{-\lambda_{k+1}\tau}$ is negligible and we obtain:
\begin{align}
	||\Pi_{k+1}w||_{H^{r-2}}  = O(e^{-(\lambda_k + \eta)\tau})
\end{align}

(3) Estimate $(1-\Pi_k)w$:

By the equation \eqref{w;equation;1} we get:
\begin{align}
	||(1-\Pi_k)w||_{L^2}\frac{d}{d\tau}||(1-\Pi_k)w||_{L^2} =& \frac{1}{2}\frac{d}{d\tau}||(1-\Pi_k)w||_{L^2}^2 = \langle (1-\Pi_{k}) w,Lw\rangle_{L^{2}} +  \langle (1-\Pi_{k})w,N_{v_0}w\rangle_{L^{2}}\nonumber\\
	 \geq& -\lambda_{k-1} ||(1-\Pi_{k})w||_{L^{2}}^2 - || (1-\Pi_{k})w ||_{L^{2}}\cdot  || N_{v_0}w||_{L^{2}}
\end{align}
Using \eqref{N-v0;estimate;1} and cancelling a $||(1- \Pi_{k})w ||_{L^{2}}$ factor gives
\begin{align}
	\frac{d}{d\tau}||(1-\Pi_k)w||_{L^2} + \lambda_{k-1} ||(1-\Pi_{k})w||_{L^{2}} \geq  -|| N_{v_0}w||_{L^{2}} \geq -Ce^{-(\lambda_k + \eta)\tau} \nonumber
\end{align}
Integrating the above inequality from $\tau$ to $\tau'$, where $\tau<\tau'$: 
\begin{align}
	e^{\lambda_{k-1}\tau'}||(1-\Pi_k)w(\tau')||_{L^2} \geq& e^{\lambda_{k-1}\tau}||(1-\Pi_k)w(\tau)||_{L^2} - \int_{\tau}^{\tau'} Ce^{(\lambda_{k-1}-\lambda_k - \eta)s}ds \nonumber\\
	\geq& e^{\lambda_{k-1}\tau}||(1-\Pi_k)w(\tau)||_{L^2} - C e^{(\lambda_{k-1}-\lambda_k - \eta)\tau} \nonumber
\end{align}
Since $w\in X_{r,\sigma,\eta}$, we have decay estimate $||w(\tau')||_{L^2}\leq C||w(\tau')||_{H^r}\leq C e^{-\sigma \tau'}$. Because $\sigma>\lambda_{k-1}$, the left-hand side converges to $0$ as
$\tau'\rightarrow\infty$. This implies that
\begin{align}
	||(1-\Pi_k)w(\tau)||_{L^2} \leq C e^{-(\lambda_k + \eta)\tau} \nonumber
\end{align}
Finally, the image of $1-\Pi_k$ is in a finite dimensional space, hence the $L^2$ norm is equivalent to $H^{r-2}$ norm:
\begin{align}
	||(1-\Pi_k)w(\tau)||_{H^{r-2}} = O (e^{-(\lambda_k + \eta)\tau})
\end{align}
If we put together (1)-(3), the proposition follows.

\end{proof}

Based on the above result, we can make sense the following definition:
\begin{definition}\label{Def;w-q0;asymptotic}
Fix any $v_0$ with $||v_0||_{r,\eta}<\varepsilon$. Then for any $q_0\in F_k$ with $||q_0||_{H^r}\leq \varepsilon$, there is a unique solution $w(t,q_0)\in X_{r,\sigma,\eta}$ to \eqref{w;equation;1} with $||w(\cdot,q_0)||_{r,\sigma,\eta} \le \delta$ and $\Pi_k w(0,q_0) =q_0$. We define
\begin{align}
	F(q_0) := \lim\limits_{t\rightarrow\infty}e^{\lambda_k \tau}w(\tau,q_0) \in E_k.
\end{align}
Note that $F(0) =0$ since $w(\cdot,0)\equiv 0$.

\end{definition}

\begin{lemma}\label{Lem;Diff-w;Diff-q}
	For $q_1,q_2\in F_k $ with $||q_i||_{H^r}\leq \varepsilon$ ($i=1,2$), we have
	\begin{align*}
		||w(\cdot, q_1)-w(\cdot,q_2)||_{r,\sigma,\eta} \leq { 2C_2}||q_2 - q_1||_{H^r}.
	\end{align*}
\end{lemma}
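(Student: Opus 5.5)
The plan is to compare the two fixed points directly. Write $w_i = w(\cdot,q_i)$ for $i=1,2$. By Definition \ref{Def;w-q0;asymptotic}, each $w_i$ is the unique fixed point of $T_{v_0}(\cdot,q_i)$ in the $\delta$-ball of $X_{r,\sigma,\eta}$. Hence
\begin{align*}
w_1 - w_2 = T_{v_0}(w_1,q_1) - T_{v_0}(w_2,q_2) = \big[T_{v_0}(w_1,q_1) - T_{v_0}(w_1,q_2)\big] + \big[T_{v_0}(w_1,q_2) - T_{v_0}(w_2,q_2)\big].
\end{align*}
I will bound the two brackets separately and then absorb.

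For the first bracket, inspect \eqref{Def;T-v0}. The dependence on $q_0$ is only through the linear term $e^{L\tau}q_0$, because the nonlinear integrals involve only $w$. So the first bracket equals $e^{L\tau}(q_1-q_2)$, which is exactly $T_{v_0}(0,q_1-q_2)$. Since $q_1-q_2 \in F_k$, the estimate \eqref{Linear-term;1} applies verbatim. It relies only on the decay $\|e^{Ls}q\|_{H^r}\le e^{-\lambda_k s}\|q\|_{H^r}$ for $q\in F_k$ and on the energy identity \eqref{energy-estimate;4}, so it does not need the smallness $\|q\|_{H^r}<\varepsilon$. This gives $\|e^{L\tau}(q_1-q_2)\|_{r,\sigma,\eta}\le C_2\|q_1-q_2\|_{H^r}$.

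For the second bracket, use the contraction estimate \eqref{Ineq;15} from the proof of Theorem \ref{Thm;contraction-map;2}. Both $w_i$ lie in the $\delta$-ball, and $\|v_0\|_{r,\eta}\le\varepsilon$, so with the choice $\delta\le(10C_1+4)^{-1}$ we get $\|T_{v_0}(w_1,q_2)-T_{v_0}(w_2,q_2)\|_{r,\sigma,\eta}\le \tfrac12\|w_1-w_2\|_{r,\sigma,\eta}$. Combining the two bounds,
\begin{align*}
\|w_1-w_2\|_{r,\sigma,\eta}\le C_2\|q_1-q_2\|_{H^r} + \tfrac12\|w_1-w_2\|_{r,\sigma,\eta}.
\end{align*}
The quantity $\|w_1-w_2\|_{r,\sigma,\eta}$ is finite, being at most $2\delta$, so it can be absorbed into the left side, giving the claimed bound $2C_2\|q_2-q_1\|_{H^r}$.

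The only step needing care is whether the contraction estimate still holds when the radius condition is non-strict ($\le\delta$ in the definition versus $<\delta$ in Theorem \ref{Thm;contraction-map;2}). Inequality \eqref{Ineq;15} only uses $\|w_i\|_{r,\eta}\le\|w_i\|_{r,\sigma,\eta}\le\delta$ and $4C_1\delta\le 1/2$, which remain valid at equality, so I expect no real obstacle there.
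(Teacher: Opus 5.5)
Your proposal is correct and is essentially the paper's argument. Like the paper, you isolate the linear term $e^{L\tau}(q_1-q_2)$, bound it by $C_2\|q_1-q_2\|_{H^r}$ as in \eqref{Linear-term;1}, control the remaining difference with the factor-$\tfrac12$ contraction from Theorem \ref{Thm;contraction-map;2}, and then absorb. The only difference is that the paper pivots through $T_{v_0}(w_2,q_1)$ rather than your $T_{v_0}(w_1,q_2)$, which is immaterial.
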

\begin{proof}
Since $w(\cdot, q_i) = T_{v_0}(w(\cdot,q_i), q_i)$ we find that
\begin{align*}
	w(\cdot,q_1) - w(\cdot ,q_2) =  T_{v_0}(w(\cdot, q_1),q_1) - T_{v_0}(w(\cdot, q_2),q_2) 
\end{align*} 
Moreover, by examining the definition of $T_{v_0}$ \eqref{Def;T-v0}, we can write
\begin{align*}
	T_{v_0}(w(\cdot,q_2), q_2) = T_{v_0}(w(\cdot,q_2), q_1) - e^{L\tau}q_1 + e^{L\tau}q_2
\end{align*}
Putting them together we get: 
	\begin{align*}
		 w(\cdot ,q_1) - w(\cdot ,q_2) = e^{L\tau}(q_1 - q_2) + T_{v_0}(w(\cdot, q_1),q_1) - T_{v_0}(w(\cdot, q_2),q_1) 
	\end{align*}
	Similar to \eqref{Linear-term;1}, we have 
	\begin{align}
		||e^{L\tau}(q_2 - q_1)||_{r,\sigma,\eta} \leq  C_2||q_2-q_1||_{H^r}
	\end{align}
	Then we can apply Theorem \eqref{Thm;contraction-map;2}:
	\begin{align}
		||w(\cdot,q_1) - w(\cdot , q_2)||_{r,\sigma,\eta} \leq& ||e^{L\tau}(q_2 - q_1)||_{r,\sigma,\eta} + ||T_{v_0}(w(\cdot, q_1),q_1) - T_{v_0}(w(\cdot, q_2),q_1)||_{r,\sigma,\eta}\nonumber\\
		\leq&  C_2||q_2-q_1||_{H^r} + \frac{1}{2} ||w(\cdot,q_1) - w(\cdot,q_2)||_{r,\sigma,\eta}
	\end{align}
	The assertion then follows.
\end{proof}

\begin{lemma}\label{Lem;Fk;asymptotic}
	For any $q\in F_k$, $e^{(\lambda_k + L)t}q$ converges to $\pi_k 	q$ in $H^l$ and $C^l$ for all $l\geq 0$. 
\end{lemma}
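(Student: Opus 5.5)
The plan is to expand $q$ in the orthonormal eigenbasis and read off each mode's behavior under the semigroup. Write $q=\sum_{i\ge k}\sum_{j=1}^{d_i} a_{i,j}\phi_{i,j}$, which is possible since $q\in F_k$. Because $L\phi_{i,j}=-\lambda_i\phi_{i,j}$, we get
\begin{align*}
e^{(\lambda_k+L)t}q=\pi_k q+\sum_{i\ge k+1}\sum_{j} e^{-(\lambda_i-\lambda_k)t}a_{i,j}\phi_{i,j}.
\end{align*}
So the $E_k$ component is frozen and equals $\pi_k q$. Every other mode decays like $e^{-(\lambda_i-\lambda_k)t}$, and by \eqref{eigenvalue;lambda-k;1} these rates satisfy $\lambda_i-\lambda_k\ge\lambda_{k+1}-\lambda_k=\frac{2k+n}{2n}>0$.

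For convergence in $H^l$, compute the norm of the remainder directly with the spectral definition of $\|\cdot\|_{H^l}$:
\begin{align*}
\|e^{(\lambda_k+L)t}q-\pi_k q\|_{H^l}^2=\sum_{i\ge k+1}\sum_j e^{-2(\lambda_i-\lambda_k)t}|\lambda_i|^l a_{i,j}^2 .
\end{align*}
There are two ways to bound this. If $q\in H^l$, the sum is at most $e^{-2(\lambda_{k+1}-\lambda_k)t}\|q\|_{H^l}^2$, which tends to $0$. If $q$ is only in $L^2$ (or some fixed $H^r$), use the elementary bound $\sup_{x\ge0} x^{l}e^{-2xt}\le C_l t^{-l}$ together with the comparability $|\lambda_i|\le C(\lambda_i-\lambda_k)$ for $i\ge k+1$. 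This gives a bound $C_l t^{-l}\|q\|_{L^2}^2$ for $t\ge1$, so the remainder still tends to $0$ in every $H^l$. This parabolic smoothing step is the only point that needs a little care, and it is where I would spend the effort to state the precise hypothesis on $q$ (e.g. $q\in H^r$ as in the setup). Even the crude bound $x^l e^{-xt}\le C_l$ for $t\ge1$, combined with dominated convergence on the series, is enough to conclude.

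Convergence in $C^l$ then follows from Sobolev embedding on the compact manifold $\mathbb{S}^n$: $H^{l'}\hookrightarrow C^l$ whenever $l'>\frac n2+l$. Applying the $H^{l'}$ convergence with such an $l'$ gives the $C^l$ statement. I expect no real obstacle beyond this routine bookkeeping.
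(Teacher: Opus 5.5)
Your proof is correct. The paper states this lemma without proof, and the eigen-expansion you give is exactly the standard argument it implicitly relies on: the $E_k$ component is frozen, the remaining modes decay at rate at least $\lambda_{k+1}-\lambda_k=\frac{2k+n}{2n}$, and Sobolev embedding gives the $C^l$ statement. In fact Proposition \ref{Prop;Lipschitz} only invokes the lemma for $q\in E_k$, where $e^{(\lambda_k+L)t}q=q$ trivially.

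One small slip: the smoothing bound $C_l t^{-l}\|q\|_{L^2}^2$ does not decay when $l=0$. That case is still covered by your first estimate. Alternatively, take the supremum of $x^l e^{-2xt}$ over $x\ge \lambda_{k+1}-\lambda_k$ rather than $x\ge 0$; this gives exponential decay for every $l$.
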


\begin{proposition}\label{Prop;Lipschitz}
	$F$ is Lipschitz continuous on the $\varepsilon$-ball (in the $H^r$ norm) of $F_k$. Moreover, $F-Id$ is a contraction map with factor $\frac{1}{2}$ on the $\varepsilon$-ball (in the $H^r$ norm) of $E_k$.
\end{proposition}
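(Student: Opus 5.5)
We will estimate $F(q_1)-F(q_2)$ directly from the integral representation of $w(\cdot,q)$. Since $w(\cdot,q)=T_{v_0}(w(\cdot,q),q)$, projecting onto $E_k$ gives
\begin{align*}
e^{\lambda_k\tau}\pi_k w(\tau,q)=\pi_k q+\int_0^{\tau}e^{\lambda_k s}\pi_k N_{v_0}(w(s,q))\,ds .
\end{align*}
Letting $\tau\to\infty$ and using Proposition \ref{Prop;fine-asymptotic;1} (the components off $E_k$ decay faster, so they do not contribute to the limit), we get
\begin{align*}
F(q)=\pi_k q+\int_0^\infty e^{\lambda_k s}\pi_k N_{v_0}(w(s,q))\,ds .
\end{align*}
The integral converges by \eqref{N-v0;estimate;1}. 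Hence
\begin{align*}
F(q_1)-F(q_2)=\pi_k(q_1-q_2)+\int_0^\infty e^{\lambda_k s}\pi_k\big[N(w_1+v_0)-N(w_2+v_0)\big]ds,
\end{align*}
where $w_i=w(\cdot,q_i)$.

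We bound the integral in $H^{r-2}$, using finite dimensionality of $E_k$ (the constant $C_{E,k}$) to return to $H^r$. We apply \eqref{Nonlinear;1} at level $r-1$, which is allowed since $r-1>\frac n2+1$. This bounds the integrand by
\begin{align*}
C_3e^{\lambda_k s}\Big(\|w_1+v_0\|_{H^{r-1}}\|w_1-w_2\|_{H^{r}}+\|w_2+v_0\|_{H^{r}}\|w_1-w_2\|_{H^{r-1}}\Big).
\end{align*}
For the difference we use $\|w_1-w_2\|_{H^r}\le e^{-\sigma s}\|w_1-w_2\|_{r,\sigma,\eta}$. For the other factor we use $\|w_i+v_0\|_{H^r}\le e^{-\eta s}(\delta+\varepsilon)$, which follows from $\|w_i\|_{r,\sigma,\eta}\le\delta$ and $\|v_0\|_{r,\eta}\le\varepsilon$. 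Since $\sigma=\lambda_k-\eta/2$, the exponent becomes $e^{(\lambda_k-\sigma-\eta)s}=e^{-\eta s/2}$, which is integrable with integral $2/\eta$. Thus the integral term is at most $C C_3(\delta+\varepsilon)\eta^{-1}\|w_1-w_2\|_{r,\sigma,\eta}$. Lemma \ref{Lem;Diff-w;Diff-q} then bounds this by
\begin{align*}
4C_{E,k}C_2C_3(\delta+\varepsilon)\eta^{-1}\|q_1-q_2\|_{H^r}.
\end{align*}
This gives Lipschitz continuity on the $\varepsilon$-ball of $F_k$, with constant $1+O(\delta/\eta)$.

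For the contraction statement, restrict to $q_i\in E_k$, so that $\pi_k(q_1-q_2)=q_1-q_2$ and
\begin{align*}
(F-\mathrm{Id})(q_1)-(F-\mathrm{Id})(q_2)=\int_0^\infty e^{\lambda_k s}\pi_k\big[N(w_1+v_0)-N(w_2+v_0)\big]ds .
\end{align*}
By the bound above, its $H^r$ norm is at most $4C_{E,k}C_2C_3\cdot 2\delta\,\eta^{-1}\|q_1-q_2\|_{H^r}\le \tfrac14\|q_1-q_2\|_{H^r}$, because $\delta\le(32C_{E,k}C_2C_3)^{-1}\eta$. This gives factor $\le\frac12$.

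The main obstacle is the bookkeeping of exponential rates. The weight $e^{\lambda_k s}$ must be beaten by $e^{-\sigma s}$ from the difference together with $e^{-\eta s}$ from the sup part of $\|\cdot\|_{r,\eta}$. For this we must place the $H^r$ (sup) norm on the factor $w_i+v_0$ and avoid the $H^{r+1}$ integral norm. That is why we go down to $H^{r-2}$ and apply \eqref{Nonlinear;1} at level $r-1$, so that only $H^r$ norms appear.
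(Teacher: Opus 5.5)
Your proposal is correct and is essentially the paper's proof. It uses the same ingredients: \eqref{Nonlinear;1} applied at level $r-1$ so that only $H^r$ sup-norms appear, Lemma \ref{Lem;Diff-w;Diff-q}, the exponent $\lambda_k-\sigma-\eta=-\eta/2$, and the choice of $\delta$. The only difference is that you project onto $E_k$ first to get the exact formula $F(q)=\pi_k q+\int_0^\infty e^{\lambda_k s}\pi_k N_{v_0}(w(s,q))\,ds$. The paper instead bounds the full Duhamel expression for $e^{\lambda_k\tau}w$ uniformly in $\tau$, using that $e^{(L+\lambda_k)(\tau-s)}$ is non-increasing on $F_k$ for $s\le\tau$ and on $F_k^{\perp}$ for $s\ge\tau$, and then lets $\tau\to\infty$.

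There is one bookkeeping slip. Both terms of \eqref{Nonlinear;1} contribute, so the constant is $8C_{E,k}C_2C_3(\delta+\varepsilon)\eta^{-1}$ rather than $4C_{E,k}C_2C_3(\delta+\varepsilon)\eta^{-1}$. This gives $16C_{E,k}C_2C_3\delta\eta^{-1}\le\frac12$, exactly the paper's bound, instead of $\frac14$, which still yields the stated contraction factor.
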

\begin{remark}
	We may possibly decrease $\varepsilon,\delta$ to get $\frac{1}{2}$ contraction factor.
\end{remark}	 
\begin{proof}
Now we consider $q_1,q_2\in F_k$ with $||q_i||_{H^r}\leq \varepsilon$ ($i=1,2$).

By definition, 
\begin{align}
	w(t,q_1) = e^{L\tau}q_1 + \int_{0}^{\tau} e^{L(\tau-s)}\Pi_k[N_{v_0}(w(s,q_1))]ds - \int_{\tau}^{\infty} e^{L(\tau-s)}(1-\Pi_k)[N_{v_0}(w(s,q_1))]ds  
\end{align}	 

Following idea in Strehlke's paper \cite[Proposition 4.9]{str}
\begin{align}
	e^{\lambda_k \tau}w(\tau,q_1) = e^{(\lambda_k + L)\tau}q_1 + \int_{0}^{\tau} e^{\lambda_k \tau + L(\tau-s)}\Pi_k[N_{v_0}(w(s,q_1))]ds - \int_{\tau}^{\infty} e^{\lambda_k \tau + L(\tau-s)}(1-\Pi_k)[N_{v_0}(w(s,q_1))]ds  
\end{align}	 
Note that
\begin{align*}
	\lambda_k \tau + L(\tau-s) = (\lambda_k + L )(\tau-s) + \lambda_k s
\end{align*}
Thus we can rewrite the above as
\begin{align}
	&e^{\lambda_k \tau}w(\tau,q_1) - e^{(\lambda_k + L)\tau}q_1 \nonumber\\
    =& \int_{0}^{\tau} e^{\lambda_k s} e^{(\lambda_k + L) (\tau-s)}\Pi_k[N_{v_0}(w(s,q_1))]ds - \int_{\tau}^{\infty} e^{\lambda_k s} e^{(\lambda_k + L) (\tau-s)}(1-\Pi_k)[N_{v_0}(w(s,q_1))]ds  
\end{align}

We shall now consider difference of two solution $w(\cdot,q_1)$ and $w(\cdot, q_2)$:
\begin{align}
	&\Big(e^{\lambda_k \tau}w(\tau,q_1) - e^{\lambda_k \tau}w(\tau,q_2)\Big) - \Big( e^{(\lambda_k + L)\tau}q_1 -  e^{(\lambda_k + L)\tau}q_2\Big) \nonumber\\
	=& \int_{0}^{\tau} e^{\lambda_k s} e^{(\lambda_k + L) (\tau-s)}\Pi_k[N_{v_0}(w(s,q_1)) - N_{v_0}(w(s,q_2))]ds \nonumber\\
	&- \int_{\tau}^{\infty} e^{\lambda_k s} e^{(\lambda_k + L) (\tau-s)}(1-\Pi_k)[N_{v_0}(w(s,q_1)) - N_{v_0}(w(s,q_2))]ds  
\end{align}

For all $l\geq 0$, note that $e^{(L+\lambda_k)(\tau-s)}$ is $H^l$ non-increasing either on $F_k$ when $s\leq \tau$, or on $F_k^{\perp} $ when $s\geq \tau$, then we can estimate the above by
\begin{align}
	&\Big\Vert \Big(e^{\lambda_k \tau}w(\tau,q_1) - e^{\lambda_k \tau}w(\tau,q_2)\Big) - \Big( e^{(\lambda_k + L)\tau}q_1 -  e^{(\lambda_k + L)\tau}q_2\Big)\Big\Vert_{H^{r-2}} \nonumber\\
	\leq& \int_{0}^{\tau} e^{\lambda_k s} ||\Pi_k[N_{v_0}(w(s,q_1)) - N_{v_0}(w(s,q_2))]||_{H^{r-2}}ds \nonumber \\
    &+ \int_{\tau}^{\infty} e^{\lambda_k s} ||(1-\Pi_k)[N_{v_0}(w(s,q_1)) - N_{v_0}(w(s,q_2))]||_{H^{r-2}}ds\\
	\leq &  \int_{0}^{\infty} e^{\lambda_k s} ||N_{v_0}(w(s,q_1)) - N_{v_0}(w(s,q_2))||_{H^{r-2}}ds \nonumber\\
	=&  \int_{0}^{\infty} e^{\lambda_k s} ||N(w(s,q_1)+v_0(s)) - N(w(s,q_2)+v_0(s))||_{H^{r-2}}ds \nonumber\\
	\leq& C_3 \int_{0}^{\infty} e^{\lambda_k s} ||w(s,q_1) - w(s,q_2)||_{H^r}(2||v_0(s)||_{H^r} + ||w(s,q_1)||_{H^r} + ||w(s,q_2)||_{H^r})ds 
\end{align}

By Lemma \ref{Lem;Diff-w;Diff-q}, for all $s\geq 0$:
\begin{align*}
	||w(s,q_1) - w(s,q_2)||_{H^r}\leq 2C_2 e^{-\sigma s} ||q_1-q_2||_{H^r}
\end{align*}
By setup and definition we also have for $s\geq 0$:
\begin{align*}
	2||v_0(s)||_{H^r} + ||w(s,q_1)||_{H^r} + ||w(s,q_2)||_{H^r}\leq  (2\varepsilon + 2\delta)e^{-\eta s}\leq 4\delta e^{-\eta s}
\end{align*}
Then
\begin{align}\label{Ineq;17}
	&\Big\Vert \Big(e^{\lambda_k \tau}w(\tau,q_1) - e^{\lambda_k \tau}w(\tau,q_2)\Big) - \Big( e^{(\lambda_k + L)\tau}q_1 -  e^{(\lambda_k + L)\tau}q_2\Big)\Big\Vert_{H^{r-2}} \nonumber\\
	\leq&  8C_2C_3\delta \int_{0}^{\infty} e^{\lambda_k s} e^{-\sigma s}||q_1-q_2||_{H^r} e^{-\eta s}ds \nonumber\\
	\leq&  16C_2C_3\eta^{-1}\delta ||q_1-q_2||_{H^r}
\end{align}
Since $(\lambda_k + L)$ is $H^l$ non-increasing on $F_k$ for all $l\geq 0$, we have 
\begin{align*}
	||e^{(\lambda_k + L)\tau}q_1 -  e^{(\lambda_k + L)\tau}q_2 ||_{H^{r-2}}\leq ||q_1 - q_2||_{H^{r-2}}\leq C||q_1 - q_2||_{H^{r}}
\end{align*} 
Putting them together:
\begin{align*}
	&|| e^{\lambda_k \tau}w(\tau,q_1) - e^{\lambda_k \tau}w(\tau,q_2)||_{H^{r-2}} \\
    \leq& ||e^{(\lambda_k + L)\tau}q_1 -  e^{(\lambda_k + L)\tau}q_2 ||_{H^{r-2}} +  16C_2C_3 \eta^{-1}\delta ||q_1-q_2||_{H^r} \leq C||q_1-q_2||_{H^r}
\end{align*}
From this and applying Sobolev embedding $H^{r-2}\rightarrow C^0$, we can reach our conclusion:
\begin{align*}
	||F(q_1) - F(q_2)||_{H^r} \leq &  C_{E,k}||F(q_1) - F(q_2)||_{H^{r-2}}\\
	 =& C_{E,k}\lim_{\tau\rightarrow\infty}|| e^{\lambda_k \tau}w(\tau,q_1) - e^{\lambda_k \tau}w(\tau,q_2)||_{H^{r-2}}\\
	 \leq& C ||q_1-q_2||_{H^r}
\end{align*}

This proves that $F$ is a Lipschitz map.

If $q_1, q_2\in E_k$ with $||q_i||_{H^r}\leq \varepsilon$, this means $q_i =\pi_k q_i$ ($i=1,2$). 
We apply Lemma \ref{Lem;Fk;asymptotic} to get:
\begin{align}
	(F-Id)(q_1)  - (F-Id)(q_2)  =&  F(q_1) - F(q_2) - (q_1 - q_2) \nonumber\\
	=& \lim_{\tau\rightarrow \infty}\Big(e^{\lambda_k \tau}w(\tau,q_1) - e^{\lambda_k \tau}w(\tau,q_2)\Big) - \Big( e^{(\lambda_k + L)\tau}q_1 -  e^{(\lambda_k + L)\tau}q_2\Big) 
\end{align}
By \eqref{Ineq;17}, Sobolev norm equivalence on $E_k$ and the choice of $\delta$, we conclude
\begin{align}
	||(F-Id)(q_1)  - (F-Id)(q_2)||_{H^r} \leq  16C_{E,k}C_2C_3 \eta^{-1}\delta ||q_1-q_2||_{H^r} \leq \frac{1}{2} ||q_1-q_2||_{H^r} 
\end{align}
 
This proves that $F-Id$ is a contraction map with contraction factor $\frac{1}{2}$ on $\varepsilon$-ball (in the $H^r$ norm) of $E_k$.  
\end{proof}

\bigskip

 The following Theorem allows us to prescribe asymptotics of $w$, the difference between solutions.

\begin{theorem}\label{Thm;prescribe-diff-asymptotic;1}
Given  a solution $v_0$ to \eqref{v;equation;1} with $||v_0||_{r,\eta} < \varepsilon$, for any $Q_k\in E_k$ with $||Q_k||_{H^r}\leq \varepsilon/3$, there exists a unique $q_k\in E_k$ with $||q_k||_{H^r}\leq \varepsilon$ and a unique solution $w$ of \eqref{w;equation;1} with $||w||_{r,\sigma,\eta} \le \delta$ such that $\Pi_k w(0) = q_k$ and
\begin{align}
	|e^{\lambda_k \tau}w - Q_k|_{C^0(\mathbb{S}^n)} =  O(e^{-\eta \tau})   \quad  \text{ as } \tau\rightarrow\infty
\end{align}
\end{theorem}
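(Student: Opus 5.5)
The plan is to realize the prescribed limit $Q_k$ as a fixed point of the map $F$ from Definition \ref{Def;w-q0;asymptotic}, restricted to initial data in $E_k$. By Proposition \ref{Prop;Lipschitz}, $G:=F-Id$ is a contraction with factor $\frac12$ on the closed $\varepsilon$-ball $B_\varepsilon\subset E_k$ (in $H^r$), and $G(0)=F(0)-0=0$. We want $q_k\in B_\varepsilon$ with $F(q_k)=Q_k$, i.e. $q_k+G(q_k)=Q_k$. Equivalently, $q_k$ should be a fixed point of
\begin{align*}
\Phi(q):=Q_k-G(q), \qquad q\in B_\varepsilon .
\end{align*}

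First check that $\Phi$ maps $B_\varepsilon$ into itself. Since $G(0)=0$ and $G$ is $\frac12$-Lipschitz, we have $\|G(q)\|_{H^r}\le\frac12\|q\|_{H^r}\le\frac{\varepsilon}{2}$. Hence
\begin{align*}
\|\Phi(q)\|_{H^r}\le \frac{\varepsilon}{3}+\frac{\varepsilon}{2}<\varepsilon .
\end{align*}
The map $\Phi$ is also a $\frac12$-contraction. $B_\varepsilon$ is a closed ball in the finite-dimensional space $E_k$, so it is complete. Banach's fixed point theorem then gives a unique $q_k\in B_\varepsilon$ with $F(q_k)=Q_k$.

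Next set $w:=w(\cdot,q_k)$, the solution supplied by Corollary \ref{Cor;existence;1}. It satisfies $\|w\|_{r,\sigma,\eta}\le\delta$ and $\Pi_k w(0)=q_k$. Proposition \ref{Prop;fine-asymptotic;1} gives $\|e^{\lambda_k\tau}w(\tau)-\tilde Q\|_{H^{r-2}}=O(e^{-\eta\tau})$ for some $\tilde Q\in E_k$. Since $\tilde Q$ is the pointwise limit of $e^{\lambda_k\tau}w$, it equals $F(q_k)=Q_k$. The embedding $H^{r-2}\hookrightarrow C^0$, valid because $r>\frac n2+2$, converts the rate into the stated $C^0$ bound.

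Uniqueness: suppose another pair $(q',w')$ has the same properties. The uniqueness part of Corollary \ref{Cor;existence;1} forces $w'=w(\cdot,q')$. The asymptotic then forces $F(q')=Q_k$, so $q'$ is also a fixed point of $\Phi$ in $B_\varepsilon$, and hence $q'=q_k$.

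The main subtlety is bookkeeping. The contraction and Lipschitz constants in Proposition \ref{Prop;Lipschitz} must hold on the whole ball of radius $\varepsilon$; per the remark after that proposition, $\varepsilon,\delta$ may need to be shrunk, and everything above should use those shrunken values. A second point to check is the identification of the limit in Proposition \ref{Prop;fine-asymptotic;1} with $F(q_k)$. This is immediate, because both are the same pointwise limit.
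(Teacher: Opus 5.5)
Your proposal is correct and follows the paper's proof essentially verbatim: your map $\Phi(q)=Q_k-(F(q)-q)$ is exactly the paper's $G$, and you use the same chain of steps. These are the self-map estimate $\varepsilon/3+\varepsilon/2<\varepsilon$, the contraction from Proposition \ref{Prop;Lipschitz}, Proposition \ref{Prop;fine-asymptotic;1}, and the Sobolev embedding. Your explicit uniqueness argument for the pair $(q_k,w)$ via Corollary \ref{Cor;existence;1} is a small but welcome addition, since the paper leaves it implicit.
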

\begin{proof}
	Let us define $G: E_k \cap H^r|_{B_{\varepsilon}}\rightarrow E_k$ by
	\begin{align*}
		G(a):=  Q_k - (F(a) - a)
	\end{align*}
	By Proposition \ref{Prop;Lipschitz}, we know that $G$ is a contraction map with factor $\frac{1}{2}$. 
	
	Since $F(0) = 0$, we have $G(0) = Q_k$. Therefore, for all $||a||_{H^r} < \varepsilon$ 
	\begin{align*}
		||G(a)||_{H^r}\leq ||G(0)||_{H^r}  + \frac{1}{2}||a||_{H^r}\leq \frac{\varepsilon}{3} + \frac{\varepsilon}{2} < \varepsilon
	\end{align*}
	This means that $G$ maps  $\varepsilon$-ball (in $H^r$ norm) of $E_k$ to itself. In fact the image of $G$ is in a $\frac{5\varepsilon}{6}$-ball. By contraction property, there must be a unique fixed point $q_k$.
	
	This means  $G(q_k) = q_k$, thus $F(q_k) = Q_k$.   In other words, $|e^{\lambda_k \tau}w(\tau,q_k) - Q_k|\rightarrow 0$ as $\tau\rightarrow\infty$.
	Moreover, Proposition \ref{Prop;fine-asymptotic;1} gives finer asymptotics:
	\begin{align*}
		||e^{\lambda_k \tau}w(\tau,q_k) - Q_k||_{H^{r-2}}\leq Ce^{-\eta \tau} 
	\end{align*}
	By Sobolev embedding we would also have the  $C^0$ inequality:
	\begin{align*}
		|e^{\lambda_k \tau}w(\tau,q_k) - Q_k|_{C^0(\mathbb{S}^n)} \leq Ce^{-\eta \tau}  
	\end{align*}
	The norm bound $||w||_{r,\sigma,\eta}\leq\delta$ follows directly from Corollary \ref{Cor;existence;1}.
	 
\end{proof}
\begin{remark}
	We don't have to use contraction property to find fixed point $q_k$. Indeed, one can use Brouwer fixed point theorem because $E_k$ is finite dimensional.
	\end{remark}

We  show that the smallness assumption in the Theorem \ref{Thm;prescribe-diff-asymptotic;1}  are unnecessary and arrive in the main result of this section:
\begin{theorem*}[Restate of Theorem \ref{Thm;prescribe-diff-asymptotic;2}]
	Fix any $r> \frac{n}{2}+2, k\geq2$, given  an admissible solution $v_0$ to \eqref{v;equation;1} on $[\tau_0,\infty)$ and given $Q_k\in E_k$, set $\eta = 1/(4n)$. There exists a constant $C>0$, $\tau_1\geq \tau_0$ and another admissible solution $v_1$ of \eqref{v;equation;1} define on $[\tau_1,\infty)$ such that 
\begin{align}
	|e^{\lambda_k \tau}(v_1-v_0) - Q_k|_{C^0(\mathbb{S}^n)} = O( e^{-\eta \tau}).
\end{align}
\end{theorem*}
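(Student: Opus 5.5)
The plan is to reduce to Theorem \ref{Thm;prescribe-diff-asymptotic;1} by a time shift, which gives smallness of $v_0$, followed by a scaling in the prescribed coefficient, which gives smallness of $Q_k$.

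\textbf{Step 1: shift $v_0$ so it is small.} Since $v_0$ is admissible, Lemma \ref{Lem;admissible-norm} (via \v{S}e\v{s}um's decay $\|v_0(\tau)\|_{C^l}=O(e^{-\lambda_2\tau})$) gives $\|v_0(\cdot+\tau_0)\|_{r,\eta}<\infty$ with $\eta=\frac{1}{4n}<\lambda_2$. I would check directly that the shifted solution $v_0^T(\tau):=v_0(\tau+T)$ satisfies $\|v_0^T\|_{r,\eta}\to0$ as $T\to\infty$. Both pieces of the norm are tails:
\[
\sup_{s\ge0} e^{\eta s}\|v_0(s+T)\|_{H^r}\le Ce^{-(\lambda_2-\eta)T},
\qquad
\int_0^\infty\|v_0(s+T)\|_{H^{r+1}}^2\,ds\le C\int_T^\infty e^{-2\lambda_2 s}\,ds .
\]
We may use $H^{r+1}$ and higher norms here because the decay holds in every $C^l$. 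Hence we can fix $T\ge\tau_0$ with $\|v_0^T\|_{r,\eta}<\varepsilon$, where $\varepsilon,\delta$ are the constants of the refined-asymptotics subsection with $\sigma=\lambda_k-\eta/2$. The equation \eqref{v;equation;1} is autonomous, so $v_0^T$ is again a solution on $[0,\infty)$.

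\textbf{Step 2: rescale the target.} In the shifted time the desired asymptotics read
\[
e^{\lambda_k(\tau+T)}\bigl(v_1(\tau+T)-v_0(\tau+T)\bigr)\to Q_k .
\]
Equivalently, the difference $w(\tau):=v_1(\tau+T)-v_0^T(\tau)$ must satisfy $e^{\lambda_k\tau}w\to e^{-\lambda_k T}Q_k$. Enlarging $T$ if necessary (which only helps Step 1), we arrange $\|e^{-\lambda_k T}Q_k\|_{H^r}\le\varepsilon/3$; this uses that $E_k$ is finite dimensional, so the $H^r$ norm of $Q_k$ is finite. Theorem \ref{Thm;prescribe-diff-asymptotic;1}, applied with base solution $v_0^T$ and target $e^{-\lambda_k T}Q_k$, then produces $w$ with $\|w\|_{r,\sigma,\eta}\le\delta$ and
\[
\bigl|e^{\lambda_k\tau}w-e^{-\lambda_kT}Q_k\bigr|_{C^0}\le Ce^{-\eta\tau}.
\]

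\textbf{Step 3: transfer back and check admissibility.} Set $v_1(\tau):=v_0^T(\tau-T)+w(\tau-T)=v_0(\tau)+w(\tau-T)$ for $\tau\ge\tau_1:=T$. Since $w$ solves \eqref{w;equation;1} and $N_{v_0}(w)=N(w+v_0)-N(v_0)$, the function $v_1$ solves \eqref{v;equation;1}. Multiplying the estimate of Step 2 by $e^{\lambda_kT}$ and substituting $\tau\mapsto\tau-T$ gives
\[
\bigl|e^{\lambda_k\tau}(v_1-v_0)-Q_k\bigr|_{C^0}\le Ce^{\lambda_kT}e^{-\eta(\tau-T)}=O(e^{-\eta\tau}).
\]
For admissibility, the bound $\|w(\tau)\|_{H^r}\le\delta e^{-\sigma\tau}$ with $r>\frac n2+2$ gives $w\to0$ in $C^2$ by Sobolev embedding, and $v_0\to0$ in $C^2$ by assumption, so $v_1\to0$ in $C^2$.

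The main obstacle is Step 1: confirming that the $\|\cdot\|_{r,\eta}$ norm of the shifted solution genuinely becomes small. This requires the integrated $H^{r+1}$ part of the norm and not just the sup part, so I rely on the all-orders $C^l$ decay of admissible solutions. The rest is bookkeeping of the shift.
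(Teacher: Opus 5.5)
Your proposal is correct and follows the paper's proof essentially step for step. You shift $v_0$ forward in time so that $\|\cdot\|_{r,\eta}<\varepsilon$, replace $Q_k$ by $e^{-\lambda_k T}Q_k$ so that it lies in the $\varepsilon/3$-ball, apply Theorem \ref{Thm;prescribe-diff-asymptotic;1}, and shift back; the remaining differences are cosmetic (you get smallness from the $C^l$ decay at rate $\lambda_2$ rather than the paper's explicit $\tau_1$ built from $\|v_0\|_{r+1,\eta}$, and you check admissibility by Sobolev embedding rather than Lemma \ref{Lem;admissible-norm}).
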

\begin{proof}
	If $Q_k = 0$, then we can simply take $v_1 = v_0$. So we may assume that $Q_k\neq 0$.
	
	The main idea is simply "shifting left and then right", because such operation will dilate the asymptotics and allows us to use Theorem \ref{Thm;prescribe-diff-asymptotic;1}. Now we describe the idea in detail.
	 
	By Lemma \ref{Lem;admissible-norm}, we can let $||v_0(\cdot, \cdot+\tau_0)||_{r,\eta} + ||v_0(\cdot,\cdot+\tau_0)||_{r+1,\eta} + ||Q_k||_{H^r}= R >0$. 
	Let 
	\begin{align*}
		\tau_1 =& |\tau_0| +  \eta^{-1}\ln(1 + 2R\varepsilon^{-1} \eta^{-1/2}) + \sigma^{-1} \ln(1+2R\varepsilon^{-1}) + \lambda_k^{-1}\ln(1+ 3R\varepsilon^{-1})> \tau_0\\
		\hat{Q}_k =& e^{-\lambda_k \tau_1} Q_k\\
		\hat{v}_0(\tau) =& v_0(\tau+\tau_1)
	\end{align*}
	Then $||v_0(s)||_{H^r} $ and $||v_0(s)||_{H^{r+1}}$ are both bounded by $ Re^{-\eta (s-\tau_0)}$. Hence
	\begin{align*}
		||\hat{v}_0||_{r,\eta} =& \Big(\int_{0}^{\infty} ||v_0(s+\tau_1)||^2_{H^{r+1}}ds\Big)^{1/2} + \sup\limits_{s\geq 0}e^{\eta s}||v_0(s+\tau_1)||_{H^r}\\
		=& \Big(\int_{\tau_1}^{\infty} ||v_0(s)||^2_{H^{r+1}}ds\Big)^{1/2} + e^{\eta(\tau_0 -\tau_1)}\cdot\sup\limits_{s\geq \tau_0}e^{\eta (s-\tau_0)}||v_0(s)||_{H^r}\\
		\leq& \Big(\int_{\tau_1}^{\infty} R^2e^{-2\eta (s-\tau_0)}ds\Big)^{1/2} + R e^{ -\eta (\tau_1-\tau_0)}\\
		\leq& R\eta^{-1/2}e^{-\eta (\tau_1 - \tau_0)} + Re^{-\eta(\tau_1 - \tau_0)}<\varepsilon
	\end{align*}

	We can also check that
	\begin{align*}
		||\hat{Q}_k||_{H^r}< \frac{||Q_k||_{H^r}}{3R\varepsilon^{-1}} \leq  \frac{\varepsilon}{3}
	\end{align*}

	Then we can use Theorem \ref{Thm;prescribe-diff-asymptotic;1} to get $w$ defined on $[0,\infty)$ satisfying \eqref{w;equation;1}:
	\begin{align*}
		\partial_{\tau} w = Lw + N_{\hat{v}_0}(w)
	\end{align*}
	and
	\begin{align}
	|e^{\lambda_k \tau}w - \hat{Q}_k|_{C^0(\mathbb{S}^n)}\leq C e^{-\eta \tau}
	\end{align}
	Then  
	\begin{align*}
		\hat{v}_1 = \hat{v}_0 + w
	\end{align*} 
	is defined on $[0,\infty)$ and satisfies \eqref{v;equation;1}. 	
	This in turn gives
	\begin{align*}
		|e^{\lambda_k \tau}(\hat{v}_1 - \hat{v}_0) - \hat{Q}_k|_{C^0(\mathbb{S}^n)}\leq C e^{-\eta \tau}
	\end{align*} 
	By Theorem \ref{Thm;prescribe-diff-asymptotic;1}, $\hat{v}_1$ has bounded $||\cdot||_{r,\eta}$ norm, thus is admissible by Lemma \ref{Lem;admissible-norm}.

	Finally we define $v_1(\tau) = \hat{v}_1(\tau-\tau_1)$ on $\tau\in [\tau_1,\infty)$, which is also admissible. We now check that $v_1$ is the desired function:
	\begin{align*}
		\big|e^{\lambda_k \tau}(v_1-v_0)(\tau) -Q_k\big|_{C^0(\mathbb{S}^n)} =& \big|e^{\lambda_k \tau}\hat{v}_1(\tau-\tau_1) - e^{\lambda_k \tau}\hat{v}_0(\tau-\tau_1) - e^{\lambda_k \tau_1}\hat{Q}_k\big|_{C^0(\mathbb{S}^n)}\\
		=& e^{\lambda_k \tau_1}\big|e^{\lambda_k (\tau-\tau_1)}(\hat{v}_1 - \hat{v}_0)(\tau-\tau_1) - \hat{Q}_k\big|_{C^0(\mathbb{S}^n)}\\
		\leq&  Ce^{(\lambda_k+\eta) \tau_1} \cdot e^{-\eta \tau}  
	\end{align*}
\end{proof}

\section{Construction of nonsmooth arrival time functions}\label{Sec;main-theorem-proof}

We now assemble the ingredients developed in the previous sections in order to prove the main theorems. Sections 4 and 5 show that higher regularity of the arrival time function imposes nonlinear algebraic obstruction equations on the asymptotic coefficients of the associated rescaled mean curvature flow.  Section 6, on the other hand, provides a mechanism for prescribing higher-order asymptotic modes of the rescaled flow.

Now we explain the ideas of the proof briefly. The proof is by contradiction. Assume there are too few solutions with low regularity. Then almost every higher-mode perturbation must remain smooth. Their asymptotic coefficients would have to satisfy the obstruction equations.  We choose the prescribed asymptotic data so that these equations are violated. Hence we obtain a contradiction.

We first summarize results from the previous sections. Let us begin with $\RR^{n+1}$ for all $n\geq 1$. Define 
\begin{align*}
	\cC\cA^{N}_n = \Big\{U\in C^{N}(\Omega)\big| \  U \text{ satisfies } \eqref{arrival-time-eqn-1},  U = const \text{ on } \partial \Omega, \ \Omega \subset \RR^{n+1} \text{ is bounded, open and convex} \Big\}
\end{align*} 
to be the set of all $C^{N}$ arrival time functions in bounded convex domain in $\RR^{n+1}$ satisfying Dirichlet boundary condition.

Recall that the arrival time function on any bounded convex domain is in $\cC\cA^{2}_n$ for $n\geq 2$ and in $\cC\cA^{3}_1$ when $n=1$. From now on, let us always assume that $N \geq 2$ in this section.

For any  $U\in \cC\cA^{N}_n$, it canonically  corresponds to a convex MCF $\{M_t\}_{t\in[t_0,T_0)}$ with a spherical singularity at $(x_0,T_0)$, where $x_0, t_0, T_0$ can be determined by $t_0 = \inf_{\Omega}\{U\} = U(\partial \Omega), T_0 = \sup_{\Omega}\{U\}$ and $x_0$ is the unique maximum point of $U$.    We define  the corresponding RMCF of $U$ or $\{M_t\}$ to be the RMCF  of $\{M_t\}$ center $(x_0,T_0)$ specified by \eqref{Def;RMCF;1}, denoted by  $\{\bar{M}_{\tau}\}$. Note that this forces $\bar{M}_{\tau}$ to start at $-\ln(T_0-t_0)$.  Then consider the  graph function $v$ of $\{\bar{M}_{\tau}\}$ over $\mathbb{S}^n(\sqrt{2n})$ as specified by \eqref{Def;RMCF-graph;1}. 

Conversely, given a RMCF $\{\bar{M}_{\tau}\}$, we define the corresponding MCF centered at $(x_0,T_0)$ to be 
\begin{align}\label{RMCF-MCF;2}
	M_t := x_0 + \sqrt{T_0 - t}\bar{M}_{-\ln(T_0-t)}
\end{align}
Then we can canonically corresponds an arrival time function $U$ to $\{M_t\}$. This definition allows the correspondence to be inverse to each other, namely
\begin{proposition}
	Given a convex MCF $\{M_t\}$ with spherical singularity at $(x_0, T_0)$, let the corresponding RMCF centered at $(x_0,T_0)$ be $\bar{M}_{\tau}$. Then the corresponding MCF of $\{\bar{M}_{\tau}\}$ centered at $(x_0, T_0)$ is $\{M_t\}$. 
\end{proposition}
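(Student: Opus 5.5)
We need to show the two constructions are mutually inverse. Given $\{M_t\}_{t\in[t_0,T_0)}$, the corresponding RMCF centered at $(x_0,T_0)$ is, by \eqref{Def;RMCF;1},
\[
\bar M_\tau=e^{\tau/2}\big(M_{T_0-e^{-\tau}}-x_0\big),\qquad \tau\in[-\ln(T_0-t_0),\infty).
\]
The plan is simply to substitute this into \eqref{RMCF-MCF;2} and check that we recover $M_t$ for every $t\in[t_0,T_0)$.

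Fix $t\in[t_0,T_0)$ and set $\tau=-\ln(T_0-t)$. Then $\tau$ lies in the domain of $\bar M$, and we have the two identities $e^{-\tau}=T_0-t$ and $e^{\tau/2}=(T_0-t)^{-1/2}$. Hence $T_0-e^{-\tau}=t$, so
\[
\bar M_{-\ln(T_0-t)}=(T_0-t)^{-1/2}(M_t-x_0).
\]
Plugging this into \eqref{RMCF-MCF;2} gives
\[
x_0+\sqrt{T_0-t}\,(T_0-t)^{-1/2}(M_t-x_0)=x_0+(M_t-x_0)=M_t .
\]
Dilation and translation are bijections of $\RR^{n+1}$, so these are set identities.

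Two small bookkeeping points remain. First, the time intervals match: $t\mapsto-\ln(T_0-t)$ is a bijection from $[t_0,T_0)$ onto $[-\ln(T_0-t_0),\infty)$. Second, the center $(x_0,T_0)$ used in \eqref{RMCF-MCF;2} is the same one used to form the RMCF, which holds by hypothesis.

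Nothing here is a genuine obstacle. The only care needed is keeping the time domains consistent, so that the reconstructed flow starts at exactly $t_0$.
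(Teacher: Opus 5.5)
Your proof is correct. The paper gives no written proof and treats the proposition as immediate from the definitions \eqref{Def;RMCF;1} and \eqref{RMCF-MCF;2}; your direct substitution with $\tau=-\ln(T_0-t)$, together with matching $[t_0,T_0)$ to $[-\ln(T_0-t_0),\infty)$, is exactly that verification.
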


For every solution produced below, increase the initial rescaled time and shrink the fixed-point ball, if necessary, so that the radial graph is uniformly $C^2$-close to the round sphere. It is then embedded and strictly convex; formula \eqref{RMCF-MCF;2} gives a smooth compact strictly convex flow, and its nested enclosed bodies define the corresponding arrival time.

If $U\in C^{N}$ for some $N\geq 2$, then by Section \ref{Sec;elliptic} we can write its Taylor expansion as
\begin{align}\label{Taylor;4}
	T_0 - U(x - x_0) = \frac{|x-x_0|^2}{2n} + \sum_{k=3}^{N}\frac{P_k(x-x_0)}{n} + o(|x-x_0|^{N})
\end{align} 
where $P_k$ are homogeneous degree $k$ polynomials.

By Theorem \ref{Taylor-to-asymptotic} we have the unique asymptotic expansion of $v$ as $\tau\rightarrow \infty$ :
\begin{align}\label{v;asymp;5}
	\Big| v(\cdot, \tau) - \sum^{N-2}_{l=1}f_l e^{-\frac{l \tau}{2} }\Big|_{C^0(\mathbb{S}^n(\sqrt{2n}))} = o(e^{-\frac{N-2}{2}\tau})
\end{align}
for a collection of smooth functions $f_1,...,f_{N-2}$ on $\mathbb{S}^n(\sqrt{2n})$.   For notation consistency, we manually set 
\begin{align*}
	P_0 = P_1 = P_2 \equiv 0,\quad f_0 \equiv \sqrt{2n}
\end{align*}
The collection $f_1,\cdots, f_{N-2}$ are called the \textbf{asymptotic functions} of $U$.

By Corollary \ref{fm;Lemma;2}, $f_l$ are additions and multiplications of $\frac{P_3}{|x|^2},....\frac{P_{N}}{|x|^2}$ and thus are smooth functions on $\mathbb{S}^n(\sqrt{2n})$ for each $l=1,2,...,N-2$. Moreover, $f_l$ is the restriction of degree $l$ homogeneous polynomial on $\mathbb{S}^n(\sqrt{2n})$  when $n=1$ by Corollary \ref{Cor;divisble}. Indeed this is the case for all $n\geq 2$ but we do not prove it as it will not be used in the current paper.

Recall that $\lambda_j = \frac{j(j+n-1)}{2n}-1$. Let us take all setups from \ref{subsec;prescribe;setup} in Section \ref{Sec;prescribe-higher-asymptotic}. Specially we need to adapt the notation of $E_j, \pi_j$, the $L^2$ inner product, and the $L^2$ norm on $\mathbb{S}^n({\sqrt{2n}})$. Recall that $E_j$ is the eigenspace of $\lambda_j$ and $\pi_j$ is the $L^2$-orthogonal projection onto $E_j$.

\begin{definition}\label{Def;asymp-distinct}
Given $U\in \cC\cA^{2}_n$ with a spherical singularity at $(x_0,T_0)$, a rigid motion of $U$ is $\tilde{U}(x) =  U(R(x- p)) + c$ for some $p\in \RR^{n+1}$, $R\in O(n+1)$ and $c\in \RR$. In the special case of $R = Id$  (i.e no rotation), we call it a translation.  A truncation of $U$ at $t_1\in [0,T_0)$ is $U$ restricted on $\Omega_{t_1} = \{x\in \RR^{n+1} | U(x)\geq t_1 \}$. A rescaling of $U$ is $U^{\lambda} = \lambda^2 U(\frac{x-x_0}{\lambda})$ for some $\lambda > 0$. We say that two arrival time functions $U_1, U_2 \in \cC\cA^{2}_n$ are \textbf{asymptotically distinct}, if they are different modulo rescaling, rigid motion, truncation. Otherwise, they are called \textbf{asymptotically equivalent}.
\end{definition}

Note that the spherical singularity is the only critical point of $U$ corresponding to a convex MCF. Therefore, if we are to find an rigid motion, rescaling and truncation to match two arrival time function of convex MCF, we must ensure that the spatial translation match the their respective critical point.

Let us describe how these actions will impact the arrival time function. Suppose that $U\in \cC\cA^{2}$ satisfies \eqref{Taylor;4} with Taylor polynomial $P_3,...,P_N$. Suppose that $(x_0,T_0)$ is the spherical singularity of the corresponding MCF. 
 Then Theorem \ref{Taylor-to-asymptotic}  implies that the corresponding RMCF centered at $(x_0,T_0)$ and graph function $v$ satisfies  \eqref{v;asymp;5} with asymptotic function $f_1,...f_{N-2}$. Note that  we did not assume $C^{N}$ here and \eqref{Taylor;4} is weaker than being $C^{N}$. 

The truncation of $U$ is amount to starting the corresponding MCF centered at $(x_0,T_0)$ at later time. This action will keep $f_1,...,f_{N-2}$ unchanged. Indeed, the truncation preserve the asymptotic behavior. 

The rigid motion of $U$ is amount to the space-time translation and rotation of the corresponding MCF. When coorepsonding to RMCF, we move the center point accordingly and therefore this action will keep $f_1,...,f_{N-2}$ unchanged when no rotation is involved. Indeed, the full asymptotic behavior is preserved.
In general with a rotation, there exists $R\in O(n+1)$ and $f_l$ will be changed to $f_l\circ R$ for all $1\leq l \leq N-2$. This will change $f_l$ but will keep $||f_l||_{L^2}$ unchanged. In addition, $||\pi_j(f_l)||_{L^2}$ is also invariant under rotation for all $l\geq 1$ and $j\geq 0$ since eigenspaces $E_j$ are all invariant under rotation. Indeed, this follows from the fact that $E_j$ is the space of restriction of harmonic homogeneous degree $j$ polynomial onto the sphere.

 We will explain more on the rescaling. In view of the correspondence between MCF and arrival time function, the MCF corresponding to $U^{\lambda}$ is $\{{M}^{\lambda}_t\}_{t\in [0, \lambda^{2}T_0)} = \Big\{x_0 + \lambda (M_{\lambda^{-2}t}-x_0)\Big\}_{t\in [0, \lambda^{2}T_0)}$, which is a rescaling of $M_t$ by $\lambda$ with a spherical singularity at $(x_0,\lambda^2T_0). $
 Then we can compute that the corresponding RMCF centered at $(x_0,\lambda^2 T_0)$ to be $\bar{M}^{\lambda}_{\tau} = \bar{M}_{\tau + 2\ln(\lambda)}$. Clearly $U^{\lambda}$ has the same regularity as $U$, hence the corresponding profile function $v^{\lambda}$ will satisfy the modified asymptotic:
\begin{align}\label{f;rescale}
	 \sum^{N-2}_{l=1}f_l e^{-\frac{l (\tau+2\ln \lambda)}{2} } =  \sum^{N-2}_{l=1}f_l\lambda^{-l} e^{-\frac{l \tau}{2} }
\end{align}
Alternatively, one can directly check that changing $U$ to $U^{\lambda}$ will scale $P_k$ by factor of $\lambda^{2-k}$, then Corollary \ref{fm;Lemma;2} implies that $f_l$ will be scaled by factor of $\lambda^{-l}$ for all $1\leq l\leq N-2$. Either way, we have obtained:
\begin{lemma}\label{Lem;rescale}
	 Rescaling $U$ by factor of $\lambda$ (equivalent to rescaling MCF by $\lambda$)  will scale  $f_l$ by a factor of $\lambda^{-l}$ for each $1 \leq l \leq N-2$.
\end{lemma}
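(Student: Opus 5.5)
The statement is Lemma \ref{Lem;rescale}: rescaling $U$ by $\lambda$ multiplies each asymptotic function $f_l$ by $\lambda^{-l}$. I would give two independent arguments, parabolic and elliptic, and use each as a check on the other.

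\emph{Parabolic route.} I first identify the RMCF of $U^\lambda$. Since $U^\lambda(x)=\lambda^2U(\frac{x-x_0}{\lambda})$, its level set at height $t$ is $M^\lambda_t=x_0+\lambda(M_{\lambda^{-2}t}-x_0)$. The critical point is unchanged and the extinction time becomes $\lambda^2T_0$. Plugging into \eqref{Def;RMCF;1} gives
\begin{align*}
(\lambda^2T_0-t)^{-1/2}(M^\lambda_t-x_0)=(T_0-\lambda^{-2}t)^{-1/2}(M_{\lambda^{-2}t}-x_0),
\end{align*}
where the factor $\lambda$ has cancelled against $\lambda^{-1}$ from the prefactor. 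For the time variable, $-\ln(\lambda^2T_0-t)=-\ln(T_0-\lambda^{-2}t)-2\ln\lambda$. Hence $\bar M^\lambda_\tau=\bar M_{\tau+2\ln\lambda}$, and so $v^\lambda(y,\tau)=v(y,\tau+2\ln\lambda)$.

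Substituting this into the expansion \eqref{v;asymp;5} of $v$ yields \eqref{f;rescale}, with remainder $o(e^{-\frac{N-2}{2}(\tau+2\ln\lambda)})=o(e^{-\frac{N-2}{2}\tau})$. One point needs care: $U^\lambda$ has a Taylor expansion of the form \eqref{Taylor;4}, with $P_k$ replaced by $\lambda^{2-k}P_k$, so Theorem \ref{Taylor-to-asymptotic} applies to $U^\lambda$. The uniqueness part of that theorem then identifies the asymptotic functions of $U^\lambda$ as $\lambda^{-l}f_l$.

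\emph{Elliptic cross-check.} Expanding $\lambda^2U(x/\lambda)$ directly shows that the degree-$k$ Taylor term scales by $\lambda^{2-k}$, while $P_2=|x|^2/2$ is fixed. In the induction formula \eqref{fm;induction;3}, each term of $f_m$ has the form $P_kf_{\alpha_1}\cdots f_{\alpha_k}/P_2$ with $k+|\alpha|=m+2$. By induction on $m$, such a term scales by
\begin{align*}
\lambda^{2-k}\,\lambda^{-|\alpha|}=\lambda^{-m},
\end{align*}
with base case $f_0$, which is unscaled. This recovers $f_m\mapsto\lambda^{-m}f_m$. Equivalently, Corollary \ref{fm;Lemma;2} is weighted-homogeneous of weight $m$ in the variables $P_{j+2}/P_2$, each of which scales by $\lambda^{-j}$.

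\emph{Main obstacle.} The only real care needed is bookkeeping: getting the sign of the time shift $2\ln\lambda$ right, and remembering that the center moves to $(x_0,\lambda^2T_0)$. The rest is routine.
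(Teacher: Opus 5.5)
Your proposal is correct and takes the same approach as the paper. The paper likewise identifies $\bar M^\lambda_\tau=\bar M_{\tau+2\ln\lambda}$ to obtain \eqref{f;rescale}, and offers as an alternative the elliptic check: $P_k\mapsto\lambda^{2-k}P_k$ combined with Corollary \ref{fm;Lemma;2}. One cosmetic point, inherited from the paper: the formula $\lambda^2U(\frac{x-x_0}{\lambda})$ has level sets $x_0+\lambda(M_{\lambda^{-2}t}-x_0)$ only when $x_0=0$, so in general you should write $U^\lambda(x)=\lambda^2U(x_0+\frac{x-x_0}{\lambda})$, which also makes your claim that the critical point is unchanged literally true.
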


\subsection[Construction of a finite parameter family of solutions]{Construction of a finite parameter family of solutions}\label{subsec;construct-I-parameter-solution}

Suppose that $U_1$ is an arrival time function of convex MCF with singularity $(x_0,T_0)$. Then the corresponding RMCF centered at $(x_0, T_0)$ has graph function $v_1$, which is an admissible solution to the graphical RMCF equation \eqref{v;equation;1} on $[\tau_0,\infty)$. 
 
Let $I\geq 2$. Fix $r>\frac{n}{2}+2$ and $R>0$. We construct an $(I-1)$-parameter collections of graphical RMCF solutions defined on $[\tau_1,\infty)$: 
\begin{align*}
	\mathbf{v}^{(R,r,v_1)}(Q_2,\ldots,Q_I), \qquad
	(Q_2,\ldots,Q_I)\in \prod_{k=2}^I \bigl(E_k\cap H^r|_{B_R}\bigr)
\end{align*}
$\tau_1$ will be specified later, but will be fully determined by $R,r,v_1,n, I$. 
We then use \eqref{RMCF-MCF;2} to convert each of them back to MCF with spherical singularity at $(x_0,T_0)$. The corresponding arrival time function is denoted by 
\begin{align*}
	\mho^{(R,r,U_1)}(Q_2,\cdots, Q_I)
\end{align*}
Here each $Q_k$ is the restriction to $\mathbb{S}^n(\sqrt{2n})$ of a homogeneous harmonic polynomial of degree $k$. Since $E_k$ is finite dimensional, each $Q_k$   may be identified with a finite tuple of real parameters. In particular, when $n=1$, the space $E_k$ has real dimension two, so $Q_k$  may be represented by either two real parameters or one complex parameter.

Note that $\mathbf{v}^{(R,r,v_1)}$ and $\mho^{(R,r,U_1)}$ also depend on the cutoff degree $I$ and the dimension $n$. We suppress this dependence because (1) the parameter domain $\prod_{k=2}^I(E_k\cap H^r|_{B_R})$ records $I$, and (2) $n$ is fixed background data.

Set
\begin{align}\label{Def;iterated-eta-sigma}
	\eta:=\frac{1}{4n}, \qquad \sigma_k:=\lambda_k-\frac{\eta}{2}, \qquad 2\leq k\leq I. 
\end{align}
Then $\lambda_{k-1}<\sigma_k<\lambda_k$ for every $2\leq k\leq I$. Take
\begin{align}
	&C_{1,k}:={}C(n,r)\left[\left(1+\sigma_k^{\frac12}\eta^{-\frac12}\right)
	\left(\frac{\lambda_k}{\lambda_k-\sigma_k}\right)^{\frac12}
	(1+\lambda_k^{\frac{1}{2}}\eta^{-\frac{1}{2}})\frac{\lambda_k}{\sigma_k - \lambda_{k-1}} \right], \label{Def;iterated-C1}\\
	&C_{2,k}:={}2+\sigma_k^{\frac12}\eta^{-\frac12}, \label{Def;iterated-C2}\\
    &C_{E,k}:={}\lambda_k \\
    &C_3 \text{ from }  \eqref{Nonlinear;1}. 
\end{align}

and take
\begin{align}\label{Def;iterated-uniform-C}
	C_1^*:=\max_{2\leq k\leq I}C_{1,k}, \qquad C_2^*:=\max_{2\leq k\leq I}C_{2,k}, \qquad C_{E}^* = \max_{2\leq k\leq I}C_{E,k}. 
\end{align}
We  choose the constants
\begin{align}
	\delta:=\min\left\{(10C_1^*+4)^{-1},\frac{\eta}{32C_{E}^*C_2^*C_3}\right\},
	\qquad \varepsilon:=(2+2C_2^*)^{-1}\delta. \label{Def;iterated-epsilon-delta}
\end{align}
These constants only depend on $I$, but they work simultaneously for all $2\leq k\leq I$ in Theorem \ref{Thm;prescribe-diff-asymptotic;1}, Proposition \ref{Prop;Lipschitz},  Lemma \ref{Lem;Diff-w;Diff-q} and Corollary \ref{Cor;existence;1}. Set
\begin{align}\label{Def;iterated-R0}
	R_1:=||v_1(\cdot,\cdot+\tau_0)||_{r,\eta}+||v_1(\cdot,\cdot+\tau_0)||_{r+1,\eta}.
\end{align}
By Lemma \ref{Lem;admissible-norm}, for every $s\geq \tau_0$,
\begin{align}\label{Ineq;iterated-v0-decay}
	||v_1(s)||_{H^r}+||v_1(s)||_{H^{r+1}}\leq R_1e^{-\eta (s-\tau_0)}.
\end{align}

We now choose a common initial time for the entire construction. Let
\begin{align}
	\tau_1:=|\tau_0| + &\eta^{-1}\ln\left(1+\frac{2R_1(1+\eta^{-\frac12})}{\varepsilon}\right)
	+\lambda_2^{-1}\ln\left(1+\frac{3R}{\varepsilon}\right) \nonumber\\
	&+\lambda_2^{-1}\ln\left(1+\frac{8(I-1)C_2^*R}{\varepsilon}\right). \label{Def;iterated-tau1}
\end{align}

Once $v_1,n,r$, and $I$ are fixed, the right hand side depends only on $R$. Define, for $\tau\geq0$:
\begin{align}\label{Def;iterated-shift}
	\hat{v}_1(\tau):=v_1(\tau+\tau_1), \qquad
\end{align}
As in the proof of Theorem \ref{Thm;prescribe-diff-asymptotic;2}, we have
\begin{align}
	||\hat{v}_1||_{r,\eta}
	={}&\left(\int_0^\infty ||v_1(s+\tau_1)||_{H^{r+1}}^2ds\right)^{\frac12}
	+\sup_{s\geq0}e^{\eta s}||v_1(s+\tau_1)||_{H^r} \nonumber\\
	={}&\left(\int_{\tau_1}^\infty ||v_1(s)||_{H^{r+1}}^2ds\right)^{\frac12}
	+e^{-\eta\tau_1}\sup_{s\geq \tau_1}e^{\eta s}||v_1(s)||_{H^r} \nonumber\\
	\leq{}&\left(\int_{\tau_1}^\infty R_1^2e^{-2\eta (s-\tau_0)}ds\right)^{\frac12}
	+R_1e^{-\eta(\tau_1-\tau_0)} \nonumber\\
	\leq{}&R_1(1+\eta^{-\frac12})e^{-\eta(\tau_1-\tau_0)}
	<\frac{\varepsilon}{2}. \label{Ineq;iterated-v0-small}
\end{align}
Further, for any fixed tuple $(Q_2,\ldots,Q_I)$ in the parameter domain, define
\begin{align}
	\hat{Q}_k:=e^{-\lambda_k\tau_1}Q_k, \qquad 2\leq k\leq I.
\end{align}
Then for every $2\leq k\leq I$,
\begin{align}
	||\hat{Q}_k||_{H^r}
	=e^{-\lambda_k\tau_1}||Q_k||_{H^r}
	\leq Re^{-\lambda_2\tau_1}
	<\frac{\varepsilon}{3}, \label{Ineq;iterated-Q-small}
\end{align}
and
\begin{align}
	4(I-1)C_2^*Re^{-\lambda_2\tau_1} < \frac{\varepsilon}{2}. \label{Ineq;iterated-sum-small}
\end{align}

We now construct the solutions inductively without any further time shift.

For each $2\leq k\leq I$, assume that $w_2,\ldots,w_{k-1}$ have been constructed and satisfy:
\begin{align}\label{Ineq;iterated-induction-hypothesis}
	\sum_{j=2}^{k-1}||w_j||_{r,\eta}
	< 4(k-2)C_2^*Re^{-\lambda_2\tau_1}.
\end{align}
Set
\begin{align}\label{Def;iterated-vk-base}
	\hat{v}_{k-1}:=\hat{v}_1+\sum_{j=2}^{k-1}w_j.
\end{align}
when $k = 2$, the collection of $w_2,...,w_{k-1}$ is empty and so the following  inequality is trivial.

By \eqref{Ineq;iterated-v0-small} and the induction hypothesis,
\begin{align}
	||\hat{v}_{k-1}||_{r,\eta}
	&\leq||\hat{v}_1||_{r,\eta}+\sum_{j=2}^{k-1}||w_j||_{r,\eta} \nonumber\\
	&\leq\frac{\varepsilon}{2}+4(k-2)C_2^*Re^{-\lambda_2\tau_1} \nonumber\\
	&\leq\frac{\varepsilon}{2}+4(I-1)C_2^*Re^{-\lambda_2\tau_1}
	<\varepsilon. \label{Ineq;iterated-base-small}
\end{align}

Consider the map $F$ in Definition \ref{Def;w-q0;asymptotic} corresponding to the base solution $\hat{v}_{k-1}$ and the eigenspace $E_k$. This will depend on $\hat{v}_{k-1}$ and so we denote $F|_{E_k} = F_{[\hat{v}_{k-1}]}$.   Together with \eqref{Ineq;iterated-Q-small}, this allows us to apply Theorem \ref{Thm;prescribe-diff-asymptotic;1} to produce a unique solution $w_k$ to \eqref{w;equation;1} defined on $[0,\infty)$ with base $\hat{v}_{k-1}$ such that 
\begin{align}
	||w_k||_{r,\sigma_k,\eta}<\delta, \qquad
	\left|e^{\lambda_k \tau}w_k(\tau)-\hat{Q}_k\right|_{C^0(\mathbb{S}^n)}=O(e^{-\eta \tau}). \label{Ineq;iterated-wk-asymp}
\end{align}
Moreover, the initial data satisfies $ \Pi_k w_k(0) = F_{[\hat{v}_{k-1}]}^{-1}(\hat{Q}_k)$. 
Then Proposition \ref{Prop;Lipschitz} gives  
\begin{align}
	||F_{[\hat{v}_{k-1}]}^{-1}(\hat{Q}_k)||_{H^r}&\leq2||\hat{Q}_k||_{H^r}, \label{Ineq;iterated-qk}
\end{align}
and  Lemma \ref{Lem;Diff-w;Diff-q} implies
\begin{align}
	||w_k||_{r,\eta}
	&\leq||w_k||_{r,\sigma_k,\eta}
	\leq2C_{2,k}||F_{[\hat{v}_{k-1}]}^{-1}(\hat{Q}_k)||_{H^r}
	\leq4C_2^*Re^{-\lambda_2\tau_1}. \label{Ineq;iterated-wk}
\end{align}
Using \eqref{Ineq;iterated-wk} and the induction hypothesis, we obtain
\begin{align}
	\sum_{j=2}^k||w_j||_{r,\eta}
	&\leq4(k-2)C_2^*Re^{-\lambda_2\tau_1}
	+4C_2^*Re^{-\lambda_2\tau_1} \nonumber\\
	&=4(k-1)C_2^*Re^{-\lambda_2\tau_1}. \label{Ineq;iterated-induction-conclusion}
\end{align}
We then define $\hat{v}_k:=\hat{v}_{k-1}+w_k.$
This completes the induction. In particular, all the shifted solutions $\hat{v}_2,\ldots,\hat{v}_I$ are defined on the same interval $[0,\infty)$.  
Finally, for every $2\leq k\leq I$, define
\begin{align}\label{Def;iterated-vk}
	v_k(\tau):=\hat{v}_k(\tau-\tau_1), \qquad \tau\geq\tau_1.
\end{align}
All the solutions $v_2,\ldots,v_I$ are defined on the common interval $[\tau_1,\infty)$.  Using \eqref{Def;iterated-shift} and the asymptotics of $w_k$, we obtain
\begin{align}
	&\left|e^{\lambda_k\tau}(v_k-v_{k-1})-Q_k\right|_{C^0(\mathbb{S}^n)} \nonumber\\
	&\quad=e^{\lambda_k\tau_1}
	\left|e^{\lambda_k(\tau-\tau_1)}w_k(\tau-\tau_1)-e^{-\lambda_k\tau_1}Q_k\right|_{C^0(\mathbb{S}^n)}
	=O(e^{-\eta\tau}). \label{v;iterated-asymp;1}
\end{align}
Since $\hat{v}_1$ and $w_2,\ldots,w_I$ all have finite $||\cdot||_{r,\eta}$ norm, each $\hat{v}_k$ and hence each $v_k$ is admissible by Lemma \ref{Lem;admissible-norm}.

Note that our process is deterministic upon fixing $R, r, v_1$ and $Q_2,\cdots, Q_I$. Therefore, we can define
\begin{align}
	\mathbf{v}^{(R,r,v_1)}(Q_2,\ldots,Q_I):=v_I. \label{Def;iterated-family}
\end{align}
Thus each parameter $Q_k$ prescribes the leading asymptotic mode of the difference introduced at the $k$-th stage.
If $Q_k=0$, then the zero difference solves the corresponding prescription problem; hence uniqueness gives $\hat{v}_k=\hat{v}_{k-1}$, and therefore $v_k=v_{k-1}$ after shifting back in time.

In the following lemmas, we record some basic facts about difference of our constructed solutions:
\begin{lemma}\label{Lem;error-estimate;0}
Suppose that $2\leq k\leq I-1$ and $(Q_2,\ldots,Q_I)$ and $(\tilde{Q}_2,\ldots, \tilde{Q}_I)$ coincide up to index $k$, i.e $Q_2 =\tilde{Q}_2,\ldots, Q_k = \tilde{Q}_k$.  Then 
\begin{align}
	 \Big|e^{\lambda_{k+1}\tau } \Big[ \mathbf{v}^{(R,r,v_1)}(\tilde{Q}_2,\ldots, \tilde{Q}_I) - \mathbf{v}^{(R,r,v_1)}(Q_2,\ldots,Q_I)\Big] - (\tilde{Q}_{k+1} - Q_{k+1})\Big|_{C^0(\mathbb{S}^n)} = O(e^{-\eta \tau}).
\end{align}
\begin{proof}
	Let $v_1, v_2,\ldots,v_I$ be the intermediate graph functions produced by the
iterative construction for $\mathbf{v}^{(R,r,v_1)}(Q_2,\ldots,Q_I)$ and  $\tilde{v}_1, \tilde{v}_2,\ldots,\tilde{v}_I$ be the intermediate graph functions produced by the
iterative construction for $\mathbf{v}^{(R,r,v_1)}(\tilde{Q}_2,\ldots,\tilde{Q}_I)$. Note that $\tilde{v}_1 = v_1$.

Since $Q_2 =\tilde{Q}_2,\ldots, Q_k = \tilde{Q}_k$, the construction process gives that $v_2 = \tilde{v}_2,\ldots, v_k = \tilde{v}_k$. 
Together with \eqref{v;iterated-asymp;1}, we have the following estimate:
\begin{align*}
	&\Big|e^{\lambda_{k+1}\tau}\Big( \mathbf{v}^{(R,r,v_1)}(\tilde{Q}_2,\ldots,\tilde{Q}_I) - \mathbf{v}^{(R,r,v_1)}(Q_2,\ldots,Q_I)\Big) - (\tilde{Q}_{k+1} - Q_{k+1})\Big|_{C^0(\mathbb{S}^n)} \\
	=&  \Big|e^{\lambda_{k+1}\tau}\sum_{i=2}^I (\tilde{v}_i - \tilde{v}_{i-1} - v_{i} + v_{i-1}) - (\tilde{Q}_{k+1} - Q_{k+1})\Big|_{C^0(\mathbb{S}^n)}\\
	\leq& \Big| e^{\lambda_{k+1}\tau}(\tilde{v}_{k+1} -\tilde{v}_{k})- e^{\lambda_{k+1}\tau}(v_{k+1} - v_k) -  (\tilde{Q}_{k+1} - Q_{k+1})\Big|_{C^0(\mathbb{S}^n)} \\ 
    &+ \sum_{i=k+2}^Ie^{\lambda_{k+1}\tau}\Big(|\tilde{v}_i - \tilde{v}_{i-1}|_{C^0(\mathbb{S}^n)}  + |v_i - v_{i-1}|_{C^0(\mathbb{S}^n)}\Big) \\
	 =& O(e^{-\eta \tau}).
\end{align*}
The assertion then follows.
\end{proof}
\end{lemma}

\begin{lemma}\label{Lem;error-estimate;1}
Suppose that $2\leq k \leq I$ and $Q_2=\cdots = Q_k = 0$. Moreover, we assume that
\begin{align}\label{U;expansion;5}
		U_1(x) = T_0 - \frac{|x-x_0|^2}{2n}  + |x-x_0|^{\frac{k(k-1)}{n}}H_k(x-x_0) + o(|x-x_0|^{\frac{k(k-1)}{n}+k}) \quad 
	\end{align}
Then 
\begin{align}
	|\mho^{(R,r,U_1)}(Q_2,\ldots,Q_I) - U_1| = o(|x-x_0|^{\frac{k(k-1)}{n} + k})
\end{align}	
\end{lemma}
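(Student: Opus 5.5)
The strategy is to compare the two rescaled flows directly and to convert their parabolic closeness into elliptic closeness of the arrival times along rays from $x_0$, as in the proof of Theorem \ref{Taylor-to-asymptotic}. The expansion \eqref{U;expansion;5} of $U_1$ plays no role in the estimate. It only fixes the exponent $\frac{k(k-1)}{n}+k$ that has to be beaten.

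\emph{Step 1: the base flow is not changed at the first $k-1$ stages.} Since $Q_2=\cdots=Q_k=0$, the remark after \eqref{Def;iterated-family} gives $v_2=\cdots=v_k=v_1$ on $[\tau_1,\infty)$. Here $v_1$ denotes the truncation of the original graph function, and truncation preserves all asymptotics. Write $v:=\mathbf{v}^{(R,r,v_1)}(Q_2,\ldots,Q_I)=v_I$, so that $v-v_1=\sum_{j=k+1}^{I}(v_j-v_{j-1})$. By \eqref{v;iterated-asymp;1}, each term is $O(e^{-\lambda_j\tau})$ in $C^0$. Since $\lambda_j$ is increasing in $j$, this gives
\begin{align*}
|v-v_1|_{C^0(\mathbb{S}^n)}\le Ce^{-\lambda_{k+1}\tau}.
\end{align*}
Both flows are converted back by \eqref{RMCF-MCF;2} with the same center $(x_0,T_0)$. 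Hence $\mho:=\mho^{(R,r,U_1)}(Q_2,\ldots,Q_I)$ and $U_1$ have the same critical point $x_0$ and the same maximum value $T_0$.

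\emph{Step 2: inversion along rays.} Put $x=x_0+r\theta$, $y=\sqrt{2n}\,\theta$, $T_0-\mho(x)=s^2$ and $T_0-U_1(x)=s_1^2$. Exactly as in the proof of Theorem \ref{Taylor-to-asymptotic},
\begin{align*}
r=s\bigl(\sqrt{2n}+v(y,-2\ln s)\bigr)=s_1\bigl(\sqrt{2n}+v_1(y,-2\ln s_1)\bigr).
\end{align*}
Consider $g(s):=s(\sqrt{2n}+v_1(y,-2\ln s))$. Its derivative is
\begin{align*}
g'(s)=\sqrt{2n}+v_1-2\partial_\tau v_1 .
\end{align*}
By admissibility and parabolic regularity, $|v_1|_{C^0}+|\partial_\tau v_1|_{C^0}\le Ce^{-\lambda_2\tau}$. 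So for small $s$, uniformly in $\theta$, we have $g'(s)\in[\sqrt{2n}/2,\,2\sqrt{2n}]$.

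Now $g(s_1)=r$ and $g(s)=r-s\,(v-v_1)(y,-2\ln s)$. Together with Step 1 and $\tau=-2\ln s$, this gives
\begin{align*}
|s-s_1|\le C\,s\,e^{-\lambda_{k+1}\tau}=C\,s^{1+2\lambda_{k+1}}.
\end{align*}
Both $s$ and $s_1$ are comparable to $r$. Therefore
\begin{align*}
|\mho(x)-U_1(x)|=|s^2-s_1^2|\le C\,r^{2+2\lambda_{k+1}}.
\end{align*}

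\emph{Step 3: exponent count.} By \eqref{eigenvalue;lambda-k;1},
\begin{align*}
2+2\lambda_{k+1}=\frac{(k+1)(k+n)}{n}=\frac{k(k-1)}{n}+k+\frac{2k+n}{n}.
\end{align*}
This exceeds $\frac{k(k-1)}{n}+k$ by the positive amount $\frac{2k+n}{n}$, so $|\mho-U_1|=o(|x-x_0|^{\frac{k(k-1)}{n}+k})$, which is the claim.

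\emph{Main obstacle.} The delicate point is the bi-Lipschitz control of the ray map $s\mapsto s(\sqrt{2n}+v_1(y,-2\ln s))$, uniformly in $\theta$. This needs a $C^0$ bound on $\partial_\tau v_1$, not only on $v_1$. I would take it from the smooth exponential convergence of admissible solutions, that is, \v{S}e\v{s}um's $C^l$ decay together with the evolution equation \eqref{v;equation;1}. One must also check that $\mho$ is defined on a common neighborhood of $x_0$; this follows from the uniform $C^2$-closeness arranged at the start of Section \ref{Sec;main-theorem-proof}.
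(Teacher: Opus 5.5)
Your argument is correct, and after the common first step it takes a different route from the paper.

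Both proofs begin identically. Since $Q_2=\cdots=Q_k=0$, uniqueness gives $v_2=\cdots=v_k=v_1$, and \eqref{v;iterated-asymp;1} then yields $|v_I-v_1|_{C^0}=O(e^{-\lambda_{k+1}\tau})$.

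The paper then uses the hypothesis \eqref{U;expansion;5} and Strehlke's correspondence \cite[pp.~205--206]{str} to obtain the leading asymptotics $v_1=c_{n,k}H_ke^{-\lambda_k\tau}+o(e^{-\lambda_k\tau})$. It transfers these to $v_I$ and runs Strehlke's argument in reverse, concluding that $\mho$ has the same expansion \eqref{U;expansion;5} as $U_1$. The $o(|x-x_0|^{\frac{k(k-1)}{n}+k})$ bound on the difference is then immediate.

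You instead compare the two arrival times ray by ray through the graph representation, and invert the ray map $s\mapsto s(\sqrt{2n}+v_1(y,-2\ln s))$ with a mean value estimate.

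What your route buys:
\begin{itemize}
\item A quantitative bound $|\mho-U_1|=O(|x-x_0|^{\frac{(k+1)(k+n)}{n}})$.
\item Independence from \eqref{U;expansion;5}, consistent with the paper's remark that this hypothesis is not needed.
\item No need to invoke Strehlke's elliptic--parabolic conversion for the new flow.
\end{itemize}
The price is a uniform $C^0$ bound on $\partial_\tau v_1$, which makes the ray map uniformly bi-Lipschitz. You correctly obtain this from the smooth exponential convergence of admissible solutions together with the evolution equation.

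The paper's route, in turn, directly produces the statement used downstream, namely that $\mho$ itself satisfies \eqref{U;expansion;5}. In your approach this follows at once by adding your bound to the expansion of $U_1$.
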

\begin{remark}
	The leading order expansion of $U_1$ is not necessary. However, the current statement is sufficient for our purpose. Adding this assumption can simplify the argument.
\end{remark}
\begin{proof}
	Let  $v_1, v_2,\ldots,v_I$ be the intermediate graph functions produced by the
iterative construction.
Recall that
\begin{align*}
	v_i(\tau)=\hat v_i(\tau-\tau_1),\qquad \tau\geq\tau_1.
\end{align*}
Since $Q_i=0$  for $2\leq i\leq k$, uniqueness gives $v_i=v_{i-1}$ for these $i$. By \eqref{v;iterated-asymp;1} we get
\begin{align*}
	\big|v_I(\cdot,\tau)-v_1(\cdot,\tau)\big|_{C^0(\mathbb{S}^n)}
	&\leq\sum_{i=k+1}^I
	\big|v_i(\cdot,\tau)-v_{i-1}(\cdot,\tau)\big|_{C^0(\mathbb{S}^n)}\\
	&=O\Bigg(\sum_{i=k+1}^I e^{-\lambda_i\tau}\Bigg)
	=O(e^{-\lambda_{k+1}\tau})
	=o(e^{-\lambda_k\tau}),
\end{align*}

On the other hand, \eqref{U;expansion;5} and the argument of  Strehlke \cite[pp. 205--206]{str} tells us that:
\begin{align*}
	v_1=\frac{1}{2}
	(2n)^{\lambda_k + \frac{3}{2}-\frac{k}{2}}H_ke^{-\lambda_k\tau}
	+o(e^{-\lambda_k\tau}).
\end{align*}
Consequently, the difference estimate implies that
\begin{align*}
	v_I=\frac{1}{2}
	(2n)^{\lambda_k + \frac{3}{2}-\frac{k}{2}}H_ke^{-\lambda_k\tau}
	+o(e^{-\lambda_k\tau}).
\end{align*}
By definition,
\begin{align*}
	v_I=\mathbf{v}^{(R,r,v_1)}
	(Q_2,\ldots,Q_{I})
\end{align*}
is the graph function of the RMCF corresponding to the arrival time
function
\begin{align*}
	\mho^{(R,r,U_1)}
	(Q_2,\ldots,Q_{I}).
\end{align*}
The argument of Strehlke \cite[pp. 205--206]{str} now shows that $\mho^{(R,r,U_1)}
	(Q_2,\ldots,Q_{I})$ satisfies
\eqref{U;expansion;5}. 
The conclusion follows immediately.

\end{proof}

\subsection{Dimension 2, $n=1$}

	Let us take the complex coordinate introduced in Section \ref{Sec;elliptic}. Moreover, we adapt the decomposition $P_{i,j}$ from Section \ref{Sec;elliptic} and $f_{i,j}$ from Section \ref{Sec;parabolic-obstruction}. Recall that they correspond to the term in the form of $cz^i\Bz^j$ in $P_{i+j}, f_{i+j}$ respectively.  
	In addition, we adapt $\bp^{*}$ and $\mathbf{f}^{*}$ from  Section \ref{Sec;parabolic-obstruction}, which marks polynomials with special indices. 
	
	In this subsection, we shall keep track of the degree of the polynomials and make sure that the degree always match the subscripts.  For example, $P_l$ and $f_l$ denote polynomials of degree $l$. When restricted to the sphere $\mathbb{S}^1(\sqrt{2})$, a given  polynomial may admit different representations, since multiplying $\frac{z\Bz}{2}$ does not change the value. Among these equivalent representations, we always choose the one with the correct degree.
	
	We should remark that a polynomial $Q\in E_k$ can always be represented by a polynomial of degree $k^2-2$ on $\mathbb{S}^1(\sqrt{2})$. To see this, we can write $Q  = cz^k + \bar{c} \Bz^k$ for some $c\in \dC$, which is a polynomial of degree $k$. Then the goal is achieved by multiplying $\big(\frac{z\Bz}{2}\big)^{\frac{k^2-k-2}{2}}$.

	For smooth functions $f,f'$ on $\mathbb{S}^1(\sqrt{2})$ taking value in $\dC$, we define  $L^2$ Hermitian inner product and Hermitian norm on $\mathbb{S}^1(\sqrt{2})$ to be:
	\begin{align}
		\langle f,f'\rangle_{L^2} := \int_{\mathbb{S}^1(\sqrt{2})}f\bar{f'},\quad \quad ||f||_{L^2}^2 := \langle f,f\rangle_{L^2}
	\end{align}
	This is the natural extension of the $L^2$ inner product and norm  to complex valued functions. 
	
	For each $N\geq 4$, we find the largest integer $m$ such that $m^2\leq N$ and define $\Phi^{N}: \cC\cA^{N}_1\rightarrow \dC^{m-1}$ as follows. For each $U\in \cC\cA^{N}_1$, we find $\mathbf{f}^{-}_l, \mathbf{f}^{+}_l$ as above for all $2\leq l \leq m$. Then we let
	\begin{align}\label{Def;Phi;1}
		\Phi^{N}(U): = (\Phi^{N}_2(U), \cdots , \Phi^{N}_{m}(U))  
	\end{align}  
	where
	\begin{align}\label{Def;Phi;2}
		\Phi^{N}_l(U) := \frac{\Big\langle\mathbf{f}^{-}_l, z^{\frac{l(l-1)-2}{2}}\Bz^{{\frac{l(l+1)-2}{2}}}\Big\rangle_{L^2}}{\big\Vert z^{\frac{l(l-1)-2}{2}}\Bz^{{\frac{l(l+1)-2}{2}}}\big\Vert^2_{L^2}}  \in \dC
	\end{align}
	for each $2\leq l \leq m$.	

	 One can view $\Phi^N_l$ as the  coefficient of $z^{\frac{l(l-1)-2}{2}}\Bz^{\frac{l(l+1)-2}{2}}$ in $f_{l^2-2}$.
	 Note that we did not mention $\mathbf{f}^{+}_l$ here, because the term it is conjugate to $\mathbf{f}^{-}_l$ and provides no extra information. In particular, the quantity $\frac{\langle\mathbf{f}^{+}_l, z^{\frac{l(l+1)-2}{2}}\Bz^{{\frac{l(l-1)-2}{2}}}\rangle_{L^2}}{|| z^{\frac{l(l+1)-2}{2}}\Bz^{{\frac{l(l-1)-2}{2}}}||^2_{L^2}} $ equals $ \bar{\Phi^N_l}$, the conjugate of $\Phi^N_l$. 
	 
	 Throughout the subsection, we shall keep in mind that 
	 \begin{align}
	 	\lambda_k = \frac{k^2}{2}-1
	 \end{align}
	 as we will constantly switch between $\lambda_k$ and $\frac{k^2}{2}-1$ without referring here. 
\begin{lemma}\label{Lem;asymp-identical}
	Suppose that $n=1$.  For each $k\geq 2$,  each nonzero homogeneous harmonic polynomial $H_k$ of degree $k$ on $\mathbb R^2$, and fixed $x_0, T_0$, there are at most $2k$ asymptotically equivalent arrival time functions satisfying 
	\begin{align}\label{U;expansion;2}
		U(x) = T_0 - \frac{|x-x_0|^2}{2}  + |x-x_0|^{k(k-1)}H_k(x-x_0) + o(|x-x_0|^{k^2}) \quad 
	\end{align}
	that are different modulo truncation.
\end{lemma}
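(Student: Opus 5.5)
The plan is to show that any equivalence between two such arrival time functions is forced to be one of at most $2k$ explicit rigid motions, composed with a truncation. Fix $x_0,T_0,H_k$ and let $U$, $\tilde U$ both satisfy \eqref{U;expansion;2}, with $\tilde U$ asymptotically equivalent to $U$.

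\textbf{Step 1: reduce to a single normal form.} Rescalings, rigid motions and truncations normalize one another: conjugating a rigid motion by a rescaling gives a rigid motion, and truncation commutes with both up to a change of the truncation level. So, modulo truncation, I will write
\begin{align*}
\tilde U(x)=\lambda^2 U\Big(\frac{R(x-p)-x_0}{\lambda}+x_0\Big)+c
\end{align*}
for some $\lambda>0$, $R\in O(2)$, $p\in\RR^2$ and $c\in\RR$.

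\textbf{Step 2: fix $p$ and $c$.} The only critical point of an arrival time function of a convex flow is its maximum point. Both $U$ and $\tilde U$ have this point at $x_0$, so $p$ is determined by $R$ and $\lambda$. Comparing the maximum values, $\lambda^2T_0+c=T_0$, so $c$ is determined by $\lambda$.

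\textbf{Step 3: show $\lambda=1$, hence $c=0$.} I compare the homogeneous degree $k^2$ parts of the two expansions at $x_0$. For $n=1$ the exponent $k(k-1)$ is even, so $|x|^{k(k-1)}H_k$ is a homogeneous polynomial of degree $k^2$. The expansion \eqref{U;expansion;2} holds up to $o(|x-x_0|^{k^2})$, so these parts are uniquely determined. Write $y=x-x_0$; since $R$ preserves $|y|$, the degree $k^2$ part of $\tilde U$ is
\begin{align*}
\lambda^2\,\lambda^{-k^2}\,|y|^{k(k-1)}\,H_k(Ry).
\end{align*}
The quadratic term $-|y|^2/2$ is invariant, consistent with \eqref{U;expansion;2}. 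Equating with $|y|^{k(k-1)}H_k(y)$ gives $\lambda^{2-k^2}H_k\circ R=H_k$. Taking $L^2(\mathbb S^1)$ norms, which are invariant under $R$, yields $\lambda^{2-k^2}=1$. Since $k\ge2$, we have $k^2\neq2$, so $\lambda=1$, and then $c=0$ by Step 2.

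\textbf{Step 4: count the rotations.} It remains to count $R\in O(2)$ with $H_k\circ R=H_k$. I write $H_k=\mathrm{Re}(a z^k)$ with $a\in\dC\setminus\{0\}$. A rotation $z\mapsto e^{i\theta}z$ requires $e^{ik\theta}=1$, which gives $k$ choices. A reflection $z\mapsto e^{i\theta}\bar z$ requires $\bar a e^{-ik\theta}=a$, which again gives exactly $k$ choices of $\theta$ mod $2\pi$. So there are at most $2k$ admissible $R$. Each one determines $\tilde U$ modulo truncation, since $\lambda$, $c$ and $p$ are fixed by the previous steps. Hence at most $2k$ such functions are distinct modulo truncation.

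The main obstacle is Step 1: justifying that an arbitrary finite chain of these operations collapses to the normal form above. I would handle it by checking the conjugation relations for each pair of generators. The remaining steps are direct coefficient comparisons using uniqueness of the Taylor expansion at $x_0$.
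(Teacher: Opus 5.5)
Your proposal is correct and follows essentially the same route as the paper. Both reduce the equivalence to $\lambda^2 U\bigl(x_0+R(x-x_0)/\lambda\bigr)+c$ near $x_0$, compare the degree-$k^2$ terms to force $\lambda=1$, no time shift, and $H_k\circ R=H_k$, and then use that the stabilizer of $H_k$ in $O(2)$ has exactly $2k$ elements ($k$ rotations and $k$ reflections). The differences are cosmetic: the paper phrases the count as a pigeonhole contradiction among $2k+1$ candidates that are pairwise distinct on a small ball. It also leaves implicit several points you make explicit: locating $p$ via the unique critical point, the $L^2$-norm argument giving $\lambda^{2-k^2}=1$, and the explicit stabilizer count via $H_k=\mathrm{Re}(az^k)$.
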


It is necessary to modulo truncation, for otherwise we can always get a continuous family of solutions truncated at different time.  
The meaning of the Lemma is that rotation and rescaling can only create $k$ different asymptotics behavior (including those hidden in $o(|x-x_0|^{k^2})$) satisfying \eqref{U;expansion;2}. This result is true only when $n=1$. 
\begin{proof}
	Suppose that we have $2k+1$ asymptotically equivalent arrival time functions $U_j$ satisfying \eqref{U;expansion;2} but they are pairwise distinct modulo translation and truncation.
	This means that there exists an open ball $B_{\epsilon} = B_{\epsilon}(0)\subset\RR^2$ centered at origin such that after translations, $U_1,\cdots, U_{2k+1}$ are always pairwise distinct on $B_{\epsilon}$.
	
	Since $U_i$ and $U_j$ are asymptotically equivalent for all $1\leq i,j\leq 2k+1$, by possibly shrinking $\varepsilon$ we can find $\lambda > 0$ and $R_j\in O(2)$  such that the following holds on $B_{\epsilon}$:
	\begin{align}\label{U;expansion;11}
		U_1(x-x_0) = T_j +\lambda^2 \Big(U_j\Big(\frac{R_j(x-x_0)}{\lambda}\Big)-T_0\Big)
	\end{align}
	
	Using the fact that $|R_jx| = |x|$, one can compute the right-hand-side as:
	\begin{align}\label{U;expansion;12}
		 T_j + \lambda^2 \Big(U_j\Big(\frac{R_j(x-x_0)}{\lambda}\Big)-T_0\Big) = T_j - \frac{|x-x_0|^2}{2} +  \lambda^{2-k^2}\cdot|x-x_0|^{k(k-1)}H_k\circ R_j + o(|x-x_0|^{k^2})
	\end{align}
	Putting together \eqref{U;expansion;2}, \eqref{U;expansion;11} and \eqref{U;expansion;12} and  we get $\lambda = 1$ (no rescaling), $T_j = T_0$ (no time shift) and $H_k = H_k\circ R_j$ on small ball $B_{\epsilon}$. 
    
    However, there are at most $2k$ elements in $O(2)$ that leave $H_k$ invariant. Actually, using polar coordinates, any $H_k$ can be written as $H_k(r,\theta) = C r^k \cos(k\theta - \phi)$ with $C \neq 0$. Its symmetry group consist of $k$ rotations and $k$ reflections.

	By Pigeonhole principle, there must exist $1 \leq j < l \leq 2k+1$ such that $R_j = R_l$. 
	
	However, \eqref{U;expansion;11} implies that $U_j = U_l$ on a small ball $B_{\epsilon}$ (after possibly shrinking $\epsilon$), this contradicts our assumption.
	
	Therefore the assumption is false and the Lemma follows.
\end{proof}

\begin{proposition}\label{Prop;nonregular-parameter-family}
Fix $R,r$, $k\geq 2$, and $I\geq k^2+k-1$. Let $U_1$ be an arrival time function of a convex mean curvature flow satisfying
\begin{align}\label{U;expansion;7}
		U_1(x) = T_0 - \frac{|x-x_0|^2}{2}  + |x-x_0|^{k(k-1)}H_k(x-x_0) + o(|x-x_0|^{k^2}) \quad 
	\end{align}
for some nonzero homogeneous harmonic polynomial $H_k$ of degree $k$ on $\mathbb R^2$. Then there exist an element $\underline{Q}_{k+1} \in E_{k+1}\cap H^r|_{B_R}$ and
a map
\begin{align*}
	\Psi_2:\prod_{i=k+1}^{k^2+k-3}\bigl(E_i\cap H^r|_{B_R}\bigr) \longrightarrow E_{k^2+k-2}\cap H^r|_{B_R}
\end{align*}
such that
\begin{align*}
	\mho^{(R,r,U_1)}\bigl(\underbracket{0,\ldots,0}_{Q_2,...,Q_k},Q_{k+1},\ldots,Q_I\bigr)
	\notin C^{(k^2+k-1)^2}
\end{align*}
for every $(Q_{k+1},\ldots,Q_I)\in
\prod_{i=k+1}^I\bigl(E_i\cap H^r|_{B_R}\bigr)\setminus \mathcal{Z}$,
where
\begin{align*}
	\mathcal{Z}:=\left\{
	(Q_{k+1},\ldots,Q_I)
	\in\prod_{i=k+1}^I\bigl(E_i\cap H^r|_{B_R}\bigr):
	\begin{aligned}
	&Q_{k+1}= \underline{Q}_{k+1} \quad\text{or}\\
	&Q_{k^2+k-2}= \Psi_2\big(Q_{k+1},\ldots, Q_{k^2+k-3}\big)
	\end{aligned}
	\right\}.
\end{align*}
\end{proposition}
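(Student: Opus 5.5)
Set $N=(k^2+k-1)^2$ and argue by contradiction against the obstruction equation of Theorem~\ref{Thm;14}. Suppose $U:=\mho^{(R,r,U_1)}(0,\ldots,0,Q_{k+1},\ldots,Q_I)\in C^N$. Then Theorem~\ref{Taylor-to-asymptotic} and Lemma~\ref{fm;Lemma;3} give half-integer asymptotic functions $f_1,\ldots,f_{N-2}$ of its graph function, and Theorem~\ref{Thm;14} forces
\begin{align*}
C^{(6)}_k\,\mathbf f^+_k\mathbf f^-_{k+1}\mathbf f^-_{k^2+k-2}+\cQ^{(7)}_k(\mathbf f^*)+\cQ^{(8)}_k(\mathbf f^*)=0 .
\end{align*}
By Lemma~\ref{Lem;error-estimate;1} (all $Q_2=\cdots=Q_k=0$), $U$ has the same leading expansion \eqref{U;expansion;7} as $U_1$. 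So $\mathbf f^\pm_l=0$ for $l<k$: the expansion is $o(|x|^{k^2})$ beyond $|x|^2/2$, so $P_m=0$ for $m<k^2$ and hence $f_m=0$ for $m<k^2-2$ by Corollary~\ref{fm;Lemma;2}. This kills $\cQ^{(8)}_k$, since each of its monomials contains some $\bx^\pm_l$ with $l<k$. The same expansion also makes $\mathbf f^\pm_k$ fixed and nonzero, namely a nonzero multiple of the monomials of $H_k$, via Proposition~\ref{Prop;ptof;1} with $\mathbf p^\pm_k=-\mathbf f^\pm_k/\sqrt2$.

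Next I identify each $\mathbf f^\pm_l$ with the asymptotic data of the flow. Since $\lambda_l=(l^2-2)/2$, the coefficient of $e^{-\lambda_l\tau}$ in $v$ is $f_{l^2-2}$. Its $E_l$-component is exactly $\mathbf f^\pm_l$, because on $\mathbb S^1(\sqrt2)$ the monomial $z^a\Bz^b$ lies in $E_{|a-b|}$. Using the iterative construction, I aim to show that for $k+1\le l\le k^2+k-2$
\begin{align*}
\Phi^N_l(U)=c_l\,(\text{complex coordinate of }Q_l)+G_l(Q_{k+1},\ldots,Q_{l-1}),
\end{align*}
with a universal $c_l\ne0$ and $G_l$ depending only on the earlier parameters (and on $U_1$). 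The point is that the stage-$l$ correction $w_l$ contributes $e^{-\lambda_l\tau}Q_l+O(e^{-(\lambda_l+\eta)\tau})$, and later stages are $O(e^{-\lambda_{l+1}\tau})$ by Lemma~\ref{Lem;error-estimate;0}. The earlier stages $v_{l-1}$ do not depend on $Q_l,\ldots,Q_I$.

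One subtlety needs checking here. The half-integer expansion of $v_{l-1}$ need not exist a priori, so I extract the $e^{-\lambda_l\tau}E_l$-coefficient of $v$ directly from \eqref{v;asymp;0}. I then use that $U$'s expansion determines it, splitting it as the $Q_l$-part plus a quantity determined by $v_{l-1}$. Because the coefficient of $v$ is unique, the latter depends only on $Q_{k+1},\ldots,Q_{l-1}$. Under $C^N$ regularity of $U$ for some parameter, the map is affine in $Q_l$ with that remainder; I treat it as a function $G_l$ regardless, defining it via $\pi_l$ of the limit when it exists.

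With this in hand, the obstruction becomes a condition on the parameters. Rewrite it via Lemma~\ref{Lem;extraction;2} as a scalar equation for the coefficient of $z^{S'_1}\Bz^{S'_2}$:
\begin{align*}
C^{(6)}_k\,\overline{\Phi_k}\,\Phi_{k+1}\,\Phi_{k^2+k-2}+\cQ^{(7)}_k=0 .
\end{align*}
The term $\cQ^{(7)}_k$ involves only indices $k,\ldots,k^2+k-3$, hence only $Q_{k+1},\ldots,Q_{k^2+k-3}$, and not $Q_{k^2+k-2}$.

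Now define $\underline Q_{k+1}$ as the value making $\Phi_{k+1}=0$, i.e.\ $c_{k+1}Q_{k+1}+G_{k+1}=0$. I have to check that it lies in $B_R$; if it does not, any point of the ball can serve as $\underline Q_{k+1}$, since the exclusion is then vacuous. Off $Q_{k+1}=\underline Q_{k+1}$ we have $\Phi_{k+1}\ne0$, and $C^{(6)}_k\overline{\Phi_k}\ne0$. So the equation has a unique solution $\Phi_{k^2+k-2}=-\cQ^{(7)}_k/(C^{(6)}_k\overline{\Phi_k}\Phi_{k+1})$, which determines a unique $Q_{k^2+k-2}=:\Psi_2(Q_{k+1},\ldots,Q_{k^2+k-3})$. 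If that value leaves $B_R$, I project it to any fixed point of the ball so that $\Psi_2$ maps into the ball. Outside $\mathcal Z$ the obstruction then fails, so $U\notin C^N$.

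I expect the main obstacle to be the second step: rigorously showing that each $\mathbf f^\pm_l$ equals a nonzero multiple of $Q_l$ plus a quantity depending only on earlier $Q$'s. This needs care with the $O(e^{-\eta\tau})$ errors (recall $\eta=1/4$ is smaller than the half-integer spacing $1/2$), and with the fact that the half-integer expansion is only guaranteed under the regularity assumption we are contradicting.
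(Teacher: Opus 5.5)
Your proposal is correct and follows essentially the same argument as the paper. It uses the scalar form of the obstruction from Theorem~\ref{Thm;14} with $\cQ^{(8)}_k=0$ and $\Phi_k\neq 0$, chooses $\underline{Q}_{k+1}$ so that $\Phi_{k+1}\neq 0$ away from it, and obtains $\Psi_2$ by solving for $\Phi_{k^2+k-2}$. The only difference is packaging: you write $\Phi_l=c_lQ_l+G_l(Q_{k+1},\ldots,Q_{l-1})$ with $G_l$ read off from the expansion of $v_{l-1}$, which is unique when it exists. The paper never defines $G_l$; it compares two $C^N$ parameter tuples that agree below index $l$ using Lemma~\ref{Lem;error-estimate;0} and uniqueness of expansions, which is exactly how the obstacle you flagged is handled.
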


\begin{proof}
Set
\begin{align*}
	m:=k^2+k-1,\qquad N:=m^2.
\end{align*}
Let $v_1$ be the graph function over $\mathbb{S}^1(\sqrt{2})$ of the
RMCF associated with the convex MCF corresponding
to $U_1$. The work of Strehlke \cite{str} indicate that the error term in \eqref{U;expansion;7} can be improved to $O(|x-x_0|^{k^2+\sigma})$ for some $\sigma > 0$ (see also \cite{se}). By the argument in \cite[pp.205-206]{str} we get:
\begin{align}\label{v;asymp;11}
	\Big|v_1 -  2^{\frac{k^2-k-1}{2}} H_ke^{-\lambda_k \tau}\Big| \leq Ce^{-(\lambda_k + \sigma)\tau}
\end{align}

Throughout the proof, all the parameter tuples $(Q_2,\ldots, Q_I)$ will satisfy $Q_2=\cdots = Q_k =0$ and we only specify $Q_{k+1},\ldots,Q_I$. Whenever $Q_{k+1},\ldots,Q_I$ are specified,  write
\begin{align*}
	\mathbf{Q}:= (Q_2,\ldots, Q_I) = (0,\ldots,0,Q_{k+1},\ldots,Q_I)
\end{align*}
as the \textbf{associated parameter tuple}. 
 Denote the corresponding arrival time
function by
\begin{align*}
	U_{\mathbf{Q}}:=
	\mho^{(R,r,U_1)}(0,\ldots,0,Q_{k+1},\ldots,Q_I),
\end{align*}
and denote the graph function of its corresponding RMCF by
\begin{align*}
	v_{\mathbf{Q}}:=
	\mathbf{v}^{(R,r,v_1)}(0,\ldots,0,Q_{k+1},\ldots,Q_I).
\end{align*}
In particular, $U_1 = \mho^{(R,r,U_1)}(0,\ldots, 0)$ and $v_1 =
	\mathbf{v}^{(R,r,v_1)}(0,\ldots,0)$.

Whenever $U_{\mathbf{Q}}\in\cC\cA_1^N$, the Taylor expansion
\eqref{Taylor;4} applies to $U_{\mathbf{Q}}$, and the asymptotic expansion
\eqref{v;asymp;5} applies to $v_{\mathbf{Q}}$. In particular, there exist
smooth functions
\begin{align*}
	f_1^{U_{\mathbf{Q}}},\ldots,f_{N-2}^{U_{\mathbf{Q}}}
\end{align*}
on $\mathbb{S}^1(\sqrt{2})$ such that
\begin{align}\label{v;asymp;parameter-family}
	\left|v_{\mathbf{Q}}-
	\sum_{l=1}^{N-2}f_l^{U_{\mathbf{Q}}}e^{-\frac{l\tau}{2}}
	\right|_{C^0(\mathbb{S}^1(\sqrt{2}))}
	=o(e^{-\frac{N-2}{2}\tau}).
\end{align}
These are the asymptotic functions of $U_{\mathbf{Q}}$ used in the
definition of $\Phi^N(U_{\mathbf{Q}})$.

Suppose that $\mathbf{Q}$ and $\widetilde{\mathbf{Q}}$ agree in all
components with indices less than $l$, where $k+1\leq l\leq m-1$.
Lemma \ref{Lem;error-estimate;0} gives
\begin{align}\label{Phi;parameter-comparison}
	\left|e^{\lambda_l\tau}(v_{\mathbf{Q}}-v_{\widetilde{\mathbf{Q}}})
	-(Q_l-\widetilde Q_l)\right|_{C^0(\mathbb{S}^1(\sqrt{2}))}
	=O(e^{-\eta\tau}).
\end{align}
Suppose in addition that $U_{\mathbf{Q}},U_{\widetilde{\mathbf{Q}}}\in\cC\cA_1^N$. Since
$2\lambda_l=l^2-2$, comparison with the unique expansions
\eqref{v;asymp;parameter-family} shows that their asymptotic functions agree through
degree $l^2-3$, while
\begin{align*}
	f^{U_{\mathbf{Q}}}_{l^2-2}-f^{U_{\widetilde{\mathbf{Q}}}}_{l^2-2}
	=Q_l-\widetilde Q_l.
\end{align*}
It follows from
\eqref{Def;Phi;2} that
\begin{align}
	\Phi_j^N(U_{\mathbf{Q}}) = \Phi_j^N(U_{\widetilde{\mathbf{Q}}}), \quad 2\leq j\leq l-1
\end{align}
Since $Q_l - \tilde{Q}_l\in E_l$, it can be written as $Q_l - \tilde{Q}_l = 2Re(cz^{\frac{l(l-1)-2}{2}}\Bz^{{\frac{l(l+1)-2}{2}}})$ for some $c\neq 0$.  By
\eqref{Def;Phi;2} again, we have
$\Phi_l^N(U_{\mathbf{Q}}) - \Phi_l^N(U_{\widetilde{\mathbf{Q}}}) = c$. As a consequence, 
\begin{align}\label{Phi;parameter-injective}
	Q_l\neq\widetilde Q_l
	\quad\Longrightarrow\quad
	\Phi_l^N(U_{\mathbf{Q}})\neq\Phi_l^N(U_{\widetilde{\mathbf{Q}}}).
\end{align}

We next verify the required nondegeneracy. By \eqref{U;expansion;7} and
the argument of Strehlke \cite[pp. 205--206]{str},
\begin{align*}
	|v_1 - 2^{\frac{k^2-k-1}{2}}H_ke^{-\lambda_k\tau}|_{C^0(\mathbb{S}^1(\sqrt{2}))} = o(e^{-\lambda_k\tau}).
\end{align*}
Since $Q_2=\cdots=Q_k=0$, Lemma \ref{Lem;error-estimate;0} applied to
$\mathbf{Q}$ and the zero parameter tuple gives
\begin{align}\label{v;diff;13}
	\big|v_{\mathbf{Q}}-v_1\big|_{C^0(\mathbb{S}^1(\sqrt{2}))}
	=o(e^{-\lambda_k\tau}).
\end{align}

Note that $0\neq H_k\in E_k$ and is in the form of $Re(cz^{\frac{k(k-1)-2}{2}}\Bz^{{\frac{k(k+1)-2}{2}}})$ for some $c\neq 0$. Then  whenever $U_{\mathbf{Q}}\in\cC\cA_1^N$, the definition \eqref{Def;Phi;2} and difference estimate \eqref{v;diff;13}  gives: 
\begin{align}\label{Phi;k;nonzero}
	\Phi_k^N(U_{\mathbf{Q}}) = 2^{\frac{k^2-k-1}{2}}c\neq 0.
\end{align}
Similarly, 
\begin{align}\label{Phi;lower-vanishing}
  \Phi_j^N(U_{\mathbf Q})=0,\qquad 2\leq j\leq k-1.
\end{align}

Consider the functions $Q_{k+1}\in E_{k+1}\cap H^r|_{B_R}$ for which there
exist parameters
\begin{align*}
	(Q_{k+2},\ldots,Q_I)
	\in\prod_{i=k+2}^I\bigl(E_i\cap H^r|_{B_R}\bigr)
\end{align*}
such that the associated parameter tuple $\mathbf{Q}$ satisfies
\begin{align*}
	U_{\mathbf{Q}}\in\cC\cA_1^N,\qquad
	\Phi_{k+1}^N(U_{\mathbf{Q}})=0.
\end{align*}
There is at most one such value by \eqref{Phi;parameter-injective}. If it
exists, denote it by $\underline{Q}_{k+1}$; otherwise, set $\underline{Q}_{k+1}=0$.

Now fix $(Q_{k+1},\ldots,Q_{m-2})$ with
$Q_{k+1}\neq \underline{Q}_{k+1}$. Consider those values of
$Q_{m-1}\in E_{m-1}\cap H^r|_{B_R}$ for which there exist parameters
\begin{align*}
	(Q_m,\ldots,Q_I)
	\in\prod_{i=m}^I\bigl(E_i\cap H^r|_{B_R}\bigr)
\end{align*}
such that the associated parameter tuple $\mathbf{Q} = (0,\ldots,0,Q_{k+1},\ldots, Q_I)$ satisfies
$U_{\mathbf{Q}}\in\cC\cA_1^N$. We claim that there is at most one such
value.

Indeed, suppose that $Q_{m-1}$ and $\widetilde Q_{m-1}$ are two such
values. Choose corresponding parameters
\begin{align*}
	(Q_m,\ldots,Q_I),\qquad
	(\widetilde Q_m,\ldots,\widetilde Q_I),
\end{align*}
and set
\begin{align*}
	\mathbf{Q}
	=(0,\ldots,0,Q_{k+1},\ldots,Q_I), \quad 
	\widetilde{\mathbf{Q}}
	=(0,\ldots,0,Q_{k+1},\ldots,Q_{m-2},
	\widetilde Q_{m-1},\widetilde Q_m,\ldots,\widetilde Q_I).
\end{align*}
Then
$U_{\mathbf{Q}},U_{\widetilde{\mathbf{Q}}}\in\cC\cA_1^N$, and the two
tuples agree through index $m-2$. Hence  Lemma \ref{Lem;error-estimate;0} implies that
\begin{align}\label{Phi;equality;11}
	\Phi_l^N(U_{\mathbf{Q}})=\Phi_l^N(U_{\widetilde{\mathbf{Q}}}),
	\qquad 2\leq l\leq m-2.
\end{align}
Moreover, \eqref{Phi;k;nonzero} and the definition of $\underline{Q}_{k+1}$ imply
\begin{align}\label{Phi;nonzero;12}
	\Phi_k^N(U_{\mathbf{Q}})\neq0,\qquad
	\Phi_{k+1}^N(U_{\mathbf{Q}})\neq0.
\end{align}

Applying Theorem \ref{Thm;14} and extracting the coefficient specified in
Lemma \ref{Lem;extraction;2}, as in the proof below, gives
\begin{align}\label{Phi;obstruction;13}
	C_k^{(6)}\overline{\Phi_k^N(U_{\mathbf{Q}})}
	\Phi_{k+1}^N(U_{\mathbf{Q}})
	\Phi_{m-1}^N(U_{\mathbf{Q}})
	+\cQ_k^{(7)}+\cQ_k^{(8)}=0,
\end{align}
where the last two terms $\cQ_k^{(7)}$ only depends on
$\Phi_k^N(U_{\mathbf{Q}}),\ldots,\Phi_{m-2}^N(U_{\mathbf{Q}})$ and their conjugates, while every monomial in $\cQ_k^{(8)}$ contains a factor among $\Phi_2^N(U_{\mathbf{Q}}),\ldots,\Phi_{k-1}^N(U_{\mathbf{Q}})$ or its conjugate. Thus $\cQ^{(8)}_k=0$ by \eqref{Phi;lower-vanishing} and $\cQ^{(7)}_k$ is determined by the coefficient through the index $m-2$. 

The same identity holds for $U_{\widetilde{\mathbf{Q}}}$.   Since $C_k^{(6)}\neq0$, 
\eqref{Phi;equality;11}, \eqref{Phi;nonzero;12} and \eqref{Phi;obstruction;13} implies that
\begin{align*}
	\Phi_{m-1}^N(U_{\mathbf{Q}})
	=\Phi_{m-1}^N(U_{\widetilde{\mathbf{Q}}}).
\end{align*}
\eqref{Phi;parameter-injective} then gives
$Q_{m-1}=\widetilde Q_{m-1}$, proving the claim.

Define $\Psi_2(Q_{k+1},\ldots,Q_{m-2})$ to be this unique value whenever it exists and
$Q_{k+1}\neq \underline{Q}_{k+1}$, and set it equal to zero otherwise.

Finally, suppose that $U_{\mathbf{Q}}\in\cC\cA_1^N$. If
$Q_{k+1}=\underline{Q}_{k+1}$, then
$(Q_{k+1},\ldots,Q_I)\in\mathcal Z$. Otherwise, the
definition of $\Psi_2$ gives
\begin{align*}
	Q_{m-1}=\Psi_2(Q_{k+1},\ldots,Q_{m-2}),
\end{align*}
and again $(Q_{k+1},\ldots,Q_I)\in\mathcal Z$. Thus every parameter tuple producing a
$C^N$ arrival time function lies in $\mathcal Z$. Equivalently, every
tuple outside $\mathcal Z$ produces an arrival time function that is not
in $C^N$.
\end{proof}

\begin{remark}
	The choice of $\Psi_2$ may not be unique and we do not have any regularity about them. In fact, it can happen that all parameters produces nonsmooth solution. In that case, the choice of $\Psi_2$ is arbitrary. However, if $\Psi_2$ happens to be differentiable, then $\mathcal{Z}$ is nothing but a union of submanifolds of complex codimension 1 (or equivalently real codimension 2) in the full space. Nevertheless, $\mathcal Z$ is still small in the set-theoretical sense. 
\end{remark}

\begin{theorem}
Suppose that $n=1$. For each $k\geq 2$ and each nonzero homogeneous harmonic polynomial $H_k$ of degree $k$ on $\mathbb R^2$,  there exists uncountably many asymptotically distinct arrival time functions $U$ that are not in  $C^N$ satisfying:
\begin{align}\label{U;expansion;6}
		U(x) = T_0 - \frac{|x-x_0|^2}{2}  + |x-x_0|^{k(k-1)}H_k(x-x_0) + o(|x-x_0|^{k^2}) \quad 
	\end{align}
where $N = (k^2+k-1)^2$. In particular, there exists uncountably many asymptotically distinct arrival time functions $U$ that are not $C^{25}$. 
 
\end{theorem}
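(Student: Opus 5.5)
We will deduce the theorem from Proposition \ref{Prop;nonregular-parameter-family} together with Lemma \ref{Lem;asymp-identical}. Fix $k\ge2$ and a nonzero $H_k\in E_k$, and set $m=k^2+k-1$, $N=m^2$ and $I=m$.

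The first step is to produce a seed $U_1$. By Strehlke's construction \cite{str}, applied with leading mode proportional to $H_k$, there is a convex compact curve shortening flow whose arrival time $U_1$ satisfies \eqref{U;expansion;7}. Equivalently, we may start from the shrinking circle $v\equiv0$ and apply Theorem \ref{Thm;prescribe-diff-asymptotic;2} with $Q_k$ a suitable multiple of $H_k$, giving $v_1=cH_ke^{-\lambda_k\tau}+O(e^{-(\lambda_k+\eta)\tau})$; the argument of \cite[pp.~205--206]{str} then converts this back into \eqref{U;expansion;7}. After translating we may take $x_0=0$, and after rescaling we may normalize $T_0$.

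The second step fixes $R,r$ and uses the family $\mho^{(R,r,U_1)}(0,\ldots,0,Q_{k+1},\ldots,Q_I)$. By Lemma \ref{Lem;error-estimate;1} (with $Q_2=\cdots=Q_k=0$), every member of the family still satisfies \eqref{U;expansion;6} with the same $H_k$, $x_0$ and $T_0$. The construction in Section \ref{Sec;main-theorem-proof} makes each member a convex compact flow, so each member is a genuine arrival time function.

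The third step selects a large subfamily. Choose $Q_{k+1}\ne\underline{Q}_{k+1}$, fix $Q_{k+2},\ldots,Q_{m-2}$ arbitrarily (for instance zero), and let $Q_{m-1}=Q_{k^2+k-2}$ range over the uncountable set $E_{m-1}\cap H^r|_{B_R}\setminus\{\Psi_2(Q_{k+1},\ldots,Q_{m-2})\}$; set $Q_m=0$. Every such tuple lies outside $\mathcal Z$, so by Proposition \ref{Prop;nonregular-parameter-family} the corresponding $U_{\mathbf Q}$ is not $C^N$. This gives uncountably many non-$C^N$ arrival times, all satisfying \eqref{U;expansion;6}.

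The last step, which is the main subtlety, is to show these are asymptotically distinct. Distinct $Q_{m-1}$ give distinct $v_{\mathbf Q}$: by Lemma \ref{Lem;error-estimate;0} they differ by $e^{-\lambda_{m-1}\tau}(Q_{m-1}-\widetilde Q_{m-1})+O(e^{-(\lambda_{m-1}+\eta)\tau})$. Hence the $U_{\mathbf Q}$ are pairwise distinct near $x_0$ and remain distinct after any truncation, since truncation preserves the asymptotics. All of them share the same $x_0$, $T_0$ and $H_k$, so Lemma \ref{Lem;asymp-identical} shows that each asymptotic-equivalence class contains at most $2k$ of them modulo truncation. An uncountable set split into classes of size at most $2k$ has uncountably many classes. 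Taking $k=2$ gives $N=25$.
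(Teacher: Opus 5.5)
Your proposal is correct and follows the paper's proof. It uses the same Strehlke seed $U_1$ and the same family $\mho^{(R,r,U_1)}(0,\ldots,0,Q_{k+1},\ldots,Q_I)$, gets non-$C^N$ from Proposition \ref{Prop;nonregular-parameter-family} and the expansion from Lemma \ref{Lem;error-estimate;1}, and gets distinctness from Lemmas \ref{Lem;error-estimate;0} and \ref{Lem;asymp-identical}. The only difference is that you take $I=m$ and vary the obstructed mode $Q_{m-1}$ over the punctured ball $E_{m-1}\cap H^r|_{B_R}\setminus\{\Psi_2(Q_{k+1},\ldots,Q_{m-2})\}$, whereas the paper fixes $Q_{m-1}^*\neq\Psi_2(\cdots)$ and varies the higher modes $Q_m,\ldots,Q_{m+J-1}$, which $\mathcal Z$ does not constrain; both give an uncountable non-$C^N$ family (your aside about normalizing $T_0$ by rescaling is unnecessary and would in fact replace $H_k$ by $\lambda^{2-k^2}H_k$).
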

\begin{proof}
Set
\begin{align*}
	m:=k^2+k-1
\end{align*}
By the work of Strehlke \cite{str}, there exists an arrival time function
$U_1$ of a convex mean curvature flow satisfying \eqref{U;expansion;6}.

Fix $R > 0, r > \frac{n}{2}+2$, fix an integer $J\geq1$, and let $I=m+J-1$. Then by subsection \ref{subsec;construct-I-parameter-solution}, we obtain a collection of solutions $\mho^{(R,r,U_1)}$. Take $\underline{Q}_{k+1}, \Psi_2$ and $\mathcal{Z}$ predicted by Proposition \ref{Prop;nonregular-parameter-family}. 
Choose $Q_2^* = \cdots = Q_{k}^* = 0 $ and $Q_{k+2}^* = \cdots = Q_{m-2}^* = 0$, then choose
\begin{align*}
	&Q_{k+1}^*\in E_{k+1}\cap H^r|_{B_R} \setminus \{\underline{Q}_{k+1}\}\\
	&Q_{m-1}^*\in E_{m-1}\cap H^r|_{B_R}\setminus\{\Psi_2(Q_{k+1}^*,\ldots , Q_{m-2}^*)\}.
\end{align*}
We now allow
\begin{align*}
	(Q_{m},\ldots,Q_{m+J-1})
	\in\prod_{i=m}^{m+J-1}\bigl(E_i\cap H^r|_{B_R}\bigr)
\end{align*}
to vary freely. By our choice of $Q_i^*$ and definition of $\mathcal{Z}$, every tuple of free parameters $(Q_{m},\ldots,Q_{m+J-1})$ ensures
\begin{align*}
	(Q_2^*,\ldots,Q_{m-1}^*,Q_{m},\ldots,Q_{m+J-1}) \not\in \mathcal{Z}
\end{align*}
Proposition
\ref{Prop;nonregular-parameter-family} therefore gives
\begin{align}\label{Ineq;parameter-family-nonregular}
	\mho^{(R,r,U_1)}
	(Q_2^*,\ldots,Q_{m-1}^*,Q_{m},\ldots,Q_{m+J-1})\notin C^N
\end{align}
for every choice of the $J$ free parameters $Q_{m},\ldots,Q_{m+J-1}$.

Since $Q_2^* = \cdots = Q_{k}^* = 0 $, we can apply Lemma \ref{Lem;error-estimate;1} to deduce that each $\mho^{(R,r,U_1)}
	(Q_2^*,\ldots,Q_{m-1}^*,Q_{m},\ldots,Q_{m+J-1})$ satisfies \eqref{U;expansion;6}.

For each $1\leq i\leq J$, choose an
$H^r$-unit vector $e_{m+i-1}\in E_{m+i-1}$ and restrict
\begin{align*}
	Q_{m+i-1}=t_i e_{m+i-1},\qquad t_i\in(-R,R).
\end{align*}
Then $(t_1,\ldots,t_J)\in(-R,R)^J$ parametrizes a collection of
non-$C^N$ solutions satisfying \eqref{U;expansion;6}.

By Lemma \ref{Lem;error-estimate;0}, different parameters give different solutions modulo truncation, Lemma \ref{Lem;asymp-identical} then implies that asymptotically equivalent solutions are represented by at most $2k$ parameters. In particular, this means that we have uncountably many asymptotically distinct solutions. 
\end{proof}

\subsection{Higher dimensions $n\geq 2$}

The higher dimensional case is fairly easy to handle. In contrast in the $n=1$ case, we do not track the degree of $f_1,\cdots, f_{N-2}$ appeared in \eqref{v;asymp;5}, Instead, we mostly treat them as smooth functions over $\mathbb{S}^n$. In particular, we do not state and use that $f_l$ is a restriction of degree $l$ polynomial.

Before proving the Theorem, we state one more Lemma. 
\begin{lemma}\label{Lem;poly;1}
	Suppose that $P$ and $P'$ are nonzero homogeneous polynomials of degree $k$ and $k'$ and that $P =P'$ on a round sphere centered at origin. Then $k - k'$ is an even number. 
\end{lemma}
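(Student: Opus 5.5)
We will prove Lemma \ref{Lem;poly;1} by comparing the two polynomials along a single ray through the origin and using homogeneity.

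Let the sphere have radius $\rho>0$, and fix a unit vector $\theta$ with $P(\rho\theta)\neq 0$. Such a $\theta$ exists: $P$ is nonzero and homogeneous, so it cannot vanish identically on a sphere centered at the origin, since otherwise homogeneity would force $P\equiv 0$ on all of $\RR^{n+1}$. The hypothesis then gives
\begin{align*}
	P(\rho\theta)=P'(\rho\theta)=:a\neq 0 .
\end{align*}
Next we evaluate at the antipodal point $-\rho\theta$, which also lies on the sphere. Homogeneity of degrees $k$ and $k'$ gives
\begin{align*}
	P(-\rho\theta)=(-1)^k a,\qquad P'(-\rho\theta)=(-1)^{k'}a .
\end{align*}
These two values agree by hypothesis, so $(-1)^k=(-1)^{k'}$ because $a\neq0$. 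Hence $k-k'$ is even.

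No step here is a genuine obstacle. The only point needing care is the existence of $\theta$ with $P(\rho\theta)\neq 0$, which is exactly where the nonzero assumption on $P$ (equivalently on $P'$) is used.

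For context, there is a stronger statement: for $k'\le k$ one in fact has $P=(|x|^2/\rho^2)^{(k-k')/2}P'$. This follows because $P-(|x|^2/\rho^2)^{(k-k')/2}P'$ is homogeneous and vanishes on the sphere. That refinement is not needed for the lemma as stated.
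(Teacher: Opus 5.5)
Your proof is correct and is the same argument as the paper's, which proves the lemma by comparing values at antipodal points on the sphere. You also make explicit the one detail the paper leaves implicit: homogeneity guarantees a point on the sphere where $P\neq 0$.
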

\begin{proof}
    The proof is direct by comparing the values at antipodal points on the sphere.
\end{proof}
We finish the proof of the case $n\ge 2$ for the main Theorem \ref{thm:general}.

\begin{theorem*}
		Suppose that $n\geq 2$.  For each $k\geq 2$ and $0\neq H_k\in E_k$, there exist uncountably many asymptotically distinct arrival time functions $U$ that are not $C^N$ and  satisfy:
	\begin{align}\label{U;expansion;21}
		U(x) = T_0 - \frac{|x-x_0|^2}{2n}  + |x-x_0|^{\frac{k(k-1)}{n}}H_k(x-x_0) + o(|x-x_0|^{\frac{k(k-1)}{n}+k}) \quad 
	\end{align}
	Here $N=N(k)$ is defined as follows. Let $k_1$ be the smallest integer satisfying  $k_1 \geq k$ and $\frac{k_1(k_1-1)}{2n}\not\in \mathbb{Z}$. Then we define $N$ to be the smallest integer such that $N\geq  \frac{k_1(k_1-1)}{n} + k_1$.
\end{theorem*}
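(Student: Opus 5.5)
The plan mirrors the $n=1$ construction, except that the obstruction is no longer a nonlinear compatibility condition: it is simply a non-even power. Start, by Strehlke \cite{str}, from a convex arrival time $U_1$ satisfying \eqref{U;expansion;21}, with graph function $v_1$ and $v_1 = c_{n,k}H_k e^{-\lambda_k\tau}+o(e^{-\lambda_k\tau})$. Two cases arise.

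\emph{Case $k_1=k$.} Then $\frac{k(k-1)}{n}$ is not an even integer, and the term $|x|^{\frac{k(k-1)}{n}}H_k$ already obstructs $C^N$. Indeed, if $U\in C^N$ with $N\ge \frac{k(k-1)}{n}+k$, then $T_0-U$ minus its Taylor polynomial is $o(|x|^N)$. Comparing with \eqref{U;expansion;21} forces $|x|^{\frac{k(k-1)}{n}}H_k$ to coincide with a polynomial $P$ of degree $m$ at order $|x|^{\frac{k(k-1)}{n}+k}$. Restricting to the sphere, $P$ would equal $H_k$ there, and Lemma \ref{Lem;poly;1} gives $m-k$ even, i.e.\ $\frac{k(k-1)}{n}$ an even integer, a contradiction. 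The uncountable family then comes from Lemma \ref{Lem;error-estimate;1} applied to $\mho^{(R,r,U_1)}(0,\ldots,0,Q_{k+1},\ldots,Q_I)$: all such functions keep the expansion \eqref{U;expansion;21} and are therefore non-$C^N$. Lemma \ref{Lem;error-estimate;0} makes different $Q_{k+1}$ give different asymptotics.

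\emph{Case $k_1>k$.} Here the plan is to plant the mode $k_1$ using Theorem \ref{Thm;prescribe-diff-asymptotic;2}, i.e.\ the iterative family with $I\ge k_1$. Take $Q_2=\cdots=Q_{k_1-1}=0$ and $Q_{k_1}\neq 0$, so that $v_{\mathbf Q}-v_1 = Q_{k_1}e^{-\lambda_{k_1}\tau}+O(e^{-(\lambda_{k_1}+\eta)\tau})$. The expansion \eqref{U;expansion;21} is still preserved by Lemma \ref{Lem;error-estimate;1}.

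Suppose $U_{\mathbf Q}\in C^N$. Theorem \ref{Taylor-to-asymptotic} then gives half-integer asymptotics $v=\sum_{l\le N-2} f_l e^{-l\tau/2}+o(e^{-\frac{N-2}{2}\tau})$. Note $2\lambda_{k_1}=\frac{k_1(k_1-1)}{n}+k_1-2$, and $N-2\ge 2\lambda_{k_1}$. If $2\lambda_{k_1}$ is not an integer, the exponent $\lambda_{k_1}$ is not a half-integer. Applying the same argument to $\widetilde{\mathbf Q}$ (same entries except $Q_{k_1}$ replaced by $2Q_{k_1}$, or by $0$) and subtracting the two expansions leaves a half-integer series equal to $Q_{k_1}e^{-\lambda_{k_1}\tau}+O(e^{-(\lambda_{k_1}+\eta)\tau})$. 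This is impossible by uniqueness of asymptotic expansions in distinct exponents, since $\eta$ is small. So at most one of the tuples $\mathbf Q$ can be in $C^N$.

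If $2\lambda_{k_1}$ is an integer $l$, the same subtraction shows $f_l$ differs by $Q_{k_1}$. One then uses Lemma \ref{Lem;poly;1} together with the structure from Corollary \ref{fm;Lemma;2}: $f_l$ arises from homogeneous polynomials $P_{j}/|x|^2$, i.e.\ it is a combination of restrictions of degree $\equiv l \pmod 2$ homogeneous functions. The difference $Q_{k_1}$, a harmonic of degree $k_1$ with $l-k_1=\frac{k_1(k_1-1)}{n}-2$ odd, cannot be of that parity. The step I expect to be the main obstacle is exactly this parity bookkeeping. Since Lemma \ref{fm;Lemma;3} is only proven for $n=1$, I would instead use that each $P_j$ is a homogeneous polynomial of degree $j$, so $f_l$ is invariant under the antipodal map up to the sign $(-1)^l$, while $Q_{k_1}$ has parity $(-1)^{k_1}$.

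In both subcases, all but at most one value of $Q_{k_1}$ (times a circle of choices) give non-$C^N$ arrival times. Letting the free parameters $Q_{k_1+1},\ldots$ vary produces an uncountable family. Asymptotic distinctness follows as in Lemma \ref{Lem;asymp-identical}: rescaling is fixed by the $H_k$ term, and rotations preserving $H_k$ form a compact group. So we instead use a real parameter $t\,e$ in a higher mode, for which each equivalence class is at most countable via the discrete symmetries acting on a one-dimensional line of parameters.
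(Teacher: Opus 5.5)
Your non-$C^N$ argument is sound and is essentially the paper's Step 2, built on a different family. You plant the mode $k_1$ through $\mho^{(R,r,U_1)}$ and read off $Q_{k_1}-\widetilde Q_{k_1}$ via Lemma \ref{Lem;error-estimate;0}; the paper uses a line $t_{k_1}\varphi_{k_1}$ and injectivity of $F_{[\hat v_1]}$. Your parity observation ($f_l(-y)=(-1)^l f_l(y)$, immediate from \eqref{fm;induction;3}) does the same job as the paper's use of Corollary \ref{fm;Lemma;1} with Lemma \ref{Lem;poly;1}. Your case $k_1=k$ also matches the paper.

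The gap is asymptotic distinctness. It is part of the statement, and it is exactly where $n\ge 2$ differs from $n=1$. Lemma \ref{Lem;asymp-identical} rests on the stabilizer of $H_k$ in $O(2)$ having at most $2k$ elements. For $n\ge2$ the stabilizer can be positive-dimensional: a zonal $H_k$ is fixed by a copy of $O(n)$. So there are no ``discrete symmetries'' to count.

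Restricting a higher parameter to a line $t e$ does not help inside $\mho^{(R,r,U_1)}$. There $Q_{k'}$ is only the leading coefficient of $v_{k'}-v_{k'-1}$, and the base $v_{k'-1}$ is built nonlinearly from $v_1$. Since $v_1$ is only known to have leading mode $H_k$, it need not be $\mathcal R$-invariant for $\mathcal R\in\mathrm{Stab}(H_k)$. Hence $v_{\mathbf Q}\circ\mathcal R$ is in general not a member of the family. From $v_{\mathbf Q(t)}=v_{\mathbf Q(\tilde t)}\circ\mathcal R$ you therefore cannot deduce $te=\tilde t\, e\circ\mathcal R$, nor that equivalence classes on the line are countable. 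As written, your family is uncountable, but it is not shown to contain uncountably many asymptotically distinct members.

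The missing idea is to encode the free parameters exactly, and rotation-equivariantly, in the solution at a fixed time.

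1. The paper solves, with base $\hat v_2=\hat v_1+w_{k_1}$, for $w$ with
\[
\Pi_{k_1+1}w(0)=-\Pi_{k_1+1}\hat v_2(0)+e^{-\sigma_{k_1+1}\tau_1}\sum_{i}t_i\varphi_i ,\qquad t_i\in[0,R],
\]
so that $\Pi_{k_1+1}\mathbf v_{[t]}(\cdot,\tau_1)=e^{-\sigma_{k_1+1}\tau_1}\sum_i t_i\varphi_i$ holds exactly.
2. An asymptotic equivalence first forces $\rho=1$ via the $H_k$ term. It then gives $\mathbf v_{[t]}=\mathbf v_{[\tilde t]}\circ\mathcal R$ for large $\tau$.
3. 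Backward uniqueness \cite{huang} extends this identity down to $\tau=\tau_1$.
4. Since $\Pi_{k_1+1}$ and each $\pi_i$ commute with rotations, one gets $t_i\varphi_i=\tilde t_i\,\varphi_i\circ\mathcal R$. Comparing $L^2$ norms with $t_i,\tilde t_i\ge0$ yields $t_i=\tilde t_i$.

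You would need to replace the last stage of your construction by such an exact prescription, or supply some other argument controlling a continuous stabilizer.
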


\begin{proof}
For each $U$ satisfying \eqref{U;expansion;21}, the corresponding MCF has a spherical singularity at $(x_0,T_0)$ and we can find the graph function $v$ of the corresponding RMCF centered at $(x_0,T_0)$.  Then the argument in \cite[pp.205-206]{str} implies that $U$ satisfies \eqref{U;expansion;21} if and only if :
\begin{align}\label{v;asymp;23}
	\Big|v -  \frac{(2n)^{\lambda_k +\frac{3}{2}-\frac{k}{2}}}{2} H_ke^{-\lambda_k \tau}\Big|_{C^0(\mathbb{S}^n(\sqrt{2n}))} = o(e^{-\lambda_k\tau}).
\end{align} 

By the work of Strehlke \cite{str}, there exists an arrival time function
$U_1$ of a convex mean curvature flow satisfying \eqref{U;expansion;21}. Then there is a corresponding MCF. Let $v_1$ be the graph function of the corresponding RMCF satisfying \eqref{v;asymp;23}. 
We aim to find a $J$-parameter collection of required solutions that are pairwise asymptotically distinct. 
 
Fix $R>0, r>\frac{n}{2} + 2$ and let $I = k_1 + J$.   Take all constants $\sigma_2\ldots, \sigma_I, \eta, \varepsilon,\delta, C_1^*, C_2^*$ as in \eqref{Def;iterated-eta-sigma} -- \eqref{Def;iterated-epsilon-delta}. Take $R_1:=||v_1(\cdot,\cdot+\tau_0)||_{r,\eta}+||v_1(\cdot,\cdot+\tau_0)||_{r+1,\eta}$ and 
\begin{align}
	\tau_1:=& |\tau_0| + \eta^{-1}\ln\left(1+\frac{4R_1(1+\eta^{-\frac12})}{\varepsilon}\right)
	+\lambda_2^{-1}\ln\left(1+\frac{4R\cdot J}{\varepsilon}\right). \label{Def;iterated-tau1;2}+ \lambda_2^{-1}\ln\left(1+\frac{8C_2^*R}{\varepsilon}\right)
\end{align}

We first choose arbitrary $H^r$-unit vectors $\varphi_i \in E_i$ for $k_1\leq i \leq k_1+ J$.

Then take $\varphi_{k_1}$ to be any $H^r$-unit vector in $E_{k_1}$. 
Let $\hat{v}_1(\tau) = v_1(\tau+\tau_1)$ for all $\tau \in [0,\infty)$. One can estimate as in the proof of Theorem \ref{Thm;prescribe-diff-asymptotic;2} or as \eqref{Ineq;iterated-v0-small}: 
\begin{align}
	||\hat{v}_1||_{r,\eta}
	={}&\left(\int_0^\infty ||v_1(s+\tau_1)||_{H^{r+1}}^2ds\right)^{\frac12}
	+\sup_{s\geq0}e^{\eta s}||v_1(s+\tau_1)||_{H^r} \nonumber\\
	\leq &R_1(1+\eta^{-\frac12})e^{-\eta(\tau_1-\tau_0)}
	<\frac{\varepsilon}{4}. \label{Ineq;iterated-v1-small}
\end{align}

Since $\Pi_i$ is $H^r$-nonincreasing map for all $i\geq 0$, we have
\begin{align}\label{Initial-data;11}
	||\Pi_{k_1+1} \hat{v}_1(\cdot, 0) ||_{H^r} \leq ||\hat{v}_1(\cdot, 0) ||_{H^r} <\frac{\varepsilon}{4}
\end{align}

\noindent\textbf{Step1: construction of solutions.} 
We construct a $J+1$-parameter collection of solutions $\mathbf{v}_{[t_{k_1}, t_{k_1+1}, \ldots, t_{k_1+J}]}$ with parameters 
$(t_{k_1}, t_{k_1+1}, \ldots, t_{k_1+J}) \in  [0,R]^{J+1}$.

Take any $(t_{k_1}, t_{k_1+1}, \ldots, t_{I}) \in [0,R]^{J+1}$. We first handle the parameter $t_{k_1}$.   Since $||t_{k_1}e^{-\lambda_{k_1}\tau_1}\varphi_{k_1}||_{H^r} \leq e^{-\lambda_{k_1}\tau_1} R < \varepsilon$,  Corollary \ref{Cor;existence;1} allows us to produce a unique solution $w_{k_1}$ to \eqref{w;equation;1} defined on $[0,\infty)$ with base $\hat{v}_{1}$ such that 
\begin{align}
	||w_{k_1}||_{r,\sigma_{k_1},\eta}<\delta, \qquad \Pi_{k_1}w_{k_1}(0) = t_{k_1}e^{-\lambda_{k_1}\tau_1}\varphi_{k_1}
\end{align}
Moreover, we can take the map $F$ as in Definition \ref{Def;w-q0;asymptotic}, which is denoted by $F_{[\hat{v}_1]}$ here. By definition, 
\begin{align}\label{w;asymp;2}
	\left|e^{\lambda_{k_1}\tau}w_{k_1}(\tau)-F_{[\hat{v}_1]}(t_{k_1}e^{-\lambda_{k_1}\tau_1}\varphi_{k_1})\right|_{C^0(\mathbb{S}^n)}=o(1). 
\end{align} 
Proposition \ref{Prop;Lipschitz} gives that $F_{[\hat{v}_1]}$ is injective on $E_{k_1}\cap H^r|_{B_{\varepsilon}}$. In particular, the input $t_{k_1}e^{-\lambda_{k_1}\tau_1}\varphi_{k_1}$ is always in the domain $E_{k_1}\cap H^r|_{B_{\varepsilon}}$ whenever  $t_{k_1}\in [0,R]$ and $F_{[\hat{v}_1]}$ maps 0 to 0.

By Lemma \ref{Lem;Diff-w;Diff-q} we have $||w_{k_1}||_{r,\sigma_{k_1}, \eta} \leq 2C_2 ||t_{k_1}e^{-\lambda_{k_1}\tau_1}\varphi_{k_1}||_{H^r} \leq 2C_2^* Re^{-\lambda_{k_1}\tau_1} < \frac{\varepsilon}{4}$. 
Therefore, 
\begin{align}\label{Initial-data;12}
	||\Pi_{k_1+1}w_{k_1}(\cdot, 0)||_{H^r}\leq ||w_{k_1}(\cdot, 0)||_{H^r}\leq ||w_{k_1}||_{r,\eta} < \frac{\varepsilon}{4}
\end{align}
Set $\hat{v}_2 = \hat{v}_1 + w_{k_1}$. Then by \eqref{Ineq;iterated-v1-small}, \eqref{Initial-data;11} and \eqref{Initial-data;12}:
\begin{align}
	&\Vert\hat{v}_2\Vert_{r,\eta} \leq \Vert\hat{v}_1\Vert_{r,\eta} + \Vert w_{k_1}\Vert_{r,\eta}  \leq  \frac{\varepsilon}{4} + \frac{\varepsilon}{4} \leq \frac{\varepsilon}{2} \\
	&\big\Vert  -\Pi_{k_1+1} \hat{v}_2(\cdot, 0) + e^{-\sigma_{k_1+1}\tau_1}\sum_{i=k_1+1}^{I} t_{i} \varphi_i \big\Vert_{H^r} \leq \frac{\varepsilon}{2}  +  e^{-\lambda_{2}\tau_1 }R\sqrt{J} < \varepsilon
\end{align}
These two estimates allows us to use Corollary \ref{Cor;existence;1} to find a unique solution $w$ defined on $[0,\infty)$ solving \eqref{w;equation;1}  with base $\hat{v}_2$ and satisfies 
\begin{align}\label{w;proj;2}
	||w||_{r,\sigma_{k_1+1}, \eta} < \delta \quad  \text{ and } \quad \Pi_{k_1 +1}w(\cdot,0) = -\Pi_{k_1+1} \hat{v}_2(\cdot, 0) + e^{-\sigma_{k_1+1}\tau_1}\sum_{i=k_1+1}^{I} t_{i} \varphi_i
\end{align}
Moreover, by Proposition \ref{Prop;fine-asymptotic;1} and Sobolev embedding we get 
\begin{align}\label{w;asymp;3}
	\left|e^{\lambda_{k_1}\tau}w(\tau)\right|_{C^0(\mathbb{S}^n)}=O(e^{(\lambda_{k_1}- \lambda_{k_1+1})\tau}) = o(1). 
\end{align}
Finally we define for all $\tau \in [\tau_1 ,\infty)$:
\begin{align}\label{time-shift;5}
	\mathbf{v}_{[t_{k_1}, t_{k_1+1}, \ldots, t_{I}]} (\tau):= (\hat{v}_2+ w) (\tau - \tau_1) = (\hat{v}_1 + w_{k_1} + w)(\tau - \tau_1)
\end{align}
Let us use \eqref{RMCF-MCF;2} to convert the RMCF with graph $\mathbf{v}_{[t_{k_1}, t_{k_1+1}, \ldots, t_{I}]} (\tau)$ back to the MCF with singularity $(x_0,T_0)$. The corresponding arrival time function is denoted by $\mho_{[t_{k_1}, t_{k_1+1}, \ldots, t_{I}]}$.

By \eqref{v;asymp;23}, \eqref{w;asymp;2}, \eqref{w;asymp;3} and \eqref{time-shift;5}, the following two asymptotics hold if $k_1 > k$, or if $k_1 = k$ and $t_{k_1} = 0$: 
\begin{align}\label{v;asymp;24}
	\Big|\mathbf{v}_{[t_{k_1}, t_{k_1+1}, \ldots, t_{I}]} (\tau) -  \frac{(2n)^{\lambda_k +\frac{3}{2}-\frac{k}{2}}}{2} H_ke^{-\lambda_k \tau}\Big|_{C^0(\mathbb{S}^n(\sqrt{2n}))} = o(e^{-\lambda_k\tau})
\end{align} 
\begin{align}\label{U;expansion;22}
		\mho_{[t_{k_1}, t_{k_1+1}, \ldots, t_{I}]}(x) = T_0 - \frac{|x-x_0|^2}{2n}  + |x-x_0|^{\frac{k(k-1)}{n}}H_k(x-x_0) + o(|x-x_0|^{\frac{k(k-1)}{n}+k})  
\end{align}

\noindent\textbf{Step2: $C^N$ solutions are rare.}
Let us consider those values of $t_{k_1} \in [0,R]$ for which there exists parameters $(t_{k_1+1},\ldots, t_{I}) \in [0,R]^J$ such that $\mho_{[t_{k_1}, t_{k_1+1}, \ldots, t_{I}]} $ is in  $ C^N$. We claim that there is at most one such $t_{k_1}$, which is denoted by $t_{k_1}^*$. In the case that no such value exists, we simply fix $t_{k_1}^*$ to be arbitrary number in $(0,R]$.

Suppose that the claim is not true, then there exists two such values $t_{k_1} \neq \tilde{t}_{k_1}\in [0,R]$. Choose the   corresponding remaining parameters $(t_{k_1+1},\ldots, t_{I})$ and $(\tilde{t}_{k_1+1},\ldots, \tilde{t}_{I})\in [0,R]^J$ such that $\mho^{[1]}:= \mho_{[t_{k_1},\ldots, t_{I}]}$ and $\mho^{[2]}:=\mho_{[t_{k_1},\tilde{t}_{k_1+1},\ldots, \tilde{t}_{I}]}$ are both in $C^N$. 
Denote their corresponding RMCF graph function to be $\mathbf{v}_{[t_{k_1}, t_{k_1+1}\ldots, t_{I}]} := \mathbf{v}^{[1]}$ and $\mathbf{v}_{[{t}_{k_1}, \tilde{t}_{k_1+1}, \ldots, \tilde{t}_{I}]}:= \mathbf{v}^{[2]}$ respectively. 
Then \eqref{Taylor;4} holds for $\mho^{[1]}, \mho^{[2]}$ and \eqref{v;asymp;5} holds for  $\mathbf{v}^{[1]}$ and $\mathbf{v}^{[2]}$, namely: 
\begin{align}\label{Taylor;11}
	T_0 - \mho^{[\alpha]}(x - x_0) = \frac{|x-x_0|^2}{2n} + \sum_{k=3}^{N}\frac{(P^{[\alpha]})_k(x-x_0)}{n} + o(|x-x_0|^{N})\qquad \alpha=1,2
\end{align} 
\begin{align}\label{v;asymp;21}
	\Big| v^{[\alpha]}(\cdot, \tau) - \sum^{N-2}_{l=1}(f^{[\alpha]})_l e^{-\frac{l \tau}{2} }\Big|_{C^0(\mathbb{S}^n(\sqrt{2n}))} = o(e^{-\frac{N-2}{2}\tau}) \quad  \quad \text{ as }\tau \rightarrow \infty\qquad \alpha=1,2
\end{align}
where $(P^{[\alpha]})_k$ are homogenous degree $k$ polynomials and $(f^{[\alpha]})_k$ are smooth functions on $\mathbb{S}^n(\sqrt{2n})$ for all $3\leq k \leq N$ and $\alpha = 1,2$. Moreover, $P^{[\alpha]}$ and $f^{[\alpha]}$ satisfies the relation specified in Corollary \ref{fm;Lemma;1} and \ref{fm;Lemma;2}. 

Before comparing $P^{[\alpha]}$ and $f^{[\alpha]}$, we need more information on the asymptotics. Let
\begin{align}
	Q^{diff} = e^{\lambda_{k_1}\tau_1}\Big(F_{[\hat{v}_1]}(t_{k_1}e^{-\lambda_{k_1}\tau_1}\varphi_{k_1}) -F_{[\hat{v}_1]}(\tilde{t}_{k_1}e^{-\lambda_{k_1}\tau_1}\varphi_{k_1})\Big)
\end{align}
By our assumption, $t_{k_1} \neq \tilde{t}_{k_1}\in [0,R]$, which means $t_{k_1}e^{-\lambda_{k_1}\tau_1}\varphi_{k_1}$ and $ \tilde{t}_{k_1}e^{-\lambda_{k_1}\tau_1}\varphi_{k_1}$ are both in $ E_{k_1}\cap H^r|_{B_{\varepsilon}}$. Since $F_{[\hat{v}_1]}$ is injective on $ E_{k_1}\cap H^r|_{B_{\varepsilon}}$, this means
\begin{align}
	Q^{diff} \neq 0
\end{align}

Note that \eqref{time-shift;5} can be rewritten as $\mathbf{v}_{[t_{k_1}, t_{k_1+1}, \ldots, t_{I}]} (\tau)=  v_1(\tau) + ( w_{k_1} + w)(\tau - \tau_1)$.
Together with \eqref{v;asymp;23},  \eqref{w;asymp;2} and \eqref{w;asymp;3} we get: 
\begin{align}\label{v;asymp;15}
	&\Big|e^{\lambda_{k_1}\tau} \Big( \mathbf{v}^{[1]}(\tau) -  \mathbf{v}^{[2]}(\tau)\Big) - Q^{diff}\Big|   \\
	=& e^{\lambda_{k_1}\tau_1}\Big|e^{\lambda_{k_1}(\tau-\tau_1)} \Big( \mathbf{v}_{[t_{k_1}, t_{k_1+1}, \ldots, t_{I}]}(\tau) -  \mathbf{v}_{[{t}_{k_1}, \tilde{t}_{k_1+1}, \ldots, \tilde{t}_{I}]}(\tau)\Big)  -  e^{-\lambda_{k_1}\tau_1}Q^{diff}\Big| = o(1) \nonumber
\end{align}

It can be quickly checked that $N-2\geq 2\lambda_{k_1}$ by our definition. Then using  \eqref{v;asymp;21}, \eqref{v;asymp;15} and that $Q^{diff} \neq 0$, we get: 
\begin{itemize}
	\item $2\lambda_{k_1}\in \mathbb{Z}$
	\item $(f^{[1]})_l = (f^{[2]})_l $ for all $1\leq l \leq 2\lambda_{k_1} - 1$
	\item $(f^{[1]})_{2\lambda_{k_1}} = (f^{[2]})_{2\lambda_{k_1}}  +  Q^{diff}$
\end{itemize} 

Using Corollary \ref{fm;Lemma;1} with $m = 2\lambda_{k_1}$ we get:  
\begin{align}
	(P^{[1]})_{2\lambda_{k_1}+2} - (P^{[2]})_{2\lambda_{k_1}+2} = -(2n)^{\frac{1}{2}} Q^{diff} \quad \text{ on } \mathbb{S}^n(\sqrt{2n})
\end{align}
the denominator disappeared because $|x|^2 = 2n$  on $\mathbb{S}^n(\sqrt{2n})$. In particular, $Q^{diff}$ coincide with a homogeneous degree $2\lambda_{k_1}+2$ polynomial on $\mathbb{S}^n(\sqrt{2n})$. 
On the other hand, any element in $E_{k_1}$, including $Q^{diff}$, must be equal to a homogeneous degree $k_1$ polynomial on  $ \mathbb{S}^n(\sqrt{2n})$. Lemma \ref{Lem;poly;1} then forces $k_1$ and $2\lambda_{k_1}+2$ to differ by an even number. However, $2\lambda_{k_1} + 2 - k_1  = 2(\lambda_{k_1} - \frac{k_1}{2}) + 2$ can not be an even number since we assumed $\lambda_{k_1} - \frac{k_1}{2}\not\in \mathbb{Z}$. This causes a contradiction. The claim is then justified. 

\begin{remark}
    We might already reached a contradiction when we conclude that $2\lambda_{k_1}\in \mathbb{Z}$, because $2\lambda_{k_1}$ is likely not an integer, especially when dimension is high. If this happens,  one can simply skip the afterward  argument. 
\end{remark}

When $k_1 =k$, we claim that $t_{k_1}^{*} \neq 0$. Indeed, it suffices to show that $\mho_{[0, t_{k_1+1}, \ldots, t_{I}]}(x) \not\in C^N$. If this is not the case, then it admits Taylor expansion $\mho_{[0, t_{k_1+1}, \ldots, t_{I}]}(x) = \sum_{k=0}^{N} P_k(x-x_0) +o(|x-x_0|^N)$, where $P_k$ is homogenous polynomial of degree $k$. Since $k = k_1$ and hence $\frac{k(k-1)}{n}$ is not an even number, \eqref{U;expansion;22}  is not compatible with the Taylor expansion, a contradiction.

\noindent\textbf{Step3: Asymptotic distinction.}
 
For each fixed $t_{k_1} \in [0,R]$,  we claim that they are pairwise asymptotically distinct in terms of the remaining parameters $(t_{k_1+1},\ldots, t_{I})$.

If this is not the case, then there exists two parameter tuples $(t_{k_1+1},\ldots, t_{k_i+J})$ and  $(\tilde{t}_{k_1+1},\ldots, \tilde{t}_{k_i+J})$ in $[0,R]^J$, a rotation $\mathcal{R}  \in O(n+1)$,  $\rho \in \RR_{+}$  and  $\tau_2 \geq \tau_1$ such that 
$\mathbf{v}_{[t_{k_1}, t_{k_1+1}, \ldots, t_{k_1+J}]}(\tau) = (\mathbf{v}_{[t_{k_1}, \tilde{t}_{k_1+1},\ldots, \tilde{t}_{k_i+J}]}\circ \mathcal{R})(\tau + 2\ln\rho)$ holds on $[\tau_2,\infty)$. By \eqref{v;asymp;24}, this implies that $H_k = \rho^{-2\lambda_k} H_k\circ \mathcal{R}$. Comparing the $L^2$ norm we get $\rho = 1$.  
They are both solutions to \eqref{v;equation;1}, since \eqref{v;equation;1} is rotational invariant.  Then backward uniqueness \cite{huang} implies that
\begin{align}
	\mathbf{v}_{[t_{k_1}, t_{k_1+1}, \ldots, t_{I}]} = \mathbf{v}_{[t_{k_1}, \tilde{t}_{k_1+1},\ldots, \tilde{t}_{I}]}\circ \mathcal{R}
\end{align}
 for all $\tau \in [\tau_1, \infty)$. Since $\Pi_i$ commutes with $\mathcal{R}$ for all $i\geq 0$, we can consequently get: 
\begin{align}
	\Pi_{k_1+1} \mathbf{v}_{[t_{k_1},t_{k_1+1}, \ldots, t_{I}]}(\cdot, \tau_1) =\Big(\Pi_{k_1+1} \mathbf{v}_{[t_{k_1}, \tilde{t}_{k_1+1},\ldots, \tilde{t}_{I}]}(\cdot,\tau_1) \Big)\circ \mathcal{R}
\end{align}
Together with \eqref{w;proj;2} and \eqref{time-shift;5}, we obtain:
\begin{align}
	 e^{-\sigma_{k_1+1}\tau_1}\sum_{i=k_1+1}^{I} t_{i} \varphi_i =  e^{-\sigma_{k_1+1}\tau_1}\sum_{i=k_1+1}^{I} \tilde{t}_{i} \varphi_i  \circ \mathcal{R}
\end{align}
Taking further projection $\pi_i$ for each $k_1+1\leq i \leq I$ we get
\begin{align}
	t_i \varphi_i = \tilde{t}_i \varphi_i\circ \mathcal{R}
\end{align}

Comparing the $L^2$ norm we get $t_i = \tilde{t}_i \geq 0$. But this forces  $(t_{k_1+1},\ldots, t_{k_i+J})=(\tilde{t}_{k_1+1},\ldots, \tilde{t}_{k_i+J})$, a contradiction.

\noindent\textbf{Completion of proof.}
We take the collection of solution $\mathbf{v}_{[t_{k_1},t_{k_1+1},\ldots,t_{I}]}$ constructed in Step 1 with \\  $(t_{k_1},t_{k_1+1},\ldots,t_{I}) \in [0,R]^{J+1}$. Then we take  $t_{k_1}^*\in [0,R]$ from Step 2 and fix a number  $t_{k_1}^{**} \in [0,R]\setminus\{t_{k_1}^*\}$.  In the case of $k_1 = k$, we set $t_{k_1}^{**} = 0$, which is allowed by Step 2. 
Then we let the collection of solutions to be
\begin{align}
	\mho_{[t_{k_1}^{**}, t_{k_1+1}, \ldots, t_{I}]}(x)
\end{align} 
with $J$-free parameters $t_{k_1+1}, \ldots, t_{I} \in [0,R]^J$. 
By the choice of $t_{k_1}^{**}$ and Step2, each solution in the collection is not in $C^N$. By Step 3, these $J$-parameter collection of solutions are pairwise asymptotically distinct. 
In particular there are uncountably many of them. 
Finally, by the choice of $t_{k_1}^{**}$ and by  \eqref{U;expansion;22} in Step 1, each $\mho_{[t_{k_1}^{**}, t_{k_1+1}, \ldots, t_{I}]}(x) $ satisfies \eqref{U;expansion;21}. the Theorem then follows.

\end{proof}


\appendix
\section{Proof of Corollaries \ref{fm;Lemma;1} and \ref{fm;Lemma;2}} 

Recall that we take $P_0, P_1, P_2, P_3,...,P_{N}$ and $f_0, f_1 ,...,f_{N-2}$ from Theorem \ref{Taylor-to-asymptotic}. Keep in mind the special values: $P_0 = P_1\equiv 0, P_2= \frac{|x|^2}{2}$ in $\mathbb{R}^{n+1}$ and  $f_0 = (2n)^{1/2}$ on $\mathbb{S}^n((2n)^{1/2})$.  Throughout this appendix, the polynomials $P_j$ are understood to be restricted to $\mathbb S^n((2n)^{1/2})$.  On this sphere, $f_0=(2n)^{1/2}$ and $2P_2=2n$.  We use the corrected recurrence \eqref{fm;induction;3}, whose spherical-radius factor is
$(2n)^{1-k/2}$.

\subsection{Proof of Corollary \ref{fm;Lemma;1}}\label{App: A1} The proof is similar to Corollary \ref{fm;Lemma;2}.

Define universal polynomials $\cQ^{(10)}_m(\bx_1,\cdots,\bx_{m-1},\bx_m)$ and $ \cQ^{(11)}_m(\bx_1,\cdots,\bx_{m-1})$ depending only on $m\geq 0$ and dimension $n$ in the following inductive way. First let 
\begin{align}
	 \cQ^{(10)}_0 \equiv \frac{1}{2}, \qquad\cQ^{(10)}_1(\bx_1) \equiv -(2n)^{-1/2}\bx_1.
\end{align}

\begin{align}
	 \cQ^{(10)}_{m+1}(\bx_1,\ldots,\bx_{m+1}) =& -(2n)^{-1/2}\bx_{m+1} -\sum_{\substack{\alpha_1+\cdots+\alpha_k+k=m+3\\ 2\leq k\leq m+2\\0\leq\alpha_1,\ldots,\alpha_k\leq m}} (2n)^{-k/2}\cQ^{(10)}_{k-2}(\bx_1,\ldots,\bx_{k-2})
	\bx_{\alpha_1}\cdots\bx_{\alpha_k}.
\end{align}

Once $\cQ^{(10)}_m$ is defined for all $m\geq 0$, we can define
\begin{align}
	 \cQ^{(11)}_{m}(\bx_1,\ldots,\bx_{m-1}) =& -\sum_{\substack{\alpha_1+\cdots+\alpha_k+k=m+2\\ 2\leq k\leq m+1\\0\leq\alpha_1,\ldots,\alpha_k\leq m-1}} (2n)^{-k/2}\cQ^{(10)}_{k-2}(\bx_1,\ldots,\bx_{k-2}) \bx_{\alpha_1}\cdots\bx_{\alpha_k}.
\end{align}
where we set $\bx_0$ to be 1. One can check that
\begin{align}\label{Def;Q10;2}
	 \cQ^{(10)}_{m}(\bx_1,\ldots,\bx_m) =-(2n)^{-1/2}\bx_m+\cQ^{(11)}_m(\bx_1,\ldots,\bx_{m-1}),
	\qquad m\geq1.
\end{align}

For each monomial $C\bx_{i_1}\cdots \bx_{i_p}$ in  $\cQ^{(10)}_m$ and $\cQ^{(11)}_m$, if we count the sum of indices $i_1+\cdots + i_p$, our definition forces them to be equal to $m$. Moreover, $\cQ^{(11)}_m$ does not have degree 1 term. This will give the desired form \eqref{Q11;def}. It remains to establish \eqref{fm;formula;3} by induction.

By formula \eqref{fm;induction;3} we have
\begin{align}
	 f_1=-(2n)^{-1/2}P_3.
\end{align}	
 Thus $\frac{P_3}{2P_2}=-(2n)^{-1/2}f_1$, which proves the case $m=1$.

Suppose that  \eqref{fm;formula;3} is proved for $1,..., m$.  We may assume $m+1\leq N-2$, otherwise there is nothing else to prove.  By \eqref{Def;Q10;2}, 
\begin{align*}
	\frac{P_{i}}{2P_2} = \cQ^{(10)}_{i-2}(f_1,\cdots, f_{i-2})
\end{align*} 
for $i=3,\cdots m+2$. Then we use \eqref{fm;induction;3} to obtain:
\begin{align}
	 f_{m+1} =& -(2n)^{1/2} \frac{P_{m+3}}{2P_2}  -\sum_{\substack{|\alpha|+k=m+3\\2\leq k\leq m+2\\ 
     0\leq\alpha_1,\ldots,\alpha_k\leq m}} (2n)^{-(k-1)/2}\frac{P_k}{2P_2} f_{\alpha_1}\cdots f_{\alpha_k}.
\end{align}
Using induction hypothesis, we can rewrite the above as:
\begin{align}
	 \frac{P_{m+3}}{2P_2} =& -(2n)^{-1/2}f_{m+1}
	-\sum_{\substack{|\alpha|+k=m+3\\2\leq k\leq m+2\\
	0\leq\alpha_1,\ldots,\alpha_k\leq m}} (2n)^{-k/2}\cQ^{(10)}_{k-2}(f_1,\ldots,f_{k-2}) f_{\alpha_1}\cdots f_{\alpha_k}\\
	 =& -(2n)^{-1/2}f_{m+1} +\cQ^{(11)}_{m+1}(f_1,\ldots,f_m).
\end{align}

This proves \eqref{fm;formula;3} for $m+1$, which finishes the induction step.

\subsection{Proof of Corollary \ref{fm;Lemma;2}}{\label{App: A2}} 
Corollary \ref{fm;Lemma;2} states that one can write $f_m$ directly in terms of $P_k$ (restricted on $\mathbb{S}^n((2n)^{1/2})$) with universal coefficients. 

Define universal polynomials $\cQ^{(20)}_m(\bx_1,\cdots,\bx_{m-1},\bx_m)$ and $ \cQ^{(21)}_m(\bx_1,\cdots,\bx_{m-1})$ depending only on $m\geq 0$ and dimension $n$ in the following inductive way. First let 
\begin{align}
	 \cQ^{(20)}_0\equiv (2n)^{1/2},\qquad \cQ^{(20)}_1(\bx_1)\equiv-\frac{(2n)^{1/2}}2\bx_1,\qquad \cQ^{(21)}_0\equiv\cQ^{(21)}_1\equiv0.
\end{align}
If we have already defined $\cQ^{(20)}_0, \cdots, \cQ^{(20)}_{m}$ and $\cQ^{(21)}_0, \cdots, \cQ^{(21)}_{m}$, then we define
\begin{align}
	 \cQ^{(21)}_{m+1}(\bx_1,\ldots,\bx_m) :=& -\frac12 \sum_{\substack{|\alpha|+k-2=m+1\\2\leq k\leq m+2\\
	0\leq\alpha_1,\ldots,\alpha_k\leq m}} (2n)^{(1-k)/2}\bx_{k-2}\prod_{l=1}^k \cQ^{(20)}_{\alpha_l}(\bx_1,\ldots,\bx_{\alpha_l}).
\end{align}
\begin{align}
	 \cQ^{(20)}_{m+1}(\bx_1,\ldots,\bx_{m+1}) :=& -\frac{(2n)^{1/2}}2\bx_{m+1} +\cQ^{(21)}_{m+1}(\bx_1,\ldots,\bx_m)\\ 
     =& -\frac12
	\sum_{\substack{|\alpha|+k-2=m+1\\2\leq k\leq m+3\\
	0\leq\alpha_1,\ldots,\alpha_k\leq m}} (2n)^{(1-k)/2}\bx_{k-2}\prod_{l=1}^k\cQ^{(20)}_{\alpha_l}(\bx_1,\ldots,\bx_{\alpha_l}).
\end{align}
where we set $\bx_0$ to be 1. Every factor $\cQ^{(20)}_{\alpha_l}$ on the right has $\alpha_l\leq m$ and has therefore already been defined; the possible term with $k=m+3$ is precisely the displayed linear term in $\bx_{m+1}$.  Thus the induction is
well posed.

\noindent\textbf{Remark:} Indeed, we are only defining $\cQ^{(20)}$ by induction. Once all $\cQ^{(20)}$ are defined, $\cQ^{(21)}$ only differs from $\cQ^{(20)}$ by a computable degree 1 monomial. 

For each monomial $C\bx_{i_1}\cdots \bx_{i_p}$ in  $\cQ^{(20)}_m$ and $\cQ^{(21)}_m$, if we count the sum of indices $i_1+\cdots + i_p$, our definition forces them to be equal to $m$. Moreover, $\cQ^{(21)}_m$ does not have degree 1 term. This will give the desired form \eqref{Q21;def}. It remains to establish \eqref{fm;formula;4} by induction.

By formula \eqref{fm;induction;3} we have
\begin{align}
	 f_1=-(2n)^{-1/2}P_3 =-\frac{(2n)^{1/2}}2\frac{P_3}{P_2}.
\end{align}	
This proves the case $m=1$. Suppose that the assertion is proved for $1,..., m$. We may assume $m+1\leq N-2$, otherwise there is nothing else to prove.  
Then by \eqref{fm;induction;3}: 
\begin{align}\label{fm;formula;5}
	 f_{m+1} =& -\frac{(2n)^{1/2}}2\frac{P_{m+3}}{P_2} -\frac12\sum_{\substack{|\alpha|+k-2=m+1\\2\leq k\leq m+2\\
	0\leq\alpha_1,\ldots,\alpha_k\leq m}} (2n)^{(1-k)/2}\frac{P_k}{P_2}f_{\alpha_1}\cdots f_{\alpha_k}.
\end{align}
 
By the induction hypothesis and definition of $\cQ^{(20)}$ and $\cQ^{(21)}$ and the fact that $P_0 = P_1\equiv 0$ we have 
\begin{align}
	 f_i=-\frac{(2n)^{1/2}}2\frac{P_{i+2}}{P_2} +\cQ^{(21)}_i\!\left(\frac{P_3}{P_2},\ldots, \frac{P_{i+1}}{P_2}\right) =\cQ^{(20)}_i\!\left(\frac{P_3}{P_2},\ldots, \frac{P_{i+2}}{P_2}\right),\qquad1\leq i\leq m.
\end{align}
We can also manually check that 
\begin{align}
	 f_0=(2n)^{1/2}=\cQ^{(20)}_0.
\end{align}
Plugging into \eqref{fm;formula;5} and using definition of $\cQ^{(20)}, \cQ^{(21)}$ we get 
\begin{align}
	 f_{m+1} =& -\frac{(2n)^{1/2}}2\frac{P_{m+3}}{P_2} -\frac12\sum_{\substack{|\alpha|+k-2=m+1\\2\leq k\leq m+2\\ 
     0\leq\alpha_1,\ldots,\alpha_k\leq m}} (2n)^{(1-k)/2}\frac{P_k}{P_2} \prod_{l=1}^k\cQ^{(20)}_{\alpha_l}\!\left( \frac{P_3}{P_2},\ldots,\frac{P_{\alpha_l+2}}{P_2}\right)\\
	 =& -\frac{(2n)^{1/2}}2\frac{P_{m+3}}{P_2} +\cQ^{(21)}_{m+1}\!\left(\frac{P_3}{P_2},\ldots, \frac{P_{m+2}}{P_2}\right)\\
	 =& \cQ^{(20)}_{m+1}\!\left( \frac{P_3}{P_2},\ldots,\frac{P_{m+3}}{P_2}\right).
\end{align}
This establishes the induction step. The Corollary is then proved.

\section{Explicit computations and examples}\label{app:compute}
\subsection{Computation of example, 1}
We compute the case $k=2$. Note that $\mathbf{K} = k^2+k = 6$. 

Let us define $\ba^{1},\ba^{2}, \ba^{3}$ by their nonzero elements, where the we write $S(\ba^i)$ under each $\ba^i$ for reader's convenience:

$\underset{(3,1)}{(\ba^1)^{+}_2} = 1;  \quad \underset{(3,6)}{(\ba^2)^{-}_3} = 1; \quad  \underset{(6,10)}{(\ba^3)^{-}_4} = 1; $ 

Note that by induction base we have $C^{(1)}_{\ba^1} = C^{(1)}_{\ba^2} = C^{(1)}_{\ba^3} = 1$.

Define $\ba := \ba^{1} + \ba^{2} + \ba^{3}$.
Note that $S(\ba) = S(\ba^1 + \ba^2 + \ba^3) = (10,15)$ and
\begin{align*}
	S(\ba^1+ \ba^2) = (5,6),\quad  S(\ba^1+ \ba^3) = (8,10), \quad S(\ba^2+ \ba^3) = (8,15), 
\end{align*}
Then we can compute
\begin{align*}
	C^{(2)}_{\ba} =& 4C^{(s4)}_{S(\ba^{1} + \ba^{2}), \ S(\ba^{3})}C^{(1)}_{\ba^{1} + \ba^{2}} + 4C^{(s4)}_{S(\ba^{1} + \ba^{3}), \ S(\ba^{2})}C^{(1)}_{\ba^{1} + \ba^{3}} + 4C^{(s4)}_{S(\ba^{2} + \ba^{3}), \ S(\ba^{1})}C^{(1)}_{\ba^{2} + \ba^{3}} + 4C^{(s5)}_{S(\ba^1), S(\ba^2), S(\ba^3)} 
\end{align*}
Using Definition \ref{Def;C3C4} and \ref{Def;C1C2} we have: 
\begin{align*}
	C^{(1)}_{\ba^i+\ba^j} = \frac{C^{(2)}_{\ba^i+\ba^j}}{C^{(3)}_{S(\ba^i+\ba^j)}} = \frac{4C^{(s4)}_{S(\ba^i),\ S(\ba^j)}}{C^{(3)}_{S(\ba^i+\ba^j)}}
\end{align*} 
Therefore, we reached the final expansion:
\begin{align*}
	C^{(2)}_{\ba}=& \frac{4C^{(s4)}_{S(\ba^{1} + \ba^{2}), \ S(\ba^{3})}\cdot 4C^{(s4)}_{S(\ba^{1}), \ S(\ba^{2})}}{C^{(3)}_{S(\ba^1+\ba^2)}} + \frac{4C^{(s4)}_{S(\ba^{1} + \ba^{3}), \ S(\ba^{2})}\cdot 4C^{(s4)}_{S(\ba^{1}), \ S(\ba^{3})}}{C^{(3)}_{S(\ba^1+\ba^3)}}+ \frac{4C^{(s4)}_{S(\ba^{2} + \ba^{3}), \ S(\ba^{1})}\cdot 4C^{(s4)}_{S(\ba^{2}), \ S(\ba^{3})}}{C^{(3)}_{S(\ba^2+\ba^3)}} \\
	  &+4C^{(s5)}_{S(\ba^1), S(\ba^2), S(\ba^3)} 
\end{align*}
Plugging in the numbers:
\begin{align*}
	C^{(2)}_{\ba} =& \frac{16C^{(s4)}_{(5,6),(6,10)}C^{(s4)}_{(3,1),(3,6)}}{C^{(3)}_{(5,6)}}+\frac{16 C^{(s4)}_{(8,10),(3,6)} C^{(s4)}_{(3,1),(6,10)}}{C^{(3)}_{(8,10)}}+ \frac{16 C^{(s4)}_{(8,15),(3,1)} C^{(s4)}_{(3,6),(6,10)}}{C^{(3)}_{(8,15)}} + 4C^{(s5)}_{(3,1),(3,6),(6,10)}
\end{align*}

One can check that
\begin{align*}
	 C^{(s4)}_{(5,6),(6,10)} =& C^{(4)}_{(5,6),(6,10)}+C^{(4)}_{(6,10),(5,6)} =  -130; \quad C^{(s4)}_{(3,1),(3,6)} = C^{(4)}_{(3,1),(3,6)} + C^{(4)}_{(3,6),(3,1)} = 33; \quad C^{(3)}_{(5,6)} = 10\\
\end{align*}
Similarly we have
\begin{align*}
	&C^{(s4)}_{(8,10),(3,6)} = -138; \quad C^{(s4)}_{(3,1),(6,10)} = 72; \quad C^{(3)}_{(8,10)} = 14\\
	&C^{(s4)}_{(8,15),(3,1)} = 227; \quad C^{(s4)}_{(3,6),(6,10)} = -138; \quad C^{(3)}_{(8,15)} = -26\\
	&C^{(s5)}_{(3,1),(3,6),(6,10)} = -312
\end{align*}
Plugging all in we get
\begin{align*}
	C^{(2)}_{\ba} =& \frac{16\cdot (-130)\cdot 33}{10} + \frac{16\cdot (-138)\cdot 72}{14} + \frac{16\cdot 227\cdot (-138)}{-26}  + 4\cdot(-312)  \\
	=& -6864 - \frac{79488}{7} + \frac{250608}{13} - 1248 = -\frac{17280}{91}
\end{align*}

This recovers \eqref{Comp;C2;3} with $k=2$ (and $\mathbf{K}=6$).

\subsection{Computation of example, 2}
We consider the case $k=3$. Note that $\mathbf{K} = k^2+k = 12$. 

Let $\ba^{1},\ba^{2}, \ba^{3}$ be defined by their only nonzero elements: $(\ba^1)^{+}_3 = 1;  \  (\ba^2)^{-}_4 = 1; \   (\ba^3)^{-}_{10} = 1$ and let $\ba = \ba^1 + \ba^2 + \ba^3$. 
Then   
\begin{align*}
	S(\ba^1) = (6,3); \quad S(\ba^2) = (6,10), \quad  S(\ba^3) = (45,55) 
\end{align*}
Moreover
\begin{align*}
	S(\ba^1+ \ba^2) = (11,12),\quad  S(\ba^1+ \ba^3) = (50,57), \quad S(\ba^2+ \ba^3) = (50,64),  \quad S(\ba) = (55,66).
\end{align*}

Plugging into \eqref{Comp;C2;1} we get:
\begin{align}\label{Comp;C2;13}
	C^{(2)}_{\ba} =& \frac{16C^{(s4)}_{(11,12),(45,55)}C^{(s4)}_{(6,3),(6,10)}}{C^{(3)}_{(11,12)}}+\frac{16 C^{(s4)}_{(50,57),(6,10)} C^{(s4)}_{(6,3),(45,55)}}{C^{(3)}_{(50,57)}} \\
    &+\frac{16 C^{(s4)}_{(50,64),(6,3)} C^{(s4)}_{(6,10),(45,55)}}{C^{(3)}_{(50,64)}} + 4C^{(s5)}_{(6,3),(6,10),(45,55)}
\end{align}

Using Definition \ref{Def;C3C4} one can check that:
\begin{align*}
	&C^{(s4)}_{(11,12),(45,55)} =  -1705; \quad C^{(s4)}_{(6,3),(6,10)} = 138; \quad C^{(3)}_{(11,12)} = 22\\
	&C^{(s4)}_{(50,57),(6,10)} = -2158; \quad C^{(s4)}_{(6,3),(45,55)} = 1605; \quad C^{(3)}_{(50,57)} = 58\\
	&C^{(s4)}_{(50,64),(6,3)} = 2910; \quad C^{(s4)}_{(6,10),(45,55)} = -2280; \quad C^{(3)}_{(50,64)} = -82\\
	&C^{(s5)}_{(6,3),(6,10),(45,55)} = -42600
\end{align*}
Plugging all in \eqref{Comp;C2;13} we get
\begin{align*}
	C^{(2)}_{\ba} =& \frac{16\cdot (-1705)\cdot 138}{22} + \frac{16\cdot (-2158)\cdot 1605}{58} + \frac{16\cdot 2910\cdot (-2280)}{-82}  + 4\cdot(-42600)  \\
	=& -171120 - \frac{27708720}{29} + \frac{53078400}{41} - 170400 = -\frac{2851200}{1189}
\end{align*} 

This recovers \eqref{Comp;C2;3} with $k=3$ (and $\mathbf{K}=12$).

\end{document}